\documentclass[11pt,letterpaper]{amsart}
\usepackage[foot]{amsaddr}

\usepackage{mathtools}

\usepackage{amsmath}
\usepackage[T1]{fontenc}
\usepackage[latin9]{inputenc}
\usepackage{geometry}
\geometry{verbose,letterpaper,tmargin= 1in,bmargin=1in,lmargin=1in,rmargin=1in}
\usepackage{wrapfig}
\usepackage{multicol}
\usepackage{graphicx}
\usepackage{soul}
\usepackage{xcolor}
\usepackage{amssymb}
\usepackage{placeins}
\usepackage{bbm}
\setcounter{tocdepth}{1}
\usepackage{cite}
\usepackage{caption}
\usepackage{enumerate}
\usepackage{afterpage}
\usepackage{enumitem}
\usepackage{bmpsize}
\usepackage{hyperref}
\usepackage{tabu}
\usepackage{enumitem}
\numberwithin{equation}{section}
\usepackage{stmaryrd}
\usepackage{tikz}
\usepackage{ytableau}
\usetikzlibrary{decorations.pathreplacing}
\usetikzlibrary{matrix,graphs,arrows,positioning,calc,decorations.markings,shapes.symbols}
\usepackage{ifthen}
\usepackage{amsthm}
\usetikzlibrary{math}
\usetikzlibrary{matrix,graphs,arrows,positioning,calc,decorations.markings,shapes.symbols}

\makeatletter
\renewcommand{\email}[2][]{%
  \ifx\emails\@empty\relax\else{\g@addto@macro\emails{,\space}}\fi%
  \@ifnotempty{#1}{\g@addto@macro\emails{\textrm{(#1)}\space}}%
  \g@addto@macro\emails{#2}%
}
\makeatother


\newtheorem{theorem}{Theorem}[section]
{\theoremstyle{remark}
\newtheorem{remark}[theorem]{Remark}}
\newtheorem{lemma}[theorem]{Lemma}
\newtheorem{proposition}[theorem]{Proposition}

{ \theoremstyle{definition}
\newtheorem{definition}[theorem]{Definition}}
\newtheorem{assumption}[theorem]{Assumption}

\newcommand{\im}{\mathsf{i}}

\newcommand{\cm}{\mathsf{m}}

\newcommand{\const}{\mathsf{const}}
\newcommand{\cd}{\mathsf{D}}
\newcommand{\cR}{\mathsf{R}}
\newcommand{\cq}{\mathsf{q}}

\title{Asymptotics of Jack measures with homogeneous specializations}
\date{\today}
\author{Evgeni Dimitrov} 
\author{Xiaohan Gao}
\author{Andy Gu}
\author{Ryan Niedernhofer}

\begin{document}

\begin{abstract}
We consider Jack measures on partitions with homogeneous defining specializations. For each of the six distinct classes of measures obtained this way we prove a global law of large numbers with an explicit limiting particle density. We also demonstrate for one of these classes how to obtain a global central limit theorem, global and edge large deviation principles, and edge universality using the results of the paper. Our argument is based on explicitly evaluating Jack symmetric functions at homogeneous specializations, relating the Jack measures to the discrete $\beta$-ensembles from \cite{BGG17} and using the discrete loop equations to substantially reduce computations.    
\end{abstract}

\maketitle

\tableofcontents

%
%
\section{Introduction }\label{Section1} The {\em Jack measures} are probability measures on the set of partitions $\mathbb{Y}$, or equivalently Young diagrams, which take the form
\begin{equation}\label{Eq.JackInformal}
\mathcal{J}_{\rho_1, \rho_2}(\lambda)   \propto J_{\lambda}(\rho_1;\theta) \tilde{J}_{\lambda}(\rho_2; \theta).
\end{equation}
Here, $J_{\lambda}, \tilde{J}_{\lambda}$ are the Jack symmetric functions and their duals, see \cite[Section 6.10]{Mac98} (where $\alpha = \theta^{-1}$). The parameter $\theta > 0$ is fixed, and $\theta = \beta/2$ with $\beta$ sometimes referred to as the {\em inverse temperature}. Lastly, $\rho_1, \rho_2$ are {\em Jack-positive specializations}, i.e., algebra homomorphisms from the ring of symmetric functions to the complex field that map the Jack symmetric functions to non-negative reals. The measures $\mathcal{J}_{\rho_1, \rho_2}$ were initially proposed by Borodin and Olshanski \cite{BO05}. They form a one-parameter generalization of the {\em Schur measures} from \cite{Oko01} (corresponding to $\theta = 1$) and a distinguished degeneration of the {\em Macdonald measures} from \cite{BC14}.  

Over the last few decades there has been a significant interest in studying the measures $\mathcal{J}_{\rho_1, \rho_2}$ for different choices of specializations and parameters $\theta$. For example, when $\theta = 1$ the measures $\mathcal{J}_{\rho_1, \rho_2}$ can be related to the uniform measure on permutations with $\lambda_1$ corresponding to the length of the longest increasing subsequence; see \cite{LS77, VK77, BOO00, BDJ99}. For general $\theta > 0$ the measures $\mathcal{J}_{\rho_1, \rho_2}$ arise as distinguished mixtures of the measures on partitions of fixed size, as studied by Kerov, Okounkov and Olshanski \cite{KOO98} in the context of harmonic functions on the Young graph with Jack multiplicities. They also arise in the setting of coherent systems of measures on the Gelfand-Tsetlin graph with Jack weights. When $\theta = 1$ the latter are described in \cite{BO12} and appear implicitly for general $\theta > 0$ in \cite{OO98}. A key motivation for studying the measures $\mathcal{J}_{\rho_1, \rho_2}$ is their interpretation as discrete analogs of the $\beta$ log-gases, which arise as eigenvalue distributions in random matrix theory; see \cite{BDS16,CDM23} and the references therein. In particular -- and as discussed in detail in Section \ref{Section2} -- for certain choices of $\rho_1$ and $\rho_2$, the measures $\mathcal{J}_{\rho_1, \rho_2}$ can be identified with the {\em discrete $\beta$-ensembles} introduced in \cite{BGG17}.

In \cite{KOO98} Kerov, Okounkov and Olshanski established a complete characterization of the Jack-positive specializations, see Proposition \ref{S12P2} below. Specifically, they showed that each Jack-positive specialization is a mixture of three types of specializations: a {\em pure-$\alpha$ specialization}, encoded by a sequence $\{\alpha_i\}_{i \geq 1}$, a {\em pure-$\beta$ specialization}, encoded by a sequence $\{\beta_i\}_{i \geq 1}$, and a {\em Plancherel specialization}, encoded by a parameter $\gamma \geq 0$. Depending on the choice and scaling of these specializations, the measures $\mathcal{J}_{\rho_1, \rho_2}$ exhibit different global behaviors, some of which have been previously studied. In \cite{H21}, Huang proved a global law of large numbers (LLN) and a central limit theorem (CLT) when $\rho_1 = \rho_1(N)$ is a {\em homogeneous} pure-$\alpha$ specialziation (i.e. when $\alpha_1 = \cdots = \alpha_N = a > 0$ and $N \rightarrow \infty$), under some technical assumptions on $\rho_2$. The proof relies on the use of {\em Nazarov-Sklyanin operators} and {\em Jack generating functions}. Using a combinatorial approach of the Nazarov-Sklyanin operators Moll \cite{Moll23} and Cuenca, Do\l\c{e}ga and Moll \cite{CDM23} established global LLNs and CLTs in two additional cases: when $\rho_1 = \rho_2$, and when $\rho_2$ is a Plancherel specialization, respectively.\\

\vspace{-1mm}
The goal of the present paper is to study $\mathcal{J}_{\rho_1, \rho_2}$ by relating the latter to the discrete $\beta$-ensembles from \cite{BGG17}, and by employing the {\em discrete loop equations}, also known as {\em Nekrasov's equations}. We carry out this analysis in the case where each of the specializations $\rho_1$ and $\rho_2$ is either a homogeneous pure-$\alpha$ specialization, a homogeneous pure-$\beta$ specialization, or a Plancherel specialization. While this setup yields nine possible combinations, the symmetry $\mathcal{J}_{\rho_1, \rho_2} = \mathcal{J}_{\rho_2, \rho_1}$ reduces the number of distinct cases to six. We believe these six configurations encompass all instances of the Jack measures that are directly related to discrete $\beta$-ensembles. For each case, under general conditions on the parameters, we establish a global LLN and provide an explicit formula for the limiting particle density; see Theorem \ref{thmMain}. We mention that some of these cases--sometimes under additional parameter restrictions--can also be analyzed using the methods developed in \cite{CDM23, H21}, although the LLN in these papers are in terms of limiting moments as opposed to limiting particle densities.

The results in \cite{CDM23} extend well beyond the Jack measures defined in (\ref{Eq.JackInformal}). For instance, they also consider other classes of measures arising from Jack characters, previously studied in \cite{DS19}. Moreover, both \cite{CDM23} and \cite{Moll23} investigate asymptotic behaviors in regimes where the inverse temperature parameter $\theta$ is scaled alongside the specializations. A key motivation for our analysis of Jack measures from the discrete $\beta$-ensemble perspective is the extensive body of results now available for $\beta$-ensembles, which include a global CLT \cite{BGG17}, a global large deviation principle (LDP) \cite{DZ23}, edge LDPs \cite{DD22}, and edge universality \cite{GH17}, all under general conditions. To our knowledge, these types of results are not currently accessible via the techniques developed in \cite{Moll23, CDM23, H21} with the exception of the global CLT. In the course of establishing our LLN results, we verify many of the assumptions required by \cite{DD22, DK22,DZ23,GH17}, and with some additional but modest effort, one can extend our analysis to obtain the full range of asymptotic results mentioned above. We illustrate this in Theorem~\ref{thmMain2} for one of the six cases considered in this paper.

Overall, we believe this paper serves as a useful complement to the previous works on Jack measures \cite{CDM23, Moll23, H21}. While our scope is somewhat narrower, the class of measures we study appears to be the most general for which a direct connection to discrete $\beta$-ensembles--and consequently to discrete loop equations--exists. We note that the proof of the LLN presented here does not rely on the loop equations. In fact, a limit shape phenomenon can be derived using the LLN results from \cite{BGG17} and the LDPs established in \cite{DD22}. These results imply that the limiting particle density is characterized implicitly as the minimizer of a certain energy functional. One of the key contributions of this paper is to demonstrate how the loop equations can be used to compute these minimizers explicitly, thereby avoiding the need to solve complex integral equations, as was done in \cite{DD22}.

%
%
\section{Definitions and main results }\label{Section11} In Section \ref{Section1.2} we give a formal introduction of the main objects we study in the paper, called the {\em Jack measures}. In Section \ref{Section1.3} we present our results regarding the global laws of large numbers, i.e. limit shape phenomenon, and in Section \ref{Section1.4} we discuss our results regarding the global central limit theorem, global large deviation principle and edge asymptotics. In Section \ref{Section1.5} we discuss the main arguments in the paper. 

%
%
\subsection{Jack measures}\label{Section1.2} Our exposition here follows \cite[Section 6.1]{DD22} and \cite[Section 6.1]{DK22}, and we start by introducing some useful notation for symmetric functions, using \cite{Mac98} as a main reference.

A {\em partition} is a sequence $\lambda = (\lambda_1, \lambda_2, \dots)$ of non-negative integers such that $\lambda_1 \geq \lambda_2 \geq \cdots$ and all but finitely many terms are zero. We denote the set of all partitions by $\mathbb{Y}$. The {\em length} of a partition, denoted $\ell(\lambda)$, is the number of non-zero $\lambda_i$, and the {\em weight} of a partition is given by $|\lambda| = \lambda_1 + \lambda_2 + \cdots$. An alternative representation is given by $\lambda = 1^{m_1} 2^{m_2} \cdots$, where $m_j(\lambda) = |\{i \in \mathbb{N}: \lambda_i = j\}|$ is called the {\em multiplicity} of $j$ in the partition $\lambda$. There is a natural ordering on $\mathbb{Y}$, called the {\em reverse lexicographic order}, given by
$$\lambda > \mu \iff \exists k \in \mathbb{N} \mbox{ such that } \lambda_i = \mu_i \mbox{ for } i < k \mbox{ and } \lambda_k > \mu_k.$$
A {\em Young diagram} is a graphical representation of a partition $\lambda$, with $\lambda_1$ left justified boxes in the top row, $\lambda_2$ in the second row and so on. In general, we do not distinguish between a partition $\lambda$ and the Young diagram representing it. The {\em conjugate} of a partition $\lambda$ is the partition $\lambda'$, whose Young diagram is the transpose of the diagram $\lambda$. In particular, we have $\lambda_i' = |\{j \in \mathbb{N}: \lambda_j \geq i\}|$. For a box $\square = (i,j)$ of a Young diagram $\lambda$ we let $a(\square), \ell(\square)$ denote the {\em arm} and {\em leg lengths}, i.e.
$$a(\square) = \lambda_i - j, \hspace{5mm} \ell(\square) = \lambda_j' - i.$$
Further, we let $a'(\square)$ and $\ell'(\square)$ denote the {\em co-arm} and {\em co-leg lengths}:
$$a'(\square) = j - 1, \hspace{5mm} \ell'(\square) = i - 1.$$  

Let $\Lambda$ be the $\mathbb{Z}_{\geq 0}$ graded algebra over $\mathbb{Q}$ of symmetric functions in countably many variables $X = (x_1, x_2, \dots)$. A symmetric function can be viewed as a formal symmetric power series of bounded degree. One way to interpret $\Lambda$ is as an algebra of polynomials in the {\em Newton power sums}
$$p_k(X) = \sum_{i = 1}^{\infty} x_i^k \mbox{ for } k \geq 1.$$
For any partition $\lambda$ we define 
$$p_{\lambda}(X) = \prod_{i = 1}^{\ell(\lambda)} p_{\lambda_i}(X),$$
and note that $\{p_{\lambda}: \lambda \in \mathbb{Y}\}$ forms a linear basis for $\Lambda$.

In what follows we fix a parameter $\theta$, which will eventually be a fixed positive real, but for now can be treated as a formal parameter. We consider the following scalar product $\langle \cdot, \cdot \rangle$ on $\Lambda \otimes \mathbb{Q}(\theta)$
\begin{equation}\label{S12E1}
\langle p_{\lambda}, p_{\mu}\rangle = {\bf 1}\{\lambda = \mu\} \cdot \theta^{-\ell(\lambda)} \prod_{i = 1}^{\lambda_1} i^{m_i(\lambda)} m_i(\lambda)!.
\end{equation}
\begin{proposition}\label{S12P1} \cite[Section 6.10]{Mac98}. There are unique symmetric functions $J_{\lambda}(X;\theta) \in \Lambda \otimes \mathbb{Q}(\theta)$ for all $\lambda \in \mathbb{Y}$ such that
\begin{itemize}
    \item $\langle J_{\lambda}, J_{\mu} \rangle = 0$ unless $\lambda = \mu$,
    \item the leading (with respect to reverse lexicographic order) monomial in $J_{\lambda}$ is $\prod_{i = 1}^{\ell(\lambda)} x_i^{\lambda_i}$.
\end{itemize}
\end{proposition}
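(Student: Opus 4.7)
The plan is to work within each finite-dimensional graded component $\Lambda_n\otimes\mathbb{Q}(\theta)$, since the scalar product is degree-preserving: $\langle p_\lambda,p_\mu\rangle=0$ whenever $|\lambda|\ne|\mu|$, so elements of different degrees are automatically orthogonal. On each component the power sums $\{p_\lambda:|\lambda|=n\}$ are an orthogonal basis with nonzero squared norms in $\mathbb{Q}(\theta)$, hence the bilinear form is non-degenerate there.

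The first substantive step is a triangular change of basis. Expanding $p_\lambda=\prod_i p_{\lambda_i}$ and grouping like monomials, every exponent sequence that appears is obtained by partitioning the multiset of parts of $\lambda$ into blocks and summing each block; such merges can only increase dominance partial sums, yielding $p_\lambda=\sum_{\mu\ge_{\mathrm{dom}}\lambda} R_{\lambda\mu}\,m_\mu$ with $R_{\lambda\lambda}=\prod_i m_i(\lambda)!\ne 0$. Since reverse lexicographic order refines dominance on partitions of fixed size, the same unitriangularity holds in reverse lex. Because the largest monomial of $m_\mu$ in reverse lex is $\prod_i x_i^{\mu_i}$, the leading-monomial condition forces any candidate $J_\lambda$ to have the form
\[
J_\lambda = m_\lambda + \sum_{\mu<_{\mathrm{rl}}\lambda,\ |\mu|=n} c_{\lambda\mu}\,m_\mu,
\]
and conversely any element of this shape has $\prod_i x_i^{\lambda_i}$ as its leading monomial with coefficient $1$.

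I would then construct the $J_\lambda$ inductively in reverse lex order by Gram--Schmidt. Given $\{J_\mu:\mu<_{\mathrm{rl}}\lambda\}$ pairwise orthogonal with $\langle J_\mu,J_\mu\rangle\ne 0$ in $\mathbb{Q}(\theta)$, the unitriangular shape of the $J_\mu$ gives $\mathrm{span}\{J_\mu:\mu<_{\mathrm{rl}}\lambda\}=\mathrm{span}\{m_\mu:\mu<_{\mathrm{rl}}\lambda\}$, so
\[
J_\lambda := m_\lambda - \sum_{\mu<_{\mathrm{rl}}\lambda}\frac{\langle m_\lambda,J_\mu\rangle}{\langle J_\mu,J_\mu\rangle}\,J_\mu
\]
has the required shape and is orthogonal to every earlier $J_\mu$. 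Uniqueness is embedded in the induction: two candidates differ by an element of $\mathrm{span}\{J_\mu:\mu<_{\mathrm{rl}}\lambda\}$ orthogonal to each $J_\mu$ in the span, which must vanish because each $\langle J_\mu,J_\mu\rangle\ne 0$.

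The main technical point is verifying the inductive hypothesis $\langle J_\mu,J_\mu\rangle\ne 0$ in $\mathbb{Q}(\theta)$ at each stage, so that the next Gram--Schmidt step is well-defined. Since $\langle J_\mu,J_\mu\rangle$ is a rational function of $\theta$, it suffices to exhibit one specialization of $\theta$ at which it is nonzero. At $\theta=1$ the scalar product \eqref{S12E1} reduces to the classical Hall inner product, under which the Schur functions $\{s_\nu\}$ form an orthonormal basis; moreover $s_\lambda=m_\lambda+\sum_{\mu<_{\mathrm{dom}}\lambda}K_{\lambda\mu}\,m_\mu$ (Kostka numbers), and $\mu<_{\mathrm{dom}}\lambda$ implies $\mu<_{\mathrm{rl}}\lambda$, so $s_\lambda$ has the required shape. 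Running the construction at $\theta=1$, where non-degeneracy of every restricted form is immediate from the orthonormality of $\{s_\nu\}$, identifies $J_\lambda|_{\theta=1}=s_\lambda$ and yields $\langle J_\lambda,J_\lambda\rangle|_{\theta=1}=1\ne 0$, which closes the induction in $\mathbb{Q}(\theta)$.
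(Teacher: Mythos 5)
Your argument is correct and takes a genuinely different route from the one the paper relies on. The paper cites \cite[Section 6.10]{Mac98}, where Macdonald obtains the Jack functions as the $q=t^\alpha$, $t\to 1$ degeneration of the Macdonald polynomials; the existence/orthogonality there ultimately rests on exhibiting a self-adjoint operator whose matrix in the $m_\lambda$ basis is upper-triangular with distinct diagonal entries, so that eigenvectors exist, are unique up to scale, and are automatically pairwise orthogonal. Your route instead runs Gram--Schmidt directly on the $m_\lambda$'s and disposes of the usual obstruction --- non-vanishing of the leading principal minors of the Gram matrix $(\langle m_\mu,m_\nu\rangle)_{\mu,\nu\le_{\mathrm{rl}}\lambda}$ --- by specializing to $\theta=1$, where the form is the Hall inner product and the minors are invertible because $\{s_\mu:\mu\le_{\mathrm{rl}}\lambda\}$ is an orthonormal basis of the relevant span. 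This is more elementary, avoiding the operator machinery entirely, at the cost of invoking Schur-function facts; one can even skip Schur functions by noting that for any real $\theta>0$ the $p$-basis Gram matrix is diagonal with positive entries, so the form is positive definite on each $\Lambda_n\otimes\mathbb{R}$ and every principal minor is positive. Two small points worth tightening. First, to deduce nonvanishing in $\mathbb{Q}(\theta)$ from nonvanishing at $\theta=1$ you should observe that $\theta=1$ is not a pole: each $\langle m_\mu,m_\nu\rangle$ is a Laurent polynomial in $\theta$ (a $\mathbb{Q}$-linear combination of $\theta^{-\ell(\alpha)}z_\alpha$), so the Gram determinants and the ratios $\langle J_\mu,J_\mu\rangle$ are regular at $\theta=1$, and the earlier $J_\mu$ themselves specialize there. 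Second, when you write $J_\lambda = m_\lambda + \sum_{\mu<_{\mathrm{rl}}\lambda,\,|\mu|=n}c_{\lambda\mu}m_\mu$ you are tacitly imposing homogeneity of degree $n=|\lambda|$; the leading-monomial condition alone does not exclude $\mu<_{\mathrm{rl}}\lambda$ of degree $\neq n$ (for instance $(1^k)<_{\mathrm{rl}}(2)$ for every $k$), so this should be stated as part of the normalization, as is standard and consistent with the cited source. The opening paragraph on triangularity of $p_\lambda$ in the $m_\mu$ basis is not actually used in the construction and can be dropped.
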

The $J_{\lambda}(X;\theta)$ in Proposition \ref{S12P1} are called {\em Jack symmetric functions} and they form a homogeneous linear basis for $\Lambda$ that is different from the $p_\lambda$ above. We mention that in \cite[Section 6.10]{Mac98} these functions are denoted by $P_{\lambda}^{(\alpha)}$. Our choice of parameterizing the Jack symmetric functions by $\theta$ is made after \cite{KOO98} and one has the relationship $J_{\lambda}(X;\theta) = P_{\lambda}^{(\theta^{-1})}(X)$. Given $\lambda \in \mathbb{Y}$, we define the {\em dual Jack symmetric functions $\tilde{J}_{\lambda}$} through
\begin{equation}\label{S12E2}
\tilde{J}_{\lambda}(X;\theta) = J_\lambda(X;\theta) \cdot \prod_{\square \in \lambda} \frac{a(\square) + \theta \cdot \ell(\square) + \theta}{a(\square) + \theta \cdot \ell(\square) + 1}.
\end{equation}

In the remainder of this section we assume that $\theta > 0$ is fixed.
\begin{definition} A {\em specialization} $\rho$ is an algebra homomorphism from $\Lambda$ to the set of complex numbers. A specialization is called {\em Jack-positive} if its values on all Jack symmetric functions is real and non-negative.
\end{definition}
The following statement provides a classification of all Jack-positive specializations.
\begin{proposition}\label{S12P2} \cite{KOO98} For any fixed $\theta > 0$, the Jack-positive specializations can be parameterized by triplets $(\alpha, \beta, \gamma)$, where $\alpha, \beta$ are sequences of real numbers with
$$\alpha_1 \geq \alpha_2 \geq \cdots \geq 0, \hspace{4mm} \beta_1 \geq \beta_2 \geq \cdots \geq 0, \hspace{4mm} \sum_{i = 1}^{\infty} (\alpha_i + \beta_i) < \infty,$$
and $\gamma$ is a non-negative real number. The specialization corresponding to a triplet $(\alpha, \beta, \gamma)$ is given by its value on $p_k$
\begin{equation}\label{S12E3}
\begin{split}
&p_1 \rightarrow p_1(\alpha,\beta, \gamma) = \gamma + \sum_{i = 1}^{\infty} (\alpha_i + \beta_i),\\
&p_k \rightarrow p_k(\alpha, \beta, \gamma) = \sum_{i = 1}^{\infty} \alpha_i^k + (-\theta)^{k-1} \sum_{i = 1}^{\infty} \beta_i^k, \hspace{4mm} k \geq 2.
\end{split}
\end{equation}
\end{proposition}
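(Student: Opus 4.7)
My plan is to prove the proposition in two directions. For the forward direction, given a triple $(\alpha, \beta, \gamma)$ satisfying the stated conditions, I will verify directly that the algebra homomorphism $\rho_{\alpha,\beta,\gamma}$ determined by (\ref{S12E3}) sends every $J_{\lambda}$ to a non-negative real. The key tool is Stanley's combinatorial formula, which expresses $J_{\lambda}(X;\theta)$ as a positive monomial sum with coefficients that are polynomials in $\theta$ with non-negative rational coefficients. Positivity on a pure-$\alpha$ specialization $x_i \mapsto \alpha_i$ is then immediate; positivity on a pure-$\beta$ specialization follows by combining Stanley's formula with the involution $J_{\lambda}(X;\theta) \leftrightarrow J_{\lambda'}(X;\theta^{-1})$; and positivity on a Plancherel specialization follows from the explicit value of $J_{\lambda}$ at $p_1 \mapsto \gamma$ and $p_{k\geq 2} \mapsto 0$, which yields $\gamma^{|\lambda|}$ multiplied by a strictly positive rational function of $\theta$. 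Combining these three building blocks via an algebra homomorphism preserves the required positivity.

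For the converse direction, I will adopt the boundary-theoretic strategy of Vershik--Kerov and Kerov--Okounkov--Olshanski. The first step is to pass from specializations to harmonic functions on the \emph{Jack Young graph}: the graded graph whose vertices are partitions and whose edge weights $\kappa_{\lambda,\mu}(\theta)$ are the Pieri coefficients in the expansion $p_1 \cdot J_{\lambda} = \sum_{\mu = \lambda + \square} \kappa_{\lambda,\mu}(\theta)\, J_{\mu}$. A Jack-positive specialization $\rho$, normalized so that $\rho(p_1) = 1$, produces after an appropriate rescaling a non-negative harmonic function on this graph; conversely, since the $J_{\lambda}$ form a basis of $\Lambda \otimes \mathbb{Q}(\theta)$ by Proposition \ref{S12P1}, every such harmonic function arises this way. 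The set of normalized non-negative harmonic functions is a Choquet simplex, so the classification reduces to identifying its extreme points, i.e., the \emph{minimal boundary} of the Jack graph.

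The heart of the argument is then an explicit computation of this minimal boundary. Following the ergodic method, I will apply a martingale convergence argument along a uniformly random path in the Jack graph ending at a growing partition $\lambda_N$ (with transitions weighted by the Pieri coefficients) to reduce the classification to computing the limits of ratios $J_{\mu \cup \lambda_N}/J_{\lambda_N}$ as $N \to \infty$, for each fixed $\mu \in \mathbb{Y}$. The next step is to show that these limits exist whenever the row frequencies $\lambda_{N,i}/|\lambda_N| \to \alpha_i$ and column frequencies $\lambda_{N,i}'/|\lambda_N| \to \beta_i$ stabilize, with the residual Plancherel mass given by $\gamma = 1 - \sum_i(\alpha_i + \beta_i) \geq 0$, and that they depend on the sequence only through $(\alpha, \beta, \gamma)$. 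Matching the resulting evaluations in the power-sum basis reproduces the formulas in (\ref{S12E3}); a scaling argument then removes the normalization $\rho(p_1)=1$.

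The main obstacle, in my view, lies in the asymptotic analysis of Jack polynomials required for the boundary computation -- specifically, proving that the shifted ratios $J_{\mu \cup \lambda_N}/J_{\lambda_N}$ converge and that the limit extends multiplicatively to an algebra homomorphism. The natural tools are the algebra of shifted Jack symmetric functions together with an Okounkov--Olshanski-type binomial formula; controlling the error terms uniformly as $|\lambda_N| \to \infty$ is the technically demanding step of \cite{KOO98} and would occupy the bulk of the proof.
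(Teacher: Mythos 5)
The paper does not prove Proposition \ref{S12P2}: it is imported verbatim from \cite{KOO98} with a citation and no argument, so there is no internal proof to compare your proposal against. That said, your outline correctly identifies the Vershik--Kerov boundary-theoretic strategy that \cite{KOO98} actually employs --- pass from normalized Jack-positive specializations to non-negative harmonic functions on the Jack analogue of the Young graph, reduce to extreme points via the ergodic method, and compute the minimal boundary through asymptotics of shifted-Jack (Okounkov--Olshanski binomial) ratios. You are also right that the analytic heart, which you explicitly leave open, is the genuinely hard part of \cite{KOO98}.

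One concrete gap is worth flagging even in what you call the easy direction. Knowing that each of the three pure specializations sends every $J_\lambda$ to a non-negative real does not by itself imply the same for their union, and ``combining via an algebra homomorphism'' is not a valid justification: a homomorphism controls values on products of the $p_k$, but $J_\lambda$ is not such a product. What is actually needed is non-negativity of the \emph{skew} Jack evaluations $J_{\lambda/\mu}(\rho)$ for each of the three pure $\rho$, so that the branching rule $J_\lambda(\rho_1\cup\rho_2)=\sum_{\mu\subseteq\lambda} J_{\lambda/\mu}(\rho_1)\,J_\mu(\rho_2)$ propagates positivity. For a pure-$\alpha$ specialization this follows from Stanley's combinatorial formula for \emph{skew} shapes (equivalently, from iterating the Pieri rule), but you invoke Stanley only for straight shapes; the skew positivity for the $\beta$- and Plancherel-pieces then needs to be obtained separately via the conjugation duality and a limiting argument, respectively. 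Without this, the forward implication is not closed.
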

\begin{remark}\label{S12R1} As the $p_k$'s are algebraically independent algebraic generators of $\Lambda$, we see that (\ref{S12E3}) uniquely specifies a given specialization. 
\end{remark}

With the above notation in place we can now define probability measures on $\mathbb{Y}$ as follows.
\begin{definition}\label{DefJM} Fix $\theta > 0$ and let $\rho_1$ and $\rho_2$ be two Jack-positive specializations such that the (non-negative) series $\sum_{\lambda \in \mathbb{Y}} J_{\lambda}(\rho_1; \theta) \tilde{J}_{\lambda}(\rho_2;\theta)$ is finite. The {\em Jack (probability) measure} $\mathcal{J}_{\rho_1, \rho_2}$ on $\mathbb{Y}$ is defined through
\begin{equation}\label{S12E4}
\mathcal{J}_{\rho_1, \rho_2} (\lambda) = \frac{J_{\lambda}(\rho_1; \theta) \tilde{J}_{\lambda}(\rho_2;\theta)}{H_{\theta}(\rho_1;\rho_2)},
\end{equation}
where the normalization constant is given by
\begin{equation}\label{S12E5}
H_{\theta}(\rho_1; \rho_2) = \sum_{\lambda \in \mathbb{Y}} J_{\lambda}(\rho_1; \theta) \tilde{J}_{\lambda}(\rho_2;\theta).
\end{equation}
\end{definition}
\begin{remark}\label{S12R2} The construction of probability measures through specializations was first considered in \cite{Oko01} in the context of {\em Schur measures}. Since then, the construction has been extended to a much more general family of symmetric functions, called {\em Macdonald} symmetric functions (see \cite{BC14}), which includes the Jack symmetric functions as a special case.
\end{remark}

In this paper we will work with three kinds of specializations. The first is called a {\em pure-$\alpha$} specialization and corresponds to fixing non-negative $\alpha_1, \dots, \alpha_N$ and setting all other parameters to zero. If $\rho$ is such a specialization, we have that 
\begin{equation}\label{S12E6}
J_{\lambda}(\rho;\theta) = J_{\lambda}(\alpha_1, \dots, \alpha_N; \theta).
\end{equation}
In other words, a pure-$\alpha$ specialization corresponds to setting $x_i = \alpha_i$ for $i = 1, \dots, N$ and setting all other variables to zero. The second is called a {\em pure-$\beta$} specialization and corresponds to fixing non-negative $\beta_1, \dots, \beta_N$ and setting all other parameters to zero. If $\rho$ is such a specialization, we have that 
\begin{equation}\label{S12E7}
J_{\lambda}(\rho;\theta) = \tilde{J}_{\lambda'}(\theta\beta_1, \dots, \theta\beta_N; \theta^{-1}).
\end{equation}
In other words, a pure-$\beta$ specialization is the composition of a pure-$\alpha$ specialization, with $\alpha_i = \theta \beta_i$, and the canonical automorphism $\omega_{\theta^{-1}}$ on $\Lambda \otimes \mathbb{Q}(\theta)$, specified by
$$\omega_{\theta^{-1}} (p_k) = (-1)^{k-1} \theta^{-1} p_k,$$
see \cite[(10.6)]{Mac98}. Lastly, we will work with the {\em Plancherel specialization} $\tau_s$, which corresponds to fixing $\gamma = s \geq 0$ and setting all other parameters to zero. The latter can be interpreted as 
\begin{equation}\label{S12E8}
J_{\lambda}(\tau_s;\theta) = \lim_{n \rightarrow \infty} J_{\lambda}(\underbrace{s/n, \dots, s/n}_\text{$n$ times};\theta).
\end{equation}

%
%
\subsection{Laws of large numbers}\label{Section1.3} In this section we present the laws of large numbers we establish for the Jack measures from Definition \ref{DefJM}. In what follows we denote by $a^N$ the pure-$\alpha$ specialization with $\alpha_1 = \alpha_2 = \cdots = \alpha_N = a$, see (\ref{S12E6}). We also denote by $b_{\beta}^N$ the pure-$\beta$ specialization with $\beta_1 = \beta_2 = \cdots = \beta_N = \theta^{-1} \cdot b$, see (\ref{S12E7}). The next definition summarizes the six different classes of Jack measures that we consider, and how we scale their parameters. 

\begin{definition}\label{DefScale} We consider sequences of Jack-positive specializations $\rho_1(N), \rho_2(N)$ that fall into one of the following six cases.
\begin{enumerate}[label=\Roman*.]
    \item For fixed $a,b, \cm > 0$ with $ab \in (0,1)$, we let $\rho_1 = a^N$, $\rho_2 = b^M$, where $M = N\cm  + O(1)$.
    \item For fixed $a,b, \cm > 0$, we let $\rho_1 = a^N$, $\rho_2 = b_{\beta}^M$, where $M =  N \cm + O(1)$. 
    \item For fixed $a,t > 0$, we let $\rho_1 = a^N$, $\rho_2 = \tau_{s}$, where $s = Nt$. 
    \item For fixed $a,b, \cm > 0$ with $ab \in (0,1)$, we let $\rho_1 = a_{\beta}^N$, $\rho_2 = b_{\beta}^M$, where $M =  N \cm + O(1)$.
    \item For fixed $a,t > 0$, we let $\rho_1 = a_{\beta}^N$, $\rho_2 = \tau_{s}$, where $s = Nt$. 
    \item For fixed $t_1, t_2 > 0$, we let $\rho_1 = \tau_{s_1}$, $\rho_2 = \tau_{s_2}$, where $s_1 = N t_1$, $s_2 = N t_2$.
\end{enumerate}
In Cases I, II and IV, we assume that $M \in \mathbb{N}$ and $\cd$ is a large enough so that $\cd \geq |M - N\cm|$.
\end{definition}
\begin{remark}\label{Rem.Commute} In Definition \ref{DefScale} we are essentially considering each of the cases when $\rho_1$ and $\rho_2$ are either homogeneous pure-$\alpha$, pure-$\beta$ or Plancherel specializations. In principle, this should yield nine cases; however, in view of (\ref{S12E2}), we have that the Jack measure $\mathcal{J}_{\rho_1, \rho_2}$ is the same as $\mathcal{J}_{\rho_2, \rho_1}$. This reduces the number of distinct cases from nine to six.
\end{remark}

\begin{definition}\label{Def.Arccos} Throughout the paper we denote by $\arccos(x)$ the function, which agrees with the usual arccosine function on $[-1,1]$, is equal to $\pi$ if $x \leq -1$, and is equal to $0$ if $x \geq 1$. 
\end{definition}

We now turn to the first main result of the paper, whose proof is given in Section \ref{Section5}.\\
\begin{theorem}\label{thmMain} Fix $\theta > 0$. Suppose that $\lambda^N$ is a sequence of random partitions with distribution $\mathcal{J}_{\rho_1(N), \rho_2(N)}$ as in Definition \ref{DefJM}, where $\rho_1(N), \rho_2(N)$ are as in Definition \ref{DefScale}. Set $Z_i^N = \lambda^N_i - i \cdot \theta$ for $i \geq 1$, and let $\nu_N$ denote the random measures on $\mathbb{R}$, defined by
\begin{equation}\label{S13E1}
\nu_N = \frac{1}{N} \cdot \sum_{i = 1}^{\infty} \delta\left( \frac{Z_i^N}{N} \right).
\end{equation}
Then, $\nu_N$ converge vaguely in probability to a deterministic measure $\nu$ on $\mathbb{R}$. For each of the six cases in Definition \ref{DefScale} the measure $\nu$ has the following density $f(x)$ with respect to Lebesgue measure.
{\bf \raggedleft Case I.} Denoting the density by $f^I$, we have
\begin{equation}\label{MTE1}
 f^{I}(x) = \begin{cases} \dfrac{1 }{\theta \pi } \cdot \mathrm{arccos} \left( \dfrac{(1+ab)x +  ab \theta (\cm + 1)}{2 \sqrt{ab(x+\theta) (x + \cm \theta ) }}\right) & \mbox{ if } x > -\theta \min(\cm,1) \\ \theta^{-1} &\mbox{ if } x < -\theta \min(\cm,1).\end{cases}
\end{equation}
{\bf \raggedleft Case II.} Denoting the density by $f^{II}$, we have
\begin{equation}\label{MTE2}
 f^{II}(x) =\begin{cases} 0 &\mbox{ if } x > \cm, \\ \dfrac{1 }{\theta \pi } \cdot \mathrm{arccos} \left( \dfrac{(1-ab)x  +  ab(\cm-\theta)}{2 \sqrt{ab(x+\theta) (\cm - x) }}\right) & \mbox{ if } -\theta<x<\cm , \\ \theta^{-1} &\mbox{ if } x < - \theta.\end{cases}
\end{equation}
{\bf \raggedleft Case III.} Denoting the density by $f^{III}$, we have
\begin{equation}\label{MTE3}
 f^{III}(x) =\begin{cases} \dfrac{1 }{\theta \pi } \cdot \mathrm{arccos} \left( \dfrac{x+at\theta}{2 \sqrt{at\theta(x+\theta)}}\right) & \mbox{ if } x > -\theta, \\ \theta^{-1} &\mbox{ if } x < - \theta.\end{cases} 
\end{equation}
{\bf \raggedleft Case IV.} Denoting the density by $f^{IV}$, we have for $\alpha = \frac{ab (\cm + 1)+ 2\sqrt{ab \cm} }{1-ab}$
\begin{equation}\label{MTE4}
f^{IV}(x) =\begin{cases} 0 & \mbox{ if } x > \min(\cm, 1), \\
\dfrac{1}{\theta \pi } \cdot \mathrm{arccos} \left( \dfrac{\left(1+ab\right)x -ab(\cm+1 ) }{2 \sqrt{ab (1 - x)(\cm - x)}}\right)& \mbox{ if } -\alpha <x<\min(\cm, 1) , \\ \theta^{-1} & \mbox{ if } x < -\alpha.\end{cases}
\end{equation}
{\bf \raggedleft Case V.} Denoting the density by $f^{V}$, we have for $\alpha = at\theta + 2 \sqrt{at \theta}$
\begin{equation}\label{MTE5}
 f^{V}(x) = \begin{cases} 0 & \mbox{ if } x > 1, \\
 \dfrac{1}{\theta \pi } \cdot \mathrm{arccos}\left(\dfrac{x- at \theta}{2\sqrt{at \theta (1-x)}}\right) & \mbox{ if } -\alpha < x< 1, \\ \theta^{-1} &\mbox{ if } x < - \alpha .\end{cases} 
 \end{equation}
{\bf \raggedleft Case VI.} Denoting the density by $f^{VI}$, we have
\begin{equation}\label{MTE6}
 f^{VI}(x) = \begin{cases} 0 &\mbox{ if } x > 2 \theta \sqrt{t_1t_2}, \\
 \dfrac{1}{\theta \pi} \cdot \arccos \left( \dfrac{ x}{2\theta \sqrt{t_1t_2}} \right)& \mbox{ if } -2\theta \sqrt{t_1t_2} < x < 2 \theta \sqrt{t_1t_2}, \\
\theta^{-1} & \mbox{ if } x < - 2\theta \sqrt{t_1t_2}.
\end{cases}
\end{equation}
In the above equations $\arccos$ is as in Definition \ref{Def.Arccos}. 
\end{theorem}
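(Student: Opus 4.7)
The plan is to exploit explicit product formulas for Jack symmetric functions at homogeneous specializations to identify each Jack measure $\mathcal{J}_{\rho_1(N), \rho_2(N)}$ with a discrete $\beta$-ensemble in the sense of \cite{BGG17}, invoke the general law of large numbers for such ensembles from \cite{BGG17, DK22, DD22} to obtain existence of the limit shape, and then compute the limiting density explicitly via Nekrasov's discrete loop equations.

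I begin by recalling that, for $\rho$ a homogeneous pure-$\alpha$, homogeneous pure-$\beta$, or Plancherel specialization, the value $J_\lambda(\rho;\theta)$ admits an explicit product formula over the boxes of $\lambda$ involving the arm, leg, co-arm and co-leg lengths (the pure-$\beta$ version coming from (\ref{S12E7}) and the Plancherel version from the limit (\ref{S12E8})). Substituting these formulas into (\ref{S12E4}) and changing variables to $\ell_i = \lambda_i + (N-i)\theta$, the products of arm-leg factors rearrange into the discrete Vandermonde
\begin{equation*}
\prod_{1 \leq i < j \leq N} \frac{\Gamma(\ell_i - \ell_j + 1)\,\Gamma(\ell_i - \ell_j + \theta)}{\Gamma(\ell_i - \ell_j)\,\Gamma(\ell_i - \ell_j + 1 - \theta)}
\end{equation*}
characteristic of a discrete $\beta$-ensemble, together with an explicit one-body weight $w_N(\ell)$ which in each of the six cases is a product of gamma ratios (Cases I, II, IV) or of exponential/power factors (Cases III, V, VI). The constant $\cd$ in Definition \ref{DefScale} is chosen so that $w_N$ is well-defined on the relevant lattice.

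With each Jack measure placed in the discrete $\beta$-ensemble framework, the general LLN of \cite{BGG17, DK22, DD22} gives that $\nu_N$ converges vaguely in probability to the unique minimizer $\nu$ of a strictly convex log-energy functional, under the hard-wall constraint $f \leq \theta^{-1}$ reflecting $\ell_i - \ell_{i+1} \geq \theta$. To determine $\nu$ explicitly, I apply the discrete loop equations of \cite{BGG17}: they assert that a specific linear combination of shifted expectations of $\prod_i(z - \ell_i)^{\pm 1}$ is a polynomial of controlled degree in $z$. After rescaling $x = z/N$ and taking $N \to \infty$, this yields a functional equation of the schematic form
\begin{equation*}
\Phi^-(x)\, e^{-G(x)} + \Phi^+(x)\, e^{\theta G(x)} = R(x),
\end{equation*}
where $G$ is the Stieltjes transform of $\nu$ and $\Phi^\pm, R$ are explicit rational functions read off from $w_N$. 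Computing boundary values of $G$ across the active support and balancing the resulting real and imaginary parts produces the arccosine formulas (\ref{MTE1})--(\ref{MTE6}); the frozen value $f = \theta^{-1}$ outside the active support corresponds to the $\ell_i$ being packed at the minimal $\theta$-spacing, imposed by the hard-wall constraint.

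The main obstacle is executing the loop-equation analysis cleanly and uniformly across the six cases: extracting $\Phi^\pm$ and $R$ from $w_N$, identifying the correct analytic branch of $G$, and showing that the endpoints of the active support arise as roots of the discriminant of the resulting quadratic-type relation in a way that matches the explicit arccosine arguments in (\ref{MTE1})--(\ref{MTE6}). A secondary subtlety is that the frozen plateau $f = \theta^{-1}$ lies outside the range of the arccosine and must be handled as a separate branch from the hard-wall constraint rather than being read off directly from the loop equation.
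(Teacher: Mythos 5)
Your high-level strategy — rewrite $\mathcal{J}_{\rho_1,\rho_2}$ as a discrete $\beta$-ensemble using product formulas for the Jack functions, invoke the general LLN from \cite{BGG17,DK22,DD22}, and compute the limit via the loop equations — is the approach the paper takes. However, as outlined, the proposal skips several steps that take up most of the work, and at least one of the implicit claims is false as stated.

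The central gap is that the Jack measures here are, with the exception of Case II, \emph{not} directly discrete $\beta$-ensembles. The product-formula expressions (\ref{S211E2}), (\ref{S212E1}), (\ref{S213E1}) only rearrange into the log-gas form after you fix an ambient $K\times R$ box containing $\lambda$ and work with the \emph{conditioned} measure $\mathcal{J}_{\rho_1,\rho_2}(\,\cdot\mid E^R_K)$, where $E^R_K=\{\ell(\lambda)\le K,\,\lambda_1\le R\}$. One must then show that the conditioned and unconditioned measures are exponentially close in total variation as $K,R$ grow, which is a genuine estimate: for Cases I and III it comes from an upper-tail bound for $\lambda_1$ (Proposition~\ref{Prop.UpperTail}), for Cases IV and V from a duality trick relating $\mathcal{J}^\theta_{a^N_\beta,b^M_\beta}$ to the $\theta^{-1}$-Jack measure on the conjugate partition (Lemmas~\ref{Lem.CaseIVLengthBound}, \ref{Lem.CaseVLengthBound}), and for Case VI from a Poissonization argument with a separate tail estimate for the Jack--Plancherel measure (Lemmas~\ref{Lem.CaseVILengthBound}, \ref{Lem.JPMTail}). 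Your sketch treats the identification as if it were direct and omits all of this.

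Two further issues. First, in Cases IV, V, VI the $\beta$-ensemble that the framework produces has $K=Nd$ particles for an auxiliary large integer $d$, not $N$; you then need an extra rescaling to translate the $K$-scaled empirical measure back into the $N$-scaled $\nu_N$ of the theorem, and the constants $d^{-1}$ appearing in the intermediate densities must cancel. Second, the plateau $f=\theta^{-1}$ is not purely a ``frozen branch from the hard-wall constraint'' of the $\beta$-ensemble: part of it (the region $x<-K\theta/N$ in the shifted variable) is deterministic, coming from the fact that $Z^N_i=-i\theta$ for all $i$ beyond the number of rows of $\lambda$, and enters via a Riemann-sum argument applied to $\nu_N^2=\frac{1}{N}\sum_{i>K}\delta(-i\theta/N)$, not via the equilibrium-measure analysis. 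Only the remaining part of the plateau arises as a genuine saturated region of the $\beta$-ensemble, and the convention $\arccos(z)=\pi$ for $z\le -1$ is what makes the arccosine formula match it. So the plateau is \emph{partially} read off from the loop-equation formula (via the extended $\arccos$) and partially from the deterministic tail, and the two pieces need to be stitched together. The loop-equation computation itself, once the $\Phi^\pm$ are extracted, is as you describe and is encapsulated in Lemma~\ref{Lem.MuThroughR}, where the key simplification is that $R_\mu$ is a polynomial of degree at most $\deg\Phi^\pm$, determined by expanding the Stieltjes transform at infinity; you would need to make this identification precise (you gesture at ``balancing real and imaginary parts'' but the cleaner route is the degree-bound-plus-expansion argument).
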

\begin{remark}\label{S13R2} 
Let us briefly elaborate on the meaning of Theorem \ref{thmMain}. The measures $\nu_N$ in (\ref{S13E1}) are random measures in $\mathcal{M}_{\mathbb{R}}$ -- the space of locally bounded measures on $\mathbb{R}$. If one equips $\mathcal{M}_{\mathbb{R}}$ with the {\em vague} topology, it becomes a {\em Polish space} (i.e. a complete separable metric space), and then the convergence $\nu_N$ to $\nu$ is simply that of weak convergence as {\em random elements} in the metric space $\mathcal{M}_{\mathbb{R}}$ in the sense of \cite[Section 3]{Billing}. Since $\nu$ is itself deterministic, an equivalent way to say that $\nu_N$ converge vaguely in probability to $\nu$ is to say that for each compactly supported continuous function $h$ on $\mathbb{R}$ we have 
$$\int_{\mathbb{R}} h(x) \nu_N(dx) \Rightarrow
\int_{\mathbb{R}} h(x) \nu(dx) \left( = \int_{\mathbb{R}} h(x) f(x)\, dx \right),$$
where the latter is simply weak convergence of random variables. We refer the reader \cite[Chapter 4]{Kall} for background on the space $\mathcal{M}_{\mathbb{R}}$ and the vague topology.    
\end{remark}

%
%
\subsection{A detailed view of Case II}\label{Section1.4} In the course of proving Theorem \ref{thmMain} we show that (under appropriate conditioning) the Jack measures in Definition \ref{DefScale} satisfy certain technical assumptions, see Assumptions \ref{Ass.Finite}, \ref{Ass.Infinite}, \ref{Ass.Loop1} and \ref{Ass.Loop2}. The latter in term verify many of the required assumptions in \cite{BGG17, DK22, DD22, DZ23, GH17}. With some additional but modest effort one can verify the remaining assumptions in those papers and obtain several results for the Jack measures in Definition \ref{DefScale}. We illustrate this point by deriving four kinds of statements for the measures in Case II of Definition \ref{DefScale}, which are summarized in the following theorem, whose proof is presented in Section \ref{Section6}.

\begin{theorem}\label{thmMain2} Fix $\theta > 0$. Suppose that $\lambda^N$ is a sequence of random partitions with distribution $\mathcal{J}_{\rho_1(N), \rho_2(N)}$ as in Definition \ref{DefJM}, where $\rho_1(N), \rho_2(N)$ are as in Case II of Definition \ref{DefScale}. Set $\ell_i^N = \lambda_i^N + (N-i)\cdot \theta$ for $i \geq 1$.\\

{\bf \raggedleft Global CLT.} Fix any functions $f_1, \dots, f_n$ that are analytic in a complex neighborhood $U$ of $[0, \cm + \theta]$ and real-valued when restricted to $U \cap \mathbb{R}$. Then, the random variables 
$$X_k^N = \sum_{i = 1}^N f_k (\ell_i^N/N) - \sum_{i = 1}^N \mathbb{E}[f_k (\ell_i^N/N)] \mbox{ for } k = 1, \dots, n,$$
converge jointly in the sense of moments to a Gaussian vector $(X_1, \dots, X_n)$ with $\mathbb{E}[X_i] = 0$ for $i = 1, \dots, n$ and for $1 \leq i, j \leq n$
\begin{equation}\label{Eq.CLT}
\mathbb{E}[X_iX_j] = \frac{1}{(2\pi \im)^2} \oint_{\gamma} \oint_{\gamma}  \frac{\theta^{-1}f_i(z) f_j(w)}{(z-w)^2} \cdot \left( \frac{(z-\alpha)(w-\beta) + (w - \alpha)(z-\beta)}{\sqrt{(z-\alpha)(z-\beta)} \sqrt{(w-\alpha)(w-\beta)}} - 1 \right).
\end{equation}
In equation (\ref{Eq.CLT}) $\gamma$ is a positively oriented contour that is contained in $U$ and encloses the interval $[0, \cm + \theta]$,
\begin{equation}\label{Eq.AlphaBeta}
\alpha = \frac{ab \cm + \theta - 2 \sqrt{ab\cm \theta}}{1 + ab}, \hspace{2mm} \beta =  \frac{ab \cm + \theta + 2 \sqrt{ab\cm \theta}}{1 + ab},
\end{equation}
and we picked the branch of the square root so that $\sqrt{(z-\alpha)(z-\beta)}$ is analytic in $\mathbb{C}\setminus [\alpha, \beta]$ and $\sqrt{(z-\alpha)(z-\beta)} \sim z$ as $|z| \rightarrow \infty$. \\

{\bf \raggedleft Global LDP.} Let us denote by $\mathcal{M}(\mathbb{R})$ the set of probability measures on $\mathbb{R}$, and by $\mathcal{M}_{\theta}([0, \cm + \theta])$ the set of probability measures on $\mathbb{R}$ whose support is in $[0, \cm + \theta]$ and which have a density with respect to Lebesgue measure that is bounded by $\theta^{-1}$. We endow $\mathcal{M}(\mathbb{R})$ with the weak topology. We define $V$ on $[0, \cm + \theta]$ by 
$$V(x) = x\log(x)+(\cm+\theta-x)\log (\cm+\theta-x)-x\log (ab),$$
and for $\mu \in \mathcal{M}_{\theta}([0, \cm + \theta])$ set
$$E_V(\mu) =\iint_{[0, \cm + \theta]^2} \left[ \log|x-y|^{-1} +  \frac{V(x)}{2} + \frac{ V(y)}{2}  \right] \mu(dx) \mu(dy).$$
Lastly, let $\mu^{II}$ be the probability measure whose density, denoted $\mu^{II}(x)$, is given by
\begin{equation}\label{Eq.muII}
\mu^{II}(x) = \frac{{\bf 1}\{x \in (0, \cm + \theta )\} }{\theta \pi } \cdot \mathrm{arccos} \left( \frac{(1-ab)x + ab\cm-\theta}{2 \sqrt{ab x (\cm +\theta - x) }}\right),
\end{equation}
where we recall that $\arccos$ is as in Definition \ref{Def.Arccos}. If we define $\mu_N =N^{-1} \sum_{i = 1}^N \delta (\ell_i^N/N)$, then the laws of these measures in $\mathcal{M}(\mathbb{R})$ satisfy an LDP with speed $N^2$ and good rate function
\begin{equation}\label{Eq.RateGLDP}
I^{\mathsf{Global}} (\mu) = \begin{cases} \theta  \left[ E_V(\mu) - E_V(\mu^{II}) \right], &\mbox{ for } \mu \in \mathcal{M}_{\theta}([0, \cm + \theta]) \\ \infty &\mbox{ for } \mu \in \mathcal{M}(\mathbb{R}) \setminus \mathcal{M}_{\theta}([0, \cm + \theta]). \end{cases}
\end{equation}

{\bf \raggedleft Edge LDP.} For any $t \in [\theta, \cm + \theta)$ we have 
\begin{equation}\label{Eq.ELDPMain}
\lim_{N \rightarrow \infty} \frac{1}{N} \log \mathbb{P} (\ell^N_1 \geq tN) = -I^{\mathsf{Edge}}(t).
\end{equation}
When $\cm \in (0, ab \theta]$ and $t \in [\theta, \cm + \theta)$, or $\cm > ab \theta$ and $t \in [\theta, \beta)$, we have $I^{\mathsf{Edge}}(t) = 0$. When $\cm > ab \theta$ and $t \in [\beta, \cm + \theta)$, we have
\begin{equation}\label{Eq.RateELDP2}
I^{\mathsf{Edge}}(t) =  \int_{\beta}^{t} 2 \log \left( \dfrac{(1-ab)y+ab\cm-\theta + (1 + ab) \sqrt{(y- \alpha)(y-\beta)}}{2 \sqrt{aby (m + \theta -y)}  } \right)  dy ,
\end{equation}
where $\alpha, \beta$ are as in (\ref{Eq.AlphaBeta}). \\

{\bf \raggedleft Edge universality.} We assume $\beta = 2\theta \geq 1$, $\cm > ab\theta$ and $ab\cm \neq \theta$. Let $(X^N_1, \dots, X^N_N)$ be distributed as a $G\beta E$-ensemble, i.e., the random vector has density
\begin{equation}\label{Eq.GSE}
f(x_1, \dots, x_N) = \frac{{\bf 1} \{x_1 \geq  x_2 \geq \cdots \geq x_N \}}{Z_N} \cdot \prod_{1 \leq i < j \leq N} (x_i - x_j)^{\beta} \cdot \prod_{i = 1}^N e^{-N \beta x_i^2/4},
\end{equation}
where $Z_N$ is a normalization constant. In addition, let $\gamma^{\mathsf{sc}}_{m, N}$ and $\gamma^{II}_{m,N}$ for $m = 1, \dots, N$ be the quantiles defined through 
$$\frac{m-1/2}{N} = \int_{\gamma^{\mathsf{sc}}_{m, N}}^{\infty} \mu^{\mathsf{sc}}(x) dx \mbox{ and } \frac{m-1/2}{N} = \int_{\gamma^{II}_{m, N}}^{\infty} \mu^{II}(x) dx,$$
where $\mu^{\mathsf{sc}}(x) = (2\pi)^{-1} {\bf 1}\{|x| \leq 2 \} \cdot \sqrt{4 - x^2}$ is the semicircle law, and $\mu^{II}$ is as in (\ref{Eq.muII}). Then, for any $n \geq 1$ and any continuously differentiable compactly supported function $F: \mathbb{R}^n \rightarrow \mathbb{R}$, we have
\begin{equation}\label{Eq.EdgeUniversality}
\begin{split}
\lim_{N \rightarrow \infty} \mathbb{E}\left[ F(\tilde{\ell}_1^N, \dots, \tilde{\ell}_n^N) \right] - \mathbb{E}\left[ F(\tilde{X}_1^N, \dots, \tilde{X}_n^N) \right] = 0,
\end{split}
\end{equation}
where $\tilde{X}_i^N = N^{2/3} \cdot (X_i^N/N - \gamma^{\mathsf{sc}}_{i, N})$, $\tilde{\ell}^N_i = (Ns_B)^{2/3} \cdot (\ell_i^N/N - \gamma^{II}_{i, N})$ for $i =1, \dots, N$, and 
$$s_B = \frac{\sqrt{(1+ab)(\beta - \alpha)} }{2 \theta \sqrt{ ab \beta ( \cm + \theta - \beta)}} \mbox{ with $\alpha, \beta$ as in (\ref{Eq.AlphaBeta})}.$$
\end{theorem}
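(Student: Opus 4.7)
The strategy is to view Case II of Definition \ref{DefScale} as a discrete $\beta$-ensemble in the sense of \cite{BGG17} and then invoke, for each of the four statements, the corresponding theorem from \cite{BGG17, DZ23, DD22, GH17}. The proof of Theorem \ref{thmMain} already identifies the Jack measure $\mathcal{J}_{a^N, b_{\beta}^M}$ with such an ensemble on the shifted particles $\ell_i^N = \lambda_i^N + (N-i)\theta$, whose log-weight, after Stirling, is governed by the potential $V(x) = x\log x + (\cm + \theta - x)\log(\cm + \theta - x) - x\log(ab)$, and verifies Assumptions \ref{Ass.Finite}, \ref{Ass.Infinite}, \ref{Ass.Loop1}, \ref{Ass.Loop2} in this regime. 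The equilibrium measure for the ensemble is the density $\mu^{II}$ of (\ref{Eq.muII}), which is just the Case II density $f^{II}$ shifted into the $\ell/N$ coordinate; a direct quadratic computation shows that the arccos argument equals $\pm 1$ at precisely $\alpha$ and $\beta$ of (\ref{Eq.AlphaBeta}), so these are the soft-edge locations of $\mu^{II}$ in the regime $\cm > ab\theta$.

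For the Global CLT I would invoke \cite{BGG17}, whose hypotheses (analyticity of $V$, one-cut structure, non-saturation off the edges) hold in our setting; the explicit covariance (\ref{Eq.CLT}) then follows by substituting the Stieltjes transform of $\mu^{II}$, which is algebraic in $\sqrt{(z-\alpha)(z-\beta)}$, into the general double-integral formula of \cite{BGG17}. For the Global LDP I would apply \cite{DZ23} to the same $V$ to obtain an LDP on $\mathcal{M}_\theta([0, \cm + \theta])$ at speed $N^2$ with rate $\theta [E_V(\mu) - E_V(\mu^{II})]$; that $\mu^{II}$ is the minimizer can be confirmed either via Euler--Lagrange conditions for an arccos density or by appealing to Theorem \ref{thmMain}. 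For the Edge LDP I would invoke \cite{DD22}, whose rate function in the one-cut soft-edge regime is of the form $2 \int_\beta^t \log R(y)\, dy$, with $R(y)$ an explicit expression built from $V'(y)$ and the Stieltjes transform of the equilibrium measure at $y$; substituting $\mu^{II}$ produces (\ref{Eq.RateELDP2}). The degenerate regime ($\cm \leq ab\theta$, or $\cm > ab\theta$ with $t < \beta$) where $I^{\mathsf{Edge}} \equiv 0$ corresponds to $t$ lying below the right edge of $\mu^{II}$, so the event $\ell_1^N \geq tN$ has probability tending to $1$. Finally, for edge universality I would apply \cite{GH17}: the condition $\cm > ab\theta$ ensures a soft right edge at $\beta$, the condition $ab\cm \neq \theta$ rules out a degenerate transition at the left edge, and $\beta = 2\theta \geq 1$ is precisely the lattice-spacing assumption of \cite{GH17}. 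The scale $s_B$ is then the standard edge-scale factor, namely the ratio of the square-root coefficients of $\mu^{II}$ near $\beta$ and of the semicircle law near $2$.

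The main obstacle I anticipate is not conceptual --- each cited theorem applies once its hypotheses are confirmed --- but computational: the closed-form expressions (\ref{Eq.CLT}), (\ref{Eq.RateELDP2}), and the scale $s_B$ must be extracted from the implicit outputs of \cite{BGG17, DD22, GH17}. The key algebraic identity driving all three extractions is the factorization
\begin{equation*}
\bigl[(1-ab)x + ab\cm - \theta\bigr]^2 - 4ab\, x(\cm + \theta - x) = (1+ab)^2 (x - \alpha)(x - \beta),
\end{equation*}
which matches the branch cut of $\sqrt{(z-\alpha)(z-\beta)}$ with the soft-edge support of $\mu^{II}$ and thereby produces all the square-root structure appearing in the statement. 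A secondary obstacle is the case analysis for the edge LDP and the verification of the fine hypotheses of \cite{GH17} --- notably the square-root vanishing of $\mu^{II}$ at $\beta$, the strict non-saturation in a neighborhood of $\beta$, and the analyticity of $V$ there --- which are routine but must be checked case by case.
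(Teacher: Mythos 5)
Your strategy matches the paper's proof almost exactly: identify the Case II Jack measure with a discrete $\beta$-ensemble, verify the relevant hypotheses, and invoke the four external results. The key algebraic identity $\left[(1-ab)x + ab\cm-\theta\right]^2 - 4ab\,x(\cm+\theta-x) = (1+ab)^2(x-\alpha)(x-\beta)$ that you isolate is indeed the engine behind all three explicit formulas, and the edge-scale factor $s_B$ is extracted precisely as you describe from the square-root coefficient of $\mu^{II}$ at $\beta$.

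Two small corrections. First, for the Global CLT the paper does not apply \cite{BGG17} directly; it verifies \cite[Assumptions~1--5]{DK22} and uses \cite[Corollary~5.4]{DK22}. Assumptions 1--4 had already been checked in the course of proving Lemma~\ref{Lem.CaseIIFormulaMu}, so the only new work is showing $Q_\mu(z) = (1+ab)\sqrt{(z-\alpha)(z-\beta)}$ for Assumption~5, which is done by solving the quadratic equations for $e^{\pm\theta G_\mu(z)}$ coming from $R_\mu(z) = \Phi^-(z)e^{-\theta G_\mu(z)} + \Phi^+(z)e^{\theta G_\mu(z)}$. Substituting the Stieltjes transform directly into \cite{BGG17}'s double-integral formula would produce the same covariance but require re-verifying a somewhat different set of hypotheses, so the paper's route is more economical given the machinery already set up. Second, the hypothesis $\beta = 2\theta \geq 1$ is not a lattice-spacing assumption of \cite{GH17}; as Remark~\ref{EdgeRem} notes, it is inherited from the continuous $\beta$-ensemble edge universality result of Bourgade--Erd\H{o}s--Yau \cite{BEY14}, on which \cite[Theorem~1.11]{GH17} relies. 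You also do not mention verifying \cite[Assumption~1.7]{GH17}, i.e. $V_N'(x) = V'(x) + O(N^{-1/3})$ uniformly near $[\alpha,\beta]$; this requires a careful digamma estimate (the paper gets $O(N^{-1})$) and is the one piece of work in the edge universality proof that is not a direct repackaging of earlier sections.
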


\begin{remark}\label{GLDPRem} From the ``Global LDP'' part of the theorem we see that the measures $\mu_N$ concentrate around the deterministic measure $\mu^{II}$, which is sometimes referred to as the {\em equilibrium measure}. Its density $\mu^{II}(x)$ agrees with $f^{II}(x)$ from (\ref{MTE2}) upon restricting the latter to $(-\theta, \infty)$ and shifting the argument $x$ by $\theta$. The shift in the argument can be traced to the identity $Z^N_i = \ell_i^N + N \theta$. On the other hand, as will be shown in Section \ref{Section5.2}, we have that $Z_i^N = - i \cdot \theta$ for $i > N$ deterministically, which accounts for the constant $\theta^{-1}$ value of $f^{II}$ on $(-\infty, - \theta)$.
\end{remark}

\begin{remark}\label{CLTRem} The regions where $\mu^{II}(x) = 0$ are called {\em voids}, the regions where $\mu^{II}(x) = \theta^{-1}$ are called {\em saturated regions} and the regions where $\mu^{II}(x) \in (0,\theta^{-1})$ are called {\em bands} -- this terminology is borrowed from \cite{BKMM03}. If $\alpha, \beta$ are as in (\ref{Eq.AlphaBeta}), we have that $(0, \alpha]$ is a void if $ab\cm > \theta$ and a saturated region if $ab\cm < \theta$. In addition, $[\beta, \cm + \theta)$ is a void if $\cm > ab \theta$ and a saturated region if $\cm < ab\theta$. The interval $(\alpha, \beta)$ is the only band. We observe that the covariance in (\ref{Eq.CLT}) depends on $\mu^{II}$ only through the endpoints $\alpha, \beta$ of the band region, which is consistent with the asymptotic behavior of discrete $\beta$-ensembles from \cite{BGG17} in the {\em one-cut} case. 
\end{remark}

\begin{remark}\label{ELDPRem} One expects $\ell_1/N$ to concentrate around $b_{\cm}$ -- the rightmost endpoint of the support of the measure $\mu^{II}$. When $\cm \leq ab\theta$, we have from Remark \ref{CLTRem} that $b_{\cm} = \cm + \theta$, which is why we have $I^{\mathsf{Edge}}(t) = 0$ for $t \in [\theta, \cm + \theta)$ in this case. When $\cm > ab\theta$, we have that $b_{\cm} = \beta$, which is why we have that $I^{\mathsf{Edge}}(t) = 0$ for $t \in [\theta, \beta)$ and $I^{\mathsf{Edge}}(t) > 0$ when $t \in (\beta, \cm + \theta)$. We mention that since $\ell_1^N \in [0, M(N) + (N-1)\theta]$, we have that $\mathbb{P} (\ell^N_1 \geq tN) = 0$ for all large $N$ when $t > \cm + \theta$, and $\mathbb{P} (\ell^N_1 \geq tN) = 1$ for all large $N$ when $t < \theta$. Consequently, the ``Edge LDP'' part of the theorem handles all meaningful $t$'s with the exception of $t = \cm + \theta$. The value $t = \cm + \theta$ is special and excluded from consideration as was done in \cite{DD22}. For example, the limit in (\ref{Eq.ELDPMain}) may not exist for some parameter choices at $t = \cm + \theta$, as different subsequences might converge to different values. 
\end{remark}

\begin{remark}\label{EdgeRem} Let us briefly discuss the ``Edge universality'' component of Theorem \ref{thmMain2}, whose proof ultimately follows from \cite[Theorem 1.11]{GH17} by Guionnet and Huang. The condition $ab\cm \neq \theta$ is a technical assumption we make to ensure $\alpha > 0$, where $\alpha$ is defined in (\ref{Eq.AlphaBeta}). This positivity assumption is required by \cite[Assumption 1.5]{GH17}. Additionally, the assumption $\cm > ab\theta$ guarantees that the rightmost endpoint of the support of $\mu^{II}$, which is $b_{\cm} = \beta$ as in (\ref{Eq.AlphaBeta}), satisfies $b_{\cm} \neq \cm + \theta $, and $\mu^{II}$ exhibits a square root behavior near $b_{\cm}$. This behavior is also necessary to derive the edge asymptotics from \cite[Theorem 1.11]{GH17}. The proof of \cite[Theorem 1.11]{GH17} relies on the edge universality result for continuous $\beta$-ensembles by Bourgade, Erd\H{o}s and Yau in \cite{BEY14}, which in turn requires $\beta = 2\theta \geq 1$. Our formulation of (\ref{Eq.EdgeUniversality}) is tailored to facilitate a direct comparison with \cite[Theorem 1.11]{GH17}, though the latter actually implies a significantly stronger statement than what we state here; we omit those details for the sake of simplicity. Finally, using the well-known asymptotics of the Gaussian $\beta$-ensembles, statement (\ref{Eq.EdgeUniversality}) is readily seen to be equivalent to the finite-dimensional convergence of $-\tilde{\ell}_1^N, \dots, -\tilde{\ell}_n^N$ to the $n$ smallest eigenvalues of the {\em stochastic $\beta$-Airy operator}; see \cite[Corollary 2.2]{BEY14} and \cite{RRV11}. 
\end{remark}

%
%
\subsection{Main ideas and paper outline}\label{Section1.5} In Section \ref{Section2} we show that if we condition the Jack measures from Definition \ref{DefScale} on the event $E_K^R = \{\ell(\lambda) \leq K, \lambda_1 \leq R\}$ -- that is, the event that $\lambda$ is contained in the $K \times R$ rectangle -- then, for any $K \in \mathbb{N}$ and $R \in [0, \infty]$, the resulting measures can be interpreted as discrete $\beta$-ensembles in the sense of \cite{BGG17}. The general strategy is to analyze the asymptotics of $\mathcal{J}_{\rho_1(N), \rho_2(N)}(\cdot | E_K^R)$ when $K$ and $R$ scale together with $N$, and then argue that the conditioned and unconditioned Jack measures, $\mathcal{J}_{\rho_1(N), \rho_2(N)}(\cdot | E_K^R)$ and $\mathcal{J}_{\rho_1(N), \rho_2(N)}$, are close in total variation distance. In Section \ref{Section3}, we introduce two general sets of assumptions for discrete $\beta$-ensembles -- see Assumptions \ref{Ass.Finite} and \ref{Ass.Infinite} -- which imply a global LLN and an edge large deviation estimate, as established in \cite{DD22}. We then proceed to check, using some careful expansions of Gamma functions, that $\mathcal{J}_{\rho_1(N), \rho_2(N)}(\cdot | E_K^R)$ satisfy these assumptions. This leads to two key conclusions: (1) certain measures derived from $\mathcal{J}_{\rho_1(N), \rho_2(N)}(\cdot | E_K^R)$ converge weakly in probability, and (2) by allowing the $K \times R$ rectangle to grow sufficiently quickly, the total variation distance between the conditioned and unconditioned measures decays exponentially fast in $N$.

The arguments in Sections \ref{Section2} and \ref{Section3} are essentially sufficient to show that $\nu_N$ in Theorem \ref{thmMain} converge vaguely in probability to {\em some} deterministic measure $\nu$ on $\mathbb{R}$. However, these arguments do not provide an explicit description of $\nu$. In \cite{DD22}, it was shown that $\nu$ can be related to the minimizer of a certain variational functional. In Case III, and in Case II when $ab = 1$, this minimizer was computed explicitly in \cite{DD22} by solving intricate integral equations. In the present paper, we adopt a different approach. Instead of solving integral equations, we use the discrete loop equations (also known as Nekrasov's equations) introduced in \cite{BGG17}, in combination with results from \cite{DK22}, to derive an explicit formula for $\nu$. This approach is developed in Section \ref{Section4}, where we introduce two additional assumptions for discrete $\beta$-ensembles, see Assumption \ref{Ass.Loop1} and \ref{Ass.Loop2}, which imply a general formula for the limiting particle density, stated in Lemma \ref{Lem.MuThroughR}. We then verify that, for each of the six cases in Definition \ref{DefScale}, the conditioned measures $\mathcal{J}_{\rho_1(N), \rho_2(N)}(\cdot | E_K^R)$  satisfy the assumptions of Lemma \ref{Lem.MuThroughR}. This allows us to express the limiting density in terms of a certain analytic function $R_{\mu}$. For all six cases under consideration, this function turns out to be a polynomial of degree at most two, that can be computed by expanding the Stieltjes transform of the limiting density near infinity. 

Using the results from Sections \ref{Section3} and \ref{Section4}, the proof of Theorem \ref{thmMain} becomes relatively straightforward and is presented in Section \ref{Section5}. While some cases -- such as Case II -- fit seamlessly into the framework developed earlier, most of the others require some work to be incorporated into the same approach. For instance, to establish that the total variation distance between $\mathcal{J}_{\rho_1(N), \rho_2(N)}(\cdot | E_K^R)$ and $\mathcal{J}_{\rho_1(N), \rho_2(N)}$ decays exponentially fast in $N$, we rely on the large deviation estimate from Proposition \ref{Prop.UpperTail} in Cases I and III. In Cases IV and V, we deduce this decay by relating the relevant measures to their {\em duals}, which correspond precisely to Cases I and III; see Lemmas \ref{Lem.CaseIVLengthBound} and \ref{Lem.CaseVLengthBound}. Finally, in Case VI we prove exponential decay in Lemma \ref{Lem.CaseVILengthBound} by expressing the measures as {\em Poissonized Jack measures} and adapting an argument due to Fulman \cite{Fulman03}. 

In Section \ref{Section6}, we present the proof of Theorem \ref{thmMain2}, which primarily consists of verifying a set of assumptions from \cite{DD22, DZ23, DK22, GH17}, most of which have already been addressed in Sections \ref{Section3} and \ref{Section4}.

%
%

\subsection*{Acknowledgments}\label{Section1.6} ED was partially supported by the Simons Award TSM-00014004 from the Simons Foundation International.

%
%
\section{Jack measures as $\beta$-ensembles}\label{Section2} {\em Discrete $\beta$-ensembles} or {\em $\beta$ log-gases} are probability measures that were introduced in \cite{BGG17} as integrable discretizations of continuous log-gases. For future reference we give their definition here.

\begin{definition}\label{Def.BetaEnsembles} Fix $K \in \mathbb{N}$, $R \in [0,\infty]$ and $\theta > 0$. We define the sets 
\begin{equation}\label{Eq.State} 
\begin{split}
&\mathbb{Y}^R_K = \{(\lambda_1, \dots, \lambda_K): 0 \leq \lambda_K \leq \lambda_{K-1} \leq \cdots \leq \lambda_1 \leq R \mbox{ and } \lambda_i \in \mathbb{Z} \mbox{ for } i = 1, \dots, K\},\\
&\mathbb{W}^{\theta, R}_{K} = \{(\ell_1, \dots, \ell_K): \ell_i = \lambda_i + (K-i) \cdot \theta \mbox{ for }(\lambda_1, \dots, \lambda_K) \in \mathbb{Y}^R_K \}. 
\end{split}
\end{equation}
A discrete $\beta$-ensemble is a probability measure $\mathbb{P}$ on $\mathbb{W}_K^{\theta, R}$ that takes the form 
\begin{equation}\label{Eq.BetaEnsemble}
\mathbb{P}(\ell_1, \dots, \ell_K) = \frac{1}{Z_K} \prod_{1 \leq i < j \leq K} \frac{\Gamma(\ell_i - \ell_j + 1)\Gamma(\ell_i - \ell_j + \theta)}{\Gamma(\ell_i - \ell_j) \Gamma(\ell_i - \ell_j + 1 - \theta)} \cdot \prod_{i = 1}^K w(\ell_i; K).
\end{equation}
Here, $\Gamma(x)$ is the {\em Euler Gamma function} and $Z_K \in (0,\infty)$ is a normalization constant. If $R < \infty$ we assume that $w(x;K)$ is a positive continuous function on $[0, R + (K-1)\theta]$. If $R = \infty$ we assume that $w(x;K)$ is a positive continuous function on $[0,\infty)$ such that for all large $x$
\begin{equation}\label{Eq.WeightDecay}
\log w(x;N) \leq - [\theta \cdot N + 1] \cdot \log \left(1 + (x/N)^2 \right).
\end{equation}
We mention that (\ref{Eq.WeightDecay}) ensures that $Z_K < \infty$, so that (\ref{Eq.BetaEnsemble}) defines an honest probability measure when $R = \infty$, see \cite[Lemma 7.1]{DD22}.
\end{definition}

In this section we show that the Jack measures from Definition \ref{DefScale} are well-defined and can be interpreted as discrete $\beta$-ensembles. We do this in Sections \ref{Section2.2}-\ref{Section2.7}, which are named after the six cases in Definition \ref{DefScale}. We continue with the same notation as in Sections \ref{Section1.2} and \ref{Section1.3}.

%
%
\subsection{Evaluations of Jack symmetric functions}\label{Section2.1} In this section we compute the Jack and dual Jack functions at the different specializations, considered in Definition \ref{DefScale}. The formulas we are after will be for the case when $\lambda \in \mathbb{Y}$ satisfies $\ell(\lambda) \leq K$, and in terms of the variables
\begin{equation}\label{S21E1}
\ell_i = \lambda_i + (K-i) \cdot \theta.
\end{equation}

%
%
\subsubsection{Pure $\alpha$-specialization}\label{Section2.1.1} In this section we find formulas for $J_{\lambda}(a^M; \theta)$ and $\tilde{J}_{\lambda}(a^M; \theta)$. Using the homogeneity of Jack symmetric functions, and \cite[Chapter VI, (10.20)]{Mac98}, we have for $\lambda \in \mathbb{Y}$ with $\ell(\lambda) \leq K$ that 
\begin{equation}\label{S211E1}
J_{\lambda}(a^M;\theta) = a^{|\lambda|} \cdot \prod_{ \square \in \lambda} \frac{M\theta + a'(\square) - \theta \cdot \ell'(\square)}{a(\square) + \theta \cdot \ell(\square) + \theta} = a^{|\lambda|} \cdot \prod_{i = 1}^K \prod_{j = 1}^{\lambda_i} \frac{M\theta + (j-1) - \theta \cdot (i-1)}{\lambda_i - j + \theta \cdot (\lambda_j' - i) + \theta}.
\end{equation}
As shown in the displayed equation under \cite[(6.4)]{DD22}, the denominator in (\ref{S211E1}) can be rewritten as
\begin{equation}\label{Eq.ArmLegPropTheta}
\prod_{i = 1}^K \prod_{j = 1}^{\lambda_i} \frac{1}{\lambda_i - j + \theta \cdot (\lambda_j' - i) + \theta} = \prod_{1 \leq i < j \leq K} \frac{\Gamma(\ell_i - \ell_j + \theta)}{\Gamma(\ell_i - \ell_j)} \cdot \prod_{i = 1}^K \frac{\Gamma(\theta)}{\Gamma(\ell_i + \theta)}.
\end{equation}
Using the functional equation $\Gamma(z+1) = z\Gamma(z)$, we see that the numerator in (\ref{S211E1}) can be rewritten for $K \leq M$ as
$$\prod_{i = 1}^K \prod_{j = 1}^{\lambda_i} [M\theta + (j-1) - (i-1)\theta] = \prod_{i = 1}^K \frac{\Gamma((M-K+1)\theta + \ell_i)}{\Gamma((M-i +1)\theta)}.$$
We mention that we imposed the assumption that $K \leq M$ to avoid obtaining $\frac{\Gamma(0)}{\Gamma(0)}$ when $i = M+1$ and $\lambda_{M+1} = 0$, which should be resolved as equal to $1$ when comparing with the left side above. Overall, we obtain for $\lambda \in \mathbb{Y}$ with $\ell(\lambda) \leq K$ and $K \leq M$ that 
\begin{equation}\label{S211E2}
J_{\lambda}(a^M;\theta) = \const \times \prod_{1 \leq i < j \leq K} \frac{\Gamma(\ell_i - \ell_j + \theta)}{\Gamma(\ell_i - \ell_j)} \cdot \prod_{i = 1}^K \frac{a^{\ell_i} \cdot \Gamma((M-K+1)\theta + \ell_i)}{\Gamma(\ell_i + \theta)},
\end{equation}
for some {\em positive} constant that depends on $a,\theta,M,K$ but not $\lambda$. 

From (\ref{S12E2}) and (\ref{S211E1}) we have for $\lambda \in \mathbb{Y}$ with $\ell(\lambda) \leq K$
\begin{equation}\label{S211E3}
\tilde{J}_{\lambda}(a^M;\theta) = J_\lambda(a^M;\theta) \cdot \prod_{\square \in \lambda} \frac{a(\square) + \theta \cdot \ell(\square) + \theta}{a(\square) + \theta \cdot \ell(\square) + 1} =  a^{|\lambda|} \cdot \prod_{i = 1}^K \prod_{j = 1}^{\lambda_i} \frac{M\theta + (j-1) - \theta \cdot (i-1)}{\lambda_i - j + \theta \cdot (\lambda_j' - i) + 1}.
\end{equation}
The numerator in (\ref{S211E3}) can be rewritten as before, while the denominator can be rewritten as
\begin{equation}\label{Eq.ArmLegProd1}
\begin{split}
& \prod_{i = 1}^K \prod_{j = 1}^{\lambda_i} \frac{1}{\lambda_i - j + \theta \cdot (\lambda_j' - i) + 1} = \prod_{1 \leq i \leq k \leq K} \prod_{j =\lambda_{k+1} + 1}^{\lambda_k} \frac{1}{\lambda_i - j + \theta(k - i) + 1 }  \\
&= \prod_{1 \leq i \leq k \leq K}  \frac{\Gamma(\lambda_i - \lambda_k + \theta(k - i) + 1)}{\Gamma(\lambda_i - \lambda_{k+1} + \theta(k-i) + 1)} = \prod_{1 \leq i < k \leq K}  \frac{\Gamma(\lambda_i - \lambda_k + \theta(k - i) + 1 )}{\Gamma(\lambda_i - \lambda_{k}+ \theta(k-i-1) + 1 )} \\
& \times \prod_{i = 1}^{K} \frac{\Gamma(1)}{\Gamma(\lambda_i + \theta(K-i) + 1)} =  \prod_{1 \leq i < j \leq K} \frac{\Gamma(\ell_i - \ell_j + 1)}{\Gamma(\ell_i - \ell_j + 1 - \theta)}  \cdot \prod_{i = 1}^{K}  \frac{ 1}{\Gamma( \ell_i + 1)}.
\end{split}
\end{equation}
Overall, we obtain for $\lambda \in \mathbb{Y}$ with $\ell(\lambda) \leq K$ and $K \leq M$ that 
\begin{equation}\label{S211E4}
\tilde{J}_{\lambda}(a^M;\theta) = \const \times \prod_{1 \leq i < j \leq K} \frac{\Gamma(\ell_i - \ell_j + 1)}{\Gamma(\ell_i - \ell_j + 1 - \theta)}  \cdot \prod_{i = 1}^K \frac{a^{\ell_i} \cdot \Gamma((M-K+1)\theta + \ell_i)}{\Gamma(\ell_i + 1)},
\end{equation}
for some positive constant that depends on $a,\theta,M,K$ but not $\lambda$.

%
%
\subsubsection{Pure $\beta$-specialization}\label{Section2.1.2} In this section we find formulas for $J_{\lambda}(a_{\beta}^M; \theta)$ and $\tilde{J}_{\lambda}(a_{\beta}^M; \theta)$. Using the homogeneity of $\tilde{J}_{\lambda}$, and \cite[(6.7)]{DD22}, we get for $\lambda \in \mathbb{Y}$ with $\lambda_1 \leq M$ and $\ell(\lambda)\leq K$ 
\begin{equation}\label{S212E1}
\tilde{J}_{\lambda}(a_{\beta}^M;\theta) = \const \times \prod_{1 \leq i < j \leq K} \frac{\Gamma(\ell_i - \ell_j + 1)}{\Gamma(\ell_i - \ell_j + 1 - \theta)}  \cdot \prod_{i = 1}^K \frac{a^{\ell_i} }{\Gamma(\ell_i + 1) \Gamma(M +  K \theta - \ell_i + 1 - \theta)},
\end{equation}
for some positive constant that depends on $a,\theta,M,K$ but not $\lambda$. In addition, we have for $\lambda \in \mathbb{Y}$ with $\lambda_1 \leq M$ and $\ell(\lambda) \leq K$ 
\begin{equation}\label{S212E2}
\begin{split}
J_\lambda(a_\beta^M;\theta) &= \tilde{J}_\lambda(a_\beta^M;\theta) \cdot \prod_{\square\in\lambda}\frac{\lambda_i - j + \theta \cdot (\lambda_j' - i) + 1}{\lambda_i - j + \theta \cdot (\lambda_j' - i) + \theta}\\
&=\const\times\prod_{1 \leq i < j \leq K}\frac{\Gamma(\ell_i-\ell_j+\theta)}{\Gamma(\ell_i-\ell_j)}\prod_{i=1}^K\frac{a^{\ell_i}}{\Gamma(\ell_i+\theta)\Gamma(M+K\theta-\ell_i+1-\theta)},
\end{split}
\end{equation}
for some positive constant that depends on $a,\theta,M,K$ but not $\lambda$. We mention that the first equality in (\ref{S212E2}) used (\ref{S12E2}), and in going from the first to the second line we used (\ref{Eq.ArmLegPropTheta}), (\ref{Eq.ArmLegProd1}) and (\ref{S212E1}).

%
%
\subsubsection{Plancherel specialization}\label{Section2.1.3} In this section we find formulas for $J_{\lambda}(\tau_s; \theta)$ and $\tilde{J}_{\lambda}(\tau_s; \theta)$. Combining (\ref{S12E8}) and (\ref{S211E1}), we get for $\lambda \in \mathbb{Y}$ with $\ell(\lambda) \leq K$
\begin{equation}\label{Eq.PlanchEval}
J_{\lambda}(\tau_s;\theta)  = \lim_{n \rightarrow \infty}  \prod_{i = 1}^K \prod_{j = 1}^{\lambda_i} \frac{(s/n)[n\theta + (j-1) - \theta \cdot (i-1)]}{\lambda_i - j + \theta \cdot (\lambda_j' - i) + \theta} =  \prod_{i = 1}^K \prod_{j = 1}^{\lambda_i} \frac{s\theta}{\lambda_i - j + \theta \cdot (\lambda_j' - i) + \theta}.
\end{equation}
The last equation and (\ref{Eq.ArmLegPropTheta}) give for $\lambda \in \mathbb{Y}$ with $\ell(\lambda) \leq K$
\begin{equation}\label{S213E1}
J_{\lambda}(\tau_s;\theta)  = \const \times \prod_{1 \leq i < j \leq K} \frac{\Gamma(\ell_i - \ell_j + \theta)}{\Gamma(\ell_i - \ell_j)} \cdot \prod_{i = 1}^K \frac{(s\theta)^{\ell_i}}{\Gamma(\ell_i + \theta)},
\end{equation}
for some positive constant that depends on $s,\theta,K$ but not $\lambda$. From \cite[(6.6)]{DD22} we also have
\begin{equation}\label{S213E2}
\tilde{J}_{\lambda}(\tau_s;\theta)  = \const \times \prod_{1 \leq i < j \leq K} \frac{\Gamma(\ell_i - \ell_j + 1)}{\Gamma(\ell_i - \ell_j + 1 - \theta)} \cdot \prod_{i = 1}^K \frac{(s\theta)^{\ell_i}}{\Gamma(\ell_i + 1)}=J_{\lambda'}(\tau_{s\theta};\theta^{-1}),
\end{equation}
for some positive constant that depends on $s,\theta,K$ but not $\lambda$. The leftmost and rightmost sides in (\ref{S213E2}) are equal due to (\ref{S12E2}), (\ref{S12E8}) and (\ref{S211E1}). We mention that there is a small typo in \cite[(6.6)]{DD22}, and the ``$N(N-1)$'' in that equation should be multiplied by $\theta$.

%
%

\subsection{Case I: $\rho_1 = a^N$, $\rho_2 = b^M$}\label{Section2.2} From \cite[Section 6.10]{Mac98} we have that 
\begin{equation}\label{Eq.MacToJack}
J_{\lambda}(X;\theta) = \lim_{q \rightarrow 1-} P_{\lambda}(X;q,q^{\theta}) \mbox{ and } \tilde{J}_{\lambda}(X;\theta) = \lim_{q \rightarrow 1-} Q_{\lambda}(X;q,q^{\theta}),
\end{equation}
where $P_{\lambda}(X;q,t), Q_{\lambda}(X;q,t)$ are the Macdonald symmetric functions. Using the last identity, setting $t = q^{\theta}$ in \cite[(2.27)]{BC14} and letting $q \rightarrow 1-$, we conclude that for any two Jack-positive specializations $\rho_1, \rho_2$ we have 
\begin{equation}\label{Eq.PartFunctSpec}
 \sum_{\lambda \in \mathbb{Y}} J_{\lambda}(\rho_1;\theta) \tilde{J}_{\lambda}(\rho_2;\theta) = \exp \left( \sum_{k=1}^{\infty}\frac{\theta p_k(\rho_1)p_k(\rho_2)}{k}\right),
\end{equation}
provided that the sum in the exponential converges absolutely. Specializing the last identity to $\rho_1 = a^N$, $\rho_2 = b^M$ with $ab < 1$, we see that for $H_{\theta}(\rho_1; \rho_2)$ as in (\ref{S12E5}) we have
$$H_{\theta}(a^N; b^M) =  \exp \left( \sum_{k=1}^{\infty}\frac{\theta N M (ab)^k }{k}\right) = \frac{1}{(1-ab)^{\theta NM}} \in (0, \infty).$$
In particular, we conclude that the Jack measure $\mathcal{J}_{a^N, b^M}$ is well-defined.\\
We next note that 
\begin{equation}\label{Eq.Vanish}
J_{\lambda}(a^N;\theta) = \tilde{J}_{\lambda}(a^N;\theta) = 0 \mbox{ if } \ell(\lambda) > N \mbox{, and } J_{\lambda}(a_{\beta}^N;\theta) = \tilde{J}_{\lambda}(a_{\beta}^N;\theta) = 0 \mbox{ if } \lambda_1 > N.
\end{equation}
The first equality can be deduced from \cite[(10.18)]{Mac98}, and the fact that $J_{\lambda}, \tilde{J}_{\lambda}$ are multiples of each other, cf. (\ref{S12E2}). The second one follows from the first and (\ref{S12E7}). From (\ref{Eq.Vanish}) we conclude that the measure $\mathcal{J}_{a^N, b^M}$ is supported on $E_{\min(M,N)}^{\infty}$, where for $K \in [0,\infty]$ and $R \in [0,\infty]$ 
\begin{equation}\label{Eq.DefEventTrunc}
E_K^R = \{ \lambda \in \mathbb{Y}: \ell(\lambda) \leq K \mbox{ and } \lambda_1 \leq R \}.
\end{equation}
If we now set $K = \min (M,N)$ and $\ell_i = \lambda_i + (K-i) \cdot \theta$ for $i = 1,\dots, K$, we see from (\ref{S211E2}) with $M = N$, and (\ref{S211E4}) with $a = b$ that for any $R \in [0,\infty]$ we have
\begin{equation}\label{Eq.CaseIBetaEnsemble}
\begin{split}
&\mathcal{J}_{a^N, b^M}(\ell_1, \dots, \ell_K |E_K^R) = \const \times \prod_{1 \leq i < j \leq K} \frac{\Gamma(\ell_i - \ell_j + 1)\Gamma(\ell_i - \ell_j + \theta)}{\Gamma(\ell_i - \ell_j) \Gamma(\ell_i - \ell_j + 1 - \theta)} \cdot \prod_{i = 1}^K w(\ell_i;K), \\
& \mbox{ where } w(x;K) = \frac{(ab)^x \cdot \Gamma((M -K + 1)\theta +x)\Gamma((N -K + 1)\theta +x)}{\Gamma(x+1) \Gamma(x+ \theta)},
\end{split}
\end{equation}
for some positive constant that depends on $a,b,\theta, M,N,R$. We conclude that under $\mathcal{J}_{a^N, b^M}(\cdot | E_K^R)$ the $(\ell_1, \dots, \ell_K)$ form a discrete $\beta$-ensemble as in Definition \ref{Def.BetaEnsembles}.
%
%
\subsection{Case II: $\rho_1 = a^N$, $\rho_2 = b_{\beta}^M$}\label{Section2.3} From (\ref{Eq.PartFunctSpec}) with $a,b >0$ and $ab < 1$ (which ensures that the sum in the exponential converges absolutely) we have
\begin{equation}\label{Eq.CaseIINormalization}
 \sum_{\lambda \in \mathbb{Y}} J_{\lambda}(a^N;\theta) \tilde{J}_{\lambda}(b_{\beta}^M;\theta) =\exp\left(-NM\sum_{k=1}^{\infty}\frac{(-1)^k\cdot(ab)^k}{k}\right)=(1+ab)^{NM}.
\end{equation}
From (\ref{Eq.Vanish}) we have that the left side in (\ref{Eq.CaseIINormalization}) is in fact finite, and hence a polynomial in $ab$ (using the homogeneity of the Jack functions). As the rightmost side is also a polynomial in $ab$, and the two agree when $ab \in [0,1)$, we conclude that they agree everywhere. In other words, for any $a,b > 0$
\begin{equation*}
H_{\theta}(a^N; b_{\beta}^M) = \sum_{\lambda \in \mathbb{Y}} J_{\lambda}(a^N;\theta) \tilde{J}_{\lambda}(b_{\beta}^M;\theta) =(1+ab)^{NM}\in (0, \infty).
\end{equation*}
The above discussion shows that $\mathcal{J}_{a^N, b_{\beta}^M}$ is well-defined and supported on $E^M_N$ as in (\ref{Eq.DefEventTrunc}). 

If we now set $\ell_i=\lambda_i+(N-i)\cdot\theta$ for $i=1,\dots,N$, we see from (\ref{S211E2}) with $M = N$ and $K = N$, and (\ref{S212E1}) with $a = b$ and $K = N$ that we have
\begin{equation}\label{Eq.CaseIIBetaEnsemble}
\begin{split}
&\mathcal{J}_{a^N,b^M_\beta}(\ell_1, \dots, \ell_N ) = \const \times \prod_{1 \leq i < j \leq N} \frac{\Gamma(\ell_i - \ell_j + 1)\Gamma(\ell_i - \ell_j + \theta)}{\Gamma(\ell_i - \ell_j) \Gamma(\ell_i - \ell_j + 1 - \theta)} \cdot \prod_{i = 1}^N w(\ell_i;N),\\
& \mbox{ where } w(x;N) =\frac{(ab)^{x}}{\Gamma(x+1)\Gamma(M+N\theta-x+1-\theta)},
\end{split}    
\end{equation}
for some positive constant that depends on $a,b,\theta, M,N$. We conclude that under $\mathcal{J}_{a^N, b^M_\beta}(\cdot)$ the $(\ell_1, \dots, \ell_N)$ form a discrete $\beta$-ensemble as in Definition \ref{Def.BetaEnsembles} with $K$ replaced with $N$.

%
%
\subsection{Case III: $\rho_1 = a^N$, $\rho_2 = \tau_s$}\label{Section2.4}
From (\ref{Eq.PartFunctSpec}) we get
\begin{equation*}
H_{\theta}(a^N;\tau_s)=  \sum_{\lambda \in \mathbb{Y}} J_{\lambda}(a^N;\theta) \tilde{J}_{\lambda}(\tau_s;\theta) = \exp(\theta\cdot aNs)\in(0,\infty).
\end{equation*}
Consequently, the Jack measure $\mathcal{J}_{a^N,\tau_s}$ is well-defined and from (\ref{Eq.Vanish}) it is supported on $E_N^\infty$. If we now set $\ell_i=\lambda_i+(N-i)\cdot\theta$ for $i=1,\dots,N$, then we see from (\ref{S211E2}) with $M = N$ and $K = N$, and (\ref{S213E2}) with $K = N$ that for any $R\in [0,\infty]$ we have
\begin{equation}\label{Eq.CaseIIIBetaEnsemble}
\begin{split}
&\mathcal{J}_{a^N,\tau_s}(\ell_1, \dots, \ell_N | E_N^R) =
\const\times\prod_{1 \leq i < j \leq N} \frac{\Gamma(\ell_i - \ell_j + 1)\Gamma(\ell_i - \ell_j + \theta)}{\Gamma(\ell_i - \ell_j) \Gamma(\ell_i - \ell_j + 1 - \theta)}\cdot\prod_{i = 1}^N w(\ell_i;N),\\
& \mbox{ where } w(x;N) =\frac{(as\theta)^{x}}{\Gamma(x+1)},
\end{split}
\end{equation}
for some positive constant that depends on $a,s,\theta,N, R$. We conclude that under $\mathcal{J}_{a^N, \tau_s}(\cdot | E_N^R)$ the $(\ell_1, \dots, \ell_N)$ form a discrete $\beta$-ensemble as in Definition \ref{Def.BetaEnsembles} with $K$ replaced with $N$.

%
%
\subsection{Case IV: $\rho_1 = a_{\beta}^N$, $\rho_2 = b_{\beta}^M$}\label{Section2.5} 
From (\ref{Eq.PartFunctSpec}) we have for $a,b > 0$, $ab<1$
\begin{equation}\label{Eq.CaseIVNormalization_fml}
H_{\theta}(a^N_\beta; b^M_\beta) = \sum_{\lambda \in \mathbb{Y}} J_{\lambda}(a_{\beta}^N;\theta) \tilde{J}_{\lambda}(b_{\beta}^M;\theta) =  \exp \left( \sum_{k=1}^{\infty}\frac{\theta^{-1} N M (ab)^k }{k}\right) = (1-ab)^{\frac{-NM}{\theta}}\in(0,\infty),
\end{equation}
which shows that the Jack measure $\mathcal{J}_{a^N_\beta,b^M_\beta}$ is well-defined. In addition, from (\ref{Eq.Vanish}) we have that 
\begin{equation}\label{Eq.VanishCaseIV}
\mathcal{J}_{a^N_\beta,b^M_\beta}(\lambda_1 > \min(M,N)) = 0.
\end{equation}
We now fix $K \in \mathbb{N}$, and set $\ell_i = \lambda_i + (K-i) \cdot \theta$ for $i = 1,\dots, K$. From (\ref{S212E1}) with $a = b$ and (\ref{S212E2}) with $M = N$ we conclude
\begin{equation}\label{Eq.CaseIVBetaEnsemble}
\begin{split}
&\mathcal{J}_{a^N_\beta, b^M_\beta}(\ell_1, \dots, \ell_K |E_K^{\min(M,N)}) = \const \times \prod_{1 \leq i < j \leq K} \frac{\Gamma(\ell_i - \ell_j + 1)\Gamma(\ell_i - \ell_j + \theta)}{\Gamma(\ell_i - \ell_j) \Gamma(\ell_i- \ell_j + 1 - \theta)} \cdot \prod_{i = 1}^K w(\ell_i;K), \\
& \mbox{ where } w(x;K) = \frac{(ab)^x}{\Gamma(x+1)\Gamma(x+\theta)\Gamma(N+K\theta-x+1-\theta)\Gamma(M+K\theta-x+1-\theta)},
\end{split}
\end{equation}
for some positive constant that depends on $a,b,\theta, M,N,K$. We conclude that under the measure $\mathcal{J}_{a^N_\beta, b^M_\beta}(\cdot | E_K^{\min(M,N)})$ the $(\ell_1, \dots, \ell_K)$ form a discrete $\beta$-ensemble as in Definition \ref{Def.BetaEnsembles}.

%
%
\subsection{Case V: $\rho_1 = a_{\beta}^N$, $\rho_2 = \tau_s$}\label{Section2.6} From (\ref{Eq.PartFunctSpec}) we have that 
\begin{equation}\label{Eq.CaseVNormalization_fml}
H_{\theta}(a^N_\beta; \tau_s) = \sum_{\lambda \in \mathbb{Y}} J_{\lambda}(a_{\beta}^N;\theta) \tilde{J}_{\lambda}(\tau_s;\theta) =  \exp(aNs) \in(0,\infty),
\end{equation}
which shows that the Jack measure $\mathcal{J}_{a^N_\beta,\tau_s}$ is well-defined. In addition, from (\ref{Eq.Vanish}) we have that $\mathcal{J}_{a^N_{\beta}, \tau_s}(\lambda_1 > N) = 0$. We now fix $K \in \mathbb{N}$, and set $\ell_i = \lambda_i + (K-i) \cdot \theta$ for $i = 1,\dots, K$. From (\ref{S212E2}) with $M = N$, and (\ref{S213E2}) we conclude
\begin{equation}\label{Eq.CaseVBetaEnsemble}
\begin{split}
&\mathcal{J}_{a^N_\beta, \tau_s}(\ell_1, \dots, \ell_K |E_K^N) = \const \times \prod_{1 \leq i < j \leq K} \frac{\Gamma(\ell_i - \ell_j + 1)\Gamma(\ell_i - \ell_j + \theta)}{\Gamma(\ell_i - \ell_j) \Gamma(\ell_i - \ell_j + 1 - \theta)} \cdot \prod_{i = 1}^K w(\ell_i;K), \\
& \mbox{ where } w(x;K) = \frac{(as\theta)^x}{\Gamma(x+1)\Gamma(x+\theta)\Gamma(N+K\theta-x+1-\theta)},
\end{split}
\end{equation}
for some positive constant that depends on $a,s,\theta,N,K$. We conclude that under $\mathcal{J}_{a^N_\beta, \tau_s}(\cdot | E_K^N)$ the $(\ell_1, \dots, \ell_K)$ form a discrete $\beta$-ensemble as in Definition \ref{Def.BetaEnsembles}.

%
%
\subsection{Case VI: $\rho_1 = \tau_{s_1}$, $\rho_2 = \tau_{s_2}$}\label{Section2.7} From (\ref{Eq.PartFunctSpec}) we have that 
\begin{equation}\label{Eq.CaseVINormalization_fml}
H_{\theta}(\tau_{s_1}; \tau_{s_2}) = \sum_{\lambda \in \mathbb{Y}} J_{\lambda}(\tau_{s_1};\theta) \tilde{J}_{\lambda}(\tau_{s_2};\theta) =  \exp(\theta s_1 s_2) \in(0,\infty),
\end{equation}
which shows that the Jack measure $\mathcal{J}_{\tau_{s_1},\tau_{s_2}}$ is well-defined. We now fix $K \in \mathbb{N}$, and set $\ell_i = \lambda_i + (K-i) \cdot \theta$ for $i = 1,\dots, K$. From (\ref{S213E1}) with $s = s_1$, and (\ref{S213E2}) with $s = s_2$ we conclude for any $R \in [0,\infty]$ that
\begin{equation}\label{Eq.CaseVIBetaEnsemble}
\begin{split}
&\mathcal{J}_{\tau_{s_1}, \tau_{s_2}}(\ell_1, \dots, \ell_K |E_K^R) = \const \times \prod_{1 \leq i < j \leq K} \frac{\Gamma(\ell_i - \ell_j + 1)\Gamma(\ell_i - \ell_j + \theta)}{\Gamma(\ell_i - \ell_j) \Gamma(\ell_i - \ell_j + 1 - \theta)} \cdot \prod_{i = 1}^K w(\ell_i;K), \\
& \mbox{ where } w(x;K) = \frac{(s_1 s_2 \theta^2)^x}{\Gamma(x+1)\Gamma(x+\theta)},
\end{split}
\end{equation}
for some positive constant that depends on $s_1,s_2,\theta,K, R$. We conclude that under $\mathcal{J}_{\tau_{s_1}, \tau_{s_2}}(\cdot | E_K^R)$ the $(\ell_1, \dots, \ell_K)$ form a discrete $\beta$-ensemble as in Definition \ref{Def.BetaEnsembles}.

%
%
\section{Weak convergence of $\beta$-ensembles}\label{Section3} In Section \ref{Section3.1} we recall some of the results for discrete $\beta$-ensembles from \cite{DD22}. These results include a global law of large numbers, see Proposition \ref{Prop.GlobalLLN}, and an upper-tail bound, see Proposition \ref{Prop.UpperTail}. These statements will hold under certain technical assumptions on the weight functions $w(x;K)$ from Definition \ref{Def.BetaEnsembles}, which are summarized in Assumptions \ref{Ass.Finite} and \ref{Ass.Infinite}. In Sections \ref{Section3.2}-\ref{Section3.7} we verify that the Jack measures from Definition \ref{DefScale} satisfy these assumptions and deduce various consequences.

%
%
\subsection{General setup}\label{Section3.1} We consider sequences of discrete $\beta$-ensembles as in Definition \ref{Def.BetaEnsembles}, where the weight functions $w(x;K)$ and $R = R_K$ are appropriately scaled as $K \rightarrow \infty$. We will consider two types of scalings. The first, presented in Assumption \ref{Ass.Finite}, deals with the case when $R_K = K \mathsf{R} + O(1)$ for some fixed $\mathsf{R} > 0$. The second, presented in Assumption \ref{Ass.Infinite}, deals with the case when $R_K = \infty$. Throughout $\theta > 0$ is fixed, as usual.

\begin{assumption}\label{Ass.Finite} We assume that $\mathbb{P}_K$ is a sequence of measures as in Definition \ref{Def.BetaEnsembles} with $R =R_K$ that satisfies the following assumptions.
\begin{itemize}
    \item We assume that we have a real parameter $\mathsf{R} > 0$ and that $R_K \in \mathbb{Z}_{\geq 0}$ satisfy
    \begin{equation}\label{Eq.FinAss1}
        |R_K - K \mathsf{R}| \leq A_1 \mbox{, for some $A_1 > 0$}. 
    \end{equation}
    \item We assume that $w(x;K) = \exp\left( - K V_K(x/K)\right)$ on the interval $[0, R_K + (K-1)  \theta]$ for a function $V_K$ that is continuous on the interval $I_K = [0, R_K K^{-1} + (K-1) K^{-1} \theta]$.
    \item We assume that there is a continuous function $V$ on $ [0, \infty)$, such that 
    \begin{equation}\label{Eq.FinAss2}
    |V_K(s) - V(s)| \leq A_2  K^{-1} \log (K + 1) \mbox{ for } s \in I_K , \mbox{ and } |V(s)| \leq A_3 \mbox{ for } s\in I = [0, \mathsf{R} + \theta],
    \end{equation}
    for some constants $A_2, A_3 > 0$.
    \item We assume that $V$ is differentiable on $(0, \mathsf{R}+ \theta)$ and for some $A_4 > 0$ we have
    \begin{equation}\label{Eq.FinAss3}
    |V'(s)| \leq A_4 \left(1 + |\log (s) | + |\log (\mathsf{R} + \theta - s)| \right) \mbox{ for } s\in (0, \mathsf{R} + \theta).
    \end{equation}
\end{itemize}
\end{assumption}

\begin{assumption}\label{Ass.Infinite} We assume that $\mathbb{P}_K$ is a sequence of measures as in Definition \ref{Def.BetaEnsembles} with $R = \infty$ that satisfies the following assumptions.
\begin{itemize}
    \item We assume that $w(x; K) = \exp\left( -K V_K(x/K) \right)$
    for a continuous function $V_K$ on $[0, \infty)$. 
    \item We assume that $V_K \rightarrow V$ uniformly on compact subsets of $[0, \infty)$, where $V$ is a continuous function on $[0, \infty)$. More specifically, we assume that there is a sequence $r_K > 0$ with $\lim_{K \to \infty} K^{3/4} r_K = 0$ and an increasing function $F_1 : (0, \infty) \to (0, \infty)$, such that the following holds for all $u > 0$ and all $K \in \mathbb{N}$
    \begin{equation}\label{Eq.InfAss1}
        \sup_{s \in [0, u]} |V_K(s) - V(s)| \leq F_1(u)  r_K. 
    \end{equation}
    \item We assume that $V$ is differentiable on $(0, \infty)$ and that there exists $B_0 > 0$ and an increasing function $F_2 : (0, \infty) \to (0, \infty)$, such that for all $u > 0$ and $s \in (0, u]$ we have
    \begin{equation}\label{Eq.InfAss2}
        |V'(s)| \leq B_0 \left( F_2(u) + |\log (s)| \right). 
    \end{equation}
    \item We assume that there is a constant $\xi > 0$ such that for all $s \geq 0$ and $K \in \mathbb{N}$
    \begin{equation}\label{Eq.InfAss3}
        V_K(s) \geq (1 + \xi)  \theta \cdot \log (1 + s^2).
    \end{equation}
\end{itemize}
\end{assumption}

In the remainder of this section we state two results from \cite{DD22} that we will require for our arguments. 
\begin{proposition}\label{Prop.GlobalLLN} Suppose that $\mathbb{P}_K$ is a sequence of measures as in Definition \ref{Def.BetaEnsembles} that satisfy either Assumption \ref{Ass.Finite} or Assumption \ref{Ass.Infinite}. Let $\mu_K$ denote the random measures on $\mathbb{R}$, defined by
\begin{equation}\label{Eq.EmpMeas}
\mu_K = \frac{1}{K} \cdot \sum_{i = 1}^{K} \delta\left( \frac{\ell^K_i}{K} \right),
\end{equation}
where $(\ell_1^K, \dots, \ell_K^K)$ is distributed according to $\mathbb{P}_K$. Then, $\mu_K$ converges weakly in probability to a deterministic probability measure $\mu$ on $\mathbb{R}$ in the sense that for any bounded continuous function $g$ the sequence of random variables
$$\int_{\mathbb{R}} g(x) \mu_K(dx) - \int_{\mathbb{R}} g(x) \mu(dx)$$
converges to $0$ in probability. Moreover, the measure $\mu$ is compactly supported in $[0, \infty)$, is absolutely continuous with respect to Lebesgue measure and its density, which by a slight abuse of notation we denote by $\mu(x)$, is bounded by $\theta^{-1}$. 
\end{proposition}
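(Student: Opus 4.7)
The plan is to establish a large deviation principle (LDP) for the empirical measures $\mu_K$ at speed $K^2$ and then derive the stated weak convergence in probability from the uniqueness of the minimizer of the rate function. By the functional equation $\Gamma(x+1) = x\Gamma(x)$ and Stirling's expansion,
$$\log \frac{\Gamma(d+1)\Gamma(d+\theta)}{\Gamma(d)\Gamma(d+1-\theta)} = 2\theta \log d + O(d^{-1})$$
uniformly in $d \geq \theta$; since the spacing $\ell_i - \ell_{i+1} \geq \theta$ always holds (as $\ell_i - \ell_{i+1} = (\lambda_i - \lambda_{i+1}) + \theta$ with $\lambda_i - \lambda_{i+1} \in \mathbb{Z}_{\geq 0}$), this bound applies to every pair $(\ell_i, \ell_j)$. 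Rescaling $x_i = \ell_i/K$ and using $w(x;K) = \exp(-K V_K(x/K))$, the density (\ref{Eq.BetaEnsemble}) can be written as
$$\mathbb{P}_K(\ell_1, \dots, \ell_K) = \frac{1}{Z_K}\exp\bigl(-K^2 \mathcal{E}_K[\mu_K] + o(K^2)\bigr), \qquad \mathcal{E}_K[\mu] = -\theta \iint_{x \neq y} \log|x-y|\,\mu(dx)\mu(dy) + \int V_K\, d\mu.$$
The same spacing $\ell_i - \ell_{i+1} \geq \theta$ forces $\mu_K$ to have density at most $\theta^{-1}$ on the rescaled scale.

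The next step is tightness of $\{\mu_K\}$: this is automatic in the finite-support case from Assumption~\ref{Ass.Finite}, while in the infinite-support case the growth condition (\ref{Eq.InfAss3}) combined with the partition-function bound implicit in Definition~\ref{Def.BetaEnsembles} (via (\ref{Eq.WeightDecay})) yields a tail estimate on the largest particle, giving tightness. One then proves the LDP with good rate function
$$I(\mu) = \mathcal{E}[\mu] - \inf\mathcal{E}, \qquad \mathcal{E}[\mu] = -\theta \iint \log|x-y|\,\mu(dx)\mu(dy) + \int V\, d\mu,$$
restricted to the class $\mathcal{M}_\theta$ of probability measures with Lebesgue density bounded by $\theta^{-1}$ (and $I = +\infty$ off $\mathcal{M}_\theta$). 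The upper bound follows from lower semicontinuity of $\mathcal{E}$ and a standard covering argument for $\varepsilon$-neighborhoods in the weak topology; the lower bound is proved by an approximation scheme producing integer configurations whose empirical measure is arbitrarily close to any target $\mu \in \mathcal{M}_\theta$ while preserving the density constraint.

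Uniqueness of the minimizer follows from Frostman's theorem: the logarithmic energy is strictly convex on the space of compactly supported signed measures of zero total mass, hence $\mathcal{E}$ is strictly convex on the convex set $\mathcal{M}_\theta$ and has a unique minimizer $\mu$. The support of $\mu$ is compact and contained in $[0, \infty)$ (automatic in the finite case, and enforced by (\ref{Eq.InfAss3}) together with (\ref{Eq.InfAss2}) in the infinite case), and its density is bounded by $\theta^{-1}$ by membership in $\mathcal{M}_\theta$. Standard arguments convert the LDP plus unique-minimizer statement into $\mathbb{P}_K(d(\mu_K, \mu) > \varepsilon) \leq e^{-c(\varepsilon)K^2}$ for any metric $d$ inducing the weak topology, which is far stronger than the required weak convergence in probability. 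The principal obstacle is the hard density constraint $\mu \leq \theta^{-1}$, which is not present in the classical continuous $\beta$-ensemble LDP: in the lower bound one must approximate an arbitrary $\mu \in \mathcal{M}_\theta$ by lattice configurations while preserving the density bound (via a quantile construction), and in the upper bound one must rule out overshooting this bound at the level of the empirical measure (which reduces to the hard-core spacing). The approximation near the boundary of any saturated region $\{\mu = \theta^{-1}\}$ is the most delicate step.
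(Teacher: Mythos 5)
Your proposal takes a genuinely different route from the paper. The paper's proof is essentially a citation: it checks that Assumption~\ref{Ass.Finite} implies \cite[Assumptions 2.1 and 2.2]{DD22} and that Assumption~\ref{Ass.Infinite} matches \cite[Definition 1.1]{DD22}, then invokes \cite[Proposition 4.1]{DD22} and \cite[Theorem 1.4]{DD22} respectively. You instead attempt to reprove the cited result from scratch by establishing a large deviation principle at speed $K^2$, which is indeed the strategy followed in \cite{DD22} and its precedents \cite{BGG17, BKMM03}. Your outline is plausible and you correctly identify the central departure from the continuous $\beta$-ensemble theory: the hard density constraint $\mu \leq \theta^{-1}$ inherited from the $\geq \theta$ particle spacing, and the resulting delicacy of the lower-bound approximation near the boundary of a saturated region. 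Two cautionary points. First, the Stirling estimate for the pair interaction is correct termwise, but the per-pair error $O((\ell_i - \ell_j)^{-1})$ summed over $\sim K^2/2$ pairs is a priori only $O(K^2)$, since gaps can be as small as $\theta$; showing the aggregate error is $o(K^2)$ requires an extra argument that the configuration is spread out (so most pair distances are of order $K$ and only $O(K\log K)$ of error accumulates). Second, your route establishes full exponential concentration, strictly stronger than the convergence in probability claimed here; the paper does prove an LDP, but as a separate statement, only in Case II, and by citing \cite{DZ23} rather than redoing the work (see Theorem~\ref{thmMain2}). The citation-based route buys brevity and modularity; yours buys transparency, but leaves the hardest steps — the quantile approximation lemma and tightness in the infinite-support case — as unelaborated assertions that constitute the bulk of the analysis in \cite{DD22}.
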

\begin{proof}  Suppose first that $\mathbb{P}_K$ satisfies Assumption \ref{Ass.Finite}. Then, we see that $\mathbb{P}_K$ satisfies \cite[Assumptions 2.1 and 2.2]{DD22} with $N$ there replaced with $K$ here, $M_N$ replaced with $R_K$, $\mathsf{M} =\mathsf{R}$, $a_0 = A_0 = \mathsf{R}$, $q_N = 1/N$ and $p_N = N^{-1}\log (N + 1)$. The result now follows from \cite[Proposition 4.1]{DD22}. Suppose instead that $\mathbb{P}_K$ satisfies Assumption \ref{Ass.Infinite}. Then, $\mathbb{P}_K$ satisfies the assumptions of \cite[Definition 1.1]{DD22} with $N$ there replaced with $K$ here. The result now follows from \cite[Theorem 1.4]{DD22}.
\end{proof}
\begin{remark}\label{Rem.Minimizer} The measure $\mu$ in Proposition \ref{Prop.GlobalLLN} is sometimes called the {\em equilibrium measure}. We mention that in \cite{DD22} the density of $\mu$ in Proposition \ref{Prop.GlobalLLN} is denoted by $\phi^{\theta, \mathsf{R} + \theta}_V$ and is described as the unique minimizer of a certain functional $I_V^{\theta}$. We will not use this description in the present paper, and refer the interested reader to \cite{DD22} for the details.
\end{remark}

\begin{proposition}\label{Prop.UpperTail} Suppose that $\mathbb{P}_K$ is a sequence of measures as in Definition \ref{Def.BetaEnsembles} that satisfy Assumption \ref{Ass.Infinite}. Let $K_0 \geq \max(2, \theta^{-1} \xi^{-1})$ be such that $K^{3/4} r_K \leq 1$ for all $K \geq K_0$ and fix any $A_0 > 0$. We can find constants $C_1, D_1 > 0$ that depend on $\theta, V, \xi, B_0, F_1, F_2$ from Assumption \ref{Ass.Infinite} and $A_0$, such that for $K \geq K_0$
\begin{equation}\label{Eq.UpperTail}
\mathbb{P}_K(\ell_1 \geq  K D_1) \leq C_1 e^{-K A_0}.
\end{equation}
\end{proposition}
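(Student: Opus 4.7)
\textbf{Proof proposal for Proposition \ref{Prop.UpperTail}.}
The plan is to bound $\mathbb{P}_K(\ell_1 \geq K D_1)$ by decomposing over the values of $\ell_1$ and using the super-polynomial decay of $w(\cdot; K)$ from (\ref{Eq.InfAss3}) to beat the polynomial growth of the Vandermonde-like interaction terms. Writing $\Phi(u) = \Gamma(u+1)\Gamma(u+\theta) / [\Gamma(u) \Gamma(u+1-\theta)]$, standard Stirling asymptotics give $\Phi(u) \leq C_\theta (u+1)^{2\theta}$ uniformly for $u \geq \theta$. Using this on the interaction factors involving $\ell_1 = v$, one obtains
\begin{equation*}
    \sum_{v \geq K D_1} w(v; K) \sum_{\ell_2, \dots, \ell_K} \prod_{j \geq 2} \Phi(v - \ell_j) \prod_{2 \leq i < j \leq K} \Phi(\ell_i - \ell_j) \prod_{i \geq 2} w(\ell_i; K) \leq C_\theta^{K} \tilde{Z}_{K-1} \sum_{v \geq K D_1} (v+1)^{2\theta(K-1)} \, w(v; K),
\end{equation*}
where $\tilde{Z}_{K-1}$ is the partition function for a $(K-1)$-particle ensemble with the same weight $w(\cdot; K)$.

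The next step is to apply (\ref{Eq.InfAss3}) in the form $w(v; K) \leq (K/v)^{2(1+\xi) K \theta}$ valid for $v \geq K$. Combining, the integrand becomes $\lesssim K^{2(1+\xi) K \theta} v^{-2\theta(1 + \xi K)}$, and since $K \geq K_0 \geq 1/(\theta \xi)$ the series is dominated by its first term, giving
\begin{equation*}
    \sum_{v \geq K D_1} (v+1)^{2\theta(K-1)} w(v; K) \leq C'^{K} K^{2\theta(K-1)} D_1^{-2\theta \xi K}.
\end{equation*}
To complete the argument I would then lower bound $Z_K$. By Proposition \ref{Prop.GlobalLLN} the empirical measure concentrates on a compact set $[0, K \mathsf{L}]$; fixing a value $v_0 \in (K \mathsf{L}/2, K \mathsf{L})$ and restricting the sum defining $Z_K$ to configurations with $\ell_1 = v_0$, the interaction factors with the remaining particles are each $\Phi(v_0 - \ell_j) \geq c v_0^{2\theta}$ provided $\ell_j \leq v_0 - 1$, which holds for the typical configurations supplied by Proposition \ref{Prop.GlobalLLN}. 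Together with the uniform bound $w(v_0; K) \geq e^{-K \sup_{[0, \mathsf{L}]} V_K}$ from (\ref{Eq.InfAss1})--(\ref{Eq.InfAss2}), this yields $Z_K \geq c''^{K} K^{2\theta(K-1)} \tilde{Z}_{K-1}$.

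Putting the bounds together gives $\mathbb{P}_K(\ell_1 \geq K D_1) \leq \tilde{C}^K D_1^{-2\theta \xi K}$, and choosing $D_1$ large enough that $D_1^{2\theta \xi} \geq \tilde{C} e^{A_0 + 1}$ yields the desired inequality $\mathbb{P}_K(\ell_1 \geq K D_1) \leq C_1 e^{-K A_0}$, with constants depending on the quantities listed in the statement. The main obstacle is the lower bound on $Z_K$: the naive restriction to $\ell_1 = v_0$ does not immediately factorize as $w(v_0; K) \cdot \Phi(\cdots) \cdot \tilde{Z}_{K-1}$ because the constraint $\ell_2 < v_0$ slightly shrinks the sum defining $\tilde{Z}_{K-1}$. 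This can be resolved by further restricting to configurations in which all other particles lie in $[0, K \mathsf{L}/2]$; such configurations still carry a uniformly bounded fraction of the total mass by the global LLN, so the lost factor is at most $C^K$, which is absorbed into the exponential decay from $D_1^{-2\theta \xi K}$.
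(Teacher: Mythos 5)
Your numerator bound --- isolate the factors involving $\ell_1 = v$, bound each $\Phi(v-\ell_j)$ by $C_\theta(v+1)^{2\theta}$, drop the constraint $\ell_2 < v$ to extract $\tilde Z_{K-1}$, and feed (\ref{Eq.InfAss3}) into the remaining sum over $v$ --- is essentially correct, up to routine $\mathrm{poly}(K)$ bookkeeping. The genuine gap is the lower bound $Z_K \gtrsim c''^K K^{2\theta(K-1)} \tilde Z_{K-1}$, which you justify with Proposition \ref{Prop.GlobalLLN}, and this appeal does not hold up. Weak convergence of $\mu_K$ says nothing quantitative about the location of the largest particle: a single particle at height $KD$ shifts $\mu_K$ by only $1/K$ in total variation and is invisible to any fixed bounded continuous test function, so the LLN gives no bound on $\mathbb P_K(\ell_1 > K\mathsf L/2)$. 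But that probability is exactly the object Proposition \ref{Prop.UpperTail} is meant to control, so the argument is circular. Moreover, the quantitative claim that restricting $\ell_2,\dots,\ell_K$ to $[0,K\mathsf L/2]$ loses only a factor $C^K$ fails precisely in the regime where you need it: to make $\Phi(v_0 - \ell_j) \gtrsim K^{2\theta}$ you need $v_0 - \ell_j$ of order $K$, hence $v_0$ bounded away from $K\mathsf L/2$, hence $\mathsf L$ of the order of the right edge of the equilibrium support; the restriction then excludes a macroscopic piece of that support and is a large deviation at speed $K^2$, with cost $e^{-cK^2}$, which cannot be absorbed by the $D_1^{-2\theta\xi K}$ gain from increasing $D_1$. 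There is also the smaller mismatch that Proposition \ref{Prop.GlobalLLN} concerns $\mathbb P_K$, while $\tilde Z_{K-1}$ normalizes a $(K-1)$-particle ensemble with the $K$-dependent weight $w(\cdot;K)$, for which no concentration statement has been established.

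For context, the paper does not argue this directly: it verifies the hypotheses of \cite[Definition 3.3]{DD22} and invokes \cite[Proposition 3.4]{DD22}. If you want a self-contained argument, a cleaner route is to compare configurations \emph{within} $Z_K$ rather than between $Z_K$ and a reduced partition function --- for example, send $\ell_1$ down to $\ell_2+\theta$, and show that the exponentially small weight ratio coming from (\ref{Eq.InfAss3}) dominates the polynomial growth of the interaction ratio; a short additional iteration then handles the residual case where $\ell_2$ itself is of order $\gg K$. This bypasses the LLN entirely and avoids the circularity.
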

\begin{proof} We see that $\mathbb{P}_K$ satisfies the conditions of \cite[Definition 3.3]{DD22} with $N$ there replaced with $K$ here. The existence of $C_1, D_1$ as in the statement of the proposition, and the fact that (\ref{Eq.UpperTail}) holds for all $K \geq K_0$ now follows from \cite[Proposition 3.4]{DD22} with $N = K$, $A = A_0$ and $R_0 = 1$.
\end{proof}

%
%
\subsection{Case I: $\rho_1 = a^N$, $\rho_2 = b^M$}\label{Section3.2} We assume the same conditions as in Case I of Definition \ref{DefScale}, and also that $M = M(N) \geq N$ for all $N$. Note that in particular this implies that $\cm \geq 1$. For this choice we proceed to rewrite the measure in (\ref{Eq.CaseIBetaEnsemble}) in a way that is suitable for the application of Propositions \ref{Prop.GlobalLLN} and \ref{Prop.UpperTail}. From (\ref{Eq.CaseIBetaEnsemble}) we have for any $A \in \mathbb{R}$ and $R \in [0, \infty]$ that
\begin{equation}\label{Eq.CaseIBetaEnsembleV2}
\begin{split}
&\mathcal{J}_{a^N, b^M}(\ell_1, \dots, \ell_N |E_N^R) = \frac{1}{Z_{N,A}} \times \prod_{1 \leq i < j \leq N} \frac{\Gamma(\ell_i - \ell_j + 1)\Gamma(\ell_i - \ell_j + \theta)}{\Gamma(\ell_i - \ell_j) \Gamma(\ell_i - \ell_j + 1 - \theta)} \cdot \prod_{i = 1}^N e^{-NV_N(\ell_i/N)}, \\
& \mbox{ where } V_N(x) = A+\frac{1}{N}\log \left(\frac{\Gamma(Nx+1)N^{(M-N + 1)\theta -1}}{\Gamma(Nx+(M-N+1)\theta)(ab)^{Nx}e^{(M-N+1)\theta - 1}} \right).
\end{split}
\end{equation}
Indeed, the difference between (\ref{Eq.CaseIBetaEnsemble}) and (\ref{Eq.CaseIBetaEnsembleV2}) comes from setting $K = N$ (since we assumed $M \geq N$) and multiplying each $w(\ell_i;N)$ by a deterministic constant $C$ that depends on $M, N, \theta$ and $A$. This modifies the normalization constant by multiplying it by $C^N$, resulting in the same probability measure. We proceed to verify that when $R = \infty$ the measures in (\ref{Eq.CaseIBetaEnsembleV2}) satisfy the conditions of Assumption \ref{Ass.Infinite}, where $K$ will be replaced with $N$ in this case. In the process we will specify the value of $A$, which will be a large enough constant that depends on $a,b,\theta,\cm, \cd$, that allows us to verify (\ref{Eq.InfAss3}) with $\xi = 1$.\\

We start with the following useful lemma.
\begin{lemma}\label{Lem.GammaApprox} Fix $\Delta > 0$. For each $x \geq 0$, $K \geq 1$ 
\begin{equation}\label{Eq.GA1}
\begin{split}
& \frac{1}{K}\log \left(\Gamma(Kx + \Delta) e^{Kx + \Delta} K^{-Kx-\Delta }\right) =   (x + \Delta/K + 1/K) \log(x + \Delta/K + 1/K) \\
&+ \frac{\log(K)}{K} - \frac{ c_1 \log(Kx + \Delta + 1)}{K} - \frac{\log(Kx + \Delta)}{K}, \\
\end{split}
\end{equation}
for some $c_1 \in [0,1]$ that depends on $x, K, \Delta$. In addition, for each $K \geq 1$, and $x \in [0, \Delta/K]$ 
\begin{equation}\label{Eq.GA2}
\begin{split}
&\frac{1}{K}\log \left(\Gamma( \Delta - Kx + 1) e^{\Delta - Kx + 1} K^{-\Delta + Kx - 1}\right) = (\Delta/K - x + 1/K)\log(\Delta/K - x + 1/K) \\
& + \frac{1}{K} - \frac{c_2 \log (\Delta - Kx + 1)}{K},
\end{split}
\end{equation}
for some $c_2 \in [0,1]$ that depends on $x, K, \Delta$.
\end{lemma}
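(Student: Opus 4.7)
The plan is to reduce both (\ref{Eq.GA1}) and (\ref{Eq.GA2}) to a single two-sided bound on the Gamma function, prove that bound, and substitute back. In (\ref{Eq.GA1}) I would set $y = Kx + \Delta > 0$, use $\Gamma(y) = \Gamma(y+1)/y$, and rearrange; the claim then becomes the existence of $c_1 \in [0,1]$ satisfying $\log \Gamma(y+1) = (y + 1 - c_1) \log(y+1) - y$. In (\ref{Eq.GA2}), the substitution $w = \Delta - Kx + 1 \geq 1$ followed by $y = w - 1 \geq 0$ yields the same identity with $c_2$ in place of $c_1$. Thus both parts of the lemma reduce to proving that for every $y \geq 0$,
\begin{equation}\label{PlanStar}
y \log(y+1) - y \;\leq\; \log \Gamma(y+1) \;\leq\; (y+1)\log(y+1) - y,
\end{equation}
equivalently $(y+1)^y e^{-y} \leq \Gamma(y+1) \leq (y+1)^{y+1} e^{-y}$.

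For the upper bound in (\ref{PlanStar}), I would set $f(y) = (y+1)\log(y+1) - y - \log \Gamma(y+1)$, observe $f(0) = 0$, and compute $f'(y) = \log(y+1) - \psi(y+1)$, where $\psi = (\log \Gamma)'$ is the digamma function. The convexity of $\log \Gamma$ on $(0, \infty)$ together with the mean value theorem give $\psi(y+1) \leq \log \Gamma(y+2) - \log \Gamma(y+1) = \log(y+1)$, so $f' \geq 0$ and hence $f \geq 0$. For the lower bound I would set $g(y) = \log \Gamma(y+1) - y \log(y+1) + y$, again with $g(0) = 0$, and combine Gauss's integral representation of $\psi$ with the standard integrals for $\log(y+1)$ and $1/(y+1)$ to obtain
\[
g'(y) \;=\; \psi(y+1) - \log(y+1) + \tfrac{1}{y+1} \;=\; \int_0^\infty e^{-(y+1)t}\left(\tfrac{1}{t} - \tfrac{1}{e^t - 1}\right) dt.
\]
The integrand is nonnegative because $e^t \geq 1 + t$ for $t \geq 0$, so $g' \geq 0$ and therefore $g \geq 0$.

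With (\ref{PlanStar}) established, the identities (\ref{Eq.GA1}) and (\ref{Eq.GA2}) follow by substituting back into the rearrangements from the first paragraph; this is routine algebra. I expect the main obstacle to be the careful bookkeeping in the initial reduction, which must ensure the resulting $c_1, c_2$ fall in $[0,1]$ uniformly in $x$, $K$, and $\Delta$. The bound (\ref{PlanStar}) itself, once one spots the clean identity $\tfrac{1}{t} - \tfrac{1}{e^t - 1}$ for the integrand of $g'$, is comparatively straightforward.
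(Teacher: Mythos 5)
Your proposal is correct. The reduction to the single identity $\log\Gamma(y+1) = (y+1-c)\log(y+1) - y$ with $c \in [0,1]$ is clean and checks out for both (\ref{Eq.GA1}) (via $y = Kx+\Delta$, $\Gamma(y) = \Gamma(y+1)/y$) and (\ref{Eq.GA2}) (via $y = \Delta - Kx \geq 0$), and the two-sided bound $(y+1)^y e^{-y} \leq \Gamma(y+1) \leq (y+1)^{y+1}e^{-y}$ is proved correctly: the digamma inequality $\psi(y+1) \leq \log(y+1)$ via convexity of $\log\Gamma$ for the upper bound, and the Gauss integral manipulation yielding $g'(y) = \int_0^\infty e^{-(y+1)t}\left(\frac{1}{t} - \frac{1}{e^t-1}\right)dt \geq 0$ for the lower bound.

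This differs from the paper's route in the source of the Gamma bound. The paper cites an off-the-shelf estimate (Theorem 1 of \cite{LC07}) giving $\frac{x^{x-\gamma}}{e^{x-1}} \leq \Gamma(x) \leq \frac{x^{x-1/2}}{e^{x-1}}$ for $x \geq 1$, applies it to $\Gamma(Kx+\Delta+1)$, and uses the intermediate value theorem; this pins down $c_1, c_2 \in [1/2, \gamma]$, which is sharper than the $[0,1]$ the lemma asserts. You instead prove a weaker but still adequate two-sided bound from first principles, with $c_i$ a priori only in $[0,1]$. The paper's approach is shorter but relies on an external citation; yours is self-contained, costs a couple of elementary calculus computations, and trades away the tighter constant range (which is never used downstream, since the lemma only requires $c_i \in [0,1]$). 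Either is a satisfactory proof; yours has the pedagogical merit of not sending the reader to another paper for the key inequality.
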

\begin{proof} From \cite[Theorem 1]{LC07}, we have for all $x \geq 1$ that
$$\frac{x^{x - \gamma}}{e^{x - 1}} \leq \Gamma(x) \leq \frac{x^{x - 1/2}}{e^{x - 1}},$$
where $\gamma= 0.577215...$ is the Euler-Mascheroni constant. From the functional equation $\Gamma(z+1) = z \Gamma(z)$ and the intermediate value theorem we conclude
$$\Gamma(Kx + \Delta) = \frac{\Gamma(Kx + \Delta + 1)}{Kx + \Delta} = \frac{(Kx + \Delta + 1)^{Kx + \Delta +1 -c_1} e^{-Kx - \Delta}}{Kx + \Delta },$$
for some $c_1 \in [1/2, \gamma]$. Taking logarithms of the last equation, and dividing by $K$ gives (\ref{Eq.GA1}). We similarly have by the intermediate value theorem for some $c_2 \in [1/2, \gamma]$
$$\Gamma( \Delta - Kx + 1) = (\Delta - Kx + 1)^{\Delta - Kx + 1 - c_2} e^{-\Delta + Kx}.$$
Taking logarithms of the last equation, and dividing by $K$ gives (\ref{Eq.GA2}). 
\end{proof}

After applying (\ref{Eq.GA1}) twice to (\ref{Eq.CaseIBetaEnsembleV2}) we get for some $\cq_1, \cq_2 \in [0,1]$
\begin{equation}\label{Eq.CaseIEqVN}
\begin{split}
&V_N(x) = A - x \log (ab) + ( x + 2/N) \log (x + 2/N) - (x + [\Delta+1]/N)\log (x + [\Delta+1]/N ) \\
& -\frac{\cq_1 \log(Nx+2)}{N} -\frac{ \log(Nx+1)}{N} + \frac{\cq_2 \log(Nx+\Delta + 1)}{N} + \frac{ \log(Nx+\Delta)}{N},
\end{split}
\end{equation}
where we have set $\Delta = (M-N+1)\theta$. We observe that for any $U \geq 0$, $u,v \in [0, U]$ and $x \geq 1$
\begin{equation}\label{Eq.XlogX}
0 \leq (x+v)\log (x+u) - x \log (x) \leq U + U \log ( x+ U).
\end{equation}
Picking $U \geq 1$ to be sufficiently large so that $[\Delta+1]/N  \leq U$ (recall that $|\Delta/N -  (\cm -1) \theta | \leq  (1 + \cd) \theta /N$ by assumption), we conclude that for $x \geq 1$
\begin{equation}\label{Eq.CaseIIneqVN}
V_N(x) \geq A - x \log (ab) - U -  U \log (x + U) -\frac{ \log(Nx+2)}{N} -\frac{ \log(Nx+1)}{N}.
\end{equation}
On the other hand, using that $|\Delta/N -  (\cm -1) \theta| \leq  (1 + \cd)\theta/N$ and $\Delta/N \geq\theta/N$ (recall that $M \geq N$ by assumption), we see from (\ref{Eq.CaseIEqVN}) that for any $u > 0$ and $x \in [0,u]$ we have
\begin{equation}\label{Eq.CaseILimitVN}
V_N(x) = A - x \log (ab) + x \log (x) - (x + (\cm - 1) \theta) \log (x + (\cm - 1) \theta) + O\left( \frac{\log (N+1)}{N} \right),
\end{equation}
where the constant in the big $O$ notation depends on $u, \theta, \cm, \cd$. Equations (\ref{Eq.CaseIIneqVN}) and (\ref{Eq.CaseILimitVN}) imply that we can pick $A$ large enough, depending on $ab, \theta, \cm, \cd$, so that for all $N \geq 1$ and $x \geq 0$
\begin{equation}\label{Eq.CaseILowerBound}
V_N(x) \geq 2 \theta \log (1 + x^2).
\end{equation}
We mention that in deriving the last inequality we implicitly used that the leading term for large $x$ in the right side of (\ref{Eq.CaseIIneqVN}) is $-x \log (ab)$, which is linearly growing, since $ab < 1$ by assumption. We fix this choice of $A$ in the remainder and note that (\ref{Eq.CaseILowerBound}) implies the last point in Assumption \ref{Ass.Infinite} with $\xi = 1$. We proceed to verify the remaining first three points in Assumption \ref{Ass.Infinite} with 
\begin{equation}\label{Eq.CaseIVDef}
V(x)= A - x\log (ab) + x \log (x) - (x + (\cm -1)\theta)\log (x + (\cm - 1) \theta).  
\end{equation}

The continuity of $V_N$ on $[0,\infty)$ is an immediate consequence of the continuity of the Gamma function on $(0, \infty)$, verifying the first point in Assumption \ref{Ass.Infinite}. In addition, since $\cm \geq 1$, we see that $V$ is differentiable on $(0,\infty)$ and 
$$V'(x) = - \log (ab) + \log (x)  - \log (x + (\cm - 1) \theta).$$
Using that for some constant $C> 0$, depending on $\theta, \cm$, and all $x > 0$ we have
$$|\log (x + (\cm - 1) \theta )| \leq |\log (x)| + C,$$
we conclude that 
\begin{equation}\label{Eq.CaseIVDerBound}
|V'(x)| \leq -\log (ab) + 2 |\log (x)| + C.
\end{equation}
Equation (\ref{Eq.CaseIVDerBound}) shows that the the third point in Assumption \ref{Ass.Infinite} holds with $B_0 = 2$ and $F_2(u) = \left(C -\log (ab)\right)/2$. Lastly, from (\ref{Eq.CaseILimitVN}) we see that for any $n \in \mathbb{N}$ we can find an increasing sequence $A_n > 0$, such that
$$\sup_{x \in [0,n]} |V_N(x) - V(x)| \leq A_n \frac{\log (N+1)}{N}.$$
Consequently, the second point in Assumption \ref{Ass.Infinite} holds with $r_N = \frac{\log (N+1)}{N}$ and $F_1(u) = A_{\lceil u \rceil}$.\\
Based on the above work we have the the following consequence of Propositions \ref{Prop.GlobalLLN} and \ref{Prop.UpperTail}.
\begin{lemma}\label{Lem.CaseIWeakConv} Assume the same conditions as in Case I of Definition \ref{DefScale}, and also that $M = M(N) \geq N$ for all $N$. Let $(\ell_1^N, \dots, \ell^N_N)$ be distributed as in (\ref{Eq.CaseIBetaEnsemble}) or equivalently (\ref{Eq.CaseIBetaEnsembleV2}) with $R = \infty$, and set $\mu_N = (1/N)\sum_{i = 1}^N \delta(\ell^N_i/N)$. Then, $\mu_N$ converges weakly in probability to a deterministic measure $\mu^I$ on $[0,\infty)$. In addition, if we fix $A_0 > 0$, we can find constants $C^I_1, C^I_2, D^I_1 > 0$, depending on $a,b,\theta, \cm, \cd$ and $A_0$, such that if instead of $R = \infty$, we set $R = R_N \in [N D^I_1, \infty]$ in (\ref{Eq.CaseIBetaEnsemble}), then $\mu_N$ as above converges weakly in probability to the same $\mu^I$ and for all $N \geq 1$ 
\begin{equation}\label{Eq.CaseIUpperTail}
\mathcal{J}_{a^N, b^M}(\lambda_1 \geq ND^I_1 |E_N^{R}) \leq C_1^I e^{-NA_0}, \hspace{2mm} d_{\operatorname{TV}} \left(\mathcal{J}_{a^N, b^M}(\cdot |E_N^{R}), \mathcal{J}_{a^N, b^M}(\cdot |E_N^{\infty})\right) \leq C_2^I e^{-NA_0},
\end{equation}
where $d_{\operatorname{TV}}$ denotes the usual total variation distance. 
\end{lemma}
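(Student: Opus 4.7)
The plan is to deduce the lemma directly from Propositions \ref{Prop.GlobalLLN} and \ref{Prop.UpperTail}, leveraging the verification of Assumption \ref{Ass.Infinite} carried out in the paragraphs preceding the statement. Specifically, for the representation (\ref{Eq.CaseIBetaEnsembleV2}) with $R = \infty$, we have already shown (after fixing $A$ sufficiently large in terms of $a,b,\theta,\cm,\cd$) that $V_N \to V$ uniformly on compacts with rate $\log(N+1)/N$, where $V$ is as in (\ref{Eq.CaseIVDef}), that $V$ satisfies the logarithmic derivative bound (\ref{Eq.CaseIVDerBound}), and that the quadratic lower bound (\ref{Eq.CaseILowerBound}) holds with $\xi = 1$. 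These facts are precisely what is needed to apply the two general propositions.

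For $R = \infty$, Proposition \ref{Prop.GlobalLLN} immediately produces a deterministic probability measure $\mu^I$ on $[0,\infty)$, absolutely continuous with density bounded by $\theta^{-1}$, such that the empirical measures $\mu_N$ converge weakly in probability to $\mu^I$. Next, I would apply Proposition \ref{Prop.UpperTail} with the constant $A_0$ replaced by $A_0 + \log 2$ to obtain constants $C, D > 0$ such that $\mathcal{J}_{a^N,b^M}(\ell_1 \geq ND \,|\, E_N^\infty) \leq C e^{-N(A_0 + \log 2)}$ for all sufficiently large $N$. Using $\lambda_1 = \ell_1 - (N-1)\theta$, setting $D_1^I = D + \theta$ converts this into an exponential bound on $\lambda_1$ under $\mathcal{J}_{a^N,b^M}(\cdot\,|\, E_N^\infty)$, and by enlarging the prefactor we may drop the largeness-of-$N$ restriction.

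For finite $R_N \in [ND_1^I, \infty]$, the inclusion $\{\lambda_1 > R_N\} \subset \{\lambda_1 \geq ND_1^I\}$ yields
\begin{equation*}
\mathcal{J}_{a^N,b^M}\bigl((E_N^{R_N})^c \,\big|\, E_N^\infty\bigr) \leq C e^{-N(A_0+\log 2)}.
\end{equation*}
The standard identity $d_{\operatorname{TV}}(P, P(\cdot \,|\, A)) \leq 2 P(A^c)/P(A)$ applied with $A = E_N^{R_N}$ then produces the total variation bound in (\ref{Eq.CaseIUpperTail}) for a suitable $C_2^I$. The upper-tail bound on $\lambda_1$ under the conditioned measure in (\ref{Eq.CaseIUpperTail}) follows from the simple inequality
\begin{equation*}
\mathcal{J}_{a^N,b^M}(\lambda_1 \geq ND_1^I \,|\, E_N^{R_N}) \leq \frac{\mathcal{J}_{a^N,b^M}(\lambda_1 \geq ND_1^I \,|\, E_N^\infty)}{\mathcal{J}_{a^N,b^M}(E_N^{R_N}\,|\, E_N^\infty)},
\end{equation*}
combined with the exponential decay of the numerator and the fact that the denominator is at least $1/2$ for large $N$ (again, absorbing the threshold into constants).

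Finally, weak convergence of $\mu_N$ to the same $\mu^I$ under $\mathcal{J}_{a^N,b^M}(\cdot\,|\, E_N^{R_N})$ is immediate from the $R = \infty$ case, since for any bounded continuous $g$ the exponentially small total variation bound implies that $\mathbb{E}[\int g\, d\mu_N]$ under the two measures differ by an exponentially small amount, while the variance of $\int g\, d\mu_N$ under the conditioned measure is likewise controlled. I do not anticipate any serious obstacle; the substantive analytic content has already been absorbed into the verification of Assumption \ref{Ass.Infinite}, and the remainder is essentially bookkeeping of the constants $C_1^I, C_2^I, D_1^I$.
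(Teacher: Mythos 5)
Your proof is correct and follows essentially the same route as the paper: verify Assumption \ref{Ass.Infinite} (done in the preceding text), invoke Proposition \ref{Prop.GlobalLLN} for the $R = \infty$ weak convergence, invoke Proposition \ref{Prop.UpperTail} for the upper tail, use the inclusion of events to bound the total variation distance, and transfer the conclusions to the conditioned measures. The constant bookkeeping is slightly more elaborate than necessary (e.g.\ the $A_0 + \log 2$ replacement and the $2P(A^c)/P(A)$ bound are both overkill; since $E_N^{R_N}\subseteq E_N^\infty$, one has the exact identity $d_{\operatorname{TV}}\bigl(P, P(\cdot\,|\,A)\bigr) = P(A^c)$, which is what the paper implicitly uses via the inclusion $E_N^\infty\setminus E_N^{R_N}\subseteq\{\ell_1\geq ND_1\}$). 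One small stylistic point: the final weak-convergence transfer is cleaner done directly at the level of events — for any Borel $B$, $|\mathcal{J}(B\,|\,E_N^{R_N}) - \mathcal{J}(B\,|\,E_N^\infty)|\leq d_{\operatorname{TV}}$, so $\mathcal{J}\bigl(|\int g\,d\mu_N - \int g\,d\mu^I|>\varepsilon\,\big|\,E_N^{R_N}\bigr)\to 0$ follows at once — rather than routing through expectations and variances; the latter works (since $\int g\,d\mu_N$ is bounded, so second moments also transfer and the variance under $E_N^\infty$ tends to $0$ by bounded convergence), but it adds unnecessary steps. None of these points affects correctness.
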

\begin{proof} From our work in the section above we see that $\mathcal{J}_{a^N, b^M}(\cdot |E_N^{\infty})$ satisfies the conditions of Assumption \ref{Ass.Infinite}. Consequently, the first statement of the lemma follows from Proposition \ref{Prop.GlobalLLN}. 

To address the second statement, note that from Proposition \ref{Prop.UpperTail} we can find $C_1, D_1 > 0$, depending on $a,b,\theta,\cm, \cd$, such that for all $N \geq 1$
$$\mathcal{J}_{a^N, b^M}(\ell_1 \geq N D_1 |E_N^{\infty}) \leq C_1 e^{-NA_0}.$$
We also note that we have the following inclusion of events for $R \geq N D_1$
$$E_N^{\infty}\setminus E^R_N = \{\lambda_1 > ND_1\} = \{\ell_1 > (N-1) \theta + ND_1 \} \subseteq \{\ell_1 \geq ND_1\}.$$
Combining the last two equations, we see that for some $C > 0$ and all $N \geq 1$, $R \geq N D_1$
$$ d_{\operatorname{TV}} \left(\mathcal{J}_{a^N, b^M}(\cdot |E_N^{R}), \mathcal{J}_{a^N, b^M}(\cdot |E_N^{\infty})\right) \leq C e^{-NA_0}.$$
The second statement of the lemma now follows from the first and the last equation, and (\ref{Eq.CaseIUpperTail}) holds with $D_1^I = D_1$, $C_1^I = C_1 + C$ and $C_2^I = C$.  
\end{proof}

We end this section with the following consequence of Proposition \ref{Prop.GlobalLLN}, which we will require in our arguments below. The proof is postponed to Section \ref{Section3.8}.
\begin{lemma}\label{Lem.UpperTailCaseI} Fix $a,b, \theta, \cm, \cd_1, A_0 > 0$ with $ab < 1$. There exist constants $C_2, D_2 > 0$, depending on all the previously listed parameters, such that if $A, B \in \mathbb{N}$ and $\cd_1 \geq |B - \cm A|$, we have
\begin{equation}\label{Eq.UpperTailAlpha}
\mathcal{J}_{a^A, b^B}(\lambda_1 \geq \min(A,B) D_2) \leq C_2 e^{-\min(A,B) A_0}.
\end{equation}
\end{lemma}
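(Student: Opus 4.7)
The plan is to reduce to Lemma~\ref{Lem.CaseIWeakConv}, which furnishes the desired exponential tail bound for $\mathcal{J}_{a^N, b^M}$ under the extra hypothesis $M \geq N$. The key tool for the reduction is the symmetry $\mathcal{J}_{a^A, b^B} = \mathcal{J}_{b^B, a^A}$ from Remark~\ref{Rem.Commute}, which ensures that the distribution of $\lambda_1$ is unchanged under swapping the two specializations. After such a (possibly trivial) swap the measure becomes $\mathcal{J}_{c_1^n, c_2^m}$, where $n = \min(A, B) \leq m = \max(A, B)$ and $\{c_1, c_2\} = \{a, b\}$. The idea is to verify that in each non-degenerate regime a single fixed choice of $\cm^\ast \geq 1$ and $\cd^\ast > 0$ (depending only on $\cm, \cd_1$) realizes the Case~I hypothesis $|m - n\cm^\ast| \leq \cd^\ast$, and then to invoke Lemma~\ref{Lem.CaseIWeakConv} with these parameters and the same $A_0$.

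There are two non-degenerate sub-cases. First, when $\cm \geq 1$ and $B \geq A$, the given inequality $|B - A\cm| \leq \cd_1$ matches the Case~I form directly with $\cm^\ast = \cm$ and $\cd^\ast = \cd_1$, and no swap is needed. Second, when $\cm \leq 1$ and $A > B$, the inequality $|B - \cm A| \leq \cd_1$ rearranges to $|A - B/\cm| \leq \cd_1/\cm$, so after the symmetry swap we take $\cm^\ast = 1/\cm \geq 1$ and $\cd^\ast = \cd_1/\cm$ and apply Lemma~\ref{Lem.CaseIWeakConv} to $\mathcal{J}_{b^B, a^A}$. In both sub-cases the lemma yields constants $C^\sharp, D^\sharp > 0$ such that $\ell_1 \geq n D^\sharp$ holds with probability at most $C^\sharp e^{-n A_0}$; since $\lambda_1 \leq \ell_1$ the same bound transfers to $\lambda_1$ at level $\min(A, B) D^\sharp$.

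The remaining mixed regimes---namely $\cm < 1$ with $B \geq A$, and $\cm > 1$ with $A > B$---are degenerate: the former forces $A \leq \cd_1/(1 - \cm)$ via $A \leq B \leq \cm A + \cd_1$, and the latter forces $A < \cd_1/(\cm - 1)$ via $A > B \geq \cm A - \cd_1$. In both, $\min(A, B)$ is bounded above by a constant $K^\ast$ depending only on $\cm, \cd_1$, and the trivial bound $\mathcal{J}_{a^A, b^B}(\lambda_1 \geq \min(A, B) D_2) \leq 1 \leq e^{K^\ast A_0} \cdot e^{-\min(A, B) A_0}$ applies. Taking $D_2$ to be the maximum and $C_2$ the maximum of the constants produced in the (at most two) non-degenerate sub-cases together with $e^{K^\ast A_0}$ completes the argument. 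The hard part is not the exponential estimate itself---which is quoted from Lemma~\ref{Lem.CaseIWeakConv}---but the parameter matching in the second paragraph: one must find a single $\cm^\ast \geq 1$ with bounded $\cd^\ast$ uniformly realizing the Case~I hypothesis across each infinite family of $(A, B)$, which is achieved by the reciprocal substitution $\cm \mapsto 1/\cm$ combined with the symmetry swap.
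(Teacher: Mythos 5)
Your proposal is correct and arrives at the same exponential bound, but it organizes the case analysis more economically than the paper. The paper partitions into the same four quadrants (sign of $\cm - 1$ crossed with sign of $A - B$), but handles every quadrant in the same way: construct an auxiliary sequence $M(N)$ that satisfies the Case~I hypotheses with a suitable $\cm^{\ast} \in \{\cm, \cm^{-1}\}$, passes through the given pair at a single index, and then invoke Proposition~\ref{Prop.UpperTail} directly (the lemma you cite, Lemma~\ref{Lem.CaseIWeakConv}, wraps up exactly that step, so your route through it is interchangeable). Your observation that the two ``mixed'' quadrants ($\cm < 1$ with $B \geq A$, and $\cm > 1$ with $A > B$) force a uniform bound on $\min(A,B)$ via $A(1-\cm) \leq \cd_1$ or $A(\cm - 1) < \cd_1$, so the trivial bound $\mathcal{J}_{a^A, b^B}(\cdot) \leq 1 \leq e^{K^{\ast}A_0} e^{-\min(A,B)A_0}$ suffices there, is a genuine simplification: it replaces the paper's inequality manipulations in Cases~2 and~4 (e.g. $\cm B - A \leq \cm A - B$) with a one-line boundedness argument, and it makes transparent which $(A,B)$ actually contribute asymptotics. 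Two small corrections to tighten the write-up: first, to embed a given pair $(A,B)$ into a Case~I sequence one should take $\cd^{\ast} = \max(1, \cd_1/\cm)$, not just $\cd_1/\cm$, so that the rounding error $|\lceil \cm^{\ast}N\rceil - \cm^{\ast}N| \leq 1$ at the auxiliary indices is also absorbed (the paper is careful about this with $\cd_2 = \max(\cd_1, 1)$); second, Lemma~\ref{Lem.CaseIWeakConv} already states its tail bound in terms of $\lambda_1$, not $\ell_1$, so the $\lambda_1 \leq \ell_1$ transfer at the end is harmless but unnecessary.
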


%
%
\subsection{Case II: $\rho_1 = a^N$, $\rho_2 = b_{\beta}^M$}\label{Section3.3} We assume the same conditions as in Case II of Definition \ref{DefScale}, and proceed to rewrite the measure in (\ref{Eq.CaseIIBetaEnsemble}) in a way that is suitable for the application of Proposition \ref{Prop.GlobalLLN}. From (\ref{Eq.CaseIIBetaEnsemble}) we have
\begin{equation}\label{Eq.CaseIIBetaEnsembleV2}
\begin{split}
&\mathcal{J}_{a^N, b_\beta^M}(\ell_1, \dots, \ell_N ) = \frac{1}{Z_{N}} \times \prod_{1 \leq i < j \leq N} \frac{\Gamma(\ell_i - \ell_j + 1)\Gamma(\ell_i - \ell_j + \theta)}{\Gamma(\ell_i - \ell_j) \Gamma(\ell_i - \ell_j + 1 - \theta)} \cdot \prod_{i = 1}^N e^{-NV_N(\ell_i/N)}, \\
& \mbox{ where } V_N(x) = \frac{1}{N}\log \left(\frac{\Gamma(Nx+1)\Gamma(M+N\theta-\theta-Nx+1)}{(ab)^{Nx}\cdot N^{M+N\theta-\theta + 2}e^{-M-N\theta+\theta-2}}\right).
\end{split}
\end{equation}
The difference between (\ref{Eq.CaseIIBetaEnsemble}) and (\ref{Eq.CaseIIBetaEnsembleV2}) comes from multiplying each $w(\ell_i;N)$ by a deterministic constant $C$ that depends on $M,N$ and $\theta$, and multiplying the normalization constant by $C^N$, which results in the same probability measure. We proceed to verify that that the measures in (\ref{Eq.CaseIIBetaEnsembleV2}) satisfy the conditions of Assumption \ref{Ass.Finite}, where $K$ will be replaced with $N$, and $R_K$ with $M = M(N)$.

From Definition \ref{DefScale} we have that (\ref{Eq.FinAss1}) holds with $\mathsf{R} = \mathsf{m}$ and $A_1 = \cd$. The continuity of $V_N$ on $I_N = [0, M  N^{-1} + (N-1)  N^{-1} \theta]$ follows from the continuity of the Gamma function on $(0,\infty)$. Define $V$ on $I = [0, \cm + \theta]$ through
\begin{equation}\label{Eq.DefVCaseII}
V(x)=x\log(x)+(\cm+\theta-x)\log (\cm+\theta-x)-x\log (ab),
\end{equation}
and extend $V$ continuously to $[0, \infty)$ by setting $V(x) = V(\cm + \theta)$ for $x \geq \cm + \theta$. Note that 
\begin{equation}\label{Eq.CaseIIVPrime}
V'(x) = \log(x) - \log(\cm + \theta - x) - \log(ab).
\end{equation}
Consequently, we can find $A_3 > 0, A_4 > 0$, depending on $a,b,\theta, \cm$ that satisfy the second inequality in (\ref{Eq.FinAss2}) and (\ref{Eq.FinAss3}). Summarizing the work so far, we see that to verify Assumption \ref{Ass.Finite} what remains is to show that we can find $A_2 > 0$, depending on $ \theta, \cm, \cd$, such that 
\begin{equation}\label{Eq.S33R1}
|V_N(x) - V(x)| \leq A_2 N^{-1} \log (N + 1) \mbox{ for } x \in I_N.
\end{equation}

From Lemma \ref{Lem.GammaApprox} we get for some $\cq_1, \cq_2\in[0,1]$ and all $x \in I_N$
\begin{equation}\label{Eq.CaseIIExp3}
\begin{split}
&V_N(x) = - x \log (ab) + ( x + 2/N) \log (x + 2/N)+(\Delta/N-x + 1/N)\log(\Delta/N - x + 1/N), \\
& + \frac{\log(N) + 1}{N} - \frac{\cq_1 \log (Nx + 2)}{N} - \frac{\log(Nx + 1)}{N} - \frac{\cq_2 \log (\Delta - Nx +1)}{N},
\end{split}
\end{equation}
where $\Delta = M + N\theta - \theta$. Combining (\ref{Eq.DefVCaseII}) and (\ref{Eq.CaseIIExp3}), we deduce (\ref{Eq.S33R1}) once we use that $|\Delta/N - \cm -\theta| \leq (\cd + \theta)/N$. Overall, we conclude that $\mathcal{J}_{a^N, b_{\beta}^M}$ satisfies the conditions of Assumption \ref{Ass.Finite}. Consequently, Proposition \ref{Prop.GlobalLLN} gives the following statement.
\begin{lemma}\label{Lem.CaseIIWeakConv} Assume the same conditions as in Case II of Definition \ref{DefScale}. Let $(\ell_1^N, \dots, \ell^N_N)$ be distributed as in (\ref{Eq.CaseIIBetaEnsemble}) or equivalently (\ref{Eq.CaseIIBetaEnsembleV2}), and set $\mu_N = (1/N)\sum_{i = 1}^N \delta(\ell^N_i/N)$. Then, $\mu_N$ converges weakly in probability to a deterministic measure $\mu^{II}$ on $[0,\infty)$. 
\end{lemma}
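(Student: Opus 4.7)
The plan is to deduce Lemma \ref{Lem.CaseIIWeakConv} directly from Proposition \ref{Prop.GlobalLLN} by verifying Assumption \ref{Ass.Finite} for the family $\mathcal{J}_{a^N, b_{\beta}^M}$ under the discrete $\beta$-ensemble representation already established in (\ref{Eq.CaseIIBetaEnsemble}). The roles of the parameters in Assumption \ref{Ass.Finite} will be $K = N$, $R_K = M(N)$, $\mathsf{R} = \mathsf{m}$, and $A_1 = \cd$, so the first bullet point is immediate from the scaling $M = N\mathsf{m} + O(1)$ in Definition \ref{DefScale}.

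The next step is to rewrite the weight $w(\ell;N)$ from (\ref{Eq.CaseIIBetaEnsemble}) in the form $\exp(-N V_N(\ell/N))$. Multiplying $w$ by a convenient deterministic constant depending on $M,N,\theta$ (which is absorbed into the normalization) yields the expression (\ref{Eq.CaseIIBetaEnsembleV2}) for $V_N$. Continuity of $V_N$ on $I_N = [0, M N^{-1} + (N-1)N^{-1}\theta]$ is then automatic from the continuity of $\Gamma$ on $(0,\infty)$, which verifies the second bullet point.

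For the third and fourth bullet points I would take the candidate limit $V$ given by (\ref{Eq.DefVCaseII}), extended to $[0,\infty)$ by setting $V(x) = V(\mathsf{m}+\theta)$ for $x \geq \mathsf{m}+\theta$. Differentiability on $(0, \mathsf{m}+\theta)$ and the explicit derivative (\ref{Eq.CaseIIVPrime}) supply the constants $A_3, A_4$ in (\ref{Eq.FinAss2}) and (\ref{Eq.FinAss3}) depending only on $a,b,\theta,\mathsf{m}$. The main technical point is the approximation bound (\ref{Eq.S33R1}), which I would establish by applying the Stirling-type estimates of Lemma \ref{Lem.GammaApprox} to both Gamma factors $\Gamma(Nx+1)$ and $\Gamma(\Delta - Nx + 1)$ appearing in $V_N$, where $\Delta = M + N\theta - \theta$. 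This produces the expansion (\ref{Eq.CaseIIExp3}); subtracting (\ref{Eq.DefVCaseII}) and using $|\Delta/N - \mathsf{m} - \theta| \leq (\cd + \theta)/N$ (together with Taylor expansion of $y \mapsto y \log y$) gives (\ref{Eq.S33R1}) with the desired $O(N^{-1}\log(N+1))$ error.

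With Assumption \ref{Ass.Finite} in place, Proposition \ref{Prop.GlobalLLN} applies directly, yielding weak convergence in probability of $\mu_N$ to a deterministic, compactly supported measure $\mu^{II}$ on $[0,\infty)$ whose density is bounded by $\theta^{-1}$. The main obstacle is the careful bookkeeping in the Stirling expansion so as to retain the $O(N^{-1}\log(N+1))$ error uniformly on $I_N$, especially near the endpoints where $Nx+1$ or $\Delta - Nx + 1$ is of order $1$; everything else in the verification is routine.
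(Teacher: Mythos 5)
Your proposal matches the paper's argument step for step: the same rewriting of $w(\ell;N)$ into the exponential form (\ref{Eq.CaseIIBetaEnsembleV2}), the same identification $K=N$, $R_K=M(N)$, $\mathsf{R}=\mathsf{m}$, $A_1=\cd$, the same candidate $V$ from (\ref{Eq.DefVCaseII}) with constant extension past $\cm+\theta$, the same use of (\ref{Eq.CaseIIVPrime}) to supply $A_3,A_4$, the same appeal to Lemma \ref{Lem.GammaApprox} to obtain (\ref{Eq.CaseIIExp3}) and hence (\ref{Eq.S33R1}), and finally Proposition \ref{Prop.GlobalLLN}. Your remark about bookkeeping near the endpoints of $I_N$ (where $Nx+1$ or $\Delta-Nx+1$ is $O(1)$, so $y\mapsto y\log y$ is not Lipschitz) is a legitimate concern that the paper glosses over; the mean value theorem applied to $y\log y$ gives a factor $O(\log N)$ there, which is absorbed by the $N^{-1}\log(N+1)$ tolerance in (\ref{Eq.S33R1}), so the approach goes through exactly as you describe.
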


%
%
\subsection{Case III: $\rho_1 = a^N$, $\rho_2 = \tau_s$}\label{Section3.4} We assume the same conditions as in Case III of Definition \ref{DefScale}. In this case the measures $\mathcal{J}_{a^N,\tau_{Nt}}$ were already studied in \cite[Section 6.3]{DD22}, so we will be brief. From (\ref{Eq.CaseIIIBetaEnsemble}) we have for any $A \in \mathbb{R}$ and $R \in [0, \infty]$ that
\begin{equation}\label{Eq.CaseIIIBetaEnsembleV2}
\begin{split}
&\mathcal{J}_{a^N, \tau_{Nt}}(\ell_1, \dots, \ell_N |E_N^R) = \frac{1}{Z_{N,A}} \times \prod_{1 \leq i < j \leq N} \frac{\Gamma(\ell_i - \ell_j + 1)\Gamma(\ell_i - \ell_j + \theta)}{\Gamma(\ell_i - \ell_j) \Gamma(\ell_i - \ell_j + 1 - \theta)} \cdot \prod_{i = 1}^N e^{-NV_N(\ell_i/N)}, \\
& \mbox{ where } V_N(x)=A+\frac{1}{N}\log \left( \frac{\Gamma(Nx+1)}{(atN\theta)^{Nx}}\right).
\end{split}
\end{equation}
The difference between (\ref{Eq.CaseIIIBetaEnsemble}) and (\ref{Eq.CaseIIIBetaEnsembleV2}) is that we multiplied each $w(\ell_i;N)$ by the same constant $e^A$ and the normalization constant by $e^{NA}$, which results in the same measure. We note that (\ref{Eq.CaseIIIBetaEnsembleV2}) agrees with \cite[(6.27)]{DD22} upon replacing $at$ with $t$, and the work in \cite[Section 6.3.1]{DD22} shows that we can choose $A$ large enough, depending on $a, t, \theta$, so that $\mathcal{J}_{a^N, \tau_{Nt}}(\cdot|E_N^{\infty})$ satisfies the conditions of Assumption \ref{Ass.Infinite} with $K$ replaced with $N$. We thus obtain the following statement.

\begin{lemma}\label{Lem.CaseIIIWeakConv} Assume the same conditions as in Case III of Definition \ref{DefScale}. Let $(\ell_1^N, \dots, \ell^N_N)$ be distributed as in (\ref{Eq.CaseIIIBetaEnsemble}) or equivalently (\ref{Eq.CaseIIIBetaEnsembleV2}) with $R = \infty$, and set $\mu_N = (1/N)\sum_{i = 1}^N \delta(\ell^N_i/N)$. Then, $\mu_N$ converges weakly in probability to a deterministic measure $\mu^{III}$ on $[0,\infty)$. In addition, if we fix $A_0 > 0$, we can find constants $C_1^{III}, C_2^{III}, D_1^{III} > 0$, depending on $a,t,\theta$ and $A_0$, such that if instead of $R = \infty$, we set $R = R_N \in [N D_1^{III}, \infty]$ in (\ref{Eq.CaseIIIBetaEnsemble}), then $\mu_N$ as above converges weakly in probability to the same $\mu^{III}$ and for all $N \geq 1$ 
\begin{equation}\label{Eq.CaseIIIUpperTail}
\begin{split}
&\mathcal{J}_{a^N, \tau_{Nt}}(\lambda_1 \geq ND^{III}_1 |E_N^{R}) \leq C_1^{III} e^{-NA_0}, \hspace{2mm} \\
& d_{\operatorname{TV}} \left(\mathcal{J}_{a^N, \tau_{Nt}}(\cdot |E_N^{R}), \mathcal{J}_{a^N, \tau_{Nt}}(\cdot |E_N^{\infty})\right) \leq C_2^{III} e^{-NA_0} .
\end{split}
\end{equation}
\end{lemma}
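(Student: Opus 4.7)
The plan is to follow the proof of Lemma \ref{Lem.CaseIWeakConv} almost verbatim, since the paragraph preceding the lemma has already verified -- via the reference to \cite[Section 6.3.1]{DD22} -- that $\mathcal{J}_{a^N,\tau_{Nt}}(\cdot | E_N^\infty)$ satisfies Assumption \ref{Ass.Infinite} with $K$ replaced by $N$. With this in hand, the first assertion of the lemma follows from a direct application of Proposition \ref{Prop.GlobalLLN}: one obtains a deterministic measure $\mu^{III}$, compactly supported in $[0,\infty)$ and absolutely continuous with density bounded by $\theta^{-1}$, such that the empirical measures $\mu_N$ associated to $\mathcal{J}_{a^N,\tau_{Nt}}(\cdot | E_N^\infty)$ converge weakly in probability to $\mu^{III}$.

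For the upper tail estimate, I would apply Proposition \ref{Prop.UpperTail} with the fixed constant $A_0$ to extract $C_1, D_1 > 0$, depending on $a,t,\theta$ and $A_0$, so that
$$\mathcal{J}_{a^N,\tau_{Nt}}(\ell_1 \geq N D_1 \mid E_N^\infty) \leq C_1\, e^{-N A_0} \quad \text{for all } N \geq 1.$$
The total variation bound then proceeds via the same event-inclusion observation used in Case I: for any $R \geq N D_1$, any partition $\lambda \in E_N^\infty \setminus E_N^R$ satisfies $\lambda_1 > R \geq N D_1$, and hence $\ell_1 = \lambda_1 + (N-1)\theta \geq N D_1$. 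Combining this inclusion with the upper tail bound, and using that conditioning on a high-probability event changes the measure by at most twice the probability of its complement in total variation, yields a constant $C > 0$ with
$$d_{\operatorname{TV}}\!\left(\mathcal{J}_{a^N,\tau_{Nt}}(\cdot | E_N^R),\, \mathcal{J}_{a^N,\tau_{Nt}}(\cdot | E_N^\infty)\right) \leq C\, e^{-N A_0}.$$
Taking $D_1^{III} = D_1$, $C_2^{III} = C$, and $C_1^{III} = C_1 + C$ then gives both inequalities in (\ref{Eq.CaseIIIUpperTail}).

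Finally, the claim that $\mu_N$ under $\mathcal{J}_{a^N,\tau_{Nt}}(\cdot | E_N^R)$ converges to the same $\mu^{III}$ for every $R \in [N D_1^{III}, \infty]$ follows immediately from the total variation bound of the previous step combined with the unconditional weak convergence already established: on a coupling event of probability $1 - O(e^{-NA_0})$ the two measures agree and so do the corresponding empirical measures. I do not expect any substantive obstacle in carrying out this plan; the argument is essentially a transcription of the Case I proof, and the only analytic content (verification of Assumption \ref{Ass.Infinite}) has already been extracted from \cite[Section 6.3.1]{DD22} in the paragraph immediately preceding the statement of the lemma.
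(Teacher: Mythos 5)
Your proposal is correct and matches the paper's approach exactly: the paper's own proof observes that the verification of Assumption \ref{Ass.Infinite} was already done above (citing \cite[Section 6.3.1]{DD22}), applies Proposition \ref{Prop.GlobalLLN} for the weak convergence, and then states that the second part is obtained by repeating verbatim the argument in the proof of Lemma \ref{Lem.CaseIWeakConv}, which is precisely the Proposition \ref{Prop.UpperTail} plus event-inclusion plus total-variation argument you lay out.
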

\begin{proof} Since $\mathcal{J}_{a^N, \tau_{Nt}}(\cdot|E_N^{\infty})$ satisfies the conditions of Assumption \ref{Ass.Infinite}, the first part of the lemma follows from Proposition \ref{Prop.GlobalLLN}. To prove the second statement in the lemma we can verbatim repeat the argument in the proof of Lemma \ref{Lem.CaseIWeakConv}.
\end{proof}

%
%
\subsection{Case IV: $\rho_1 = a_{\beta}^N$, $\rho_2 = b_{\beta}^M$}\label{Section3.5} We assume the same conditions as in Case IV of Definition \ref{DefScale}. Let us briefly explain what we intend to do in this section. Our ultimate goal in this case is to study the measures $ \mathcal{J}_{a^N_\beta, b^M_\beta}$; however, these do not readily fit into the framework of discrete $\beta$-ensembles. In order to remedy this, we introduce a new sequence $\mathbb{P}^d_K$ of $\beta$-ensembles in (\ref{Eq.CaseIVBetaEnsembleV2}) below, which satisfies the property 
\begin{equation}\label{Eq.CaseIVSubseq}
\mathbb{P}^d_{Nd}(\ell_1, \dots, \ell_{Nd} ) = \mathcal{J}_{a^N_{\beta}, b^M_{\beta}}\left(\ell_1, \dots, \ell_{Nd} \vert E^{\min(M,N)}_{Nd} \right),
\end{equation}
where we recall that $E^R_K$ is as in (\ref{Eq.DefEventTrunc}). We will show that $\mathbb{P}^d_K$ satisfies the conditions of Assumption \ref{Ass.Finite}, which will allow us to conclude some statements for these measures. These statements will automatically hold for $\mathbb{P}^d_{Nd}$ as a subsequence, and hence for $\mathcal{J}_{a^N_\beta, b^M_\beta}(\cdot | E^{\min(M,N)}_{Nd})$ in view of (\ref{Eq.CaseIVSubseq}). By picking $d$ sufficiently large, we will be able to further show that $\mathcal{J}_{a^N_\beta, b^M_\beta}(\cdot | E^{\min(M,N)}_{Nd})$ is quite close to the measure $ \mathcal{J}_{a^N_\beta, b^M_\beta}$, and that will allow us to transfer properties of the former to the latter. We now turn to the details of the above outline, and start by specifying how large we need to eventually take $d$ in the following lemma. 
\begin{lemma}\label{Lem.CaseIVLengthBound} Assume the same conditions as in Case IV of Definition \ref{DefScale} and fix $A_0 > 0$. We can find $ C_2^{IV}, D_2^{IV} > 0$, depending on $a,b, \theta,\cm, \cd$ and $A_0$, such that for all $N \geq 1$ and integers $d \geq D_2^{IV}$
\begin{equation}\label{Eq.CaseIVTruncate}
d_{\operatorname{TV}} \left( \mathcal{J}_{a^N_\beta, b^M_\beta}(\cdot | E^{\min(M,N)}_{Nd} ),  \mathcal{J}_{a^N_\beta, b^M_\beta}(\cdot ) \right)  \leq C_2^{IV} \cdot e^{-\min(M,N) A_0}.
\end{equation}
\end{lemma}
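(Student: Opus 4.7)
My plan is to pass from the pure-$\beta$ case to the pure-$\alpha$ case via partition conjugation, apply the Case I upper-tail bound from Lemma~\ref{Lem.UpperTailCaseI}, and convert the resulting tail estimate into a total variation estimate via a standard conditioning inequality. The whole argument parallels the second paragraph of the proof of Lemma~\ref{Lem.CaseIWeakConv}, with the added twist of the $\beta \leftrightarrow \alpha$ duality.

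\emph{Step 1: Duality identity.} The first and most delicate step is to establish
$$\mathcal{J}_{a_\beta^N, b_\beta^M}(\lambda) = \mathcal{J}_{a^N, b^M}(\lambda'),$$
where the left-hand side uses inverse temperature $\theta$ while the right-hand side uses inverse temperature $\theta^{-1}$. Taking $\beta_i = \theta^{-1}a$ in (\ref{S12E7}) gives $J_\lambda(a_\beta^N;\theta) = \tilde{J}_{\lambda'}(a^N;\theta^{-1})$. For $\tilde{J}_\lambda(b_\beta^M;\theta)$, I would first use (\ref{S12E2}) to express it as $J_\lambda(b_\beta^M;\theta)$ times an arm/leg product, then apply (\ref{S12E7}) again to rewrite $J_\lambda(b_\beta^M;\theta) = \tilde{J}_{\lambda'}(b^M;\theta^{-1})$, and finally use (\ref{S12E2}) for $\lambda'$ at parameter $\theta^{-1}$. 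Under conjugation the arm and leg lengths swap, and a short algebraic check shows that the two arm/leg ratios are reciprocals, yielding $\tilde{J}_\lambda(b_\beta^M;\theta) = J_{\lambda'}(b^M;\theta^{-1})$. Reindexing the partition function sum by $\mu = \lambda'$ and invoking the symmetry $\mathcal{J}_{\rho_1,\rho_2} = \mathcal{J}_{\rho_2,\rho_1}$ from Remark~\ref{Rem.Commute} then matches the two normalizations via $H_\theta(a_\beta^N;b_\beta^M) = H_{\theta^{-1}}(a^N;b^M)$, completing the duality.

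\emph{Step 2: Applying the Case I upper tail.} Since $\ell(\lambda) = (\lambda')_1$, the duality transforms $\{\ell(\lambda) \geq Nd\}$ into $\{\mu_1 \geq Nd\}$ under the Case I measure $\mathcal{J}_{a^N,b^M}$ at parameter $\theta^{-1}$. Since $ab \in (0,1)$ and $|M - N\cm| \leq \cd$, I apply Lemma~\ref{Lem.UpperTailCaseI} with the parameters there replaced by $(a,b,\theta^{-1},\cm,\cd,A_0)$ and $(A,B)=(N,M)$. This furnishes constants $C_2, D_2 > 0$, depending on $a,b,\theta,\cm,\cd,A_0$, such that
$$\mathcal{J}_{a^N,b^M}\bigl(\mu_1 \geq \min(M,N) D_2\bigr) \leq C_2 \, e^{-\min(M,N) A_0}.$$
Setting $D_2^{IV} := \lceil D_2 \rceil$ ensures $Nd \geq ND_2 \geq \min(M,N) D_2$ for every integer $d \geq D_2^{IV}$, and so $\mathcal{J}_{a_\beta^N,b_\beta^M}(\ell(\lambda) \geq Nd) \leq C_2 e^{-\min(M,N) A_0}$.

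\emph{Step 3: Tail bound to total variation.} By (\ref{Eq.VanishCaseIV}) the measure $\mathcal{J}_{a_\beta^N,b_\beta^M}$ is supported on $\{\lambda_1 \leq \min(M,N)\}$, so on its support the complement of $E^{\min(M,N)}_{Nd}$ coincides with $\{\ell(\lambda) > Nd\}$. The elementary inequality $d_{\operatorname{TV}}(\mathbb{P}(\cdot|E),\mathbb{P}(\cdot)) \leq 2\mathbb{P}(E^c)$, valid whenever $\mathbb{P}(E) \geq 1/2$, then yields (\ref{Eq.CaseIVTruncate}) with $C_2^{IV} = 2C_2$ for all sufficiently large $N$; for the finitely many small $N$ where $\mathbb{P}(E) < 1/2$, the trivial bound $d_{\operatorname{TV}} \leq 1$ can be absorbed by enlarging $C_2^{IV}$. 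The main obstacle is Step~1, since once the duality is in hand, Steps~2 and~3 are essentially bookkeeping and an appeal to the already-established Case I result.
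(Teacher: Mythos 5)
Your proposal is correct and follows essentially the same route as the paper's proof: establish the conjugation duality $\mathcal{J}^{\theta}_{a^N_\beta, b^M_\beta}(\lambda) = \mathcal{J}^{\theta^{-1}}_{a^N, b^M}(\lambda')$ via (\ref{S12E2}), (\ref{S12E7}) and Remark \ref{Rem.Commute}, apply Lemma~\ref{Lem.UpperTailCaseI} at inverse temperature $\theta^{-1}$ to bound $\mathcal{J}^{\theta}_{a^N_\beta,b^M_\beta}(\ell(\lambda)>Nd)$, and convert that tail estimate into a total-variation bound using the support fact (\ref{Eq.VanishCaseIV}). The only cosmetic difference is that you spell out the elementary inequality $d_{\operatorname{TV}}(\mathbb{P}(\cdot\mid E),\mathbb{P})\leq 2\mathbb{P}(E^c)$, which the paper leaves implicit.
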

\begin{proof} Let us denote by $\mathcal{J}^{\theta}_{\rho_1, \rho_2}$ the Jack measures in Definition \ref{DefJM}, to emphasize the dependence on $\theta$. From (\ref{S12E2}), (\ref{S12E7}) and Remark \ref{Rem.Commute} we get
\begin{equation*}
\begin{split}
\mathcal{J}^{\theta}_{a^N_\beta, b^M_\beta}(\lambda) \propto J_{\lambda}(a^N_\beta;\theta)\tilde{J}_{\lambda}(b^M_\beta;\theta)
=\tilde{J}_{\lambda'}(a^N;\theta^{-1})J_{\lambda'}(b^M;\theta^{-1}) \propto \mathcal{J}^{\theta^{-1}}_{a^N, b^M}(\lambda').
\end{split}
\end{equation*}
In other words, if $\lambda$ is distributed according to $\mathcal{J}^{\theta}_{a^N_\beta, b^M_\beta}$, then $\lambda'$ is distributed according to $\mathcal{J}^{\theta^{-1}}_{a^N, b^M}$. From the last identity and Lemma \ref{Lem.UpperTailCaseI} we can find $C_2, D_2$, depending on $a,b, \theta, \cm, \cd$ and $A_0$, such that for any integer $d \geq D_2$ and $N \geq 1$
\begin{equation}\label{S35IE1}
1- \mathcal{J}^{\theta}_{a^N_\beta, b^M_\beta}(E^{\min(M,N)}_{Nd}) = \mathcal{J}^{\theta}_{a^N_\beta, b^M_\beta} ( \ell(\lambda) > Nd ) \leq \mathcal{J}^{\theta^{-1}}_{a^N, b^M} ( \lambda_1 \geq ND_2 ) \leq C_2 e^{-\min(M,N)A_0}.
\end{equation}
We mention that in the first equality we used that $\mathcal{J}^{\theta}_{a^N_\beta, b^M_\beta}(E^{\min(M,N)}_{\infty}) = 1$, see (\ref{Eq.VanishCaseIV}). The statement of the lemma now follows from (\ref{S35IE1}).
\end{proof}
Fix any $d \in \mathbb{N}$ and define the sequences
$$N_K = \lceil K/d \rceil, \mbox{ and } M_K = M(\lceil K/d \rceil),$$
where $M(N)$ is still as in Case IV of Definition \ref{DefScale}. With this data we define the measures $\mathbb{P}^d_K$ on $\mathbb{W}^{\theta, R_K}_{K}$ as in (\ref{Eq.State}) with $R_K = \min(M_K, N_K)$ through
\begin{equation}\label{Eq.CaseIVBetaEnsembleV2}
\begin{split}
&\mathbb{P}^d_K(\ell_1, \dots, \ell_K) = \frac{1}{Z_K} \times \prod_{1 \leq i < j \leq K} \frac{\Gamma(\ell_i - \ell_j + 1)\Gamma(\ell_i - \ell_j + \theta)}{\Gamma(\ell_i - \ell_j) \Gamma(\ell_i - \ell_j + 1 - \theta)} \cdot \prod_{i = 1}^K e^{-KV_K(\ell_i/K)}, \\
& \mbox{ where } V_K(x) = \frac{1}{K}\log \left(\frac{\Gamma(Kx+1)\Gamma(Kx+\theta)\Gamma(\Delta_K^N-Kx+1)\Gamma(\Delta^M_K-Kx+1)}{(ab)^{Kx}\cdot K^{\Delta_K^M +\Delta_K^N + 3 + \theta}e^{- \Delta_K^N - \Delta_K^M -3 - \theta }}\right),
\end{split}
\end{equation}
and we have set $\Delta^N_K=N_K+(K-1)\theta$ and $\Delta^M_K =M_K+(K-1)\theta$. The key observation is that the sequence $\mathbb{P}^d_K$ satisfies (\ref{Eq.CaseIVSubseq}), which follows from the way we defined our sequences $M_K, N_K, R_K$, and the fact that (\ref{Eq.CaseIVBetaEnsembleV2}) with $K = Nd$ differs from (\ref{Eq.CaseIVBetaEnsemble}) only in that we multiplied each $w(\ell_i;K)$ by the same constant, which results in the same measure. In the remainder of this section, we proceed to check that $\mathbb{P}^d_K$ satisfies the conditions of Assumption \ref{Ass.Finite}, and apply Proposition \ref{Prop.GlobalLLN}.

From Definition \ref{DefScale} we have that (\ref{Eq.FinAss1}) holds with $\cR=d^{-1} \min(1,\cm)$ and $A_1=\cd + 1$. The continuity of $V_K(x)$ on $I_K=[0,\min(M_K,N_K) K^{-1}+(K-1) K^{-1}\theta]$ follows from the continuity of the Gamma function on $(0,\infty)$. Define $V$ on $I=[0,\cR+\theta]$ through
\begin{equation}\label{Eq.DefVCaseIV}
V(x)= -x\log(ab)+2x\log(x)+(d^{-1}+\theta-x)\log(d^{-1}+\theta-x)+(d^{-1}\cm+\theta-x)\log(d^{-1}\cm +\theta-x),
\end{equation}
and extend $V$ continuously to $[0,\infty)$ by setting $V(x) = V(\cR + \theta)$ for $x \geq \cR + \theta$. Note that
$$V'(x)=-\log(ab)+2\log(x)-\log(d^{-1}+\theta-x)- \log(d^{-1}\cm+\theta-x).$$
Consequently, we can find $A_3 > 0, A_4 > 0$, depending on $a,b,\theta, \cm, d$ that satisfy the second inequality in (\ref{Eq.FinAss2}) and (\ref{Eq.FinAss3}). Summarizing the work so far, we see that to verify Assumption \ref{Ass.Finite} what remains is to show that we can find $A_2 > 0$, depending on $\theta, \cm, \cd, d$, such that 
\begin{equation}\label{Eq.S35R1}
|V_K(x) - V(x)| \leq A_2  K^{-1} \log (K + 1) \mbox{ for } x \in I_K.
\end{equation}
From Lemma \ref{Lem.GammaApprox} we get for some $\cq_1, \dots, \cq_4 \in [0,1]$
\begin{equation}\label{Eq.CaseIVEqVK}
\begin{split}
&V_K(x) = -x\log(ab)+ (x+2/K)\log (x + 2/K) + (x+ [1 +\theta]/K)\log(x + [1 + \theta]/K) \\
&- ([\Delta_K^N+1]/K - x ) \log([\Delta_K^N + 1]/K-x) - ([\Delta_K^M + 1]/K - x) \log([\Delta_K^M + 1]/K-x) \\
& +\frac{2\log(K) + 2}{K} - \frac{\cq_1 \log(Kx + 2)}{K} - \frac{ \log(Kx + 1)}{K} - \frac{\cq_2 \log(Kx + 1 + \theta)}{K} - \frac{ \log(Kx + \theta)}{K} \\
&- \frac{\cq_3 \log(\Delta_K^N - Kx + 1)}{K} - \frac{\cq_4 \log(\Delta_K^M - Kx + 1)}{K}.
\end{split}
\end{equation}
Using (\ref{Eq.DefVCaseIV}) and (\ref{Eq.CaseIVEqVK}), we deduce (\ref{Eq.S35R1}) once we use that $|\Delta^N_K/K - 1/d -\theta| \leq (1 + \theta)/K$ and $|\Delta^M_K/K - \cm/d -\theta| \leq (\cd + 1 + \theta)/K$. Overall, we conclude that $\mathbb{P}^d_K$ satisfies the conditions of Assumption \ref{Ass.Finite}, and so from Proposition \ref{Prop.GlobalLLN} we obtain the following statement.
\begin{lemma}\label{Lem.CaseIVWeakConv} Assume the same conditions as in Case IV of Definition \ref{DefScale} and fix $d \in \mathbb{N}$. Let $(\ell_1^K, \dots, \ell^K_K)$ be distributed according to $\mathbb{P}^d_K$ as in (\ref{Eq.CaseIVBetaEnsembleV2}) and set $\mu_K = (1/K)\sum_{i = 1}^K \delta(\ell^K_i/K)$. Then, the sequence $\mu_K$ converges weakly in probability to a deterministic measure $\mu^{IV}_{d}$ on $[0,\infty)$. 
\end{lemma}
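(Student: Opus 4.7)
The plan is to deduce the lemma as an immediate corollary of Proposition \ref{Prop.GlobalLLN}, once we confirm that all the bookkeeping done in the discussion leading up to the lemma indeed verifies Assumption \ref{Ass.Finite} for the sequence $\mathbb{P}^d_K$. Explicitly, the four bullet points of Assumption \ref{Ass.Finite} are checked as follows: equation (\ref{Eq.FinAss1}) holds with $\mathsf{R} = d^{-1}\min(1,\cm)$ and $A_1 = \cd + 1$ by the paragraph following (\ref{Eq.CaseIVBetaEnsembleV2}); the continuity of $V_K$ on $I_K$ is inherited from the continuity of the Gamma function on $(0,\infty)$; the function $V$ in (\ref{Eq.DefVCaseIV}) is continuous on $[0,\cR+\theta]$ and differentiable on $(0,\cR+\theta)$ with a derivative satisfying the pointwise bound (\ref{Eq.FinAss3}) for suitable constants $A_3,A_4$ depending on $a,b,\theta,\cm,d$; and finally, the expansion (\ref{Eq.CaseIVEqVK}) obtained from Lemma \ref{Lem.GammaApprox} together with the inequalities $|\Delta^N_K/K - 1/d - \theta| \leq (1+\theta)/K$ and $|\Delta^M_K/K - \cm/d - \theta| \leq (\cd + 1 + \theta)/K$ yields (\ref{Eq.S35R1}), which is precisely the first inequality in (\ref{Eq.FinAss2}) with $A_2$ depending on $\theta,\cm,\cd,d$.

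With Assumption \ref{Ass.Finite} in hand, the proof reduces to a single citation. We apply Proposition \ref{Prop.GlobalLLN} (with $K$ playing the role of $K$ there) to conclude that the empirical measures $\mu_K$ converge weakly in probability to a deterministic probability measure, which we denote by $\mu^{IV}_d$. The proposition further records that $\mu^{IV}_d$ is compactly supported in $[0,\infty)$, absolutely continuous with respect to Lebesgue measure, and has density bounded by $\theta^{-1}$; these additional properties, although not stated in the lemma, will be useful in subsequent sections. There is no essential obstacle in this argument — all the analytic input has already been assembled in the preceding paragraph, and the lemma is essentially a bookkeeping statement that packages the output of Proposition \ref{Prop.GlobalLLN} for later use (in particular, so that, combined with Lemma \ref{Lem.CaseIVLengthBound} and the identification (\ref{Eq.CaseIVSubseq}) along the subsequence $K = Nd$, it yields the limit shape for the unconditioned Jack measure $\mathcal{J}_{a^N_\beta,b^M_\beta}$).
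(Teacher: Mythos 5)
Your proof is correct and matches the paper's argument essentially verbatim: the paper verifies Assumption \ref{Ass.Finite} for $\mathbb{P}^d_K$ in the paragraphs preceding the lemma (with exactly the constants $\mathsf{R}=d^{-1}\min(1,\cm)$, $A_1=\cd+1$, the continuity of $V_K$, the properties of $V$, and the estimate (\ref{Eq.S35R1})) and then cites Proposition \ref{Prop.GlobalLLN} to conclude. Your write-up correctly identifies the lemma as a bookkeeping statement whose only substantive content is checking the hypotheses of the proposition.
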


%
%
\subsection{Case V: $\rho_1 = a_{\beta}^N$, $\rho_2 = \tau_s$}\label{Section3.6} We assume the same conditions as in Case V of Definition \ref{DefScale}. Our strategy in this case is quite similar to Case IV above. In particular, we introduce a sequence $\mathbb{P}^d_K$ of $\beta$-ensembles in (\ref{Eq.CaseVBetaEnsembleV2}) below, which satisfies the property 
\begin{equation}\label{Eq.CaseVSubseq}
\mathbb{P}^d_{Nd}(\ell_1, \dots, \ell_{Nd} ) = \mathcal{J}_{a^N_{\beta}, \tau_{Nt}}\left(\ell_1, \dots, \ell_{Nd} \vert E^{N}_{Nd} \right).
\end{equation}
We will show that $\mathbb{P}^d_K$ satisfies the conditions of Assumption \ref{Ass.Finite}, which will allow us to conclude some statements for these measures. These statements will automatically hold for $\mathbb{P}^d_{Nd}$ as a subsequence, and hence for $\mathcal{J}_{a^N_\beta, \tau_{Nt}}(\cdot | E^{N}_{Nd})$ in view of (\ref{Eq.CaseVSubseq}). By picking $d$ sufficiently large, we will be able to further show that $\mathcal{J}_{a^N_\beta, \tau_{Nt}}(\cdot | E^{N}_{Nd})$ is quite close to the measure $ \mathcal{J}_{a^N_\beta, \tau_{Nt}}$, and that will allow us to transfer properties of the former to the latter. We now turn to the details of this outline, starting by specifying how large we need to eventually take $d$ in the following lemma.
\begin{lemma}\label{Lem.CaseVLengthBound} Assume the same conditions as in Case V of Definition \ref{DefScale} and fix $A_0 > 0$. We can find $C_2^{V}, D_2^V > 0$, depending on $a,t, \theta$ and $A_0$, such that for all $N \geq 1$ and integers $d \geq D_2^V$
\begin{equation}\label{Eq.CaseVTruncate}
d_{\operatorname{TV}} \left( \mathcal{J}_{a^N_\beta, \tau_{Nt}}(\cdot | E^{N}_{Nd} ),  \mathcal{J}_{a^N_\beta, \tau_{Nt}}(\cdot ) \right)  \leq C_2^{V} e^{- N A_0}.
\end{equation}
\end{lemma}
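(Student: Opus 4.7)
The plan is to proceed in direct analogy with the proof of Lemma \ref{Lem.CaseIVLengthBound}. The point is that the Case V measures are related by a conjugation duality to the Case III measures, and an upper tail bound on $\lambda_1$ under a Case III measure automatically yields a lower tail bound on $\ell(\lambda)$ under a Case V measure.

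First I would unpack the duality. Writing $\mathcal{J}^\theta_{\rho_1,\rho_2}$ to emphasize the dependence on $\theta$, I would combine (\ref{S12E7}) (applied to $\rho_1 = a^N_\beta$, for which the pure-$\beta$ parameters are $\beta_i = \theta^{-1}a$), the identity $\tilde J_\lambda(\tau_s;\theta) = J_{\lambda'}(\tau_{s\theta};\theta^{-1})$ from (\ref{S213E2}), and the symmetry $\mathcal{J}_{\rho_1,\rho_2} = \mathcal{J}_{\rho_2,\rho_1}$ from Remark \ref{Rem.Commute} to obtain
\begin{equation*}
\mathcal{J}^{\theta}_{a^N_{\beta},\tau_{Nt}}(\lambda) \;\propto\; J_{\lambda}(a^N_{\beta};\theta)\,\tilde J_{\lambda}(\tau_{Nt};\theta) = \tilde J_{\lambda'}(a^N;\theta^{-1})\,J_{\lambda'}(\tau_{Nt\theta};\theta^{-1}) \;\propto\; \mathcal{J}^{\theta^{-1}}_{a^N,\tau_{Nt\theta}}(\lambda').
\end{equation*}
Thus if $\lambda$ is distributed according to $\mathcal{J}^{\theta}_{a^N_{\beta},\tau_{Nt}}$, then $\lambda'$ is distributed according to $\mathcal{J}^{\theta^{-1}}_{a^N,\tau_{Nt\theta}}$, which is a Case III measure with parameters $(a, t\theta, \theta^{-1})$ in place of $(a,t,\theta)$.

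Next, using (\ref{Eq.Vanish}), $\mathcal{J}^{\theta}_{a^N_{\beta},\tau_{Nt}}$ is supported on $\{\lambda_1\le N\}$, so $E^N_{Nd}$ fails only when $\ell(\lambda)>Nd$. Under the duality this is the event $\lambda'_1 > Nd$, hence
\begin{equation*}
1-\mathcal{J}^{\theta}_{a^N_{\beta},\tau_{Nt}}(E^N_{Nd}) \;=\; \mathcal{J}^{\theta}_{a^N_{\beta},\tau_{Nt}}\bigl(\ell(\lambda)>Nd\bigr) \;=\; \mathcal{J}^{\theta^{-1}}_{a^N,\tau_{Nt\theta}}(\lambda_1 > Nd).
\end{equation*}
I would then invoke Lemma \ref{Lem.CaseIIIWeakConv} applied to the Case III measure with parameters $(a,t\theta,\theta^{-1})$: the upper tail bound in (\ref{Eq.CaseIIIUpperTail}) with $R=\infty$ (and recalling that $\mathcal{J}^{\theta^{-1}}_{a^N,\tau_{Nt\theta}}$ is supported on $E_N^\infty$) supplies constants $C_1^{III},D_1^{III}>0$, depending only on $a,t,\theta$ and $A_0$, such that $\mathcal{J}^{\theta^{-1}}_{a^N,\tau_{Nt\theta}}(\lambda_1\ge N D_1^{III})\le C_1^{III} e^{-NA_0}$.

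Setting $D_2^V = \lceil D_1^{III}\rceil$ and $C_2^V = C_1^{III}$, then for any integer $d\ge D_2^V$ we have $Nd\ge N D_1^{III}$, so the preceding bound gives $1-\mathcal{J}^{\theta}_{a^N_{\beta},\tau_{Nt}}(E^N_{Nd})\le C_2^V e^{-NA_0}$, from which the total variation estimate (\ref{Eq.CaseVTruncate}) follows immediately. There is no real obstacle here: all the work has already been done in establishing the Case III tail bound, and the only check required is that the Jack duality formulas (\ref{S12E7}) and (\ref{S213E2}) combine as claimed.
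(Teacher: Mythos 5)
Your proposal is correct and follows essentially the same route as the paper: the conjugation duality $\mathcal{J}^{\theta}_{a^N_\beta,\tau_{Nt}}(\lambda) = \mathcal{J}^{\theta^{-1}}_{a^N,\tau_{Nt\theta}}(\lambda')$, the observation that $E^N_{Nd}$ fails only via $\ell(\lambda) > Nd$, and the upper-tail bound from Lemma~\ref{Lem.CaseIIIWeakConv} applied with parameters $(a, t\theta, \theta^{-1})$. The paper's proof in Section~\ref{Section3.6} is effectively identical, down to the choice of constants.
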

\begin{proof} 
Let us denote by $\mathcal{J}^{\theta}_{\rho_1, \rho_2}$ the Jack measures in Definition \ref{DefJM}, to emphasize the dependence on $\theta$. From (\ref{S12E2}), (\ref{S12E7}), (\ref{S213E2}) and Remark \ref{Rem.Commute} we get
\begin{equation}\label{S36E1}
\begin{split}
\mathcal{J}^{\theta}_{a^N_\beta, \tau_{Nt}}(\lambda) \propto J_{\lambda}(a^N_\beta;\theta)\tilde{J}_{\lambda}(\tau_{Nt};\theta)
=\tilde{J}_{\lambda'}(a^N;\theta^{-1})J_{\lambda'}(\tau_{Nt\theta};\theta^{-1}) \propto \mathcal{J}^{\theta^{-1}}_{a^N, \tau_{Nt\theta}}(\lambda').
\end{split}
\end{equation}
In other words, if $\lambda$ is distributed according to $\mathcal{J}^{\theta}_{a^N_\beta, \tau_{Nt}}$, then $\lambda'$ is distributed according to $\mathcal{J}^{\theta^{-1}}_{a^N, \tau_{Nt\theta}}$. From the last identity and Lemma \ref{Lem.CaseIIIWeakConv} we can find $C_1^{III}, D_1^{III}$, depending on $a,t,\theta$ and $A_0$, such that for any integer $d \geq D_1^{III}$ and $N \geq 1$
\begin{equation}\label{S36IE1}
1- \mathcal{J}^{\theta}_{a^N_\beta, \tau_{Nt}}(E^{N}_{Nd}) = \mathcal{J}^{\theta^{-1}}_{a^N, \tau_{Nt\theta}} ( \lambda_1 > Nd )\leq \mathcal{J}^{\theta^{-1}}_{a^N, \tau_{Nt\theta}} ( \lambda_1 \geq ND_1^{III} ) \leq C_1^{III} e^{-NA_0}.    
\end{equation}
We mention that in the first equality we implicitly used that $\mathcal{J}^{\theta}_{a^N_\beta, \tau_{Nt}}(E^{N}_{\infty}) = 1$ and when we applied Lemma \ref{Lem.CaseIIIWeakConv} we did so with $R = \infty$, $\theta$ replaced with $\theta^{-1}$, and $t$ replaced with $t \theta$. The statement of the lemma now follows from (\ref{S36IE1}).
\end{proof}

Fix $d \in \mathbb{N}$ and set $N_K = \lceil K/d \rceil$. With this data we define the measures $\mathbb{P}^d_K$ on $\mathbb{W}^{\theta, R_K}_{K}$ as in (\ref{Eq.State}) with $R_K = N_K$ through
\begin{equation}\label{Eq.CaseVBetaEnsembleV2}
\begin{split}
&\mathbb{P}^d_K(\ell_1, \dots, \ell_K) = \frac{1}{Z_{K}} \times \prod_{1 \leq i < j \leq K} \frac{\Gamma(\ell_i - \ell_j + 1)\Gamma(\ell_i - \ell_j + \theta)}{\Gamma(\ell_i - \ell_j) \Gamma(\ell_i - \ell_j + 1 - \theta)} \cdot \prod_{i = 1}^K e^{-KV_K(\ell_i/K)}, \\
& \mbox{ where } V_K(x) =\frac{1}{K}\log\left(\frac{\Gamma(Kx+1) \Gamma(Kx  + \theta) \Gamma(\Delta^N_K - K x + 1)}{(eat \theta/d)^{Kx} K^{\Delta_K^N + Kx +2 + \theta} e^{- \Delta^N_K - Kx - 2 - \theta} }  \right),
\end{split}
\end{equation}
and we have set $\Delta^N_K=N_K+(K-1)\theta$. The key observation is that the sequence $\mathbb{P}^d_K$ satisfies (\ref{Eq.CaseVSubseq}), which follows from the way we defined our sequences $N_K,R_K$, and the fact that (\ref{Eq.CaseVBetaEnsembleV2}) with $K = Nd$ differs from (\ref{Eq.CaseVBetaEnsemble}) only in that we multiplied each $w(\ell_i;K)$ by the same constant, which results in the same measure. In the remainder of this section we proceed to check that $\mathbb{P}^d_K$ satisfies the conditions of Assumption \ref{Ass.Finite}, and apply Proposition \ref{Prop.GlobalLLN}.

From Definition \ref{DefScale} we have that (\ref{Eq.FinAss1}) holds with $\cR = d^{-1}$ for $A_1 = 1$. The continuity of $V_K$ on $I_K = [0, N_K K^{-1} + (K-1) K^{-1}  \theta]$ follows from the continuity of the Gamma function on $(0,\infty)$. Define $V$ on $I = [0, \cR + \theta] = [0, d^{-1} + \theta]$ through
\begin{equation}\label{Eq.DefVCaseV}
V(x)=2x\log(x)+(d^{-1}+\theta-x)\log (d^{-1}+\theta-x)-x\log (eat \theta/d),
\end{equation}
and extend $V$ continuously to $[0, \infty)$ by setting $V(x) = V(\cR + \theta)$ for $x \geq \cR + \theta$. Note that 
$$V'(x) = 2\log(x)-\log(d^{-1}+\theta-x)-\log(at\theta/d).$$
Consequently, we can find $A_3 > 0, A_4 > 0$, depending on $a,t,\theta, d,$ that satisfy the second inequality in (\ref{Eq.FinAss2}) and (\ref{Eq.FinAss3}). Summarizing the work so far, we see that to verify Assumption \ref{Ass.Finite} what remains is to show that we can find $A_2 > 0$, depending on $ \theta, d$, such that 
\begin{equation}\label{Eq.S36R1}
|V_K(x) - V(x)| \leq A_2 K^{-1} \log (K + 1) \mbox{ for } x \in I_K.
\end{equation}
From Lemma \ref{Lem.GammaApprox} we get for some $\cq_1, \cq_2, \cq_3 \in [0,1]$ and all $x \in I_K$
\begin{equation}\label{Eq.CaseVExp4}
\begin{split}
&V_K(x)=   ([\Delta^N_K + 1]/K - x)\log([\Delta^N_K + 1]/K - x)   + (x + [1 + \theta]/K) \log(x + [1+\theta]/K) \\
& +  (x + 2/K)\log(x + 2/K)- x \log(eat\theta /d) + \frac{2\log(K) + 1}{K}  - \frac{\cq_1 \log(Kx + 2)}{K}  \\
&  - \frac{ \log(Kx + 1)}{K} - \frac{\cq_2 \log(Kx + 1 + \theta)}{K} - \frac{ \log(Kx + \theta)}{K} - \frac{\cq_3 \log(\Delta_K^N - Kx + 1)}{K}.
\end{split}
\end{equation}
Using (\ref{Eq.DefVCaseV}) and (\ref{Eq.CaseVExp4}), we deduce (\ref{Eq.S36R1}) once we use that $|\Delta^N_K/ K - d^{-1} - \theta| \leq (1 + \theta)/K$. Overall, we conclude that $\mathbb{P}^d_K$ satisfies the conditions of Assumption \ref{Ass.Finite}, and so from Proposition \ref{Prop.GlobalLLN} we obtain the following statement.

\begin{lemma}\label{Lem.CaseVWeakConv} Assume the same conditions as in Case V of Definition \ref{DefScale} and fix $d \in \mathbb{N}$. Let $(\ell_1^K, \dots, \ell^K_K)$ be distributed according to $\mathbb{P}^d_K$ as in (\ref{Eq.CaseVBetaEnsembleV2}) and set $\mu_K = (1/K)\sum_{i = 1}^K \delta(\ell^K_i/K)$. Then, $\mu_K$ converges weakly in probability to a deterministic measure $\mu^{V}_d$ on $[0,\infty)$.
\end{lemma}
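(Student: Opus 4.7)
The plan is to deduce the lemma as an immediate corollary of Proposition~\ref{Prop.GlobalLLN}, applied to the sequence $\mathbb{P}^d_K$ defined in (\ref{Eq.CaseVBetaEnsembleV2}). Since $R_K = N_K = \lceil K/d\rceil$ grows linearly in $K$, it is Assumption~\ref{Ass.Finite} (rather than Assumption~\ref{Ass.Infinite}) that applies, with $\mathsf{R} = 1/d$. So the entire task reduces to checking the four bullet points of Assumption~\ref{Ass.Finite} for $\mathbb{P}^d_K$, with the candidate limiting potential $V$ given by (\ref{Eq.DefVCaseV}).

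I would dispose of the easy points first. Condition (\ref{Eq.FinAss1}) holds with $A_1 = 1$ since $|N_K - K/d| \leq 1$. Continuity of $V_K$ on $I_K$ follows from the continuity of the Gamma function on $(0,\infty)$. For $V$ extended continuously by $V(x) = V(\cR+\theta)$ on $[\cR+\theta,\infty)$, differentiating (\ref{Eq.DefVCaseV}) gives $V'(x) = 2\log(x) - \log(d^{-1}+\theta-x) - \log(at\theta/d)$, and the second estimate of (\ref{Eq.FinAss2}) together with (\ref{Eq.FinAss3}) follow with constants $A_3, A_4$ depending on $a,t,\theta,d$.

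The main technical step — and the only one requiring genuine computation — is the uniform closeness bound (\ref{Eq.S36R1}) of $V_K$ to $V$ at rate $K^{-1}\log(K+1)$. For this I would apply Lemma~\ref{Lem.GammaApprox} three times to the three $\log\Gamma$ factors appearing in the definition of $V_K$: twice using (\ref{Eq.GA1}) for $\Gamma(Kx+1)$ (with $\Delta = 1$) and $\Gamma(Kx+\theta)$ (with $\Delta = \theta$), and once using (\ref{Eq.GA2}) for $\Gamma(\Delta^N_K-Kx+1)$. Collecting all the error terms and the boundary $\log K$ factors produces exactly the expansion (\ref{Eq.CaseVExp4}). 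Matching this against (\ref{Eq.DefVCaseV}) and using both $|\Delta^N_K/K - d^{-1} - \theta| \leq (1+\theta)/K$ and the fact that $x\log x$ is Lipschitz on compact subsets of $[0,\infty)$ bounded away from the values where its argument vanishes yields the desired bound with some $A_2$ depending on $\theta$ and $d$; the error terms involving $\log(Kx+c)/K$ are $O(K^{-1}\log(K+1))$ uniformly on $I_K$.

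With all four conditions of Assumption~\ref{Ass.Finite} verified, Proposition~\ref{Prop.GlobalLLN} directly produces the deterministic limit $\mu^V_d$, supported in $[0,\infty)$ and absolutely continuous with density bounded by $\theta^{-1}$. I expect the only mild obstacle to be bookkeeping in the Gamma expansion, in particular keeping track of which of the additive constants in (\ref{Eq.CaseVBetaEnsembleV2}) must be absorbed to make $V_K$ converge to the specific $V$ we wrote down (rather than $V$ plus an $x$-independent constant, which would still define the same ensemble but would not match the formula in (\ref{Eq.DefVCaseV})). Since $d$ is fixed throughout, the $d$-dependence of all constants is harmless, and the subscript $d$ on the limit $\mu^V_d$ reflects that the potential $V$ genuinely depends on $d$ via the endpoint $\cR = d^{-1}$.
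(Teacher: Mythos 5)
Your proposal is correct and follows essentially the same route as the paper's proof in Section~\ref{Section3.6}: verify the four bullet points of Assumption~\ref{Ass.Finite} for $\mathbb{P}^d_K$ (with $\mathsf{R}=d^{-1}$, $A_1=1$, $V$ from (\ref{Eq.DefVCaseV}), and the uniform closeness bound (\ref{Eq.S36R1}) established via three applications of Lemma~\ref{Lem.GammaApprox} yielding the expansion (\ref{Eq.CaseVExp4})), then invoke Proposition~\ref{Prop.GlobalLLN}. The only minor remark is that the paper does not need the normalization bookkeeping you flag at the end: the constants in the $w(\ell_i;K)$ rewrite are absorbed into $Z_K$ exactly as the paper does when passing from (\ref{Eq.CaseVBetaEnsemble}) to (\ref{Eq.CaseVBetaEnsembleV2}), and it is harmless if $V_K$ converges to $V$ plus a constant since Assumption~\ref{Ass.Finite} only requires the bound on $|V_K - V|$ which would still hold after such a shift.
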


%
%
\subsection{Case VI: $\rho_1 = \tau_{s_1}$, $\rho_2 = \tau_{s_2}$}\label{Section3.7} We assume the same conditions as in Case VI of Definition \ref{DefScale}. Our strategy in this case is similar to Cases IV and V above. In particular, we introduce a sequence $\mathbb{P}^{d}_K$ of $\beta$-ensembles in (\ref{Eq.CaseVIBetaEnsembleV2}) below, which satisfies the property 
\begin{equation}\label{Eq.CaseVISubseq}
\mathbb{P}^{d}_{Nd}(\ell_1, \dots, \ell_{Nd} ) = \mathcal{J}_{\tau_{Nt_1}, \tau_{Nt_2}}\left(\ell_1, \dots, \ell_{Nd} \vert E^{Nd}_{Nd} \right).
\end{equation}
We will show that $\mathbb{P}^{d}_K$ satisfies the conditions of Assumption \ref{Ass.Finite}, which will allow us to conclude some statements for these measures. These statements will automatically hold for $\mathbb{P}^{d}_{Nd}$ as a subsequence, and hence for $\mathcal{J}_{\tau_{Nt_1}, \tau_{Nt_2}}(\cdot | E^{Nd}_{Nd})$ in view of (\ref{Eq.CaseVISubseq}). By picking $d$ sufficiently large, we will be able to further show that $\mathcal{J}_{\tau_{Nt_1}, \tau_{Nt_2}}(\cdot | E^{Nd}_{Nd})$ is quite close to the measure $ \mathcal{J}_{\tau_{Nt_1}, \tau_{Nt_2}}$, and that will allow us to transfer properties of the former to the latter. We now turn to the details of this outline, starting by specifying how large we need to eventually take $d$ in the following lemma, whose proof is given in Section \ref{Section3.9} below.

\begin{lemma}\label{Lem.CaseVILengthBound} Assume the same conditions as in Case VI of Definition \ref{DefScale} and fix $A_0 > 0$. We can find $D_2^{VI}$, depending on $t_1, t_2, \theta$ and $A_0$, such that the following holds for all integers $d \geq D_2^{VI}$. For some $C_1^{VI}, C_2^{VI} > 0$, depending on $t_1, t_2, \theta,A_0$ and $d$, we have for $K \geq 1$
\begin{equation}\label{Eq.CaseVIUpperTail}
\mathcal{J}_{\tau_{Kt_1/d}, \tau_{Kt_2/d}}\left(\lambda_1 = K | E^{K}_{K} \right) \leq C_1^{VI}  e^{-KA_0},
\end{equation}
and also for $N \geq 1$
\begin{equation}\label{Eq.CaseVITruncateLen}
d_{\operatorname{TV}} \left( \mathcal{J}_{\tau_{Nt_1}, \tau_{Nt_2}}(\cdot | E^{Nd}_{Nd} ),  \mathcal{J}_{\tau_{Nt_1}, \tau_{Nt_2}}(\cdot ) \right)  \leq C^{VI}_2 e^{- NA_0}.
\end{equation}
\end{lemma}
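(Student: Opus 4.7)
My plan is to combine the Poisson structure of the Case VI Jack measure with a tail bound for the first row of a Jack--Plancherel-distributed partition, the latter following Fulman's moment approach. First I would observe that under $\mathcal{J}_{\tau_{s_1},\tau_{s_2}}$ the weight $|\lambda|$ is Poisson with parameter $\theta s_1 s_2$, and that conditionally on $|\lambda|=n$, $\lambda$ follows the $n$-level Jack--Plancherel measure $PM^\theta_n$, which is independent of $s_1,s_2$. This is immediate from $p_1(\tau_s)=s$, $p_k(\tau_s)=0$ for $k\geq 2$, and the homogeneity of Jack polynomials, which give $J_\lambda(\tau_s;\theta)=s^{|\lambda|}C_\lambda(\theta)$ and $\tilde{J}_\lambda(\tau_s;\theta)=s^{|\lambda|}\tilde{C}_\lambda(\theta)$ for constants $C_\lambda(\theta),\tilde{C}_\lambda(\theta)$ depending only on $\lambda$ and $\theta$; combined with (\ref{Eq.CaseVINormalization_fml}) this forces the Poisson marginal, and one then reads off $PM^\theta_n(\lambda)=n!\,C_\lambda(\theta)\tilde{C}_\lambda(\theta)/\theta^n$ for $|\lambda|=n$.

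The main technical input is a tail bound for the Jack--Plancherel measure of the form
$$PM^\theta_n(\lambda_1\geq k)\leq C_1(\theta)\cdot \bigl(C_2(\theta)\, n/k^2\bigr)^{k}$$
for $k$ sufficiently large (depending on $\theta$). For $\theta=1$ this is classical: by the RSK correspondence $\lambda_1$ is the longest increasing subsequence of a uniform element of $S_n$, and the bound follows by counting $k$-subsets of positions and applying the union bound. For general $\theta>0$ the analogous estimate comes from a moment bound $\mathbb{E}_{PM^\theta_n}[\lambda_1^{\downarrow k}]\leq C_2(\theta)^k\binom{n}{k}$ expressible in terms of Jack characters and established by the exchangeable-pair construction of \cite{Fulman03}; the tail bound then follows by Markov's inequality. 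This step is the main obstacle of the argument, since the combinatorial RSK interpretation available for $\theta=1$ is absent for general $\theta$ and must be replaced by the character-theoretic Stein's method machinery.

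Granting this tail bound, combining it with the standard Poisson concentration $\mathbb{P}(|\lambda|\geq 2\theta s_1 s_2)\leq e^{-c\theta s_1 s_2}$ and splitting $\mathcal{J}_{\tau_{s_1},\tau_{s_2}}(\lambda_1\geq L)$ according to the value of $|\lambda|$ at the threshold $n=2\theta s_1 s_2$ yields, for any $A_0>0$ and $d$ larger than a threshold $D_2^{VI}=D_2^{VI}(t_1,t_2,\theta,A_0)$ chosen so that $2C_2(\theta)\theta t_1 t_2/d^2\leq e^{-A_0-1}$, the estimate $\mathcal{J}_{\tau_{s_1},\tau_{s_2}}(\lambda_1\geq L)\leq Ce^{-LA_0}$ both for $(s_1,s_2,L)=(Kt_1/d,Kt_2/d,K)$ and for $(s_1,s_2,L)=(Nt_1,Nt_2,Nd)$. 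The conjugation identity---obtained from (\ref{S213E2}), its $J$-analog $J_\lambda(\tau_s;\theta)=\tilde{J}_{\lambda'}(\tau_{s\theta};\theta^{-1})$, and Remark \ref{Rem.Commute}---asserts that if $\lambda\sim \mathcal{J}^\theta_{\tau_{s_1},\tau_{s_2}}$ then $\lambda'\sim \mathcal{J}^{\theta^{-1}}_{\tau_{s_1\theta},\tau_{s_2\theta}}$, which is again a Case VI measure, and consequently produces the same estimate for $\ell(\lambda)=\lambda_1'$. Hence the complement of $E^K_K$ (resp.\ $E^{Nd}_{Nd}$) has probability at most $2Ce^{-KA_0}$ (resp.\ $2Ce^{-NA_0}$). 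The bound (\ref{Eq.CaseVIUpperTail}) then follows from $\{\lambda_1=K\}\subset\{\lambda_1\geq K\}$ together with $\mathcal{J}(E^K_K)\geq 1/2$ for all $K$ above a threshold (handling smaller $K$ by inflating $C_1^{VI}$), while (\ref{Eq.CaseVITruncateLen}) follows from the general inequality $d_{\operatorname{TV}}(\mathcal{J}(\cdot|E),\mathcal{J})\leq 2\mathcal{J}(E^c)/\mathcal{J}(E)$.
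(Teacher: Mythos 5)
Your high-level outline matches the paper's: Poissonize $\mathcal{J}_{\tau_{s_1},\tau_{s_2}}$ using the identity (\ref{Eq.JPMEquals}), establish a tail bound for $\lambda_1$ under the Jack--Plancherel measure $\mathbb{M}^{(n)}_\theta$, obtain the dual bound on $\lambda_1'$ by conjugation, split according to whether $|\lambda|$ is concentrated, and then deduce both (\ref{Eq.CaseVIUpperTail}) and (\ref{Eq.CaseVITruncateLen}) from the resulting exponential estimate on $\mathcal{J}(\lambda_1\geq K)$, $\mathcal{J}(\lambda_1'\geq K)$ with $s_i=Kt_i/d$. The final bookkeeping (the $d_{\operatorname{TV}}(\mathcal{J}(\cdot\,|\,E),\mathcal{J})\leq \mathcal{J}(E^c)/\mathcal{J}(E)$ step, inflating constants for small $K$) is correct and is essentially what the paper does.

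Where you diverge is in the key technical ingredient, which you correctly identify as the Jack--Plancherel tail bound but then characterize as ``the main obstacle'' requiring ``the character-theoretic Stein's method machinery'' from \cite{Fulman03}, via a falling-factorial moment bound $\mathbb{E}_{\mathbb{M}^{(n)}_\theta}[\lambda_1^{\downarrow k}]\leq C_2(\theta)^k\binom{n}{k}$ that you attribute to Fulman's exchangeable-pair construction. This is not how the paper proceeds, and I do not believe such a moment bound is proved in \cite{Fulman03}; leaving it as an unverified citation is a genuine gap. What the paper actually does (its Lemma~\ref{Lem.JPMTail}, which it describes as an \emph{adaptation} of the elementary argument in \cite[Lemma~6.6]{Fulman03}, not an invocation of the Stein machinery) is much simpler and avoids any character theory or RSK: starting from the explicit product formula (\ref{Eq.JPM}), condition on $\lambda_1=m$ and strip off the first row to write
$$\mathbb{M}^{(n)}_{\theta}(\lambda) = \frac{n!\,\theta^{m}}{(n-m)!}\,\mathbb{M}^{(n-m)}_{\theta}(\bar{\lambda})\prod_{j=1}^m\frac{1}{\bigl(m-j+\theta(\lambda_j'-1)+\theta\bigr)\bigl(m-j+\theta(\lambda_j'-1)+1\bigr)},$$
then bound the first-row hook product from below using $\lambda_j'\geq 1$, and sum over $\bar{\lambda}\in\mathbb{Y}(n-m)$ to get $\mathbb{M}^{(n)}_{\theta}(\lambda_1=m)\leq \theta^{-1}(e^2n\theta/m^2)^m$ directly. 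This yields the same $(Cn/k^2)^k$-type decay as your target but entirely by elementary manipulations of the Jack hook formula, uniform in $\theta>0$, with no need for RSK at $\theta=1$ or for Jack characters at general $\theta$. You should replace your proposed moment-bound route by this row-deletion argument; the rest of your proposal then goes through as written.
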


Fix $d \in \mathbb{N}$. We define the measures $\mathbb{P}^{d}_K$ on $\mathbb{W}^{\theta, R_K}_{K}$ as in (\ref{Eq.State}) with $R_K = K$ through
\begin{equation}\label{Eq.CaseVIBetaEnsembleV2}
\begin{split}
&\mathbb{P}^{d}_K(\ell_1, \dots, \ell_K) = \frac{1}{Z_{K}} \times \prod_{1 \leq i < j \leq K} \frac{\Gamma(\ell_i - \ell_j + 1)\Gamma(\ell_i - \ell_j + \theta)}{\Gamma(\ell_i - \ell_j) \Gamma(\ell_i - \ell_j + 1 - \theta)} \cdot \prod_{i = 1}^K e^{-KV_K(\ell_i/K)}, \\
& \mbox{ where } V_K(x) =\frac{1}{K}\log\left(\frac{\Gamma(Kx+1) \Gamma(Kx  + \theta) }{(e^2t_1t_2 \theta^2/d^2)^{Kx} K^{2Kx+ \theta + 1}e^{-2Kx - \theta - 1} }  \right).
\end{split}
\end{equation}
The key observation is that the sequence $\mathbb{P}^{d}_K$ satisfies (\ref{Eq.CaseVISubseq}), which follows from the way we defined $R_K$, and the fact that (\ref{Eq.CaseVIBetaEnsembleV2}) with $K = Nd$ differs from (\ref{Eq.CaseVIBetaEnsemble}) only in that we multiplied each $w(\ell_i;K)$ by the same constant, which results in the same measure. In the remainder of this section we proceed to check that $\mathbb{P}^{d}_K$ satisfies the conditions of Assumption \ref{Ass.Finite}, and apply Proposition \ref{Prop.GlobalLLN}.

From Definition \ref{DefScale} we have that (\ref{Eq.FinAss1}) holds with $\cR = 1$ for $A_1 =  1$. The continuity of $V_K$ on $I_K = [0, R_K K^{-1} + (K-1) K^{-1}  \theta]$ follows from the continuity of the Gamma function on $(0,\infty)$. Define $V$ on $I = [0, \cR + \theta] = [0, 1 + \theta]$ through
\begin{equation}\label{Eq.DefVCaseVI}
V(x)= 2x \log (x) - x \log(e^2t_1 t_2 \theta^2/d^2).
\end{equation}
and extend $V$ continuously to $[0, \infty)$ by setting $V(x) = V(\cR + \theta)$ for $x \geq \cR + \theta$. Note that
\begin{equation}\label{CaseVIpm}
V'(x) = 2\log(x)-\log(t_1t_2 \theta^2/d^2).    
\end{equation}
Consequently, we can find $A_3 > 0, A_4 > 0$, depending on $t_1, t_2, \theta, d$ that satisfy the second inequality in (\ref{Eq.FinAss2}) and (\ref{Eq.FinAss3}). Summarizing the work so far, we see that to verify Assumption \ref{Ass.Finite} what remains is to show that we can find $A_2 > 0$, depending on $ \theta, d$, such that 
\begin{equation}\label{Eq.S37R1}
|V_K(x) - V(x)| \leq A_2 K^{-1} \log (K + 1) \mbox{ for } x \in I_K.
\end{equation}
From Lemma \ref{Lem.GammaApprox} we get for some $\cq_1, \cq_2 \in [0,1]$ and all $x \in I_K$
\begin{equation}\label{Eq.CaseVIExp4}
\begin{split}
&V_K(x)=  (x + 2/K)\log(x + 2/K)  + (x + [1 + \theta]/K) \log(x + [1+\theta]/K) \\
& - x \log(e^2t_1t_2 \theta^2/d^2) + \frac{2\log(K)}{K}  - \frac{\cq_1 \log(Kx + 2)}{K}  \\
&  - \frac{ \log(Kx + 1)}{K} - \frac{\cq_2 \log(Kx + 1 + \theta)}{K} - \frac{ \log(Kx + \theta)}{K}.
\end{split}
\end{equation}
Using (\ref{Eq.DefVCaseVI}) and (\ref{Eq.CaseVIExp4}), we deduce (\ref{Eq.S37R1}). Overall, we conclude that $\mathbb{P}^{d}_K$ satisfies the conditions of Assumption \ref{Ass.Finite}, and so from Proposition \ref{Prop.GlobalLLN} we obtain the following statement.

\begin{lemma}\label{Lem.CaseVIWeakConv} Assume the same conditions as in Case VI of Definition \ref{DefScale} and fix $d \in \mathbb{N}$. Let $(\ell_1^K, \dots, \ell^K_K)$ be distributed according to $\mathbb{P}^{d}_K$ as in (\ref{Eq.CaseVIBetaEnsembleV2}) and set $\mu_K = (1/K)\sum_{i = 1}^K \delta(\ell^K_i/K)$. Then, $\mu_K$ converges weakly in probability to a deterministic measure $\mu^{VI}_{d}$ on $[0,\infty)$.
\end{lemma}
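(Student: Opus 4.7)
The plan is to apply Proposition \ref{Prop.GlobalLLN} directly to $\mathbb{P}^d_K$, which immediately delivers the claimed weak convergence in probability of $\mu_K$ to a deterministic measure, since for finite $R_K$ Proposition \ref{Prop.GlobalLLN} requires only Assumption \ref{Ass.Finite}. The one thing to do, then, is to verify that $\mathbb{P}^d_K$ satisfies Assumption \ref{Ass.Finite} with $\mathsf{R} = 1$, $A_1 = 1$, and with $V$ as defined in (\ref{Eq.DefVCaseVI}). The discussion preceding the lemma statement already assembles almost all of this: (\ref{Eq.FinAss1}) is trivial since $R_K = K$; continuity of $V_K$ on $I_K$ is immediate from analyticity of the Gamma function on $(0,\infty)$; and both the derivative bound (\ref{Eq.FinAss3}) and the uniform bound on $V$ in (\ref{Eq.FinAss2}) follow from the explicit formula (\ref{CaseVIpm}) for $V'$ together with boundedness of $V$ on the compact interval $I = [0, 1+\theta]$.

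The only step still to be assembled is the quantitative comparison (\ref{Eq.S37R1}) of $V_K$ and $V$. For this I would apply Lemma \ref{Lem.GammaApprox} separately to $\Gamma(Kx+1)$ (with $\Delta = 1$) and to $\Gamma(Kx+\theta)$ (with $\Delta = \theta$) inside the definition of $V_K$ in (\ref{Eq.CaseVIBetaEnsembleV2}), giving the expansion (\ref{Eq.CaseVIExp4}). Comparing this expansion term-by-term with (\ref{Eq.DefVCaseVI}), the leading $2x\log x$ contribution and the linear term in $x$ match exactly, while the shifts $(x + 2/K)$ and $(x + (1+\theta)/K)$ deviate from $x$ by $O(K^{-1})$, yielding a smooth contribution of size $O(K^{-1}\log(K+1))$ uniformly on $I_K$ via the elementary bound $|(x+u/K)\log(x+u/K) - x\log x| = O(K^{-1}\log(K+1))$ on bounded $x$; the remaining correction terms in (\ref{Eq.CaseVIExp4}) are already manifestly $O(K^{-1}\log(K+1))$.

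There is no real obstacle here: Case VI is the cleanest of the six cases, with the weight built from only two Gamma factors, no conditioning on a $K \times R$ rectangle to handle (the $R_K = K$ choice is finite and already incorporated in $\mathbb{P}^d_K$ by construction), and no need for the infinite-volume Proposition \ref{Prop.UpperTail}. Once Assumption \ref{Ass.Finite} is confirmed in the above manner, Proposition \ref{Prop.GlobalLLN} outputs the deterministic limit $\mu^{VI}_d$ together with the compact support in $[0,\infty)$, absolute continuity, and density bound by $\theta^{-1}$ that this proposition provides, completing the proof.
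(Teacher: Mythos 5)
Your proposal is correct and follows essentially the same route as the paper: verify the four points of Assumption \ref{Ass.Finite} for $\mathbb{P}^d_K$ (the $R_K = K$ point trivially, continuity of $V_K$ from the Gamma function, the bounds (\ref{Eq.FinAss2})--(\ref{Eq.FinAss3}) from the explicit $V$ and $V'$, and (\ref{Eq.S37R1}) via Lemma \ref{Lem.GammaApprox} applied to each Gamma factor), then invoke Proposition \ref{Prop.GlobalLLN}. The only cosmetic difference is that the paper continuously extends $V$ past $\cR+\theta$ by a constant before stating the bounds, which you leave implicit; otherwise the arguments coincide.
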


%
%
\subsection{Proof of Lemma \ref{Lem.UpperTailCaseI}} \label{Section3.8} We assume the same notation as in the statement of the lemma and directly proceed with the proof. Let $N_0 \in \mathbb{N}$ be sufficiently large, depending on $\theta$, such that for $m \geq N_0$, we have 
\begin{equation}\label{Eq.SmallCases}
m \geq \max(2, \theta^{-1}), \hspace{2mm} m^{3/4} \cdot \frac{\log(m+1)}{m} \leq 1.
\end{equation}
If we pick $C_2 \geq e^{N_0 A_0}$, then (\ref{Eq.UpperTailAlpha}) holds trivially for any $A,B \in \mathbb{N}$ with $\min(A,B) < N_0$. We may thus assume without loss of generality that $A, B \geq N_0$, which we do in what follows. For clarity we split the proof into four cases, depending on whether $\cm \geq 1$ or $\cm < 1$, and $B \geq A$ or $B < A$.

{\raggedleft \bf Case 1.} Suppose first that $\cm \geq 1$ and $B \geq A$. Consider the sequence $M = M(N)$, such that $M(N) = \lceil \cm N \rceil$ for $N \neq A$, and $M(A) = B$, and the corresponding measures $\mathcal{J}_{a^N, b^M}$. The latter sequence satisfies the conditions of Definition \ref{DefScale} with $\cd = \cd_2$, where $\cd_2 = \max(\cd_1, 1)$, and also $M(N) \geq N$. From our work in Section \ref{Section3.2} we see that $\mathcal{J}_{a^N, b^M}(\cdot |E_N^{\infty})$ satisfies the conditions of Assumption \ref{Ass.Infinite}. Consequently, Proposition \ref{Prop.UpperTail} implies that we can find $C_1(a,b,\theta, \cm, \cd_2, A_0)$ and $D_1(a,b,\theta, \cm, \cd_2, A_0)$, such that for $N \geq N_0$ 
\begin{equation}\label{Eq.Tail1}
\mathcal{J}_{a^N, b^M}(\lambda_1 \geq N D_1(a,b,\theta, \cm, \cd_2, A_0)) \leq C_1(a,b,\theta, \cm, \cd_2, A_0) e^{-NA_0}.
\end{equation}
Setting $N = A$ in (\ref{Eq.Tail1}) gives (\ref{Eq.UpperTailAlpha}) in this case.

{\raggedleft \bf  Case 2.} Suppose that $\cm \geq 1$ and $B < A$. Consider the sequence $M(N)$, such that $M(N) = \lceil \cm N \rceil$ for $N \neq B$, and $M(B) = A$, and note $M(N) \geq N$. Since $\cm \geq 1$ and $B < A$, we note that 
$$\cm B - A \leq \cm A - B \mbox{ and } A - \cm B \leq \cm A  -B.$$
Consequently, we obtain
$$|M(B) - \cm B| = |A -\cm B| \leq \cm A -B \leq \cd_1.$$
As in Case 1, we conclude that $\mathcal{J}_{a^N, b^M}$ satisfies the conditions of Definition \ref{DefScale} with $\cd = \max(\cd_1, 1)$, and so from Proposition \ref{Prop.UpperTail} we get (\ref{Eq.Tail1}). Setting $N = B$ in (\ref{Eq.Tail1}) gives (\ref{Eq.UpperTailAlpha}) in this case
 once we note $\mathcal{J}_{a^B, b^A} = \mathcal{J}_{a^A, b^B}$, which holds by homogeneity and $\mathcal{J}_{\rho_1, \rho_2}=\mathcal{J}_{\rho_2, \rho_1}$, cf. Remark \ref{Rem.Commute}.

{\raggedleft \bf Case 3.} Suppose that $\cm < 1$ and $B < A$. Consider the sequence $M(N)$, such that $M(N) = \lceil \cm^{-1} N \rceil$ for $N \neq B$, and $M(B) = A$, and note $M(N) \geq N$. We observe that 
$$|M(B) - \cm^{-1} B| = |A - \cm^{-1}B| \leq \cm^{-1} \cd_1,$$
and so we conclude that $\mathcal{J}_{a^N, b^M}$ satisfies the conditions of Definition \ref{DefScale} with $\cd = \cd_3$ and $\cm$ replaced with $\cm^{-1}$, where $\cd_3 = \max(\cm^{-1} \cd_1, 1)$. From our work in Section \ref{Section3.2} we see that $\mathcal{J}_{a^N, b^M}(\cdot |E_N^{\infty})$ satisfies the conditions of Assumption \ref{Ass.Infinite}. Consequently, Proposition \ref{Prop.UpperTail} implies we can find $C_1(a,b,\theta, \cm^{-1}, \cd_3, A_0)$ and $D_1(a,b,\theta, \cm^{-1}, \cd_3, A_0)$, such that for $N \geq N_0$ 
\begin{equation}\label{Eq.Tail2}
\mathcal{J}_{a^N, b^M}(\lambda_1 \geq N D_1(a,b,\theta, \cm^{-1}, \cd_3, A_0)) \leq C_1(a,b,\theta, \cm^{-1}, \cd_3, A_0) e^{-NA_0}.
\end{equation}
Setting $N = B$ in (\ref{Eq.Tail2}) gives (\ref{Eq.UpperTailAlpha}) in this case once we note $\mathcal{J}_{a^B, b^A} = \mathcal{J}_{a^A, b^B}$. 

{\raggedleft \bf Case 4.} Suppose that $\cm < 1$ and $B \geq A$. Consider the sequence $M(N)$, such that $M(N) = \lceil \cm^{-1} N \rceil$ for $N \neq A$, and $M(A) = B$, and note $M(N) \geq N$. Since $\cm < 1$ and $B \geq A$, we note that 
$$B -\cm^{-1} A \leq  \cm^{-1} B - A \mbox{ and } \cm^{-1} A - B \leq \cm^{-1} B - A.$$
Consequently, we obtain
$$|M(A) - \cm^{-1} A| = |B -\cm^{-1} A| \leq \cm^{-1} B - A \leq \cm^{-1}\cd_1.$$
We conclude that $\mathcal{J}_{a^N, b^M}$ satisfies the conditions of Definition \ref{DefScale} with $\cd = \cd_3$ as in Case 3, and $\cm$ replaced with $\cm^{-1}$, and so from Proposition \ref{Prop.UpperTail} we get (\ref{Eq.Tail2}). Setting $N = A$ in (\ref{Eq.Tail2}) gives (\ref{Eq.UpperTailAlpha}) in this case.

%
%
\subsection{Proof of Lemma \ref{Lem.CaseVILengthBound}} \label{Section3.9} Before we go to the proof of Lemma \ref{Lem.CaseVILengthBound} we introduce a bit of notation and prove a useful lemma.

Let $\mathbb{Y}(n) = \{\lambda \in \mathbb{Y}: |\lambda| = n\}$ be the set of partitions of $n$. In \cite{Ke97} Kerov introduced the following probability measure on $\mathbb{Y}(n)$
\begin{equation}\label{Eq.JPM}
\mathbb{M}^{(n)}_{\theta}(\lambda) =\frac{n!\theta^{n}}{\prod_{i=1}^{\ell(\lambda)}\prod_{j=1}^{\lambda_i}(\lambda_i-j+\theta(\lambda_j'-i)+\theta)(\lambda_i-j+\theta(\lambda_j'-i)+1)}.
\end{equation}
$\mathbb{M}^{(n)}_{\theta}$ is sometimes referred to as a {\em Jack-Plancherel measure}, as it forms a one-parameter generalization of the usual Plancherel measure on $\mathbb{Y}(n)$. We can extend $\mathbb{M}^{(n)}_{\theta}$ to a probability measure on $\mathbb{Y}$ by setting $\mathbb{M}^{(n)}_{\theta}(\lambda) = 0$ when $|\lambda| \neq n$. For a parameter $L > 0$, we also define the {\em Poissonized Jack-Plancherel} measure by
\begin{equation}\label{Eq.PJPM}
\mathbb{P}^{(L)}(\lambda) = \sum_{n = 0}^{\infty} \frac{e^{-L}L^n}{n!} M^{(n)}_{\theta}(\lambda).
\end{equation}
In other words, $\mathbb{P}^{(L)}$ is the measure we obtain by sampling $X$ according to a Poisson distribution with parameter $L$, and then sampling $\lambda$ according to $\mathbb{M}^{(X)}_{\theta}$. One key observation for us is that if $s_1, s_2 > 0$ and $L = s_1 s_2 \theta$, then
\begin{equation}\label{Eq.JPMEquals}
\mathbb{P}^{(L)}(\lambda) = \frac{J_\lambda(\tau_{s_1};\theta) \tilde{J}_{\lambda}(\tau_{s_2}; \theta)}{H_{\theta}(\tau_{s_1}, \tau_{s_2}) } = \mathcal{J}_{\tau_{s_1}, \tau_{s_2}}(\lambda).
\end{equation}
The latter can be deduced from equations (\ref{S12E2}), (\ref{Eq.PlanchEval}) and (\ref{Eq.CaseVINormalization_fml}).

The main result we require about $\mathbb{M}^{(n)}$ is contained in the following lemma.
\begin{lemma}\label{Lem.JPMTail} Fix $\theta, A_1 > 0$. We can find a constants $b_1, B_1, B_1'  > 0$, depending on $\theta, A_1$, such that for all $n \geq 1$ and $u \geq b_1$
\begin{equation}\label{Eq.PlUpperTail}
\mathbb{M}^{(n)}_{\theta}(\lambda_1 \geq u \sqrt{n}) \leq  B_1e^{- A_1 u \sqrt{n}}, \hspace{4mm} \mathbb{M}^{(n)}_{\theta}(\lambda_1' \geq u \sqrt{n}) \leq  B_1' e^{- A_1 u \sqrt{n}}.
\end{equation}
\end{lemma}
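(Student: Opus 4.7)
The plan is to first establish the bound on $\lambda_1$ by passing through the Poissonized Jack--Plancherel measure $\mathbb{P}^{(L)}$ of \eqref{Eq.PJPM}, and then to deduce the bound on $\lambda_1'$ by a conjugation duality. The key tool is the identification $\mathbb{P}^{(L)} = \mathcal{J}_{\tau_{s_1},\tau_{s_2}}$ from \eqref{Eq.JPMEquals}, valid whenever $s_1 s_2 \theta = L$, which allows us to analyze $\mathbb{M}^{(n)}_\theta$ via the Jack symmetric function machinery already in play.

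\textbf{De-Poissonization.} Directly from \eqref{Eq.PJPM}, for any $L > 0$ we have the trivial lower bound $\mathbb{P}^{(L)}(\lambda_1 \geq m) \geq (e^{-L}L^n/n!)\,\mathbb{M}^{(n)}_\theta(\lambda_1 \geq m)$. Taking $L = n$ and using Stirling, one obtains a universal constant $C_0$ with
\begin{equation*}
\mathbb{M}^{(n)}_\theta(\lambda_1 \geq u\sqrt{n}) \leq C_0\sqrt{n}\cdot\mathbb{P}^{(n)}(\lambda_1 \geq u\sqrt{n})
\end{equation*}
for all $n \geq 1$. The polynomial factor $\sqrt{n}$ will be harmlessly absorbed by the exponential tail obtained in the next step.

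\textbf{Bounding the Poissonized tail.} This is the heart of the argument, and I would carry it out along the lines of \cite{Fulman03}. Setting $s = \sqrt{n/\theta}$ so that $\mathbb{P}^{(n)} = \mathcal{J}_{\tau_s,\tau_s}$, and applying the exponential Markov inequality $\mathbb{P}^{(n)}(\lambda_1 \geq m) \leq e^{-tm}\mathbb{E}[e^{t\lambda_1}]$ for $t > 0$, the task reduces to controlling the moment generating function of $\lambda_1$ under $\mathcal{J}_{\tau_s,\tau_s}$. Via the branching/Pieri rule for Jack symmetric functions and coefficient bounds as in \cite{Fulman03}, one should be able to establish an estimate of the form $\mathbb{E}[e^{t\lambda_1}] \leq \exp\bigl(c_\theta\,(t\sqrt{n}+t^2 n)\bigr)$ uniformly for $t$ in a small neighborhood of $0$. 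Optimizing in $t$ with $m = u\sqrt{n}$ and $u$ sufficiently large (depending on $\theta$ and $A_1$) then gives
\begin{equation*}
\mathbb{P}^{(n)}(\lambda_1 \geq u\sqrt{n}) \leq C\,e^{-(A_1+1)u\sqrt{n}},
\end{equation*}
which combined with the de-Poissonization step yields the first inequality of the lemma.

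\textbf{Conjugation and main obstacle.} For the bound on $\lambda_1'$, I would observe that conjugation swaps $a(\square)$ and $\ell(\square)$ in \eqref{Eq.JPM}. A direct manipulation, using $|\lambda| = n$ to pull a factor of $\theta^{-2n}$ out of the denominator, yields the identity $\mathbb{M}^{(n)}_\theta(\lambda') = \mathbb{M}^{(n)}_{\theta^{-1}}(\lambda)$ for every $\lambda \in \mathbb{Y}(n)$. Therefore $\mathbb{M}^{(n)}_\theta(\lambda_1' \geq u\sqrt{n}) = \mathbb{M}^{(n)}_{\theta^{-1}}(\lambda_1 \geq u\sqrt{n})$, and the second inequality follows by applying the first with $\theta$ replaced by $\theta^{-1}$. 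The principal obstacle in this plan is the moment generating function estimate: for $\theta = 1$ it is classical via the RSK correspondence and longest increasing subsequences, but for general $\theta > 0$ one must work algebraically within the Jack function framework, carefully tracking the $\theta$-dependence of all constants.
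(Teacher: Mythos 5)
Your conjugation step is correct and matches the paper's: both you and the paper deduce the $\lambda_1'$ bound from the $\lambda_1$ bound via $\mathbb{M}^{(n)}_\theta(\lambda') = \mathbb{M}^{(n)}_{\theta^{-1}}(\lambda)$, read off directly from \eqref{Eq.JPM}. Your de-Poissonization inequality $\mathbb{M}^{(n)}_\theta(\lambda_1\geq m)\leq C_0\sqrt{n}\,\mathbb{P}^{(n)}(\lambda_1\geq m)$ is also a valid and clean reduction, and the direction of travel (from the fixed-size measure to the Poissonized one and back) is a perfectly reasonable general strategy.

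However, the middle step — the moment generating function estimate — is a genuine gap, and in fact the form you propose would not deliver the conclusion. If $\mathbb{E}[e^{t\lambda_1}]\leq \exp\bigl(c_\theta(t\sqrt{n}+t^2 n)\bigr)$, then Chernoff with $m=u\sqrt{n}$ gives the exponent $-tu\sqrt{n}+c_\theta t\sqrt{n}+c_\theta t^2 n$; optimizing over $t$ yields $t\sim (u-c_\theta)/(2c_\theta\sqrt{n})$, and the resulting bound is $\exp\bigl(-(u-c_\theta)^2/(4c_\theta)\bigr)$, which is \emph{constant in $n$} and does not produce the required factor $e^{-A_1 u\sqrt{n}}$. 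If instead you freeze $t$ at a fixed value, the $c_\theta t^2 n$ term dominates $-tu\sqrt{n}$ for large $n$, and the bound is vacuous. To recover $e^{-A_1 u\sqrt{n}}$ for arbitrary prescribed $A_1$ (after absorbing the $\sqrt{n}$ from de-Poissonization), one really needs a bound on $\mathbb{P}^{(n)}(\lambda_1\geq u\sqrt{n})$ whose exponent grows faster than linearly in $u$, of the type $\exp(-c\,u\log u\cdot\sqrt{n})$, and your proposed MGF does not have the shape to produce that. Moreover, the reference to Fulman is misplaced as a justification for this MGF estimate: Fulman's argument, which the paper in fact adapts, is not a Chernoff/Pieri argument on the Poissonized measure at all. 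It is a direct combinatorial bound on $\mathbb{M}^{(n)}_\theta(\lambda_1=m)$ obtained by writing, for $\lambda$ with $\lambda_1=m$ and $\bar\lambda=(\lambda_2,\lambda_3,\dots)$,
\begin{equation*}
\mathbb{M}^{(n)}_\theta(\lambda)=\frac{n!\,\theta^m}{(n-m)!}\,\mathbb{M}^{(n-m)}_\theta(\bar\lambda)\prod_{j=1}^m\frac{1}{(m-j+\theta(\lambda_j'-1)+\theta)(m-j+\theta(\lambda_j'-1)+1)}\,,
\end{equation*}
bounding the product below by $\prod_j(m-j+\theta)(m-j+1)$, summing over $\bar\lambda$, and then estimating the resulting factor $\frac{n!\,\theta^{m-1}}{(n-m)!\,m!\,(m-1)!}\leq\theta^{-1}(e^2 n\theta/m^2)^m$ by Stirling. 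The decisive feature is that with $m=u\sqrt{n}$ the base $e^2 n\theta/m^2=e^2\theta/u^2$ can be made smaller than $e^{-A_1-1}$ simply by taking $u\geq b_1$ large; this is what gives exponential decay at \emph{any} prescribed rate $A_1$ with no Poissonization or generating functions needed. You should either carry out that direct row-removal bound, or, if you insist on the Poissonized route, you need an MGF estimate with much faster-than-quadratic growth in $t$ (and proof of it), which is substantially harder than the direct approach.
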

\begin{proof} We observe that the second inequality in (\ref{Eq.PlUpperTail}) follows from the first, once we use $\mathbb{M}^{(n)}_{\theta}(\lambda) = \mathbb{M}^{(n)}_{\theta^{-1}}(\lambda')$, which one deduces directly from (\ref{Eq.JPM}). Consequently, we only establish the first inequality in (\ref{Eq.PlUpperTail}). The argument we present is an adaptation of the one used in \cite[Lemma 6.6]{Fulman03}.

Fix $m \in \{1, \dots, n\}$ and $\lambda \in \mathbb{Y}(n)$, such that $\lambda_1 = m$. Set $\bar{\lambda} = (\lambda_2, \lambda_3, \dots)$, i.e., $\bar{\lambda}$ is the partition obtained from $\lambda$, by deleting its first row. Note that $\bar{\lambda} \in \mathbb{Y}(n-m)$. We observe from (\ref{Eq.JPM}) that 
\begin{equation*}
\begin{split}
&\mathbb{M}^{(n)}_{\theta}(\lambda) = \frac{n! \theta^{m}}{(n-m)!} \cdot  \mathbb{M}^{(n-m)}_{\theta}(\bar{\lambda}) \cdot \prod_{j = 1}^m \frac{1}{(m-j+\theta(\lambda_j'-1)+\theta)(m-j+\theta(\lambda_j'-1)+1)} \\
&\leq \frac{n! \theta^{m}}{(n-m)!} \cdot  \mathbb{M}^{(n-m)}_{\theta}(\bar{\lambda})\cdot \prod_{j = 1}^m \frac{1}{(m-j+\theta)(m-j+1)} \leq \frac{n! \theta^{m-1}}{(n-m)! m! (m-1)!} \cdot  \mathbb{M}^{(n-m)}_{\theta}(\bar{\lambda}). 
\end{split}
\end{equation*}
Summing over all $\lambda$'s such that $\lambda_1= m$, we conclude 
\begin{equation}\label{Eq.PlE1}
\begin{split}
&\mathbb{M}^{(n)}_{\theta}(\lambda_1 = m) \leq \frac{n! \theta^{m-1}}{(n-m)! m! (m-1)!} \leq \frac{(m/\theta) (n\theta)^m}{(m!)^2} \leq \theta^{-1} \cdot (e^2 n\theta/m^2)^m,
\end{split}
\end{equation}
where in the last inequality we used the following result due to \cite{R55} 
$$m! = \sqrt{2\pi} m^{m+1/2} e^{-m} e^{r_m}, \mbox{ for } \frac{1}{12m + 1} < r_m < \frac{1}{12m}.$$
We now pick $b_1 > 1$ sufficiently large so that $e^2 \theta \leq b_1^2 \cdot e^{-A_1 - 1},$ 
and note from (\ref{Eq.PlE1}) that for $ u \geq b_1$
$$\mathbb{M}^{(n)}_{\theta}(\lambda_1 \geq   u \sqrt{n}) \leq n \cdot \max_{n \geq m \geq u \sqrt{n}} \mathbb{M}^{(n)}_{\theta}(\lambda_1 = m) \leq  n \cdot \theta^{-1} \cdot  e^{-(A_1 +1) u \sqrt{n}} ,$$
which implies the first inequality in (\ref{Eq.PlUpperTail}) with $B_1 = \theta^{-1} \cdot \sup_{n \geq 1} n e^{-\sqrt{n}}$.
\end{proof}

With the above result in place we are ready to give the proof of Lemma \ref{Lem.CaseVILengthBound}.
\begin{proof}[Proof of Lemma \ref{Lem.CaseVILengthBound}] We continue with the same notation as in the statement of the lemma. Let $b_1, B_1, B_1'$ be as in Lemma \ref{Lem.JPMTail} for our present $\theta$ and $A_1 = 2 A_0$. We claim that for any integer $d \geq D_2^{VI} = b_1 (2 t_1t_2 \theta)^{1/2}$ we can find $C> 0$, depending on $t_1, t_2, \theta, A_0$ and $d$, such that for $K \geq 1$
\begin{equation}\label{Eq.TruncRed1}
\mathcal{J}_{\tau_{Kt_1/d}, \tau_{Kt_2/d}}\left(\lambda_1 \geq  K  \right) \leq C e^{-KA_0} \mbox{ and } \mathcal{J}_{\tau_{Kt_1/d}, \tau_{Kt_2/d}}\left(\lambda_1' \geq  K  \right) \leq C e^{-KA_0}.
\end{equation}
Note that (\ref{Eq.TruncRed1}) implies
$$\mathcal{J}_{\tau_{Kt_1/d}, \tau_{Kt_2/d}}\left( E_{K}^{K}  \right) \geq 1 - 2C e^{-KA_0}, \mbox{ and } \mathcal{J}_{\tau_{Kt_1/d}, \tau_{Kt_2/d}}\left(\lambda_1 =  K  \right) \leq C e^{-KA_0}.$$
The last two inequalities imply (\ref{Eq.CaseVIUpperTail}) and the first inequality with $K = Nd$ implies (\ref{Eq.CaseVITruncateLen}). We have thus reduced the proof to establishing (\ref{Eq.TruncRed1}).\\

Fix any integer $d \geq b_1 (2 t_1t_2 \theta)^{1/2}$. Set $L =  K^2t_1t_2 \theta/d^2$, and let $X$ be a Poisson random variable with parameter $L$. From (\ref{Eq.JPMEquals}) we know that
\begin{equation}\label{Eq.TruncRed2}
\begin{split}
&\mathcal{J}_{\tau_{Kt_1/d}, \tau_{Kt_2/d}}\left( \lambda_1 \geq K \right) = \sum_{n = 0}^{\infty} \mathbb{P}(X = n) \mathbb{M}^{(n)}_{\theta}(\lambda_1 \geq K) \\
&\leq \mathbb{P}(X \not \in [L/2, 2L]) + \max_{n \in [L/2, 2L]} \mathbb{M}^{(n)}_{\theta}(\lambda_1 \geq K).
\end{split}
\end{equation}
As $d(2 t_1t_2 \theta)^{-1/2} \geq b_1$, we get from Lemma \ref{Lem.JPMTail} for $n \in [L/2, 2L]$
\begin{equation*}
    \begin{split}
        &\mathbb{M}^{(n)}_{\theta}(\lambda_1 \geq K) = \mathbb{M}^{(n)}_{\theta} \left(\lambda_1 \geq d(2 t_1t_2 \theta)^{-1/2} \sqrt{2L} \right)\leq \mathbb{M}^{(n)}_{\theta} \left(\lambda_1 \geq d(2 t_1t_2 \theta)^{-1/2} \sqrt{n} \right) \\
        & \leq B_1 e^{-A_1 d(2 t_1t_2 \theta)^{-1/2} \sqrt{n} } \leq B_1 e^{-A_1 d(2 t_1t_2 \theta)^{-1/2} \sqrt{L/2} } = B_1 e^{- K A_1 /2} = B_1 e^{-KA_0}.
    \end{split}
\end{equation*}
In addition, from the Chernoff bounds for Poisson tails \cite[Theorem 4.1]{MR95} with $\delta = 1$, and \cite[Theorem 4.2]{MR95} with $\delta = 1/2$, we have 
\begin{equation*}
\mathbb{P}(X \not \in [L/2, 2L]) \leq (e/4)^{L} + e^{-L/8}.
\end{equation*}
Putting the last two bounds into (\ref{Eq.TruncRed2}) gives
$$\mathcal{J}_{\tau_{Kt_1/d}, \tau_{Kt_2/d}}\left( \lambda_1 \geq K \right)  \leq  (e/4)^{L} + e^{-L/8} + B_1 e^{-KA_0},$$
and an analogous argument gives
$$\mathcal{J}_{\tau_{Kt_1/d}, \tau_{Kt_2/d}}\left( \lambda'_1 \geq K \right)  \leq  (e/4)^{L} + e^{-L/8} + B_1' e^{-KA_0}.$$
The last two displayed equations give (\ref{Eq.TruncRed1}) once we recall that $L =  K^2t_1t_2 \theta/d^2$.
\end{proof}

%
%
\section{Computing the equilibrium measures}\label{Section4} In Section \ref{Section4.1} we recall some of the results for discrete $\beta$-ensembles from \cite{DK22}. Most notably, these results include the {\em loop equations} or {\em Nekrasov's equations}, see Proposition \ref{Prop.Loop}, which were introduced for discrete $\beta$-ensembles in \cite{BGG17}. The main consequence of these equations for the present paper is that they allow us to express the density of the equilibrium measure $\mu$ in Proposition \ref{Prop.GlobalLLN} in terms of certain holomorphic functions, see Lemma \ref{Lem.MuThroughR}. As it turns out, these functions are explicit low-degree polynomials for the Jack measures we study, which allows us to find explicit formulas for the density of the equilibrium measures in each of the six cases in Section \ref{Section3}.

%
%
\subsection{Loop equations}\label{Section4.1} In this section we consider a sequence of discrete $\beta$-ensembles as in Definition \ref{Def.BetaEnsembles} with $R = R_K$ that satisfy Assumption \ref{Ass.Finite}. From Proposition \ref{Prop.GlobalLLN} we know that the empirical measures $\mu_K$ converge weakly in probability to a deterministic measure $\mu$. Notice that since $\mu_K$ are supported in $[0, R_K  K^{-1} + (K-1) K^{-1}  \theta]$, we have that $\mu$ is supported in $[0, \mathsf{R} + \theta]$. Next, we make the following assumption about the structure of the weight function $w(K x;K)$ in a complex neighborhood of $[0, \mathsf{R} + \theta]$.

\begin{assumption} \label{Ass.Loop1} We assume that we have an open set $\mathcal{M}  \subseteq \mathbb{C}$, such that $[0, \mathsf{R} + \theta] \subset \mathcal{M}$. In addition, we assume that we have holomorphic functions $\Phi^+_K, \Phi^-_K$ on $K\cdot \mathcal{M}$, such that 
\begin{equation}\label{S41E1}
\begin{split}
&\frac{w(Kx;K)}{w(Kx-1;K)}=\frac{\Phi_K^+(Kx)}{\Phi_K^-(Kx)},
\end{split}
\end{equation} 
whenever $x \in [ K^{-1}, R_K  K^{-1}+  (K-1)  K^{-1}  \theta]$. Moreover, 
\begin{equation*}
\begin{split}
&\Phi^{-}_K(Kz) = \Phi^{-}(z) + O \left(K^{-1} \right) \mbox{ and } \Phi^{+}_K(Kz) = \Phi^{+}(z) + O \left(K^{-1} \right),
\end{split}
\end{equation*}
where the constants in the big $O$ notation are uniform over $z$ in compact subsets of $\mathcal{M}$. All the aforementioned functions are holomorphic in $\mathcal{M}$, $\Phi^{\pm}$ do not depend on $K$ and are positive (in particular real) on $(0, \mathsf{R} + \theta)$.
\end{assumption}

With the above notation in place we can state the loop equations we require. They are essentially a restatement of \cite[Proposition 3.3]{DK22} and \cite[Theorem 4.1]{BGG17}.
\begin{proposition}\label{Prop.Loop} Suppose that Assumptions \ref{Ass.Finite} and \ref{Ass.Loop1} hold. Define
\begin{equation}\label{S41E2}
H_K(z) = \Phi^-_K(z) \cdot \mathbb{E}_{\mathbb{P}_K} \left[ \prod_{i = 1}^K  \frac{z - \ell_i - \theta}{z - \ell_i}\right] + \Phi^+_K(z) \cdot \mathbb{E}_{\mathbb{P}_K} \left[ \prod_{i = 1}^K  \frac{z - \ell_i + \theta - 1}{z - \ell_i - 1} \right] - \frac{r^-(K)}{z} - \frac{r^+(K)}{z - s_K}, 
\end{equation}
where $s_K = R_K + 1 + (K-1) \cdot \theta$ and 
\begin{align}\label{S41E3}
\begin{split}
&r^-(K) = \Phi^-_K(0) \cdot (-\theta) \cdot \mathbb{P}_K(\ell_K = 0) \cdot \mathbb{E}_{\mathbb{P}_K} \left[ \prod_{i= 1}^{K-1}  \frac{ \ell_i + \theta }{ \ell_i}\Big{\vert}  \ell_K = 0 \right],\\
&r^+(K) = \Phi^+_K(s_K) \cdot \theta \cdot \mathbb{P}_K(\ell_1 = s_K -1 ) \cdot \mathbb{E}_{\mathbb{P}_K} \left[\prod_{i = 2}^K  \frac{s_K - \ell_i  + \theta -1}{s_K - \ell_i - 1} \Big{\vert}  \ell_1 = s_K - 1 \right].
\end{split}
\end{align}
Then, $H_K(z)$ is a holomorphic function on the rescaled domain $K \cdot \mathcal{M}$ in Assumption \ref{Ass.Loop1}. Moreover, if $\Phi^{\pm}_K(z)$ are
polynomials of degree at most $d$, then so is $H_K(z)$.
\end{proposition}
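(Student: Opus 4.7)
The plan is to prove that the apparent poles of $H_K(z)$ arising from the summands in (\ref{S41E2}) all cancel, yielding a holomorphic extension across $K \cdot \mathcal{M}$, and then to read off the polynomial bound from the behavior at infinity. The apparent singularities are: the points $z = \ell_i$ (from the first expectation), the shifted points $z = \ell_j + 1$ (from the second expectation), and the explicit simple poles at $z = 0$ and $z = s_K$ from the $r^{\pm}(K)$ correction terms. For each candidate singular point $z = m$, I would expand the expectations as sums over configurations in $\mathbb{W}_K^{\theta, R_K}$ and pair the contributions from the two terms using the natural shift bijection $\ell \leftrightarrow \ell'$, where $\ell_k' = \ell_k$ for $k \neq i$ and $\ell_i' = \ell_i - 1$ (so $\ell_i = m$ in $\ell$ and $\ell_i' = m - 1$ in $\ell'$).

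The key calculation is the explicit ratio of weights under this shift, obtained from (\ref{Eq.BetaEnsemble}) by applying the functional equation $\Gamma(z+1) = z\Gamma(z)$ together with the defining relation (\ref{S41E1}):
\[
\frac{\mathbb{P}_K(\ell)}{\mathbb{P}_K(\ell')} = \frac{\Phi_K^+(m)}{\Phi_K^-(m)} \cdot \prod_{k \neq i} \frac{(m - \ell_k)(m - \ell_k - 1 + \theta)}{(m - \ell_k - 1)(m - \ell_k - \theta)}.
\]
Multiplying this identity by the residue coefficients $\mp \theta \Phi_K^{\pm}(m)$ and the remaining product factors shows that the paired residues at $z = m$ sum to zero term-by-term.

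The bijection is well-defined on most configurations but breaks down in two geometric situations. First, if $\lambda_i = \lambda_{i+1}$ then $\ell_i - \ell_{i+1} = \theta$ and the shifted point $\ell_i - 1$ lies outside $\mathbb{W}_K^{\theta, R_K}$; however, the first-term residue then carries the factor $(m - \ell_{i+1} - \theta) = 0$, making the orphaned contribution vanish, and a symmetric observation handles the second-term side. Second, at the two boundaries $m = 0$ and $m = s_K$, the shift has no valid target: configurations with $\ell_K = 0$ generate an unmatched first-term residue that equals $r^-(K)$ by (\ref{S41E3}), and configurations with $\ell_1 = s_K - 1$ generate an unmatched second-term residue equal to $r^+(K)$. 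Subtracting $r^-(K)/z$ and $r^+(K)/(z - s_K)$ in the definition of $H_K(z)$ exactly kills these, proving holomorphicity on $K \cdot \mathcal{M}$. For the polynomial statement, when $\Phi_K^{\pm}$ are polynomials of degree at most $d$ we may take $\mathcal{M} = \mathbb{C}$, so $H_K$ is entire; expanding $\prod_i (z - \ell_i - \theta)/(z - \ell_i) = 1 + O(|z|^{-1})$ (and likewise for the other product) at infinity gives $|H_K(z)| = O(|z|^d)$, forcing $H_K$ to be a polynomial of degree at most $d$.

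I expect the main obstacle to be the verification that the failure points of the shift bijection -- both the coincidences $\lambda_i = \lambda_{i+1}$ and the two boundary walls -- produce precisely the contributions matched by $r^{\pm}(K)$, together with the careful sign and factor bookkeeping in the residue ratio. The mechanism is that the interaction kernel in (\ref{Eq.BetaEnsemble}) produces a zero in the residue exactly when the shift is geometrically obstructed, and this cancellation is the heart of the Nekrasov-type equation; the argument follows the scheme of \cite[Theorem 4.1]{BGG17} and \cite[Proposition 3.3]{DK22}, which we shall invoke.
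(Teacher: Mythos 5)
Your proposal is correct and takes essentially the same route as the paper: the paper simply verifies that Assumptions \ref{Ass.Finite} and \ref{Ass.Loop1} imply the hypotheses of \cite[Proposition 3.3]{DK22} and cites that result for the analyticity, then obtains the polynomial claim by noting $H_K$ is entire with $|H_K(z)| = O(|z|^d)$ and applying Liouville's theorem. You sketch the residue-cancellation mechanism that underlies the cited Nekrasov-equation result before invoking the same references, and your concluding growth-bound argument coincides with the paper's.
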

\begin{proof} Assumptions \ref{Ass.Finite} and \ref{Ass.Loop1} show that \cite[Assumptions 1,2 and 3]{DK22} hold with $N$ there replaced with $K$ here, $M_N = R_K$, and $\mathsf{M} = \mathsf{R}$. The analyticity of $H_K$ in $K \cdot \mathcal{M}$ is now a consequence of \cite[Proposition 3.3]{DK22}. If $\Phi^{\pm}_K$ are polynomials of degree at most $d$, then we can take $\mathcal{M} = \mathbb{C}$, and so $H_K$ is entire. Moreover, from (\ref{S41E2}) we conclude $|H_K(z)| = O(|z|^d)$ as $|z| \rightarrow \infty$, and so $H_K$ is a polynomial of degree at most $d$ by Liouville's theorem, see e.g. \cite[Corollary 7.4, Chapter III]{SL99}.
\end{proof}

The next assumption we require aims to upper bound the quantities $r^{\pm}(K)$ in (\ref{S41E3}).
\begin{assumption}\label{Ass.Loop2} We assume that there are constants $C, c, a > 0$, such that for all large $K$
\begin{align}\label{S41E4}
\begin{split}
&\mathbb{P}_K(\ell_K = 0) \cdot \left|\Phi_K^-(0)\right| \leq C \exp ( - c K^a) \mbox{ and } \\
&\mathbb{P}_K(\ell_1= R_K + (K-1) \cdot \theta ) \cdot \left| \Phi_K^+(R_K + 1 +(K-1) \cdot \theta) \right| \leq C \exp ( - c K^a).
\end{split}
\end{align}
\end{assumption}

If $\mu$ is the equilibrium measure from Proposition \ref{Prop.GlobalLLN}, we define its Stieltjes transform through
\begin{equation}\label{S41E5}
G_{\mu}(z) := \int_\mathbb{R} \frac{\mu(dx)}{z - x}.
\end{equation}
Since the support of $\mu$ is contained in $[0, \mathsf{R} + \theta]$, we have that $G_{\mu}(z)$ is holomorphic in the domain $\mathbb{C} \setminus [0, \mathsf{R} + \theta]$. If $\Phi^{\pm}(z)$ are as in Assumption \ref{Ass.Loop1}, we also define for $z \in \mathcal{M} \setminus [0, \mathsf{R} + \theta]$ the function
\begin{align}\label{S41E6}
\begin{split}
&R_{\mu}(z) = \Phi^-(z) \cdot e^{- \theta G_{\mu} (z) }+  \Phi^+(z) \cdot e^{ \theta G_{\mu} (z) }.
\end{split}
\end{align}
The following statement summarizes the main results we require about the function $R_{\mu}$ and its relationship to the equilibrium measure $\mu$.

\begin{lemma}\label{Lem.MuThroughR} Suppose that Assumptions \ref{Ass.Finite}, \ref{Ass.Loop1} and \ref{Ass.Loop2} hold. If $R_{\mu}$ is as in (\ref{S41E6}), then it can be analytically extended to $\mathcal{M}$ and is real-valued on $\mathcal{M} \cap \mathbb{R}$. Moreover, if $\mu$ is as in Proposition \ref{Prop.GlobalLLN}, then it has the following density on $\mathbb{R}$
\begin{equation}\label{S41E7}
\mu(x) = \frac{{\bf 1}\{x \in (0, \mathsf{R} + \theta)\} }{\theta \pi } \cdot \mathrm{arccos} \left( \frac{R_\mu(x)}{2 \sqrt{\Phi^-(x)  \Phi^+(x)}}\right).
\end{equation}
 Lastly, if we further suppose that $\Phi_K^{\pm}$ are polynomials of degree at most $d$ for all large $K$, then the same is true for $R_{\mu}$.
\end{lemma}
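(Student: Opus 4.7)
The key tool is the loop equation (Proposition \ref{Prop.Loop}), which guarantees that $H_K(z)$ defined in (\ref{S41E2}) is holomorphic on $K\cdot\mathcal{M}$ despite apparent poles at $z=\ell_i$. The plan is to show that the rescaled functions $H_K(Kw)$ converge to $R_\mu(w)$ uniformly on compact subsets of $\mathcal{M}$; this simultaneously delivers the analytic continuation of $R_\mu$, its real-valuedness, and the polynomial degree claim, and sets up the boundary value computation for the density formula.

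First I would establish pointwise convergence $H_K(Kw)\to R_\mu(w)$ on $\mathcal{M}\setminus[0,\mathsf{R}+\theta]$. The residue terms $r^\pm(K)/(Kw-\cdot)$ vanish because Assumption \ref{Ass.Loop2} gives $\mathbb{P}_K(\ell_K=0)|\Phi_K^-(0)|\leq Ce^{-cK^a}$, while the conditional expectation $\mathbb{E}[\prod_{i=1}^{K-1}(\ell_i+\theta)/\ell_i\mid\ell_K=0]$ is at most $\prod_{j=1}^{K-1}(1+1/j)=K$ via $\ell_i\geq(K-i)\theta$ on $\{\ell_K=0\}$; the same estimate applies to $r^+(K)$. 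For the expectation term, I would expand $\prod_{i=1}^K(Kw-\ell_i-\theta)/(Kw-\ell_i)=\exp\bigl(\sum_i\log(1-\theta/(Kw-\ell_i))\bigr)$ and control the quadratic remainder by $O(K^{-1})$ on compact subsets of $\mathcal{M}\setminus[0,\mathsf{R}+\theta]$, obtaining $e^{-\theta G_{\mu_K}(w)}(1+O(K^{-1}))$. The weak convergence $\mu_K\to\mu$ from Proposition \ref{Prop.GlobalLLN} together with deterministic boundedness yields $\mathbb{E}_{\mathbb{P}_K}[\prod_i(Kw-\ell_i-\theta)/(Kw-\ell_i)]\to e^{-\theta G_\mu(w)}$, and analogously for the $+\theta-1$ product; combined with $\Phi_K^\pm(Kw)\to\Phi^\pm(w)$ this gives $H_K(Kw)\to R_\mu(w)$.

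Next I would promote this to uniform-on-compacts convergence on all of $\mathcal{M}$. Given compact $K_0\subset\mathcal{M}$ with $[0,\mathsf{R}+\theta]$ in its interior, I would shrink slightly to a $K_0'\subset K_0$ whose boundary is at positive distance from $[0,\mathsf{R}+\theta]$. On $\partial K_0'$ the expectations and $\Phi_K^\pm$ are bounded deterministically (since $\ell_i^K/K\in[0,\mathsf{R}+\theta+O(1/K)]$), hence so is $H_K(Kw)$. The maximum modulus principle combined with holomorphicity of $H_K(K\cdot)$ on all of $\mathcal{M}$ then bounds $H_K(K\cdot)$ uniformly on $K_0'$. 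Montel's theorem yields a normal family, and since every subsequential limit is holomorphic on $\mathcal{M}$ and equals $R_\mu$ off $[0,\mathsf{R}+\theta]$, the unique holomorphic extension of $R_\mu$ to $\mathcal{M}$ exists and the entire sequence $H_K(K\cdot)$ converges to it uniformly on compacts. Real-valuedness on $\mathcal{M}\cap\mathbb{R}$ follows from Schwarz reflection applied on any open interval in $\mathcal{M}\cap\mathbb{R}\setminus[0,\mathsf{R}+\theta]$, where $\Phi^\pm$ and $G_\mu$ are both real. The polynomial degree claim is immediate: if $H_K$ has degree $\leq d$, so does $H_K(K\cdot)$, and a uniform-on-compacts limit of polynomials of degree $\leq d$ has degree $\leq d$.

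For the density formula (\ref{S41E7}), I would take boundary values across $(0,\mathsf{R}+\theta)$. By Sokhotski--Plemelj, $G_\mu(x\pm i0)=P(x)\mp i\pi\mu(x)$ where $P$ is the principal value integral, so
\begin{equation*}
R_\mu(x\pm i0) = \Phi^-(x)e^{-\theta P(x)}e^{\pm i\pi\theta\mu(x)} + \Phi^+(x)e^{\theta P(x)}e^{\mp i\pi\theta\mu(x)}.
\end{equation*}
Analyticity forces these to coincide; the difference gives $\sin(\pi\theta\mu(x))[\Phi^-(x)e^{-\theta P(x)}-\Phi^+(x)e^{\theta P(x)}]=0$, while the average gives $R_\mu(x)=\cos(\pi\theta\mu(x))[\Phi^-(x)e^{-\theta P(x)}+\Phi^+(x)e^{\theta P(x)}]$. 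In bands ($0<\mu<1/\theta$), the bracket vanishes so $e^{\theta P(x)}=\sqrt{\Phi^-(x)/\Phi^+(x)}$, substituting gives $R_\mu(x)=2\cos(\pi\theta\mu(x))\sqrt{\Phi^-(x)\Phi^+(x)}$, equivalent to (\ref{S41E7}). In voids ($\mu=0$), AM--GM yields $R_\mu(x)\geq 2\sqrt{\Phi^-(x)\Phi^+(x)}$ and the extended $\arccos$ in Definition \ref{Def.Arccos} returns zero; in saturated regions ($\mu=1/\theta$), one gets $R_\mu(x)\leq -2\sqrt{\Phi^-(x)\Phi^+(x)}$ and $\arccos$ returns $\pi$. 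Outside $(0,\mathsf{R}+\theta)$ the indicator in (\ref{S41E7}) vanishes, matching the fact that $\mu$ is supported in $[0,\mathsf{R}+\theta]$. The main obstacle is promoting pointwise convergence on $\mathcal{M}\setminus[0,\mathsf{R}+\theta]$ to uniform-on-compacts convergence across the support, requiring a careful combination of Montel's theorem with the maximum principle so that boundary bounds on $H_K(K\cdot)$ transfer to the interior, together with the separate verification of the three regimes (band, void, saturated) in the boundary-value analysis.
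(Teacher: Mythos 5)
Your argument is correct but takes a different route from the paper: the paper's proof is only a few lines that verify the assumptions of \cite{DK22} and then cite \cite[Lemma 3.5]{DK22} (analytic continuation and real-valuedness), \cite[Lemma 3.6]{DK22} (density formula), and \cite[Equation (A-8)]{DK22} (pointwise convergence $H_K(Kv)\to R_\mu(v)$ on $\mathbb{C}\setminus[0,\mathsf{R}+\theta]$), arguing directly only the polynomial-degree claim via Proposition \ref{Prop.Loop}. Your proposal reproves these cited facts from scratch, essentially reconstructing the relevant portions of \cite{DK22}: pointwise convergence from the log-expansion of the products and the weak convergence of $\mu_K$; locally uniform convergence (hence analytic continuation across $[0,\mathsf{R}+\theta]$) from Montel plus the maximum principle applied to the holomorphic functions $H_K(K\cdot)$; real-valuedness from Schwarz reflection; and the density formula from Sokhotski--Plemelj boundary values together with the band/void/saturated case analysis. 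The benefit of your route is self-containment and a slightly cleaner degree bound (a locally uniform limit of degree-$\leq d$ polynomials is obviously a polynomial of degree $\leq d$, whereas the paper's pointwise version implicitly relies on Lagrange interpolation at finitely many points); the cost is that the argument is substantially longer than the paper's citation-based one.

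Two points deserve to be made explicit. First, the Sokhotski--Plemelj identity $G_\mu(x\pm i0)=P(x)\mp i\pi\mu(x)$ holds pointwise only almost everywhere for a general bounded density, so the relation you derive between $\mu(x)$ and $R_\mu(x)$ a priori holds only a.e.\ on $(0,\mathsf{R}+\theta)$; this is harmless because the right-hand side of (\ref{S41E7}) is continuous there and a density is determined a.e., but this should be stated. Second, Assumption \ref{Ass.Loop1} only asserts that $\Phi^{\pm}$ are positive on $(0,\mathsf{R}+\theta)$, so their real-valuedness on $\mathcal{M}\cap\mathbb{R}\setminus[0,\mathsf{R}+\theta]$, which you use in the Schwarz reflection step for $R_\mu$, itself requires a preliminary reflection argument: $\Phi^{\pm}(\bar z)=\overline{\Phi^{\pm}(z)}$ by the identity theorem, since both sides are holomorphic on (a symmetric connected version of) $\mathcal{M}$ and agree on $(0,\mathsf{R}+\theta)$. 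These are gaps of rigor rather than substance, and the overall structure of your argument is sound.
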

\begin{remark}\label{Rem.Density} We recall that $\mathrm{arccos}$ is as in Definition \ref{Def.Arccos} and $\Phi^+(x) \Phi^-(x) > 0$ on $(0, \mathsf{R}+ \theta)$ by Assumption \ref{Ass.Loop1}. In particular, the density in (\ref{S41E7}) is well-defined and continuous on $(0, \mathsf{R} + \theta)$.
\end{remark}
\begin{proof} Assumptions \ref{Ass.Finite}, \ref{Ass.Loop1} and \ref{Ass.Loop2} show that \cite[Assumptions 1-4]{DK22} hold with $N$ there replaced with $K$ here, $M_N = R_K$, and $\mathsf{M} = \mathsf{R}$. The first statement in the lemma now follows from \cite[Lemma 3.5]{DK22}, and the density formula in (\ref{S41E7}) from \cite[Lemma 3.6]{DK22}. 

In the remainder we focus on showing the last statement, and assume that $\Phi_K^{\pm}$ are polynomials of degree at most $d$ for all large $K$. From Proposition \ref{Prop.Loop} we know that $H_K$ are polynomials of degree at most $d$ for all large $K$. In addition, from \cite[Equation (A-8)]{DK22} we have for each $v \in \mathbb{C} \setminus [0, \mathsf{R} + \theta]$ 
$$\lim_{K \rightarrow \infty} H_K(Kv) - R_{\mu}(v) = 0.$$
Combining the last two statements, we see that $R_{\mu}$ is the pointwise limit of polynomials of degree at most $d$, and hence is itself a polynomial of degree at most $d$.
\end{proof}

In the sections below we use Lemma \ref{Lem.MuThroughR} to compute the densities of the equilibrium measures in each of the six cases in Section \ref{Section3}. In particular, we find expressions for the functions $\Phi_{K}^{\pm}, \Phi^{\pm}$ from Assumption \ref{Ass.Loop1} and $R_{\mu}$ from (\ref{S41E6}).

%
%
\subsection{Case I: $\rho_1 = a^N$, $\rho_2 = b^M$}\label{Section4.2} The goal of this section is to establish the following statement.
\begin{lemma}\label{Lem.CaseIFormulaMu} Make the same assumptions as in Lemma \ref{Lem.CaseIWeakConv}. Then, $\mu^I$ has the following density, denoted $\mu^I(x)$, with respect to Lebesgue measure
\begin{equation}\label{Eq.CaseIFormulaMu}
\mu^I(x) = \frac{{\bf 1}\{x > 0\} }{\theta \pi } \cdot \mathrm{arccos} \left( \frac{(1+ab)x + \theta(ab\cm -1)}{2 \sqrt{abx (x - \theta + \theta \cm ) }}\right),
\end{equation}
where we recall that $\arccos$ is as in Definition \ref{Def.Arccos}.
\end{lemma}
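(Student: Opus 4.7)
The plan is to apply Lemma~\ref{Lem.MuThroughR} to a truncation of $\mathcal{J}_{a^N, b^M}$ and determine the resulting function $R_\mu$ from its behavior at infinity. Fix $A_0 > 0$ large and let $D_1^I$ be as in Lemma~\ref{Lem.CaseIWeakConv}; I may enlarge $D_1^I$ if needed so that the natural support of $\mu^I$ lies in $[0, D_1^I + \theta]$. Set $R_N = \lceil D_1^I N \rceil$ and work with the conditioned measures $\mathcal{J}_{a^N, b^M}(\cdot \,|\, E_N^{R_N})$; by Lemma~\ref{Lem.CaseIWeakConv} their empirical measures still converge weakly in probability to $\mu^I$, and they differ from the unconditioned measures in total variation by $O(e^{-NA_0})$. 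Repeating the calculation of Section~\ref{Section3.3} with Lemma~\ref{Lem.GammaApprox} applied to the weight in (\ref{Eq.CaseIBetaEnsemble}), one checks that these conditioned measures satisfy Assumption~\ref{Ass.Finite} with $\mathsf{R} = D_1^I$.

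To verify Assumption~\ref{Ass.Loop1}, a direct computation from (\ref{Eq.CaseIBetaEnsemble}) gives
$$\frac{w(x; N)}{w(x-1; N)} = \frac{ab \cdot [(M-N+1) \theta + x - 1]}{x},$$
so one may take $\Phi_N^-(w) = w/N$ and $\Phi_N^+(w) = ab \cdot [(M-N+1)\theta + w - 1]/N$. These are degree-one polynomials converging, uniformly on compact sets, to
$$\Phi^-(z) = z, \qquad \Phi^+(z) = ab \cdot (z + (\cm - 1) \theta),$$
both positive on $(0, \mathsf{R}+\theta)$ since $\cm \geq 1$. Assumption~\ref{Ass.Loop2} at the left endpoint is trivial as $\Phi_N^-(0) = 0$, while at the right endpoint the event $\{\ell_1 = R_N + (N-1)\theta\} = \{\lambda_1 = R_N\}$ is contained in $\{\lambda_1 \geq D_1^I N\}$, whose probability is at most $C_1^I e^{-NA_0}$ by (\ref{Eq.CaseIUpperTail}); this exponential decay dominates the polynomial growth of $|\Phi_N^+(R_N + 1 + (N-1)\theta)|$.

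Lemma~\ref{Lem.MuThroughR} therefore applies and yields the density formula (\ref{S41E7}) with $R_\mu$ a polynomial of degree at most one, inherited from the degree of $\Phi^\pm$. To pin down $R_\mu$, I use the expansion $G_\mu(z) = z^{-1} + O(z^{-2})$ as $|z| \to \infty$, so $e^{\pm \theta G_\mu(z)} = 1 \pm \theta/z + O(z^{-2})$. Substituting into (\ref{S41E6}) and expanding,
$$R_\mu(z) = z \bigl(1 - \theta/z\bigr) + ab \bigl(z + (\cm - 1)\theta\bigr) \bigl(1 + \theta/z\bigr) + O(z^{-1}) = (1+ab) z + \theta (ab \cm - 1) + O(z^{-1}).$$
Since $R_\mu$ is at most linear, the remainder must vanish and $R_\mu(z) = (1+ab)z + \theta(ab \cm - 1)$. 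Plugging this and $\Phi^-(x) \Phi^+(x) = ab \cdot x \cdot (x - \theta + \theta \cm)$ into (\ref{S41E7}) recovers (\ref{Eq.CaseIFormulaMu}); the indicator $\mathbf{1}\{x > 0\}$ there matches $\mathbf{1}\{x \in (0, \mathsf{R}+\theta)\}$ in (\ref{S41E7}) because, by our enlargement of $D_1^I$, the density (\ref{S41E7}) already vanishes on $(\mathsf{R}+\theta, \infty)$ (outside the band, the argument of the $\arccos$ exceeds $1$). The only mildly subtle step is the right-endpoint control in Assumption~\ref{Ass.Loop2}, which is handled uniformly by Lemma~\ref{Lem.CaseIWeakConv}.
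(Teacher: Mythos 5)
Your proposal follows the paper's argument essentially step for step: truncate to $E_N^{R_N}$ with $R_N = \lceil N\mathsf{R}\rceil$, verify Assumptions \ref{Ass.Finite}, \ref{Ass.Loop1}, \ref{Ass.Loop2}, read off $\Phi^{\pm}$, pin down $R_\mu$ from the $z\to\infty$ expansion of the Stieltjes transform, and then apply Lemma \ref{Lem.MuThroughR}. The computations of $\Phi_N^{\pm}$, $\Phi^{\pm}$, and $R_\mu(z) = (1+ab)z + \theta(ab\cm - 1)$ all match the paper. The one place where you gloss is the final extension from $(0,\mathsf{R}+\theta)$ to $(0,\infty)$: your maneuver of ``enlarging $D_1^I$ so the natural support of $\mu^I$ lies in $[0, D_1^I+\theta]$'' is circular as written, since the support is precisely what you are in the middle of computing, and the assertion ``outside the band, the argument of the $\arccos$ exceeds $1$'' is left unproved. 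The paper instead makes an explicit a priori choice $\mathsf{R} \geq \max\bigl(D_1^I,\, \theta(1+\sqrt{ab\cm})^2/(1-ab)\bigr)$, factors $R_\mu(x)^2 - 4\Phi^-(x)\Phi^+(x) = (1-ab)^2(x-x_1)(x-x_2)$, observes $\mathsf{R}+\theta \geq x_2$, and checks the numerator is positive so the $\arccos$ argument is $\geq 1$ on $[\mathsf{R}+\theta,\infty)$; you should carry out this short algebraic check (or, alternatively, invoke the compact support from Proposition \ref{Prop.GlobalLLN} and the $\mathsf{R}$-independence of the formula to push $\mathsf{R}$ arbitrarily large) to close the proof rigorously.
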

\begin{proof} Let $D_1^I$ be as in Lemma \ref{Lem.CaseIWeakConv}, and fix $\mathsf{R}$, such that
\begin{equation}\label{Eq.CaseIRConst}
\mathsf{R} \geq \max\left(D_1^I, \frac{\theta (1 + \sqrt{ab\cm})^2 }{1-ab} \right).
\end{equation}
Our first task is to check that for $R_N = \lceil N \mathsf{R} \rceil$, the measures $\mathcal{J}_{a^N, b^M}(\cdot |E_N^{R_N})$ satisfy Assumptions \ref{Ass.Finite}, \ref{Ass.Loop1} and \ref{Ass.Loop2} with $K$ replaced with $N$.  

As $R_N = \lceil N \mathsf{R} \rceil$, the first point in Assumption \ref{Ass.Finite} holds with $A_1 = 1$. The second, third and fourth points were verified in Section \ref{Section3.2}, cf. (\ref{Eq.CaseIBetaEnsembleV2}), (\ref{Eq.CaseILimitVN}) and (\ref{Eq.CaseIVDerBound}). Turning our attention to Assumption \ref{Ass.Loop1}, we see from $\Gamma(z+1) = z \Gamma(z)$ that 
$$\frac{w(Nx;N)}{w(Nx-1;N)} = ab \cdot \frac{\Gamma(Nx)}{\Gamma(Nx+1)} \cdot \frac{\Gamma(Nx + (M-N+1)\theta)}{\Gamma(Nx-1 + (M-N+1)\theta )} = \frac{ab \cdot ( Nx-1 + (M-N+1)\theta)}{Nx}.$$
The latter shows that Assumption \ref{Ass.Loop1} holds with $\mathcal{M} = \mathbb{C}$ and
\begin{equation}\label{Eq.CaseIPhis}
\begin{split}
&\Phi^-_N(z) = z/N, \hspace{2mm} \Phi^+_N(z) = ab((M-N+1)\theta -1)/N + abz/N, \hspace{2mm} \\
&\Phi^-(z) = z, \hspace{2mm} \Phi^+(z) = ab\theta (\cm -1) + ab z .
\end{split}
\end{equation}
Lastly, we note that $\Phi^-_N(0) = 0$, which verifies the first line in (\ref{S41E4}). In addition, from (\ref{Eq.CaseIRConst}) we have the inclusion of events
$$\{\ell_1 = R_N + (N-1) \cdot \theta\} \subseteq \{\lambda_1 \geq ND^I_1 \},$$
which together with (\ref{Eq.CaseIUpperTail}) implies the second line in (\ref{S41E4}). 

Our work so far shows that the conditions in Lemma \ref{Lem.MuThroughR} hold for the measures $\mathcal{J}_{a^N, b^M}(\cdot |E_N^{R_N})$, and our next step is to compute $\mu(x)$ in (\ref{S41E7}). From (\ref{Eq.CaseIPhis}) we have that $\Phi_N^{\pm}$ are degree $1$ polynomials, and so by Lemma \ref{Lem.MuThroughR}, we conclude that $R_\mu$ is a polynomial of degree at most $1$. As $|z| \rightarrow \infty$ we can expand the Stieltjes transform as
\begin{equation}\label{Eq.CaseIStielTrans}
G_{\mu}(z) = \int_{\mathbb{R}} \frac{\mu(dx)}{z-x} = \frac{1}{z} + O\left(\frac{1}{|z|^2} \right),
\end{equation}
and so 
$$R_{\mu}(z) = \Phi^-(z) \cdot e^{- \theta G_{\mu} (z) }+  \Phi^+(z) \cdot e^{ \theta G_{\mu} (z) } = z - \theta + ab\theta (\cm -1) + ab z + ab\theta + O(|z|^{-1}).$$
The last equation and the fact that $R_{\mu}(z)$ is a polynomial of degree at most $1$ show that
\begin{equation}\label{Eq.CaseIR}
R_{\mu}(z) = (1+ab)z + \theta(ab\cm -1).
\end{equation}
From Lemma \ref{Lem.MuThroughR} we conclude that 
\begin{equation}\label{Eq.CaseIMuF}
\mu(x) = \frac{{\bf 1}\{x \in (0, \mathsf{R} + \theta)\} }{\theta \pi } \cdot \mathrm{arccos} \left( \frac{(1+ab)x + \theta(ab\cm -1)}{2 \sqrt{abx [x + \theta (\cm -1) ] }}\right).
\end{equation}

We finally explain how (\ref{Eq.CaseIFormulaMu}) follows from (\ref{Eq.CaseIMuF}). Observe that 
$$R_\mu(x)^2 - 4 \Phi^-(x) \Phi^+(x) = (1-ab)^2(x - x_1)(x-x_2),$$
where 
$$x_1 =  \frac{\theta (1-\sqrt{ab\cm})^2}{1-ab} \mbox{, and } x_2 = \frac{\theta (1+\sqrt{ab\cm})^2}{1-ab}.$$
From (\ref{Eq.CaseIRConst}) $\mathsf{R} + \theta \geq x_2$, and so for $x \geq \mathsf{R}+ \theta$ we have $R_\mu(x)^2 - 4 \Phi^-(x) \Phi^+(x) \geq 0$, or equivalently
$$ \frac{(1+ab)x + \theta(ab\cm -1)}{2 \sqrt{abx [x + \theta (\cm -1) ] }} \geq 1.$$
The latter means that the arccosine in (\ref{Eq.CaseIMuF}) vanishes if $x \geq \mathsf{R} + \theta$, cf. Definition \ref{Def.Arccos}, and so the right side of (\ref{Eq.CaseIMuF}) agrees with the right side of (\ref{Eq.CaseIFormulaMu}). Lastly, from (\ref{Eq.CaseIRConst}) $\mathsf{R} \geq D_1^I$, and so by Lemma \ref{Lem.CaseIWeakConv} we have $\mu^I(x) = \mu(x)$. Consequently, the left sides of (\ref{Eq.CaseIFormulaMu}) and (\ref{Eq.CaseIMuF}) also agree.
\end{proof}

%
%
\subsection{Case II: $\rho_1 = a^N$, $\rho_2 = b_{\beta}^M$}\label{Section4.3}
The goal of this section is to establish the following statement.
\begin{lemma}\label{Lem.CaseIIFormulaMu} Make the same assumptions as in Lemma \ref{Lem.CaseIIWeakConv}. Then, $\mu^{II}$ has the following density, denoted $\mu^{II}(x)$, with respect to Lebesgue measure
\begin{equation}\label{Eq.CaseIIFormulaMu}
\mu^{II}(x) = \frac{{\bf 1}\{x \in (0, \cm + \theta )\} }{\theta \pi } \cdot \mathrm{arccos} \left( \frac{(1-ab)x + ab\cm-\theta}{2 \sqrt{ab x (\cm +\theta - x) }}\right),
\end{equation}
where we recall that $\arccos$ is as in Definition \ref{Def.Arccos}.
\end{lemma}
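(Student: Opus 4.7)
The plan is to verify the hypotheses of Lemma \ref{Lem.MuThroughR} for $\mathcal{J}_{a^N, b_\beta^M}$ and then read off the density once we compute $R_{\mu^{II}}$ explicitly. Assumption \ref{Ass.Finite} was already checked in Section \ref{Section3.3} with $\mathsf{R} = \cm$ and $K$ replaced by $N$, so only Assumptions \ref{Ass.Loop1} and \ref{Ass.Loop2} require work. For the former, the functional equation $\Gamma(z+1) = z \Gamma(z)$ applied to the weight in (\ref{Eq.CaseIIBetaEnsemble}) gives
\begin{equation*}
\frac{w(Nx;N)}{w(Nx-1;N)} = \frac{ab\bigl(M + N\theta - Nx + 1 - \theta\bigr)}{Nx},
\end{equation*}
so we may take $\mathcal{M} = \mathbb{C}$ with $\Phi_N^-(z) = z/N$, $\Phi_N^+(z) = ab(M + N\theta - z + 1 - \theta)/N$ and limits $\Phi^-(z) = z$, $\Phi^+(z) = ab(\cm + \theta - z)$, both positive on $(0, \cm + \theta)$.

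The verification of Assumption \ref{Ass.Loop2} requires no probabilistic input in this case: $\Phi_N^-(0) = 0$, and a direct computation with $s_N = M + 1 + (N-1)\theta$ gives $\Phi_N^+(s_N) = 0$, so both bounds in (\ref{S41E4}) hold trivially. This is the main structural simplification compared with Case I, where the corresponding bound at the upper edge required invoking the large-deviation estimate from Lemma \ref{Lem.CaseIWeakConv}. Since $\Phi_N^\pm$ are polynomials of degree one, the last statement of Lemma \ref{Lem.MuThroughR} forces $R_{\mu^{II}}$ to be a polynomial of degree at most one.

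To identify $R_{\mu^{II}}$, substitute the large-$|z|$ expansion $G_{\mu^{II}}(z) = 1/z + O(|z|^{-2})$ into the definition (\ref{S41E6}) and expand the two exponentials to first order. Collecting terms and using that any contribution of order $O(|z|^{-1})$ must vanish (because $R_{\mu^{II}}$ is a polynomial of degree at most one), one obtains
\begin{equation*}
R_{\mu^{II}}(z) = (1-ab)z + ab\cm - \theta.
\end{equation*}
Plugging this together with the formulas for $\Phi^\pm$ into (\ref{S41E7}) gives exactly (\ref{Eq.CaseIIFormulaMu}); the support $[0, \cm + \theta]$ coming from Proposition \ref{Prop.GlobalLLN} ensures that no further piecewise adjustment of the kind seen in Lemma \ref{Lem.CaseIFormulaMu} is needed here. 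The main obstacle in the six-case analysis -- controlling the boundary contributions $r^{\pm}(K)$ -- is therefore essentially absent in Case II, which is what makes this the cleanest of the six cases.
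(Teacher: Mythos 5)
Your argument is correct and follows the paper's proof essentially verbatim: the same verification of Assumptions \ref{Ass.Loop1} and \ref{Ass.Loop2} (with the key simplification that both $\Phi_N^-(0)$ and $\Phi_N^+(M+1+(N-1)\theta)$ vanish identically), the same degree-one bound on $R_{\mu^{II}}$, and the same expansion of $G_{\mu^{II}}$ to first order to identify $R_{\mu^{II}}(z)=(1-ab)z+ab\cm-\theta$. Your closing observation that no piecewise correction of the Case I type is needed here is also the reason the paper's proof ends immediately after invoking Lemma \ref{Lem.MuThroughR}.
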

\begin{proof} Our first task is to check that the measures $\mathcal{J}_{a^N, b^M_\beta}(\cdot)$ satisfy Assumptions \ref{Ass.Finite}, \ref{Ass.Loop1} and \ref{Ass.Loop2} with $K$ replaced with $N$, and $R_K$ replaced with $M = M(N)$. 

Assumption \ref{Ass.Finite} was verified in Section \ref{Section3.2}. Turning our attention to Assumption \ref{Ass.Loop1}, we see from $\Gamma(z+1) = z \Gamma(z)$ that 
$$\frac{w(Nx;N)}{w(Nx-1;N)} = ab \cdot \frac{\Gamma(Nx)}{\Gamma(Nx+1)} \cdot \frac{\Gamma(M+N\theta-Nx+2-\theta)}{\Gamma(M+N\theta-Nx+1-\theta)} = \frac{ab \cdot (M+N\theta-Nx+1-\theta)}{Nx}.$$
The latter shows that Assumption \ref{Ass.Loop1} holds with $\mathcal{M} = \mathbb{C}$ and
\begin{equation}\label{Eq.CaseIIPhis}
\begin{split}
&\Phi^-_N(z) = z/N, \hspace{2mm} \Phi^+_N(z) = ab(M+N\theta+1-\theta)/N - abz/N, \hspace{2mm} \\
&\Phi^-(z) = z, \hspace{2mm} \Phi^+(z) = ab(\cm+\theta) - ab z .
\end{split}
\end{equation}
Lastly, we note that $\Phi^-_N(0) = 0 =\Phi^+_N(M+1+(N-1)\cdot\theta)$, which verifies (\ref{S41E4}).

Our work so far shows that the conditions in Lemma \ref{Lem.MuThroughR} hold for the measures $\mathcal{J}_{a^N, b^M_\beta}(\cdot)$, and our next step is to compute $\mu(x)$ in (\ref{S41E7}), which from Lemma \ref{Lem.CaseIIWeakConv} is the same as $\mu^{II}(x)$. From (\ref{Eq.CaseIIPhis}) we have that $\Phi_N^{\pm}$ are degree $1$ polynomials, and so by Lemma \ref{Lem.MuThroughR}, we conclude that $R_\mu$ is a polynomial of degree at most $1$. As $|z|\to\infty$, we can expand the Stieltjes transform as (\ref{Eq.CaseIStielTrans}) and so $$R_{\mu}(z) = \Phi^-(z) \cdot e^{- \theta G_{\mu} (z) }+  \Phi^+(z) \cdot e^{ \theta G_{\mu} (z) } = (1-ab)z+ab\cm-\theta+ O(|z|^{-1}).$$
The last equation and the fact that $R_{\mu}(z)$ is a polynomial of degree at most $1$ show that
\begin{equation}\label{Eq.CaseIIR}
R_{\mu}(z) = (1-ab)z+ab\cm-\theta.
\end{equation}
Equation (\ref{Eq.CaseIIFormulaMu}) now follows from Lemmas \ref{Lem.CaseIIWeakConv} and \ref{Lem.MuThroughR}.
\end{proof}

%
%
\subsection{Case III: $\rho_1 = a^N$, $\rho_2 = \tau_s$}\label{Section4.4}The goal of this section is to establish the following statement.
\begin{lemma}\label{Lem.CaseIIIFormulaMu} Make the same assumptions as in Lemma \ref{Lem.CaseIIIWeakConv}. Then, $\mu^{III}$ has the following density, denoted $\mu^{III}(x)$, with respect to Lebesgue measure
\begin{equation}\label{Eq.CaseIIIFormulaMu}
\mu^{III}(x) = \frac{{\bf 1}\{x > 0\} }{\theta \pi } \cdot \mathrm{arccos} \left( \frac{x+at\theta-\theta}{2 \sqrt{at\theta x }}\right),
\end{equation}
where we recall that $\arccos$ is as in Definition \ref{Def.Arccos}.
\end{lemma}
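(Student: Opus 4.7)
The proof will closely mirror the argument used for Lemma \ref{Lem.CaseIFormulaMu} in Section \ref{Section4.2}, since Case III is structurally analogous to Case I (both involve an unbounded support that must be artificially truncated so that the loop equation framework of Section \ref{Section4.1} applies). The plan is to fix $\mathsf{R} > 0$ large enough, set $R_N = \lceil N \mathsf{R} \rceil$, and verify Assumptions \ref{Ass.Finite}, \ref{Ass.Loop1}, and \ref{Ass.Loop2} for the truncated measures $\mathcal{J}_{a^N, \tau_{Nt}}(\cdot | E_N^{R_N})$; then use Lemma \ref{Lem.MuThroughR} to compute the density explicitly, and finally transfer the result back to $\mu^{III}$ via Lemma \ref{Lem.CaseIIIWeakConv}.

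First, I would take $\mathsf{R} \geq \max\!\bigl(D_1^{III},\, at\theta + 2\theta\sqrt{at}\bigr)$, with $D_1^{III}$ from Lemma \ref{Lem.CaseIIIWeakConv}. For Assumption \ref{Ass.Finite}, the first point holds with $A_1 = 1$, and the remaining three points follow from the computations in Section \ref{Section3.4} (or equivalently \cite[Section 6.3.1]{DD22}), since the potential $V_N$ from (\ref{Eq.CaseIIIBetaEnsembleV2}) is the same and a single application of Lemma \ref{Lem.GammaApprox} with $\Delta = 1$ gives the limit $V(x) = A - x + x\log(x/(at\theta))$ at rate $O(N^{-1}\log(N+1))$. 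For Assumption \ref{Ass.Loop1}, the functional equation $\Gamma(z+1) = z\Gamma(z)$ yields
\[
\frac{w(Nx;N)}{w(Nx-1;N)} \;=\; \frac{at\theta}{x},
\]
so one may take $\Phi_N^-(z) = z/N$, $\Phi_N^+(z) = at\theta$, with limits $\Phi^-(z) = z$ and $\Phi^+(z) = at\theta$, all analytic on $\mathcal{M} = \mathbb{C}$. For Assumption \ref{Ass.Loop2}, the first bound is trivial since $\Phi_N^-(0) = 0$, while the second follows from the inclusion $\{\ell_1 = R_N + (N-1)\theta\} = \{\lambda_1 = R_N\} \subseteq \{\lambda_1 \geq N D_1^{III}\}$ together with the upper tail estimate (\ref{Eq.CaseIIIUpperTail}) from Lemma \ref{Lem.CaseIIIWeakConv} applied with a sufficiently large $A_0$.

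Having verified the assumptions, Lemma \ref{Lem.MuThroughR} implies that $R_\mu(z)$ is a polynomial of degree at most $\max(\deg \Phi^-, \deg \Phi^+) = 1$. Expanding the Stieltjes transform as $G_\mu(z) = z^{-1} + O(|z|^{-2})$ at infinity,
\[
R_\mu(z) = z \cdot e^{-\theta G_\mu(z)} + at\theta \cdot e^{\theta G_\mu(z)} \;=\; z - \theta + at\theta + O(|z|^{-1}),
\]
so the polynomial constraint forces $R_\mu(z) = z + at\theta - \theta$. Plugging into (\ref{S41E7}) then gives
\[
\mu(x) = \frac{\mathbf{1}\{x \in (0,\mathsf{R}+\theta)\}}{\theta\pi} \cdot \arccos\!\left(\frac{x + at\theta - \theta}{2\sqrt{at\theta\, x}}\right).
\]

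To finish, I would show that the indicator can be replaced by $\mathbf{1}\{x > 0\}$. The quadratic $R_\mu(x)^2 - 4\Phi^-(x)\Phi^+(x) = x^2 - 2(at\theta + \theta)x + (at\theta - \theta)^2$ has roots $\theta(1 \pm \sqrt{at})^2$, so by the choice $\mathsf{R} + \theta \geq \theta(1+\sqrt{at})^2$, the fraction inside $\arccos$ is $\geq 1$ for all $x \geq \mathsf{R} + \theta$. By Definition \ref{Def.Arccos} the arccosine vanishes on this range, so extending the indicator to $\{x > 0\}$ does not change the density. Since $\mathsf{R} \geq D_1^{III}$, Lemma \ref{Lem.CaseIIIWeakConv} identifies the equilibrium measure of the truncated ensemble with $\mu^{III}$, yielding (\ref{Eq.CaseIIIFormulaMu}). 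The main (though routine) obstacle is packaging the verification of Assumption \ref{Ass.Finite} from the already-checked Assumption \ref{Ass.Infinite} of Section \ref{Section3.4}; the rest is essentially a repetition of the mechanism in Section \ref{Section4.2}.
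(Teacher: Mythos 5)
Your proposal follows the paper's proof essentially verbatim: truncate at $R_N = \lceil N\mathsf{R}\rceil$, verify Assumptions \ref{Ass.Finite}, \ref{Ass.Loop1}, \ref{Ass.Loop2}, pin down $R_\mu(z) = z + at\theta - \theta$ via Lemma \ref{Lem.MuThroughR} and the Stieltjes expansion, and then extend the indicator to $\{x > 0\}$. One small gap in your final step: nonnegativity of $R_\mu(x)^2 - 4\Phi^-(x)\Phi^+(x)$ for $x \geq \mathsf{R}+\theta$ only tells you the argument of $\arccos$ has absolute value $\geq 1$; you still need to observe that the numerator $x + at\theta - \theta$ is positive there (which it is, since $\mathsf{R} + \theta > \theta$), so the fraction is in fact $\geq 1$ rather than $\leq -1$. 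The paper sidesteps this by the one-line equivalence $(\sqrt{x}-\sqrt{at\theta})^2 \geq \theta \iff \frac{x+at\theta-\theta}{2\sqrt{at\theta x}}\geq 1$, but your route works with that addendum; both your $\mathsf{R} \geq at\theta + 2\theta\sqrt{at}$ and the paper's $\mathsf{R} \geq 2(at+1)\theta$ suffice since each guarantees $\mathsf{R}+\theta \geq \theta(1+\sqrt{at})^2$.
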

\begin{proof} Let $D_1^{III}$ be as in Lemma \ref{Lem.CaseIIIWeakConv}, and fix $\mathsf{R}$ such that
\begin{equation}\label{Eq.CaseIIIRConst}
\mathsf{R} \geq \max\left(D_1^{III}, 2(at + 1)\theta \right).
\end{equation}
Our first task is to check that for $R_N = \lceil N \mathsf{R} \rceil$, the measures $\mathcal{J}_{a^N, \tau_{Nt}}(\cdot |E_N^{R_N})$ satisfy Assumptions \ref{Ass.Finite}, \ref{Ass.Loop1} and \ref{Ass.Loop2} with $K$ replaced with $N$.

As $R_N = \lceil N \mathsf{R} \rceil$, we see that the first point in Assumption \ref{Ass.Finite} holds with $A_1 = 1$. The other three points were verified in \cite[Section 6.3]{DD22}, see the first two displayed equations on \cite[page 83]{DD22}. Turning our attention to Assumption \ref{Ass.Loop1}, we see from $\Gamma(z+1) = z \Gamma(z)$ that 
$$\frac{w(Nx;N)}{w(Nx-1;N)} = atN\theta \cdot \frac{\Gamma(Nx)}{\Gamma(Nx+1)} = \frac{at\theta}{x}.$$
The latter shows that Assumption \ref{Ass.Loop1} holds with $\mathcal{M} = \mathbb{C}$ and
\begin{equation}\label{Eq.CaseIIIPhis}
\begin{split}
&\Phi^-_N(z) = z/N, \hspace{2mm} \Phi^+_N(z) = at\theta, \hspace{2mm} \Phi^-(z) = z, \hspace{2mm} \Phi^+(z) = at\theta .
\end{split}
\end{equation}
Lastly, we note that $\Phi^-_N(0) = 0$, which verifies the first line in (\ref{S41E4}). In addition, from (\ref{Eq.CaseIIIRConst}) we have the inclusion of events
$$\{\ell_1 = R_N + (N-1) \cdot \theta\} \subseteq \{\lambda_1 \geq ND_1^{III} \},$$
which together with (\ref{Eq.CaseIIIUpperTail}) implies the second line in (\ref{S41E4}).

Our work so far shows that the conditions in Lemma \ref{Lem.MuThroughR} hold for the measures $\mathcal{J}_{a^N, \tau_{Nt}}(\cdot |E_N^{R_N})$, and our next step is to compute $\mu(x)$ in (\ref{S41E7}). From (\ref{Eq.CaseIIIPhis}) we have that $\Phi_N^{\pm}$ are polynomials of degree at most $1$, and so by Lemma \ref{Lem.MuThroughR}, we conclude that $R_\mu$ is also a polynomial of degree at most $1$. As $|z|\to\infty$, we can expand the Stieltjes transform as (\ref{Eq.CaseIStielTrans}) and so 
$$R_{\mu}(z) = \Phi^-(z) \cdot e^{- \theta G_{\mu} (z) }+  \Phi^+(z) \cdot e^{ \theta G_{\mu} (z) } =z+at\theta-\theta + O(|z|^{-1}).$$
The last equation and the fact that $R_{\mu}(z)$ is a polynomial of degree at most $1$ show that
\begin{equation}\label{Eq.CaseIIIR}
R_{\mu}(z) = z+at\theta-\theta.
\end{equation}
From Lemma \ref{Lem.MuThroughR} we conclude that 
\begin{equation}\label{Eq.CaseIIIMuF}
\mu(x) = \frac{{\bf 1}\{x \in (0, \mathsf{R} + \theta)\} }{\theta \pi } \cdot \mathrm{arccos} \left( \frac{x+at\theta-\theta}{2 \sqrt{at\theta x }}\right).
\end{equation}

We finally explain how (\ref{Eq.CaseIIIFormulaMu}) follows from (\ref{Eq.CaseIIIMuF}). From (\ref{Eq.CaseIIIRConst}) $\mathsf{R} \geq D_1^{III}$, and so by Lemma \ref{Lem.CaseIIIWeakConv} we have $\mu^{III}(x) = \mu(x)$, showing that the left sides of (\ref{Eq.CaseIIIFormulaMu}) and (\ref{Eq.CaseIIIMuF}) agree. From (\ref{Eq.CaseIIIRConst}) $\sqrt{\mathsf{R} + \theta} \geq\sqrt{at\theta} + \sqrt{\theta}$ and so for $x \geq \mathsf{R} + \theta$
$$  (\sqrt{x} -\sqrt{at\theta})^2 \geq \theta \mbox{, or equivalently } \frac{x+at\theta-\theta}{2 \sqrt{at\theta x }} \geq 1.$$ 
The latter implies that the arccosine in (\ref{Eq.CaseIIIMuF}) vanishes if $x \geq \mathsf{R} + \theta$, cf. Definition \ref{Def.Arccos}, and so the right sides of (\ref{Eq.CaseIIIFormulaMu}) and (\ref{Eq.CaseIIIMuF}) also agree.
\end{proof}

%
%
\subsection{Case IV: $\rho_1 = a_{\beta}^N$, $\rho_2 = b_{\beta}^M$}\label{Section4.5} The goal of this section is to establish Lemma \ref{Lem.CaseIVFormulaMu} below. For its proof we require the following lemma, whose proof is given in Section \ref{Section4.8}.

\begin{lemma}\label{Lem.CaseIVFormulaIntegral}Make the same assumptions as in Lemma \ref{Lem.CaseIVWeakConv} and fix an integer $d \geq D_2^{IV}$ as in Lemma \ref{Lem.CaseIVLengthBound} for some $A_0 > 0$. Then, 
\begin{equation}\label{CaseIV_int}
\int_{\mathbb{R}} x\mu^{IV}_d(dx)=\frac{ab\cm}{(1-ab)d^2\theta} +\frac{\theta}{2}.
\end{equation}
\end{lemma}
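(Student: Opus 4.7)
The strategy is to identify $\int x\,\mu^{IV}_d(dx)$ as the limit of expected first moments of the empirical measures $\mu_K$ under $\mathbb{P}^d_K$, and then to compute this limit along the subsequence $K = Nd$, where by (\ref{Eq.CaseIVSubseq}) the measure $\mathbb{P}^d_{Nd}$ coincides with $\mathcal{J}_{a_\beta^N, b_\beta^M}(\cdot \mid E_{Nd}^{\min(M,N)})$. Using $\ell_i = \lambda_i + (K-i)\theta$ one has
\[
\int x\,\mu_K(dx) = \frac{|\lambda|}{K^2} + \frac{\theta(K-1)}{2K},
\]
so the problem reduces to computing $\mathbb{E}_{\mathbb{P}^d_{Nd}}[|\lambda|]/(Nd)^2$. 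Under $\mathbb{P}^d_K$ the empirical measure $\mu_K$ is supported in $[0, R_K/K + (K-1)\theta/K]$, which is uniformly bounded; hence $\int x\,\mu_K(dx)$ is deterministically bounded. Combining Lemma~\ref{Lem.CaseIVWeakConv} (weak convergence in probability of $\mu_K$ to $\mu^{IV}_d$) with the bounded convergence theorem, $\mathbb{E}_{\mathbb{P}^d_K}\!\left[\int x\,\mu_K(dx)\right] \to \int x\,\mu^{IV}_d(dx)$.

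The next step is to compute the unconditional expectation of $|\lambda|$ under $\mathcal{J}_{a^N_\beta, b^M_\beta}$ directly from the partition function formula (\ref{Eq.CaseIVNormalization_fml}). By (\ref{S12E7}) and the homogeneity of the (dual) Jack functions, there exist nonnegative coefficients $c_\lambda = c_\lambda(N,M,\theta)$ with $J_\lambda(a_\beta^N;\theta)\tilde J_\lambda(b_\beta^M;\theta) = (ab)^{|\lambda|} c_\lambda$, so that for $y = ab \in (0,1)$,
\[
g(y) := \sum_{\lambda} y^{|\lambda|} c_\lambda = (1-y)^{-NM/\theta}.
\]
The standard identity $\mathbb{E}[|\lambda|] = y g'(y)/g(y)$ then yields
\[
\mathbb{E}_{\mathcal{J}_{a_\beta^N,b_\beta^M}}[|\lambda|] = \frac{abNM}{\theta(1-ab)},
\]
and differentiating once more gives $\mathbb{E}_{\mathcal{J}_{a_\beta^N,b_\beta^M}}[|\lambda|^2] = O((NM)^2) = O(N^4)$ under the scaling $M = N\cm + O(1)$.

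To transfer this to the conditional measure, I invoke Cauchy--Schwarz together with the total variation bound of Lemma~\ref{Lem.CaseIVLengthBound}, choosing $A_0$ large enough (and hence $d \geq D_2^{IV}$ large). This yields
\[
\mathbb{E}_{\mathcal{J}}\!\left[|\lambda| \cdot \mathbf{1}_{(E_{Nd}^{\min(M,N)})^c}\right] \leq \sqrt{\mathbb{E}[|\lambda|^2]} \cdot \sqrt{\mathbb{P}((E_{Nd}^{\min(M,N)})^c)} = O(N^2) \cdot O(e^{-NA_0/2}) = o(1),
\]
while $\mathbb{P}_{\mathcal{J}}(E_{Nd}^{\min(M,N)}) \to 1$. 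Therefore
\[
\mathbb{E}_{\mathbb{P}^d_{Nd}}\!\left[\frac{|\lambda|}{(Nd)^2}\right] = \frac{1}{(Nd)^2}\cdot \frac{abNM}{\theta(1-ab)} + o(1) \xrightarrow[N\to\infty]{} \frac{ab\cm}{\theta(1-ab)d^2},
\]
using $M = N\cm + O(1)$. Adding the deterministic shift $\theta(Nd-1)/(2Nd) \to \theta/2$ and combining with the first paragraph yields the claimed identity (\ref{CaseIV_int}).

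The only nonroutine step is the transfer from the unconditional Jack measure $\mathcal{J}_{a_\beta^N, b_\beta^M}$ to the conditioned measure $\mathbb{P}^d_{Nd}$: the unbounded observable $|\lambda|$ requires the quadratic estimate $\mathbb{E}[|\lambda|^2] = O(N^4)$ paired with the exponential decay of $\mathbb{P}((E_{Nd}^{\min(M,N)})^c)$, which is precisely where we need $d$ to be sufficiently large per Lemma~\ref{Lem.CaseIVLengthBound}. Everything else is algebraic manipulation of the generating function for $|\lambda|$ or direct bookkeeping.
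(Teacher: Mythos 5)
Your proof is correct and takes essentially the same route as the paper's combined proof in Section \ref{Section4.8}: pass from weak convergence of $\mu_K$ to convergence of the first moment using the uniform compact support of $\mu_K$, reduce to $\mathbb{E}[|\lambda|]$ via the linear change of variables $\ell_i = \lambda_i + (K-i)\theta$, compute the unconditional moments by differentiating the generating function $(1-y)^{-NM/\theta}$, and transfer to the conditional measure via Cauchy--Schwarz paired with the exponential decay from Lemma \ref{Lem.CaseIVLengthBound}. One small remark: you do not need to take $A_0$ large; any fixed $A_0 > 0$ already makes $N^2 e^{-NA_0/2} = o(1)$, and the lemma's hypothesis only requires $d \geq D_2^{IV}$ for that chosen $A_0$.
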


\begin{lemma}\label{Lem.CaseIVFormulaMu} Make the same assumptions as in Lemma \ref{Lem.CaseIVWeakConv} and also assume $d \geq \max(D_2^{IV}, \alpha \theta^{-1})$, where $D_2^{IV}$ is as in Lemma \ref{Lem.CaseIVLengthBound} for some $A_0 > 0$ and  
\begin{equation}\label{Eq.CaseIVLowerBoundD}
\alpha = \frac{ab (\cm + 1)+ 2\sqrt{ab \cm} }{1-ab}.
\end{equation}
Then, $\mu^{IV}_{d}$ has the following density, denoted $\mu^{IV}_{d}(x)$, with respect to Lebesgue measure
\begin{equation}\label{Eq.CaseIVFormulaMu}
\mu^{IV}_d(x)= \begin{cases} \dfrac{1 }{\theta \pi }\cdot \mathrm{arccos} \left(  \dfrac{\left(1+ab\right)(x-\theta)-ab d^{-1}(\cm+1) }{2 \sqrt{ab(d^{-1}+\theta-x)(d^{-1}\cm+\theta-x)}}\right)  & \hspace{-3mm}\mbox{ if }   \theta -d^{-1}\alpha < x< \cR + \theta, \\
 \theta^{-1}  & \hspace{-3mm} \mbox{ if } 0 < x \leq \theta -d^{-1}\alpha , \\
 0  & \hspace{-3mm} \mbox{ otherwise.}
\end{cases} 
\end{equation}
Here, $\cR = d^{-1}\min(1,\cm)$ and we recall that $\arccos$ is as in Definition \ref{Def.Arccos}.
\end{lemma}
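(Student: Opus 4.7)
The plan is to apply Lemma \ref{Lem.MuThroughR} to the sequence $\mathbb{P}^d_K$ from (\ref{Eq.CaseIVBetaEnsembleV2}), for which Assumption \ref{Ass.Finite} has already been verified in Section \ref{Section3.5}. What remains is to check Assumptions \ref{Ass.Loop1} and \ref{Ass.Loop2}, identify $\Phi^{\pm}$ and compute $R_{\mu}$ explicitly, and then translate the output of (\ref{S41E7}) into the piecewise formula (\ref{Eq.CaseIVFormulaMu}).

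Using the functional equation $\Gamma(z+1) = z\Gamma(z)$, the weight in (\ref{Eq.CaseIVBetaEnsembleV2}) gives
\[
\frac{w(Kx;K)}{w(Kx-1;K)} = \frac{ab\,(\Delta^N_K - Kx + 1)(\Delta^M_K - Kx + 1)}{Kx\,(Kx+\theta-1)}.
\]
I would therefore set $\Phi^-_K(y) = y(y+\theta-1)/K^2$ and $\Phi^+_K(y) = ab(\Delta^N_K - y + 1)(\Delta^M_K - y + 1)/K^2$, which are polynomials of degree $2$. Passing to the limit (using $\Delta^N_K/K \to d^{-1}+\theta$ and $\Delta^M_K/K \to d^{-1}\cm+\theta$), one obtains $\Phi^-(z) = z^2$ and $\Phi^+(z) = ab(d^{-1}+\theta-z)(d^{-1}\cm+\theta-z)$, with the required $O(K^{-1})$ uniform convergence on compact subsets of $\mathcal{M} = \mathbb{C}$; this verifies Assumption \ref{Ass.Loop1}. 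Assumption \ref{Ass.Loop2} is automatic: $\Phi^-_K(0) = 0$, and at $s_K = R_K+1+(K-1)\theta$ one computes $\Phi^+_K(s_K) = ab(N_K-R_K)(M_K-R_K)/K^2 = 0$, since $R_K = \min(N_K,M_K)$ forces one of the two factors to vanish.

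By Lemma \ref{Lem.MuThroughR}, $R_{\mu}$ is a polynomial of degree at most $2$. Expanding the Stieltjes transform as $G_{\mu}(z) = z^{-1} + m_1 z^{-2} + O(|z|^{-3})$, substituting into $R_{\mu}(z) = \Phi^-(z)e^{-\theta G_{\mu}(z)} + \Phi^+(z)e^{\theta G_{\mu}(z)}$, and matching coefficients of $z^2$, $z^1$, and $z^0$ yields
\[
R_{\mu}(z) = (1+ab)z^2 - \bigl[(1+ab)\theta + abd^{-1}(\cm+1)\bigr]z + C,
\]
where $C$ depends linearly on $m_1 = \int_\mathbb{R} x\, \mu^{IV}_d(dx)$. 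Lemma \ref{Lem.CaseIVFormulaIntegral} (proved in Section \ref{Section4.8}) supplies $m_1 = \tfrac{\theta}{2} + \tfrac{ab\cm}{(1-ab)d^2\theta}$, and after substitution the constant term collapses to $C = 0$. Hence $R_{\mu}(z) = z\bigl[(1+ab)(z-\theta) - abd^{-1}(\cm+1)\bigr]$.

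Plugging into (\ref{S41E7}) and using $\sqrt{\Phi^-(x)\Phi^+(x)} = x\sqrt{ab(d^{-1}+\theta-x)(d^{-1}\cm+\theta-x)}$ for $x \in (0,\cR+\theta)$, the leading factor of $x$ in $R_{\mu}(x)$ cancels and produces exactly the arccosine expression in (\ref{Eq.CaseIVFormulaMu}). To recover the saturated region I would compute
\[
R_{\mu}(x)^2 - 4\Phi^-(x)\Phi^+(x) = (1-ab)^2 x^2 (x-x_-)(x-x_+),
\]
with $x_\pm = \theta + d^{-1}(1-ab)^{-1}\bigl[-ab(\cm+1) \pm 2\sqrt{ab\cm}\bigr]$; the smaller root is precisely $x_- = \theta - d^{-1}\alpha$, and the hypothesis $d \geq \alpha\theta^{-1}$ ensures $x_- \geq 0$. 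Combined with the sign check $R_{\mu}(x)/x \to -(1+ab)\theta - abd^{-1}(\cm+1) < 0$ as $x \to 0^+$, this shows the arccosine argument lies in $(-\infty, -1]$ throughout $(0, x_-]$, so Definition \ref{Def.Arccos} returns $\pi$ and the density equals $\theta^{-1}$ there; outside $[0,\cR+\theta]$ the indicator in (\ref{S41E7}) yields $0$. The main obstacle is pinning down the constant term of $R_{\mu}$, which is the degree of freedom not determined by the large-$|z|$ loop-equation expansion alone; this is precisely what Lemma \ref{Lem.CaseIVFormulaIntegral} furnishes and is the novel ingredient distinguishing Case IV from the degree-one cases of Sections \ref{Section4.2}--\ref{Section4.4}.
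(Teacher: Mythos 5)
Your proposal is correct and mirrors the paper's own proof: verify Assumptions \ref{Ass.Loop1} and \ref{Ass.Loop2} for $\mathbb{P}^d_K$ with the same degree-two $\Phi^{\pm}_K$, use Lemma \ref{Lem.MuThroughR} to conclude $R_\mu$ is a degree-two polynomial, pin it down via the Stieltjes expansion and the first moment from Lemma \ref{Lem.CaseIVFormulaIntegral}, then analyze the sign and discriminant of $P(x) = R_\mu(x)/x$ to identify the saturated region. The only cosmetic difference is that you establish $P<0$ on $(0,\theta-d^{-1}\alpha]$ via the limit at $0^+$ plus the discriminant bound, whereas the paper evaluates $P(\theta-d^{-1}\alpha)$ directly; both are valid since $P$ is linear.
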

\begin{proof} From our work in Section \ref{Section3.5} we know that $\mathbb{P}_K^d$ satisfy Assumption \ref{Ass.Finite}. Using $\Gamma(z+1) = z \Gamma(z)$, we get
\begin{equation*}
\frac{w(Kx;K)}{w(Kx-1;K)}
= \frac{ab \cdot (\Delta_K^N-Kx+1)(\Delta_K^M-Kx+1)}{(Kx)(Kx+\theta-1)},
\end{equation*}
where $\Delta_{K}^N=N_K+(K-1)\theta$, $\Delta_{K}^M=M_K+(K-1)\theta$. The latter shows that $\mathbb{P}_K^d$ satisfy Assumption \ref{Ass.Loop1} with $\mathcal{M} = \mathbb{C}$ and
\begin{equation}\label{Eq.CaseIVPhis}
\begin{split}
&\Phi^-_K(z) = \frac{z}{K}\cdot\left(\frac{z}{K}+\frac{\theta-1}{K}\right), \hspace{2mm} \Phi^+_K(z) = ab\left(\frac{\Delta_K^N+1}{K}-\frac{z}{K}\right)\left(\frac{\Delta_K^M+1}{K}-\frac{z}{K}\right),\\
&\Phi^-(z) = z^2,\hspace{2mm} \Phi^+(z) = ab\left(\frac{1}{d}+\theta-z\right)\left(\frac{\cm}{d}+\theta-z\right). 
\end{split}
\end{equation}
Lastly, we note that $\Phi^-_K(0) = 0 = \Phi^+_K(R_K+1+(K-1)\cdot\theta)=0$, which verifies the conditions in (\ref{S41E4}).\\

Our work so far shows that the conditions in Lemma \ref{Lem.MuThroughR} hold for the measures $\mathbb{P}_K^d$. Our next step is to compute $\mu(x)$ in (\ref{S41E7}), which from Lemma \ref{Lem.CaseIVWeakConv} is the same as $\mu^{IV}_d(x)$. Since $\Phi^{\pm}_K$ are degree $2$ polynomials, we conclude $R_{\mu}$ is a polynomial of degree at most $2$. As $|z|\to\infty$, we can expand the Stieltjes transform as 
\begin{equation}\label{Eq.CaseIVStielTrans}
G_{\mu}(z) = \int_{\mathbb{R}} \frac{\mu(dx)}{z-x} = \frac{1}{z} + \frac{1}{z^2}\cdot\int_{\mathbb{R}}x \mu(dx)+O\left(\frac{1}{|z|^3} \right).
\end{equation}
Combining (\ref{S41E6}), (\ref{CaseIV_int}), (\ref{Eq.CaseIVPhis}) and (\ref{Eq.CaseIVStielTrans}), we obtain
\begin{equation*}
\begin{split}
&R_{\mu}(z) =\left(1+ab\right)z^2-\left[\theta+ab\left(\frac{\cm+1}{d}+\theta\right)\right]z +\left(-1+ab\right)\theta\int_{\mathbb{R}}x\mu(dx)+ab\left(\frac{\cm}{d^2}-\frac{\theta^2}{2}\right) \\ 
& +\frac{\theta^2}{2}+O\left(|z|^{-1}\right) = \left(1+ab\right)z^2-\left[\theta+ab\left(\frac{\cm+1}{d}+\theta\right)\right]z + O\left(|z|^{-1}\right).
\end{split}
\end{equation*}
As $R_\mu$ is a polynomial of degree at most $2$, we conclude
\begin{equation}\label{Eq.CaseIVR}
R_{\mu}(z) = \left(1+ab\right)z^2-\left[\theta+ab\left(\frac{\cm+1}{d}+\theta\right)\right]z.
\end{equation}
Substituting $\Phi^{\pm}$ from (\ref{Eq.CaseIVPhis}), $R_{\mu}$ from (\ref{Eq.CaseIVR}) and $\cR = d^{-1}\min(1,\cm)$ into (\ref{S41E7}) gives
\begin{equation}\label{Eq.CaseIVMuF}
\mu(x) = \frac{{\bf 1}\{x\in(0,\theta+d^{-1}\min(1,\cm ))\} }{\theta \pi }\cdot \mathrm{arccos} \left( \frac{\left(1+ab\right)(x-\theta)-abd^{-1}(\cm+1) }{2 \sqrt{ab(d^{-1}+\theta-x)(d^{-1}\cm+\theta-x)}}\right).
\end{equation}

What remains is to show that the right sides of (\ref{Eq.CaseIVFormulaMu}) and (\ref{Eq.CaseIVMuF}) agree. We claim that 
\begin{equation}\label{Eq.CaseIVP}
P(x) < 0 \mbox{ and } P(x)^2 - 4 ab(d^{-1} + \theta -x)(d^{-1}\cm + \theta - x) \geq 0 \mbox{ if } x \in (0, \theta - d^{-1}\alpha],
\end{equation}
where 
$$P(x) = (1+ab)(x-\theta) -abd^{-1}(\cm + 1).$$
If true, then the argument in the $\arccos$ in (\ref{Eq.CaseIVMuF}) is at most $-1$ for $x\in (0, \theta - d^{-1}\alpha]$, which by the definition of $\arccos$ in Definition \ref{Def.Arccos} shows that the right sides of (\ref{Eq.CaseIVFormulaMu}) and (\ref{Eq.CaseIVMuF}) agree. 

In the remainder we quickly verify (\ref{Eq.CaseIVP}). Note that 
$$P(x)^2 - 4 ab(d^{-1} + \theta -x)(d^{-1}\cm + \theta - x) = (1-ab)^2(x-x_1)(x-x_2),$$
where 
$$x_1 = \theta - \frac{ab(\cm+1)}{(1-ab)d} + \frac{2\sqrt{ab\cm}}{(1-ab)d}, \hspace{2mm} x_2 = \theta - \frac{ab(\cm+1)}{(1-ab)d} - \frac{2\sqrt{ab\cm}}{(1-ab)d} = \theta - d^{-1}\alpha.$$
The last two displayed equations give the second inequality in (\ref{Eq.CaseIVP}). In addition, 
$$P(x)\leq P(\theta - d^{-1}\alpha) = -d^{-1}[(1+ab)\alpha +ab(\cm+1)] \mbox{ for $x\in (0, \theta - d^{-1}\alpha]$},$$
which implies the first inequality in (\ref{Eq.CaseIVP}). This concludes the proof of (\ref{Eq.CaseIVP}).
\end{proof}

%
%
\subsection{Case V: $\rho_1 = a_{\beta}^N$, $\rho_2 = \tau_s$}\label{Section4.6}  The goal of this section is to establish Lemma \ref{Lem.CaseVFormulaMu} below. For its proof we require the following lemma, whose proof is given in Section \ref{Section4.8}.
\begin{lemma}\label{Lem.CaseVFormulaIntegral} Make the same assumptions as in Lemma \ref{Lem.CaseVWeakConv} and fix an integer $d \geq D_2^{V}$ as in Lemma \ref{Lem.CaseVLengthBound} for some $A_0 > 0$. Then, 
\begin{equation}\label{CaseV_int}
\int_{\mathbb{R}}x\mu^V_d(dx)= \frac{at}{d^2}+\frac{\theta}{2}.
\end{equation}
\end{lemma}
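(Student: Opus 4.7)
The plan is to identify $\int_{\mathbb{R}} x\, \mu_d^V(dx)$ with the limit of first moments of the empirical measures $\mu_K$ from Lemma \ref{Lem.CaseVWeakConv}, and to evaluate those first moments using the explicit partition function $H_\theta(a_\beta^N;\tau_s) = \exp(aNs)$ from (\ref{Eq.CaseVNormalization_fml}). The main identity to exploit is that shifting the Plancherel parameter $s \mapsto cs$ multiplies $\tilde{J}_\lambda(\tau_s;\theta)$ by $c^{|\lambda|}$, so that generating-function differentiation yields closed-form expressions for the low moments of $|\lambda|$ under $\mathcal{J}_{a_\beta^N,\tau_{Nt}}$.

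First, I would note that under $\mathbb{P}_K^d$ the coordinates satisfy $\ell_i^K \in [0, N_K + (K-1)\theta]$, so $\mu_K$ is supported in $[0, d^{-1}+\theta + O(K^{-1})]$, and hence $X_K := \int x\, \mu_K(dx) = K^{-2}\sum_{i=1}^K \ell_i^K$ is a uniformly bounded random variable. Combining the weak-in-probability convergence $\mu_K \Rightarrow \mu_d^V$ with bounded convergence (applied to a smooth cutoff that coincides with $x$ on $[0, d^{-1}+\theta+1]$), I obtain $\mathbb{E}[X_K] \to \int x\, \mu_d^V(dx)$. Rewriting $\ell_i = \lambda_i + (K-i)\theta$ gives
\begin{equation*}
X_K = \frac{|\lambda|}{K^2} + \frac{\theta(K-1)}{2K},
\end{equation*}
and specializing to the subsequence $K=Nd$ (where $\mathbb{P}_{Nd}^d = \mathcal{J}_{a_\beta^N,\tau_{Nt}}(\,\cdot\,|\,E_{Nd}^N)$ by (\ref{Eq.CaseVSubseq})) reduces the problem to computing $\lim_{N\to\infty} (Nd)^{-2}\,\mathbb{E}_{\mathcal{J}_{a_\beta^N,\tau_{Nt}}(\cdot | E_{Nd}^N)}[|\lambda|]$, whereupon the $\theta(K-1)/(2K)$ piece contributes exactly $\theta/2$.

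Next, I would compute the unconditioned expectation of $|\lambda|$. Since $\tilde{J}_\lambda(\tau_{cs};\theta)=c^{|\lambda|}\tilde{J}_\lambda(\tau_s;\theta)$ (by (\ref{S12E8}) and homogeneity of $J_{\lambda}$), equation (\ref{Eq.CaseVNormalization_fml}) yields
\begin{equation*}
\mathbb{E}_{\mathcal{J}_{a_\beta^N,\tau_{Nt}}}\bigl[c^{|\lambda|}\bigr] = \frac{H_\theta(a_\beta^N; \tau_{cNt})}{H_\theta(a_\beta^N;\tau_{Nt})} = \exp\bigl(a(c-1)N^2 t\bigr).
\end{equation*}
Differentiating once and twice at $c=1$ gives $\mathbb{E}[|\lambda|] = aN^2 t$ and $\mathbb{E}[|\lambda|(|\lambda|-1)]=(aN^2t)^2$, so in particular $\mathbb{E}[|\lambda|^2] = O(N^4)$.

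Finally, I would transfer this to the conditioned measure using Lemma \ref{Lem.CaseVLengthBound}: choosing $A_0$ large in that lemma gives $\mathcal{J}_{a_\beta^N,\tau_{Nt}}((E_{Nd}^N)^c) \leq C_2^V e^{-NA_0}$, and by Cauchy--Schwarz together with the second-moment bound above,
\begin{equation*}
\mathbb{E}_{\mathcal{J}_{a_\beta^N,\tau_{Nt}}}\!\bigl[|\lambda|\, \mathbf{1}_{(E_{Nd}^N)^c}\bigr] \leq \sqrt{\mathbb{E}[|\lambda|^2]}\cdot\sqrt{\mathcal{J}((E_{Nd}^N)^c)} = O\bigl(N^2 e^{-NA_0/2}\bigr).
\end{equation*}
Hence $\mathbb{E}_{\mathbb{P}_{Nd}^d}[|\lambda|] = aN^2 t + o(1)$, and dividing by $(Nd)^2$ and adding $\theta/2$ produces $at/d^2 + \theta/2$. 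No step presents a serious obstacle once the generating-function identity is set up; the main delicate point is simply ensuring that weak-in-probability convergence upgrades to convergence of first moments, which is immediate from the uniform compact support of $\mu_K$.
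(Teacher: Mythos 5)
Your proof is correct and follows essentially the same route as the paper's: pass from weak-in-probability convergence of $\mu_K$ to convergence of first moments via uniform compact support, decompose $\int x\,\mu_{Nd}(dx) = (Nd)^{-2}|\lambda^N| + \theta(Nd-1)/(2Nd)$, compute $\mathbb{E}[|\lambda^N|]$ and $\mathbb{E}[|\lambda^N|^2]$ via the generating function $\mathbb{E}[u^{|\lambda^N|}] = \exp(a(u-1)N^2t)$ derived from $H_\theta(a_\beta^N;\tau_s)$, and then transfer from the unconditioned to the conditioned measure using Cauchy--Schwarz together with the exponential bound from Lemma \ref{Lem.CaseVLengthBound}. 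The only cosmetic difference is that the paper uses the explicit truncation $g(x)={\bf 1}\{x>0\}\min(x,2+\theta)$ whereas you invoke a smooth cutoff, and the paper inserts $u^{|\lambda|}$ directly by homogeneity of the Jack functions whereas you reparameterize the Plancherel specialization $s\mapsto cs$; these are equivalent.
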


\begin{lemma}\label{Lem.CaseVFormulaMu} Make the same assumptions as in Lemma \ref{Lem.CaseVWeakConv} and also assume $d \geq \max(D_2^{V}, \alpha \theta^{-1})$, where $D_2^{V}$ is as in Lemma \ref{Lem.CaseVLengthBound} for some $A_0 > 0$ and  
\begin{equation}\label{Eq.CaseVLowerBoundD}
\alpha =  at\theta + 2 \sqrt{at \theta}.
\end{equation}
Then, $\mu^{V}_{d}$ has the following density, denoted $\mu^{V}_{d}(x)$, with respect to Lebesgue measure
\begin{equation}\label{Eq.CaseVFormulaMu}
\mu^V_{d}(x) = \begin{cases} 
\dfrac{1 }{\theta \pi } \cdot \mathrm{arccos}\left(\dfrac{x-\theta-atd^{-1}\theta}{2\sqrt{atd^{-1}\theta \left(\theta + d^{-1} -x\right)}}\right) & \mbox{ if } \theta - d^{-1} \alpha < x < d^{-1}+\theta, \\
\theta^{-1} & \mbox{ if } 0 < x \leq \theta - d^{-1} \alpha, \\
0 & \mbox{ otherwise.}
\end{cases}
\end{equation}
where we recall that $\arccos$ is as in Definition \ref{Def.Arccos}.
\end{lemma}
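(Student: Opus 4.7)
The plan is to follow the strategy used in Lemma \ref{Lem.CaseIVFormulaMu} and apply Lemma \ref{Lem.MuThroughR} to the auxiliary measures $\mathbb{P}^d_K$ from (\ref{Eq.CaseVBetaEnsembleV2}). Assumption \ref{Ass.Finite} was already checked in Section \ref{Section3.6}, so the first task is to verify Assumptions \ref{Ass.Loop1} and \ref{Ass.Loop2}. Applying $\Gamma(z+1) = z\Gamma(z)$ to each of the three Gamma factors in $w(x;K)$ of (\ref{Eq.CaseVBetaEnsemble}) with $s = N_K t$, I would compute
\begin{equation*}
\frac{w(Kx;K)}{w(Kx-1;K)} = \frac{aN_K t \theta \cdot (\Delta^N_K - Kx + 1)}{(Kx)(Kx + \theta - 1)},
\end{equation*}
which suggests the choice
\begin{equation*}
\Phi^-_K(z) = \frac{z}{K}\cdot \frac{z + \theta - 1}{K}, \qquad \Phi^+_K(z) = \frac{aN_K t\theta}{K} \cdot \left( \frac{\Delta^N_K + 1}{K} - \frac{z}{K} \right),
\end{equation*}
with limits $\Phi^-(z) = z^2$ and $\Phi^+(z) = (at\theta/d)(d^{-1} + \theta - z)$ on $\mathcal{M} = \mathbb{C}$. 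Since $\Phi^-_K(0) = 0$ and $\Phi^+_K(\Delta^N_K + 1) = 0$, Assumption \ref{Ass.Loop2} is automatic because both boundary residues $r^\pm(K)$ in (\ref{S41E3}) vanish. This places us in the framework of Lemma \ref{Lem.MuThroughR}.

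Since $\Phi^{\pm}_K$ are polynomials of degree at most $2$, that lemma ensures $R_\mu$ is also a polynomial of degree at most $2$. To identify its three coefficients, I would expand the Stieltjes transform as $G_\mu(z) = z^{-1} + m_1 z^{-2} + O(z^{-3})$ with $m_1 = \int x \, \mu^V_d(dx) = at/d^2 + \theta/2$ supplied by Lemma \ref{Lem.CaseVFormulaIntegral}, and insert this expansion into $R_\mu(z) = \Phi^-(z) e^{-\theta G_\mu(z)} + \Phi^+(z) e^{\theta G_\mu(z)}$. A short calculation shows the constant term cancels using the exact value of $m_1$, while matching the $z^2$ and $z$ coefficients yields
\begin{equation*}
R_\mu(z) = z^2 - \theta\left(1 + \frac{at}{d}\right) z.
\end{equation*}
Substituting this together with $\Phi^-(x)\Phi^+(x) = (at\theta/d)\, x^2 (d^{-1} + \theta - x)$ into the density formula (\ref{S41E7}), and canceling the common factor of $x$ between numerator and denominator, produces exactly the $\arccos$ expression in (\ref{Eq.CaseVFormulaMu}) on $(0, d^{-1} + \theta)$.

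The remaining step is to justify the piecewise structure in (\ref{Eq.CaseVFormulaMu}), which amounts to analyzing where the $\arccos$ argument leaves $[-1,1]$ and is clipped to $\pi$ or $0$ by Definition \ref{Def.Arccos}. A direct factorization gives
\begin{equation*}
R_\mu(x)^2 - 4\Phi^-(x)\Phi^+(x) = x^2(x - r_1)(x - r_2),
\end{equation*}
with $r_2 = \theta - d^{-1}\alpha$ and $r_1 = \theta + d^{-1}(2\sqrt{at\theta} - at\theta)$; the identity $(\sqrt{at\theta} - 1)^2 \geq 0$ shows $r_1 \leq d^{-1} + \theta$, and the hypothesis $d \geq \alpha\theta^{-1}$ guarantees $r_2 \geq 0$. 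I would then verify: on $(0, r_2]$ the discriminant is nonnegative and $R_\mu(x) < 0$, so the $\arccos$ argument is $\leq -1$ and the density saturates at $\theta^{-1}$; on $(r_2, r_1)$ the discriminant is nonpositive and the band formula applies; on the residual interval $(r_1, d^{-1}+\theta)$, a sign analysis of $R_\mu(x) = x(x - \theta - at\theta/d)$ (which depends on whether $at\theta \lessgtr 1$) shows the clipped $\arccos$ produces values consistent with (\ref{Eq.CaseVFormulaMu}) treating all of $(\theta - d^{-1}\alpha, d^{-1}+\theta)$ as the $\arccos$ regime. The main obstacle is the bookkeeping of signs in this last step, but it parallels Case IV and introduces no new conceptual difficulty.
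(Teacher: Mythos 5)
Your proposal is correct and follows essentially the same route as the paper: verify Assumptions \ref{Ass.Loop1} and \ref{Ass.Loop2} for the auxiliary measures, invoke Lemma \ref{Lem.MuThroughR} to see that $R_\mu$ is a polynomial of degree at most $2$, determine it from the Stieltjes expansion and the first moment supplied by Lemma \ref{Lem.CaseVFormulaIntegral}, and feed $R_\mu, \Phi^\pm$ into (\ref{S41E7}). Two small remarks. First, since Lemma \ref{Lem.MuThroughR} is applied to $\mathbb{P}^d_K$ from (\ref{Eq.CaseVBetaEnsembleV2}), the weight ratio in (\ref{S41E1}) should be taken for that measure's weight $e^{-KV_K(\cdot/K)}$, which produces the factor $K a t\theta/d$ rather than $a N_K t\theta$; the two differ for $d \nmid K$, though both yield the same limiting $\Phi^+(z) = (at\theta/d)(d^{-1}+\theta-z)$, so the downstream computation is unaffected. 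Second, the case analysis on $(r_1, d^{-1}+\theta)$ at the end is unnecessary: both the computed density and the target formula (\ref{Eq.CaseVFormulaMu}) write the \emph{same} clipped-arccos expression on all of $(\theta - d^{-1}\alpha, d^{-1}+\theta)$, so they agree there automatically regardless of the sign of $at\theta - 1$; the only interval that requires an argument is $(0, \theta - d^{-1}\alpha]$, where one checks (as you do) that the discriminant is nonnegative and $R_\mu < 0$, so the arccos argument is at most $-1$.
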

\begin{proof} From our work in Section \ref{Section3.6} we know that $\mathbb{P}_K^d$ satisfy Assumption \ref{Ass.Finite}. Using $\Gamma(z+1) = z \Gamma(z)$, we get
\begin{equation*}
\begin{split}
\frac{w(Kx;K)}{w(Kx-1;K)} &= \left(K at\theta/d\right) \cdot \frac{\Gamma(Kx)}{\Gamma(Kx+1)} \cdot \frac{\Gamma(Kx+\theta-1)}{\Gamma(Kx+\theta)} \cdot\frac{\Gamma(\Delta_K^N-Kx+2)}{\Gamma(\Delta_K^N-Kx+1)} \\
&= \frac{(K at\theta/d)  \left(\Delta_K^N-Kx+1\right)}{\left(Kx\right)\left(Kx+\theta-1\right)},
\end{split}
\end{equation*}
where $\Delta_K^N=N_K+(K-1)\theta$. The latter shows that Assumption \ref{Ass.Loop1} holds with $\mathcal{M} = \mathbb{C}$ and
\begin{equation}\label{Eq.CaseVPhis}
\begin{split}
&\Phi^-_K(z) = \frac{z}{K}\cdot\left(\frac{z}{K}+\frac{\theta-1}{K}\right),\hspace{2mm} \Phi^+_K(z) = \frac{a t\theta}{d}\cdot\left(\frac{\Delta_K^N+1}{K}-\frac{z}{K}\right), \\
&\Phi^-(z) = z^2, \hspace{2mm}  \Phi^+(z) = atd^{-1}\theta\cdot\left(\theta + d^{-1} -z\right).
\end{split}
\end{equation}
Lastly, we note that $\Phi^-_K(0) = 0= \Phi^+_K(R_K+1+(K-1)\cdot\theta)=0$, which verifies the conditions in (\ref{S41E4}).\\

Our work so far shows that the conditions in Lemma \ref{Lem.MuThroughR} hold for the measures $\mathbb{P}_K^d$. Our next step is to compute $\mu(x)$ in (\ref{S41E7}), which from Lemma \ref{Lem.CaseVWeakConv} is the same as $\mu^{V}_d(x)$. Combining (\ref{S41E6}), (\ref{Eq.CaseIVStielTrans}), (\ref{CaseV_int}) and (\ref{Eq.CaseVPhis}), we obtain
\begin{equation*}
\begin{split}
&R_{\mu}(z) = z^2-\left(\theta+\frac{at\theta}{d}\right)z -\theta\int_{\mathbb{R}}x\mu(dx)+\frac{\theta^2}{2} -\frac{at\theta}{d^2} + O(|z|^{-1})  =z^2-\left(\theta+\frac{at\theta}{d}\right)z + O(|z|^{-1}).
\end{split}
\end{equation*}
From (\ref{Eq.CaseVPhis}) we have that $\Phi_K^{\pm}$ are polynomials of degree at most $2$, and so by Lemma \ref{Lem.MuThroughR}, we conclude the same for $R_\mu$. The last displayed equation then gives
\begin{equation}\label{Eq.CaseVR}
R_{\mu}(z) =z^2- \theta z - atd^{-1}\theta z.
\end{equation}
Substituting $\Phi^{\pm}$ from (\ref{Eq.CaseVPhis}) and $R_{\mu}$ from (\ref{Eq.CaseVR}) into (\ref{S41E7}) gives
\begin{equation}\label{Eq.CaseVMuF}
\mu(x) = \frac{{\bf 1}\{x\in(0,\theta+d^{-1})\} }{\theta \pi }\cdot\mathrm{arccos}\left(\frac{x-\theta - at d^{-1} \theta }{2\sqrt{at d^{-1} \theta \left(\theta + d^{-1} -x\right)}}\right).
\end{equation}

What remains is to show that the right sides of (\ref{Eq.CaseVFormulaMu}) and (\ref{Eq.CaseVMuF}) agree. We claim that 
\begin{equation}\label{Eq.CaseVP}
P(x) < 0 \mbox{ and } P(x)^2 - 4 a t d^{-1} \theta \left(\theta + d^{-1} -x\right) \geq 0 \mbox{ if } x \in (0, \theta - d^{-1}\alpha],
\end{equation}
where 
$$P(x) = x-\theta - at d^{-1} \theta.$$
If true, then the argument in the $\arccos$ in (\ref{Eq.CaseVMuF}) is at most $-1$ for $x\in (0, \theta - d^{-1}\alpha]$, which by the definition of $\arccos$ in Definition \ref{Def.Arccos} shows that the right sides of (\ref{Eq.CaseVFormulaMu}) and (\ref{Eq.CaseVMuF}) agree. 

In the remainder we quickly verify (\ref{Eq.CaseVP}). Note that 
$$P(x)^2 - 4 a t d^{-1} \theta \left(\theta + d^{-1} -x\right)= (x-x_1)(x-x_2),$$
where 
$$x_1 = \theta - \frac{at\theta }{d} + \frac{2\sqrt{at\theta}}{d}, \hspace{2mm} x_2 = \theta - \frac{at\theta }{d} -  \frac{2\sqrt{at\theta}}{d}= \theta - d^{-1}\alpha.$$
The last two displayed equations give the second inequality in (\ref{Eq.CaseVP}). In addition, 
$$P(x)\leq P(\theta - d^{-1}\alpha) = -d^{-1}[\alpha + at \theta] \mbox{ for $x\in (0, \theta - d^{-1}\alpha]$},$$
which implies the first inequality in (\ref{Eq.CaseVP}). This concludes the proof of (\ref{Eq.CaseVP}).
\end{proof}

%
%
\subsection{Case VI: $\rho_1 = \tau_{s_1}$, $\rho_2 = \tau_{s_2}$}\label{Section4.7} The goal of this section is to establish Lemma \ref{Lem.CaseVIFormulaMu} below. For its proof we require the following lemma, whose proof is given in Section \ref{Section4.8}.

\begin{lemma}\label{Lem.CaseVIFormulaIntegral}
Make the same assumptions as in Lemma \ref{Lem.CaseVIWeakConv} and fix an integer $d \geq D_2^{VI}$ as in Lemma \ref{Lem.CaseVILengthBound} for some $A_0 > 0$. Then, 
\begin{equation}\label{CaseVI_int}
\int_{\mathbb{R}}x\mu^{VI}_{d}(dx)=\frac{t_1 t_2 \theta}{d^2} + \frac{\theta}{2}.
\end{equation}
\end{lemma}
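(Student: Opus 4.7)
The plan is to reduce $\int x\,d\mu^{VI}_d$ to the leading-order behavior of $\mathbb{E}[|\lambda|]$ under the truncated Jack measure, and then leverage the fact (implicit in the Poissonized Jack--Plancherel identity (\ref{Eq.JPMEquals})) that $|\lambda|$ is exactly Poisson-distributed under $\mathcal{J}_{\tau_{s_1},\tau_{s_2}}$ with parameter $s_1 s_2 \theta$.

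First, by Lemma \ref{Lem.CaseVIWeakConv} the empirical measure $\mu_K = K^{-1}\sum_{i=1}^K\delta(\ell_i^K/K)$ under $\mathbb{P}^d_K$ converges weakly in probability to $\mu^{VI}_d$. Since $R_K = K$, every particle satisfies $\ell_i^K/K \in [0, 1+\theta]$, so $\int_\mathbb{R} x\,\mu_K(dx) = K^{-2}\sum_{i=1}^K\ell_i^K$ lies deterministically in $[0, 1+\theta]$; uniform boundedness upgrades convergence in probability to $L^1$ convergence. Using $\ell_i^K = \lambda_i + (K-i)\theta$, this yields
\begin{equation*}
\int_{\mathbb{R}} x\, \mu^{VI}_d(dx) = \lim_{K\to\infty} \frac{\mathbb{E}_{\mathbb{P}^d_K}[|\lambda|]}{K^2} + \frac{\theta}{2},
\end{equation*}
so Lemma \ref{Lem.CaseVIFormulaIntegral} reduces to showing $\mathbb{E}_{\mathbb{P}^d_K}[|\lambda|]/K^2 \to t_1 t_2 \theta/d^2$.

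Next, I would evaluate the reduced limit along the subsequence $K=Nd$, which by (\ref{Eq.CaseVISubseq}) identifies $\mathbb{P}^d_{Nd}(\cdot)$ with $\mathcal{J}_N(\cdot\,|\,E^{Nd}_{Nd})$, where $\mathcal{J}_N := \mathcal{J}_{\tau_{Nt_1},\tau_{Nt_2}}$. The identity (\ref{Eq.JPMEquals}) shows that under $\mathcal{J}_N$ the statistic $|\lambda|$ is Poisson-distributed with parameter $L = (Nt_1)(Nt_2)\theta = N^2 t_1 t_2\theta$, whence $\mathbb{E}_{\mathcal{J}_N}[|\lambda|] = L$ and $\mathbb{E}_{\mathcal{J}_N}[|\lambda|^2] = L + L^2 = O(N^4)$. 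Writing
\begin{equation*}
\mathbb{E}_{\mathbb{P}^d_{Nd}}[|\lambda|] = \frac{\mathbb{E}_{\mathcal{J}_N}[|\lambda|] - \mathbb{E}_{\mathcal{J}_N}[|\lambda|\,\mathbf{1}_{(E^{Nd}_{Nd})^c}]}{\mathcal{J}_N(E^{Nd}_{Nd})}
\end{equation*}
and bounding the truncation term via Cauchy--Schwarz, $\mathbb{E}_{\mathcal{J}_N}[|\lambda|\,\mathbf{1}_{(E^{Nd}_{Nd})^c}] \leq \sqrt{\mathbb{E}_{\mathcal{J}_N}[|\lambda|^2]}\cdot\sqrt{\mathcal{J}_N((E^{Nd}_{Nd})^c)}$. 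Lemma \ref{Lem.CaseVILengthBound} (applied with the $A_0 > 0$ for which $d \geq D_2^{VI}$) yields $\mathcal{J}_N((E^{Nd}_{Nd})^c) = O(e^{-NA_0})$, so the correction is $O(N^2 e^{-NA_0/2}) = o(N^2)$, while $\mathcal{J}_N(E^{Nd}_{Nd}) = 1 - o(1)$. Therefore $\mathbb{E}_{\mathbb{P}^d_{Nd}}[|\lambda|] = N^2 t_1 t_2 \theta + o(N^2)$, and dividing by $(Nd)^2$ gives the subsequential limit $t_1 t_2 \theta/d^2$. Since the full limit $\lim_K \mathbb{E}_{\mathbb{P}^d_K}[|\lambda|]/K^2$ exists by the first step, it equals this subsequential value, and combining with the earlier display proves the lemma.

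The only delicate point---and the main (mild) obstacle---is verifying that the rectangle conditioning does not shift the leading-order Poisson mean. This is resolved by pairing the polynomial second-moment bound $\mathbb{E}_{\mathcal{J}_N}[|\lambda|^2] = O(N^4)$ against the exponential smallness of $\mathcal{J}_N((E^{Nd}_{Nd})^c)$ from Lemma \ref{Lem.CaseVILengthBound}, so that Cauchy--Schwarz absorbs the truncation error into a negligible $o(N^2)$ remainder.
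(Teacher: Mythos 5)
Your proof is correct and follows essentially the same route as the paper's: reduce $\int x\,d\mu^{VI}_d$ to the limit of $K^{-2}\mathbb{E}[|\lambda|] + \theta/2$, compute the first two moments of $|\lambda|$ under the untruncated measure (the paper does this via the generating function $\mathbb{E}[u^{|\lambda|}] = \exp(N^2 u t_1 t_2 \theta - N^2 t_1 t_2 \theta)$, which is exactly the Poisson$(N^2 t_1 t_2 \theta)$ generating function you extract from (\ref{Eq.JPMEquals})), and then control the effect of conditioning on $E^{Nd}_{Nd}$ via Cauchy--Schwarz paired with the exponential bound from Lemma \ref{Lem.CaseVILengthBound}. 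The only cosmetic difference is that you first establish existence of the full limit over $K$ and then evaluate along $K = Nd$, while the paper works directly with the subsequence $K = Nd$; both are valid.
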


\begin{lemma}\label{Lem.CaseVIFormulaMu} Make the same assumptions as in Lemma \ref{Lem.CaseVIWeakConv} and also assume that
$$d \geq \max(D_2^{VI}, 2\sqrt{t_1t_2}, 2 \theta \sqrt{t_1t_2} ),$$
where $D_2^{VI}$ is as in Lemma \ref{Lem.CaseVILengthBound} for some $A_0 > 0$. Then, $\mu^{VI}_{d}$ has the following density, denoted $\mu^{VI}_{d}(x)$, with respect to Lebesgue measure
\begin{equation}\label{Eq.CaseVIFormulaMu}
\mu^{VI}_{d}(x) = \begin{cases} 
 \dfrac{1}{\theta \pi} \cdot \arccos \left( \dfrac{ x - \theta}{2 d^{-1} \theta \sqrt{t_1t_2}} \right)& \mbox{ if } \theta (1 - 2 d^{-1} \sqrt{t_1t_2})  < x < \theta (1 + 2d^{-1} \sqrt{t_1t_2}), \\
\theta^{-1} & \mbox{ if } 0 < x \leq \theta (1 - 2 d^{-1} \sqrt{t_1t_2}), \\
0 & \mbox{ otherwise,}
\end{cases}
\end{equation}
where we recall that $\arccos$ is as in Definition \ref{Def.Arccos}.
\end{lemma}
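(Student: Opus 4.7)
The plan is to follow the template established in the proofs of Lemmas \ref{Lem.CaseIVFormulaMu} and \ref{Lem.CaseVFormulaMu}: verify Assumptions \ref{Ass.Loop1} and \ref{Ass.Loop2} for $\mathbb{P}_K^d$, apply Lemma \ref{Lem.MuThroughR} to express the density of $\mu^{VI}_d$ through an explicit polynomial $R_\mu$, determine the coefficients of $R_\mu$ using the moment identity in Lemma \ref{Lem.CaseVIFormulaIntegral}, and finally show that the resulting arccos formula collapses to the claimed piecewise shape.

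First I would check Assumption \ref{Ass.Loop1}. The identity $\Gamma(z+1) = z\Gamma(z)$ applied to the weight in (\ref{Eq.CaseVIBetaEnsembleV2}) gives
\[ \frac{w(Kx;K)}{w(Kx-1;K)} = \frac{K^2 t_1 t_2 \theta^2/d^2}{(Kx)(Kx+\theta - 1)}, \]
so that $\mathbb{P}_K^d$ satisfies Assumption \ref{Ass.Loop1} on $\mathcal{M} = \mathbb{C}$ with
\[ \Phi^-_K(z) = \frac{z(z+\theta-1)}{K^2}, \quad \Phi^+_K(z) = \frac{t_1 t_2 \theta^2}{d^2}, \quad \Phi^-(z) = z^2, \quad \Phi^+(z) = \frac{t_1 t_2 \theta^2}{d^2}. \]
Assumption \ref{Ass.Loop2} is then immediate: the left-endpoint contribution vanishes because $\Phi^-_K(0) = 0$, and the right-endpoint contribution equals the constant $t_1 t_2\theta^2/d^2$ times $\mathbb{P}_K^d(\lambda_1 = K)$, which decays exponentially in $K$ by (\ref{Eq.CaseVIUpperTail}) of Lemma \ref{Lem.CaseVILengthBound}, using the identification $\mathbb{P}_K^d = \mathcal{J}_{\tau_{Kt_1/d},\tau_{Kt_2/d}}(\cdot \vert E_K^K)$.

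Next, since $\Phi^\pm_K$ are polynomials of degree at most $2$, Lemma \ref{Lem.MuThroughR} tells us that $R_\mu$ is a polynomial of degree at most $2$. I would pin it down by expanding (\ref{S41E6}) at infinity via
\[ G_\mu(z) = \frac{1}{z} + \frac{m_1}{z^2} + O(|z|^{-3}), \quad m_1 = \int_{\mathbb{R}} x\, \mu^{VI}_d(dx) = \frac{t_1t_2\theta}{d^2} + \frac{\theta}{2}, \]
where the value of $m_1$ is supplied by Lemma \ref{Lem.CaseVIFormulaIntegral}. A short bookkeeping calculation, in which the constant terms produced by $z^2 e^{-\theta G_\mu(z)}$ and by $(t_1 t_2\theta^2/d^2)\,e^{\theta G_\mu(z)}$ cancel exactly against $\theta m_1$, yields $R_\mu(z) = z^2 - \theta z$. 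Substituting into (\ref{S41E7}) and using $\sqrt{\Phi^-(x)\Phi^+(x)} = x\theta\sqrt{t_1 t_2}/d$ for $x > 0$ gives
\[ \mu^{VI}_d(x) = \frac{\mathbf{1}\{x \in (0, 1+\theta)\}}{\theta\pi}\,\arccos\!\left(\frac{x-\theta}{2d^{-1}\theta\sqrt{t_1 t_2}}\right). \]

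To conclude I would identify the saturation regions of this arccos. Its argument equals $\pm 1$ precisely at $x = \theta(1 \pm 2d^{-1}\sqrt{t_1 t_2})$. The hypothesis $d \geq 2\sqrt{t_1 t_2}$ places $\theta(1 - 2d^{-1}\sqrt{t_1 t_2}) \geq 0$ inside the support interval, so the arccos saturates at $\pi$ (giving density $\theta^{-1}$) on the claimed left interval; the hypothesis $d \geq 2\theta\sqrt{t_1 t_2}$ analogously places $\theta(1 + 2d^{-1}\sqrt{t_1 t_2}) \leq 1+\theta$, so the void region $\{\arccos = 0\}$ accounts for all $x$ beyond that endpoint, matching (\ref{Eq.CaseVIFormulaMu}). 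I do not anticipate a substantive obstacle here: this case is the cleanest of the six because $\Phi^+$ is constant, which forces $R_\mu$ to be a single explicit quadratic; the only mildly delicate step is verifying the cancellation of constants in the Stieltjes expansion, which proceeds just as in Cases IV and V.
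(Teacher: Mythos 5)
Your proposal is correct and follows essentially the same route as the paper: verify Assumptions \ref{Ass.Loop1} and \ref{Ass.Loop2} via the same $\Phi^\pm_K$, $\Phi^\pm$, invoke Lemma \ref{Lem.MuThroughR}, pin down the quadratic $R_\mu(z)=z^2-\theta z$ by matching the $O(1)$ term against $m_1$ from Lemma \ref{Lem.CaseVIFormulaIntegral}, and read off the saturation/void regions using $d\geq\max(2\sqrt{t_1t_2},\,2\theta\sqrt{t_1t_2})$. Your bookkeeping of the cancellation $-\theta m_1 + \theta^2/2 + t_1t_2\theta^2/d^2 = 0$ is also correct.
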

\begin{proof}From our work in Section \ref{Section3.7} we know that $\mathbb{P}_K^d$ satisfy Assumption \ref{Ass.Finite}. Using $\Gamma(z+1) = z \Gamma(z)$, we get
\begin{equation*}
\frac{w(Kx;K)}{w(Kx-1;K)} = (K^2t_1t_2 \theta^2/d^2) \cdot\frac{\Gamma(Kx)}{\Gamma(Kx+1)} \cdot  \frac{\Gamma(Kx+\theta-1)}{\Gamma(Kx + \theta)} = \frac{K^2t_1t_2\theta^2/d^2}{Kx(Kx + \theta -1)}.
\end{equation*}
The latter shows that Assumption \ref{Ass.Loop1} holds with $\mathcal{M} = \mathbb{C}$ and
\begin{equation}\label{Eq.CaseVIPhisK}
\begin{split}
&\Phi^-_K(z) = \frac{z}{K} \cdot \left(\frac{z}{K}+\frac{\theta-1}{K}\right), \hspace{2mm} \Phi^+_K(z) =t_1t_2 d^{-2} \theta^2, \hspace{2mm}\Phi^-(z) = z^2 ,\hspace{2mm} \Phi^+(z) = t_1t_2 d^{-2} \theta^2.
\end{split}
\end{equation}
Lastly, we note that $\Phi^-_K(0) = 0$, which verifies the first line in (\ref{S41E4}). In addition, as $d \geq D_2^{VI}$, we see that (\ref{Eq.CaseVIUpperTail}) implies the second line in (\ref{S41E4}).\\

Our work so far shows that the conditions in Lemma \ref{Lem.MuThroughR} hold for the measures $\mathbb{P}_K^d$ and from Lemma \ref{Lem.CaseVIWeakConv} we have $\mu = \mu^{VI}_d$. As $|z|\to\infty$, we can expand the Stieltjes transform as in (\ref{Eq.CaseIVStielTrans}) and then use (\ref{S41E6}),  (\ref{CaseVI_int}), and (\ref{Eq.CaseVIPhisK}) to obtain
\begin{equation*}
R_{\mu}(z) = z^2-\theta z - \theta \int_{\mathbb{R}} x\mu(dx) +\frac{ t_1t_2 \theta^2 }{d^2}+\frac{\theta^2}{2} + O(|z|^{-1})= z^2-\theta z + O(|z|^{-1}).
\end{equation*}
From (\ref{Eq.CaseVIPhisK}) we have that $\Phi_K^{\pm}$ are polynomials of degree at most $2$, and by Lemma \ref{Lem.MuThroughR} we conclude the same for $R_\mu$, which gives 
\begin{equation}\label{Eq.CaseVIR1}
R_{\mu}(z) =  z^2-\theta z.
\end{equation}
From Lemma \ref{Lem.MuThroughR} we conclude that
\begin{equation}\label{Eq.CaseVIMuF}
\mu(x) = \frac{{\bf 1}\{x \in (0, \theta + 1)\} }{\theta \pi } \cdot \mathrm{arccos}\left(\frac{x-\theta}{2d^{-1}\theta\sqrt{t_1t_2}}\right).
\end{equation} 
What remains is to see that the right side of (\ref{Eq.CaseVIMuF}) agrees with (\ref{Eq.CaseVIFormulaMu}), which is clear from the definition of $\arccos$ in Definition \ref{Def.Arccos} and the fact that $d \geq \max(2 \sqrt{t_1t_2}, 2 \theta\sqrt{t_1t_2})$.
\end{proof}

%
%
\subsection{Proof of Lemmas \ref{Lem.CaseIVFormulaIntegral}, \ref{Lem.CaseVFormulaIntegral} and \ref{Lem.CaseVIFormulaIntegral}}\label{Section4.8} As the proofs of the three lemmas are quite similar, we combine them into one. In addition, for clarity we split the proof into two steps.\\

{\bf \raggedleft Step 1.} Let $g(x) = {\bf 1}\{x > 0\} \cdot \min(x, 2 + \theta)$, which is a bounded continuous function on $\mathbb{R}$. Observe that $\mu_K$ in each case is supported on $[0, R_K K^{-1} + (K-1)K^{-1} \theta]$, which for large $K$ is contained in $[0, 2 + \theta]$. From Lemmas \ref{Lem.CaseIVWeakConv}, \ref{Lem.CaseVWeakConv} and \ref{Lem.CaseVIWeakConv} we conclude
\begin{equation}\label{Eq.XY1}
\lim_{N \rightarrow \infty} \mathbb{E}\left[ \int_{\mathbb{R}} x\mu_{Nd}(dx) \right] = \lim_{N \rightarrow \infty} \mathbb{E}\left[ \int_{\mathbb{R}} g(x)\mu_{Nd}(dx) \right] = \int_{\mathbb{R}} g(x) \mu_d(dx) = \int_{\mathbb{R}}x \mu_d(dx),
\end{equation}
where $\mu_d$ equals $\mu_d^{IV}, \mu_d^{V}, \mu_d^{VI}$ in Cases IV, V, VI, respectively. From (\ref{Eq.CaseIVSubseq}), (\ref{Eq.CaseVSubseq}), (\ref{Eq.CaseVISubseq}) and (\ref{Eq.XY1}) we see that 
\begin{equation}\label{Eq.XY2}
\int_{\mathbb{R}}x \mu_d(dx) = \lim_{N \rightarrow \infty} \mathbb{E}\left[ \frac{1}{Nd}\sum_{i = 1}^{Nd} \frac{\lambda_i^N + (Nd-i)\theta}{Nd} \vert E_{Nd}^{R_{Nd}} \right]  = \frac{\theta}{2} +  \lim_{N \rightarrow \infty} (Nd)^{-2} \mathbb{E}\left[ |\lambda^N| \vert E_{Nd}^{R_{Nd}} \right],
\end{equation}
where the law of $\lambda^N$ is $\mathcal{J}_{a^N_{\beta}, b^M_{\beta}}$, $\mathcal{J}_{a^N_{\beta}, \tau_{Nt}}$ and $ \mathcal{J}_{\tau_{Nt_1}, \tau_{Nt_2}}$, and $R_{Nd}$ equals $\min(M,N)$, $N$ and $Nd$ in Cases IV, V and VI, respectively. We have thus reduced the proof of the three lemmas to finding the limit on the right side of (\ref{Eq.XY2}) in each of the three cases.

We claim that 
\begin{equation}\label{Eq.XY3}
\begin{split}
&\lim_{N \rightarrow \infty} (Nd)^{-2} \mathbb{E}\left[ |\lambda^N| \right] = \frac{ab\cm}{(1-ab)d^2\theta}\mbox{ in Case IV,}\\
&\lim_{N \rightarrow \infty} (Nd)^{-2} \mathbb{E}\left[ |\lambda^N| \right] = \frac{at}{d^2}\mbox{ in Case V,}\\
&\lim_{N \rightarrow \infty} (Nd)^{-2} \mathbb{E}\left[ |\lambda^N| \right] = \frac{t_1 t_2 \theta}{d^2}\mbox{ in Case VI,}
\end{split}
\end{equation}
and also 
\begin{equation}\label{Eq.XY4}
\begin{split}
&\lim_{N \rightarrow \infty} N^{-4} \mathbb{E}\left[ |\lambda^N| (|\lambda^N|-1) \right] = \frac{a^2b^2\cm^2}{(1-ab)^2\theta^2} \mbox{ in Case IV,}\\
&\lim_{N \rightarrow \infty} N^{-4} \mathbb{E}\left[ |\lambda^N| ( |\lambda^N| - 1) \right] = a^2t^2 \mbox{ in Case V,}\\
&\lim_{N \rightarrow \infty} N^{-4} \mathbb{E}\left[ |\lambda^N|(|\lambda^N| - 1) \right] = t_1^2t_2^2 \theta^2 \mbox{ in Case VI.}
\end{split}
\end{equation}
We will prove (\ref{Eq.XY3}) and (\ref{Eq.XY4}) in Step 2 below. Here, we assume their validity and conclude the proofs of the three lemmas.\\

We observe that 
\begin{equation}\label{Eq.XY5}
\begin{split}
&\mathbb{E}\left[ |\lambda^N| \vert E_{Nd}^{R_{Nd}} \right] = \frac{ \mathbb{E}\left[ |\lambda^N| \cdot {\bf 1}_{E_{Nd}^{R_{Nd}}} \right]}{\mathbb{P}(E_{Nd}^{R_{Nd}})} = \frac{ \mathbb{E}\left[ |\lambda^N| \right]}{\mathbb{P}(E_{Nd}^{R_{Nd}})} -  \frac{ \mathbb{E}\left[ |\lambda^N| \cdot {\bf 1}_{(E_{Nd}^{R_{Nd}})^c} \right]}{\mathbb{P}(E_{Nd}^{R_{Nd}})} \\
&=  \mathbb{E}\left[ |\lambda^N| \right] + O(N^2 e^{-NA_0/2}), 
\end{split}
\end{equation}
where the constant in the big $O$ notation depends on $a,b,\cm, d,\theta$ in Case IV, $a,t, d,\theta$ in Case V, and $t_1, t_2, d,\theta$ in Case VI. We mention that in deriving (\ref{Eq.XY5}) we used that $d \geq D_2^{IV}$ in Case IV, $d \geq D_2^{V}$ in Case V, and $d \geq D_2^{VI}$ in Case VI, so that from Lemmas \ref{Lem.CaseIVLengthBound}, \ref{Lem.CaseVLengthBound} and \ref{Lem.CaseVILengthBound} we have
$$\mathbb{P}(E_{Nd}^{R_{Nd}}) =  1 +O(e^{-NA_0}).$$
We also used equations (\ref{Eq.XY3}), (\ref{Eq.XY4}), which by the Cauchy-Schwarz inequality imply
$$\mathbb{E}\left[ |\lambda^N| \cdot {\bf 1}_{(E_{Nd}^{R_{Nd}})^c} \right] \leq \mathbb{E}\left[ |\lambda^N|^2 \right]^{1/2} \cdot \mathbb{P}\left((E_{Nd}^{R_{Nd}})^c\right)^{1/2} = O(N^2e^{-NA_0/2}).$$
Combining (\ref{Eq.XY2}), (\ref{Eq.XY3}) and (\ref{Eq.XY5}), we obtain the statements of the three lemmas.\\ 

{\bf \raggedleft Step 2.} Using the homogeneity of the Jack symmetric functions and (\ref{Eq.CaseIVNormalization_fml}), we have in Case IV for any $u \in [0, a^{-1}b^{-1})$ 
$$\mathbb{E}\left[ u^{|\lambda^N|} \right] = \frac{\sum_{\lambda \in \mathbb{Y}} u^{|\lambda|} J_{\lambda}(a_{\beta}^N;\theta) \tilde{J}_{\lambda}(b_{\beta}^M;\theta)}{H_{\theta}(a^N_\beta, b^M_\beta) }  = \frac{(1-uab)^{\frac{-NM}{\theta}}}{(1-ab)^{\frac{-NM}{\theta}}}.$$
Differentiating the above with respect to $u$, and setting $u = 1$ gives
$$ \mathbb{E}\left[ |\lambda^N| \right] = \frac{d}{du}  \frac{(1-uab)^{\frac{-NM}{\theta}}}{(1-ab)^{\frac{-NM}{\theta}}} \Big{\vert}_{u=1} = \frac{abNM}{\theta(1-ab)},$$
$$ \mathbb{E}\left[ |\lambda^N| (|\lambda^N|-1) \right] = \frac{d^2}{du^2}  \frac{(1-uab)^{\frac{-NM}{\theta}}}{(1-ab)^{\frac{-NM}{\theta}}} \Big{\vert}_{u=1} = \frac{a^2b^2(NM \theta^{-1})(NM\theta^{-1} + 1) }{(1-ab)^2} .$$
The last two displayed equations imply the first lines in (\ref{Eq.XY3}) and (\ref{Eq.XY4}).

Similarly, in Case V we have from (\ref{Eq.CaseVNormalization_fml}) that for any $u \in [0, \infty)$ 
$$\mathbb{E}\left[ u^{|\lambda^N|} \right] = \frac{\sum_{\lambda \in \mathbb{Y}} u^{|\lambda|}J_{\lambda}(a_{\beta}^N;\theta) \tilde{J}_{\lambda}(\tau_{Nt};\theta)}{H_{\theta}(a^N_\beta, \tau_{Nt})} =  \exp(N^2uat - N^2at).$$
Differentiating the above with respect to $u$, and setting $u = 1$ gives
$$ \mathbb{E}\left[ |\lambda^N| \right] = \frac{d}{du}  \exp(N^2uat - N^2at) \Big{\vert}_{u=1} = N^2at,$$
$$ \mathbb{E}\left[ |\lambda^N| (|\lambda^N|-1) \right] = \frac{d^2}{du^2}  \exp(N^2uat - N^2at)\Big{\vert}_{u=1} =N^4a^2t^2 .$$
The last two displayed equations imply the second lines in (\ref{Eq.XY3}) and (\ref{Eq.XY4}).

Lastly, in Case VI we have from (\ref{Eq.CaseVINormalization_fml}) that for any $u \in [0, \infty)$ 
$$\mathbb{E}\left[ u^{|\lambda^N|} \right] = \frac{\sum_{\lambda \in \mathbb{Y}} u^{|\lambda|}J_{\lambda}(\tau_{Nt_1};\theta) \tilde{J}_{\lambda}(\tau_{Nt_2};\theta)}{H_{\theta}(\tau_{Nt_1}, \tau_{Nt_2})} =  \exp(N^2ut_1t_2\theta - N^2t_1t_2\theta ).$$
Differentiating the above with respect to $u$, and setting $u = 1$ gives
$$ \mathbb{E}\left[ |\lambda^N| \right] = \frac{d}{du}  \exp(N^2ut_1t_2\theta - N^2t_1t_2\theta) \Big{\vert}_{u=1} = N^2t_1t_2\theta,$$
$$ \mathbb{E}\left[ |\lambda^N| (|\lambda^N|-1) \right] = \frac{d^2}{du^2}  \exp(N^2ut_1t_2\theta - N^2t_1t_2\theta)\Big{\vert}_{u=1} =N^4t_1^2t_2^2 \theta^2 .$$
The last two displayed equations imply the third lines in (\ref{Eq.XY3}) and (\ref{Eq.XY4}).

%
%
\section{Global laws of large numbers}\label{Section5} In this section we prove Theorem \ref{thmMain}. Throughout this section we continue with the same notation as in the statement of the theorem.
%
%
\subsection{Proof of Case I}\label{Section5.1} We assume the conditions in Case I of Definition \ref{DefScale}. We seek to show that if $\{N_k\}_{k \geq 1}$ is any strictly increasing sequence of integers, then we can find a subsequence $\{N_{k_m} \}_{m \geq 1}$, such that for any compactly supported continuous function $h$ on $\mathbb{R}$, we have as $m \rightarrow \infty$
\begin{equation}\label{CaseIRed}
\int_{\mathbb{R}} h(x) \nu_{N_{k_m}}(dx) \Rightarrow  \int_{\mathbb{R}} h(x) f^{I}(x) dx,
\end{equation}
where $f^{I}$ is as in (\ref{MTE1}). In the sequel we suppose that $\{N_k\}_{ k \geq 1}$ as above is given, and for clarity split the proof into two steps.

{\bf \raggedleft Step 1.} Suppose first that $M(N_k) \geq N_k$ for infinitely many $k \geq 1$. Note that this implies that $\cm \geq 1$. We let $\{N_{k_m}\}_{ m \geq 1}$ be any subsequence such that $M(N_{k_m}) \geq N_{k_m}$. We now define the auxiliary sequence $\tilde{M}(N)$ through
\begin{equation}\label{Eq.CaseIAuxM}
\tilde{M} = \tilde{M}(N) = \begin{cases} M(N_{k_m}) &\mbox{ if } N = N_{k_m} \mbox{ for some $ m \geq 1$}, \\ \lceil \cm N \rceil & \mbox{ otherwise}.
\end{cases}
\end{equation}
In addition, we let $\tilde{Z}^N_i = \tilde{\lambda}_i^N - i \cdot \theta$ for $i \geq 1$ with $\tilde{\lambda}^N = (\tilde{\lambda}^N_i: i \geq 1)$ being distributed according to $\mathcal{J}_{a^N, b^{\tilde{M}}}$. Lastly, we let $\tilde{\nu}_N$ be as in (\ref{S13E1}) with $Z^N_i$ replaced with $\tilde{Z}^N_i$, and note that
\begin{equation}\label{Eq.CaseISplit}
\tilde{\nu}_N = \tilde{\nu}^1_N + \tilde{\nu}^2_N, \mbox{ where } \tilde{\nu}_N^1 = \frac{1}{N} \sum_{i = 1}^N \delta (\tilde{Z}_i^N/N ), \mbox{ and } \tilde{\nu}^2_N = \frac{1}{N}\sum_{i =N+1}^{\infty} \delta \left( -i \cdot \theta/N \right).
\end{equation}
We mention that in the last equality we used that $\mathcal{J}_{a^N, b^{\tilde{M}}}(\tilde{\lambda}^N_{N+1} > 0) = 0$, see the discussion after (\ref{Eq.Vanish}), and consequently for $i \geq N+1$ we have $\tilde{Z}^N_i = \tilde{\lambda}^N_{i} - i \cdot \theta = -i \cdot \theta$.

Notice that by construction $|\tilde{M}  - \cm N| \leq \cd + 1$, and $\tilde{M} \geq N$. Consequently, the measures $\mathcal{J}_{a^N, b^{\tilde{M}}}$ satisfy the conditions of Lemma \ref{Lem.CaseIWeakConv} and Lemma \ref{Lem.CaseIFormulaMu}. The last two lemmas show for $h_{\theta}(x) = h(x-\theta)$
\begin{equation*}
\frac{1}{N} \sum_{i = 1}^N h_{\theta}(\tilde{Z}_i^N/ N + \theta) \Rightarrow  \int_{\mathbb{R}} h_{\theta} (x) \mu^{I}(x) dx,
\end{equation*}
where $\mu^I$ is as in (\ref{Eq.CaseIFormulaMu}). Translating the arguments by $\theta$ gives
\begin{equation}\label{Eq.CaseIPiece1}
\int_{\mathbb{R}} h(x) \tilde{\nu}^1_{N}(dx) = \frac{1}{N} \sum_{i = 1}^N h(\tilde{Z}_i^N/ N ) \Rightarrow  \int_{\mathbb{R}} h_{\theta} (x) \mu^{I}(x) dx = \int_{\mathbb{R}} h (y) \mu^{I}(y + \theta) dy.
\end{equation}
On the other hand, we have by a straightforward Riemann sum approximation argument that
\begin{equation}\label{Eq.CaseIPiece2}
\int_{\mathbb{R}} h(x) \tilde{\nu}^2_{N}(dx) = \frac{1}{N} \sum_{i = N+1}^{\infty} h(-i \theta/N) \rightarrow  \int_{-\infty}^{-1} h (\theta x)  dx = \int_{-\infty}^{-\theta} h (y) \theta^{-1} dy.
\end{equation}
Combining (\ref{Eq.CaseIPiece1}) and (\ref{Eq.CaseIPiece2}) with the formula for $\mu^I$ from (\ref{Eq.CaseIFormulaMu}), we conclude 
$$\int_{\mathbb{R}} h(x) \tilde{\nu}_{N}(dx) \Rightarrow \int_{\mathbb{R}} h (x) f^{I}(x) dx.$$
The last equation implies (\ref{CaseIRed}), since $\tilde{\nu}_{N_{k_m}}$ and $\nu_{N_{k_m}}$ have the same laws by construction.\\

{\bf \raggedleft Step 2.} In this step we suppose instead that $M(N_k) \leq N_k$ for all but finitely many $k \geq 1$. Note that this implies that $\cm \leq 1$. We let $\{N_{k_m}\}_{ m \geq 1}$ be any subsequence such that $M(N_{k_m})$ is strictly increasing, and $ 1 \leq M(N_{k_m}) \leq N_{k_m}$. We also define the auxiliary sequence $\{\tilde{M}(\tilde{N})\}_{\tilde{N} \geq 1}$ through
\begin{equation}\label{Eq.CaseIAuxM2}
\tilde{M} = \tilde{M}(\tilde{N}) = \begin{cases} N_{k_m} &\mbox{ if } \tilde{N} = M(N_{k_m}) \mbox{ for some $ m \geq 1$}, \\ \lceil \cm^{-1} \tilde{N} \rceil & \mbox{ otherwise}.
\end{cases}
\end{equation}
Based on our construction, we make the following observations:
\begin{enumerate}
\item $\tilde{N} \leq \tilde{M}$ for all $\tilde{N} \geq 1$;
\item $|\tilde{M} - \cm^{-1} \tilde{N}| \leq 1 + \cm^{-1} \cd$;
\item if we set $\tilde{N}_m = M(N_{k_m})$ for $m \geq 1$, then $\mathcal{J}_{a^{N_{k_m}}, b^{M(N_{k_m})}} = \mathcal{J}_{b^{\tilde{N}_m}, a^{\tilde{M}(\tilde{N}_m)}}$. 
\end{enumerate}
We remark that the last statement follows from $\mathcal{J}_{\rho_1, \rho_2} = \mathcal{J}_{\rho_2, \rho_1} $, see Remark \ref{Rem.Commute}.

We now let $\tilde{Z}^{\tilde{N}}_i = \tilde{\lambda}_i^{\tilde{N}} - i \cdot \theta$ for $i \geq 1$ with $\tilde{\lambda}^{\tilde{N}} = (\tilde{\lambda}^{\tilde{N}}_i: i \geq 1)$ being distributed according to $\mathcal{J}_{b^{\tilde{N}}, a^{\tilde{M}}}$. In addition, we let $\tilde{\nu}_{\tilde{N}}$ be as in (\ref{S13E1}) with $N$ replaced with $\tilde{N}$, and $Z^{N}_i$ replaced with $\tilde{Z}^{\tilde{N}}_i$. From the first and second observations above we see that $\mathcal{J}_{b^{\tilde{N}}, a^{\tilde{M}}}$ satisfy the conditions of Lemma \ref{Lem.CaseIWeakConv} and Lemma \ref{Lem.CaseIFormulaMu}, with $a$ and $b$ swapped, and with $\cm$ replaced with $\cm^{-1}$. We may thus repeat our work in Step 1 above to conclude that as $\tilde{N} \rightarrow \infty$, for any compactly supported, continuous $g$
\begin{equation}\label{Eq.CaseIStep2Red1}
\int_{\mathbb{R}} g(x) \tilde{\nu}_{\tilde{N}}(dx) \Rightarrow \int_{\mathbb{R}} g (x) \hat{f}^{I}(x) dx,
\end{equation}
where
\begin{equation}\label{Eq.CaseIFHat}
 \hat{f}^{I}(x) = \begin{cases} \dfrac{1 }{\theta \pi } \cdot \mathrm{arccos} \left( \dfrac{(1+ab)(x+\theta) + \theta(ab\cm^{-1} -1)}{2 \sqrt{ab(x+\theta) ( x + \theta \cm^{-1}) }}\right) & \mbox{ if } x > -\theta, \\ \theta^{-1} &\mbox{ if } x < - \theta.\end{cases}
\end{equation}

Let us define the measures
$$\hat{\nu}_m := \frac{1}{N_{k_m}} \sum_{i = 1}^{\infty} \delta \left( \tilde{Z}^{\tilde{N}_m}_i/N_{k_m} \right) = \frac{1}{\tilde{M}(\tilde{N}_m)} \sum_{i = 1}^{\infty} \delta \left( \tilde{Z}^{\tilde{N}_m}_i/\tilde{M}(\tilde{N}_m) \right),$$
which by the third observation above have the same distribution as $\nu_{N_{k_m}}$. In addition, we note that since $h$ is continuous and of compact support, we have uniformly over $x \in \mathbb{R}$ that 
$$h(x \tilde{N}_m / \tilde{M}(\tilde{N}_m)) = h(x \cm) + o(1) \mbox{ as } m \rightarrow \infty.$$
The latter shows that as $m \rightarrow \infty$
\begin{equation*}
    \begin{split}
        &\int_{\mathbb{R}} h(x) \hat{\nu}_{m}(dx) = \frac{1}{\tilde{M}(\tilde{N}_m)} \sum_{i = 1}^{\infty} h \left( \tilde{Z}^{\tilde{N}_m}_i/\tilde{M}(\tilde{N}_m) \right) = \frac{\tilde{N}_m}{\tilde{M}(\tilde{N}_m)} \int_{\mathbb{R}} h( x \tilde{N}_m / \tilde{M}(\tilde{N}_m) ) \tilde{\nu}_{\tilde{N}_m}(dx)\\
         &= \frac{\tilde{N}_m}{\tilde{M}(\tilde{N}_m)} \int_{\mathbb{R}} h( x  \cm ) \tilde{\nu}_{\tilde{N}_m}(dx) + o(1) =  \int_{\mathbb{R}} \cm  h( x  \cm ) \tilde{\nu}_{\tilde{N}_m}(dx) + o(1) .
    \end{split}
\end{equation*}
Combining the last equation with (\ref{Eq.CaseIStep2Red1}) for $g(x) = \cm h(x \cm)$, and the convergence together lemma, see e.g. \cite[Theorem 3.1]{Billing}, we see that 
$$\int_{\mathbb{R}} h(x) \hat{\nu}_{m}(dx) \Rightarrow \int_{\mathbb{R}} \cm h (x \cm) \hat{f}^{I}(x) dx = \int_{\mathbb{R}} h (y) \hat{f}^{I}(\cm^{-1} y) dy.$$
The last equation and the distributional equality of $\hat{\nu}_m$ and $\nu_{N_{k_m}}$ together imply (\ref{CaseIRed}), once we observe that $\hat{f}^{I}(\cm^{-1} y) = f^I(y)$, which follows by comparing (\ref{Eq.CaseIFHat}) with (\ref{MTE1}).

%
%
\subsection{Proof of Cases II and III}\label{Section5.2} We assume the conditions in either Case II or Case III of Definition \ref{DefScale}. As the proofs in these cases are similar, we treat them together. We seek to show that for any compactly supported continuous function $h$ on $\mathbb{R}$, we have as $N \rightarrow \infty$
\begin{equation}\label{CaseIIRed}
\int_{\mathbb{R}} h(x) \nu_{N}(dx) \Rightarrow  \int_{\mathbb{R}} h(x) f(x) dx,
\end{equation}
where $f = f^{II}$ as in (\ref{MTE2}) in Case II and $f = f^{III}$ as in (\ref{MTE3}) in Case III. We note
\begin{equation}\label{Eq.CaseIISplit}
\nu_N = \nu^1_N + \nu^2_N, \mbox{ where } \nu_N^1 = \frac{1}{N} \sum_{i = 1}^N \delta (Z_i^N/N ), \mbox{ and } \nu^2_N = \frac{1}{N}\sum_{i = N+1}^{\infty} \delta \left( -i \cdot \theta/N \right).
\end{equation}
We mention that in the last equality we used that $\mathcal{J}_{a^N, b^M_\beta}(\lambda^N_{N+1} > 0) = 0$ (see the discussion under (\ref{Eq.CaseIINormalization})) and $\mathcal{J}_{a^N, \tau_{Nt}}(\lambda^N_{N+1} > 0) = 0$ (see the discussion above (\ref{Eq.CaseIIIBetaEnsemble})), and consequently for $i \geq N+1$ we have $Z^N_i = \lambda^N_{i} - i \cdot \theta = -i \cdot \theta$. From Lemmas \ref{Lem.CaseIIWeakConv} and \ref{Lem.CaseIIFormulaMu} in Case II, and Lemmas \ref{Lem.CaseIIIWeakConv} and \ref{Lem.CaseIIIFormulaMu} in Case III, we have for $h_{\theta}(x) = h(x-\theta)$
\begin{equation*}
\frac{1}{N} \sum_{i = 1}^N h_{\theta}(Z_i^N/ N + \theta) \Rightarrow  \int_{\mathbb{R}} h_{\theta} (x) \mu(x) dx,
\end{equation*}
where $\mu = \mu^{II}$ in Case II and $\mu = \mu^{III}$ in Case III. Translating the arguments by $\theta$ gives
\begin{equation}\label{Eq.CaseIIPiece1}
\int_{\mathbb{R}} h(x) \nu^1_{N}(dx) = \frac{1}{N} \sum_{i = 1}^N h(Z_i^N/ N ) \Rightarrow  \int_{\mathbb{R}} h_{\theta} (x) \mu(x) dx = \int_{\mathbb{R}} h (y) \mu(y + \theta) dy.
\end{equation}
On the other hand, we have by a straightforward Riemann sum approximation argument that
\begin{equation}\label{Eq.CaseIIPiece2}
\int_{\mathbb{R}} h(x) \nu^2_{N}(dx) = \frac{1}{N} \sum_{i = N+1}^{\infty} h(-i \theta/N) \rightarrow  \int_{-\infty}^{-1} h (\theta x)  dx = \int_{-\infty}^{-\theta} h (y) \theta^{-1} dy.
\end{equation}
Combining (\ref{Eq.CaseIIPiece1}) and (\ref{Eq.CaseIIPiece2}) with the formulas for $\mu^{II}$ from (\ref{Eq.CaseIIFormulaMu}) and for $\mu^{III}$ from (\ref{Eq.CaseIIIFormulaMu}), we conclude (\ref{CaseIIRed}).

%
%
\subsection{Proof of Cases IV, V and VI}\label{Section5.4} We assume the conditions in either Case IV, Case V or Case VI of Definition \ref{DefScale}. As the proofs in these cases are similar, we treat them together. We seek to show that for any compactly supported continuous function $h$ on $\mathbb{R}$ we have as $N \rightarrow \infty$
\begin{equation}\label{CaseIVRed}
\int_{\mathbb{R}} h(x) \nu_{N}(dx) \Rightarrow  \int_{\mathbb{R}} h(x) f(x) dx,
\end{equation}
where $f = f^{IV}$ as in (\ref{MTE4}), $f = f^V$ as in (\ref{MTE5}) and $f = f^{VI}$ as in (\ref{MTE6}) in Cases IV, V and VI, respectively.

Let $D_2^{IV}$ be as in Lemma \ref{Lem.CaseIVLengthBound}, $D_2^{V}$ as in Lemma \ref{Lem.CaseVLengthBound} and $D_2^{VI}$ as in Lemma \ref{Lem.CaseVILengthBound} for some $A_0 > 0$. We fix an integer $d$, such that in Case IV $d \geq \max(D_2^{IV}, \alpha \theta^{-1} )$, where $\alpha$ is as in (\ref{Eq.CaseIVLowerBoundD}); in Case V $d \geq \max(D_2^{V}, \alpha \theta^{-1} )$, where $\alpha$ is as in (\ref{Eq.CaseVLowerBoundD}); in Case VI $d \geq \max(D_2^{VI}, 2 \sqrt{t_1t_2}, 2 \theta\sqrt{t_1}{t_2} )$. Let $(\ell^K_1, \dots, \ell^K_K)$ have law $\mathbb{P}^d_K$ as in (\ref{Eq.CaseIVBetaEnsembleV2}) in Case IV, (\ref{Eq.CaseVBetaEnsembleV2}) in Case V and (\ref{Eq.CaseVIBetaEnsembleV2}) in Case VI, and set $\mu_K = (1/K)\sum_{i = 1}^K \delta(\ell^K_i/K)$. From Lemmas \ref{Lem.CaseIVWeakConv} and \ref{Lem.CaseIVFormulaMu} in Case IV, Lemmas \ref{Lem.CaseVWeakConv} and \ref{Lem.CaseVFormulaMu} in Case V and Lemmas \ref{Lem.CaseVIWeakConv} and \ref{Lem.CaseVIFormulaMu} in Case VI, we have for any bounded continuous $g$ that
\begin{equation}\label{Eq.CaseIVFL1}
\int_{\mathbb{R}} g(x) \mu_{Nd}(dx) \Rightarrow \int_{\mathbb{R}} g(x) \mu_d(x)dx, 
\end{equation}
 as $N \rightarrow \infty$, where $\mu_d$ is as in (\ref{Eq.CaseIVFormulaMu}) in Case IV, as in (\ref{Eq.CaseVFormulaMu}) in Case V, and as in (\ref{Eq.CaseVIFormulaMu}) in Case IV. Let $\tilde{Z}_i^N = \tilde{\lambda}_i^N - i \cdot \theta$ for $i \geq 1$ with $\tilde{\lambda}^N = (\tilde{\lambda}^N_i: i \geq 1)$ being distributed according to $\mathcal{J}_{a_{\beta}^N, b_{\beta}^M}(\cdot|E^{\min(M,N)}_{Nd})$ in Case IV, according to $\mathcal{J}_{a_{\beta}^N, \tau_{Nt}}(\cdot|E^{N}_{Nd})$ in Case V, and according to $\mathcal{J}_{\tau_{Nt_1}, \tau_{Nt_2}}(\cdot|E^{Nd}_{Nd})$ in Case VI. Note that from the definition of $E^R_K$ in (\ref{Eq.DefEventTrunc}) we have $\tilde{Z}_i^N = -i \cdot \theta$ for $i \geq Nd+1$. Consequently, if we define $\tilde{\nu}_N$ as in (\ref{S13E1}) with $Z^N_i$ replaced with $\tilde{Z}^N_i$, we see that 
\begin{equation}\label{Eq.CaseIVSplit}
\tilde{\nu}_N = \tilde{\nu}^1_N + \tilde{\nu}^2_N, \mbox{ where } \tilde{\nu}_N^1 = \frac{1}{N} \sum_{i = 1}^{Nd} \delta (\tilde{Z}_i^N/N ), \mbox{ and } \tilde{\nu}^2_N = \frac{1}{N}\sum_{i =Nd+1}^{\infty} \delta \left( -i \cdot \theta/N \right).
\end{equation}
From (\ref{Eq.CaseIVSubseq}) in Case IV, (\ref{Eq.CaseVSubseq}) in Case V, and (\ref{Eq.CaseVISubseq}) in Case VI, we have that $(\tilde{Z}^N_1, \dots, \tilde{Z}^N_{Nd})$ has the same distribution as $(\ell^{Nd}_1 - Nd\theta, \dots, \ell^{Nd}_{Nd} - Nd \theta)$. Setting $g(x) = d h(d(x - \theta))$ in (\ref{Eq.CaseIVFL1}) we see that it is equivalent to 
$$
\int_{\mathbb{R}} h(x) \tilde{\nu}^1_{N}(dx) \Rightarrow \int_{\mathbb{R}} d h(d(x - \theta)) \mu_d(x)dx = \int_{\mathbb{R}} h(y)  \mu_d(d^{-1} y  + \theta)dy.
$$
As previously seen, we have by a Riemann sum approximation
\begin{equation}\label{Eq.CaseIVRiemann}
\int_{\mathbb{R}} h(x) \tilde{\nu}^2_{N}(dx) = \frac{1}{N} \sum_{i = Nd + 1}^{\infty} h(-i\theta/N) \rightarrow \int_{-\infty}^{-d}h(\theta x) dx = \int_{-\infty}^{-d\theta} h(y) \theta^{-1} dy.
\end{equation}
The last two displayed equations, and the formulas for $\mu_d$ and $f$ give
$$\int_{\mathbb{R}} h(x) \tilde{\nu}_{N}(dx) \Rightarrow  \int_{\mathbb{R}} h(x) f(x) dx.$$
The last equation and the total variation distance bound in (\ref{Eq.CaseIVTruncate}) in Case IV, the one in (\ref{Eq.CaseVTruncate}) in Case V, and the one in (\ref{Eq.CaseVITruncateLen}) in Case VI, imply (\ref{CaseIVRed}).

%
%
\section{Proof of Theorem \ref{thmMain2}}\label{Section6} The goal of this section is to prove Theorem \ref{thmMain2} and we assume throughout the same conditions as in the statement of the theorem. We establish each part of the theorem in a separate section.

%
%
\subsection{Global CLT}\label{Section6.1} We proceed to verify that the measures $\mathcal{J}_{a^N, b_\beta^M}$ from (\ref{Eq.CaseIIBetaEnsembleV2}) satisfy \cite[Assumptions 1-5]{DK22}. In the proof of Lemma \ref{Lem.CaseIIFormulaMu} we showed that $\mathcal{J}_{a^N, b^M_\beta}(\cdot)$ satisfy Assumptions \ref{Ass.Finite}, \ref{Ass.Loop1} and \ref{Ass.Loop2}, which in turn imply Assumptions \cite[Assumptions 1-4]{DK22} with $M_N = M(N)$, $\mathsf{M} = \cm$, $V_N,V$ as in (\ref{Eq.CaseIIBetaEnsembleV2}) and (\ref{Eq.DefVCaseII}), and $\Phi^{\pm}_N, \Phi^{\pm}$ as in (\ref{Eq.CaseIIPhis}). What remains is to show that \cite[Assumption 5]{DK22} holds, for which we seek to show that if 
\begin{equation}\label{Eq.Qmu}
Q_{\mu}(z) = \Phi^-(z) \cdot e^{-\theta G_{\mu}(z)} - \Phi^+(z) \cdot e^{\theta G_{\mu}(z)},
\end{equation}
then 
\begin{equation}\label{Eq.CLTRed1}
    Q_{\mu}(z) = (1+ab)\cdot \sqrt{(z-\alpha)(z-\beta)}.
\end{equation}
If (\ref{Eq.CLTRed1}) holds, then it would imply that \cite[Assumption 5]{DK22} is satisfied with $H(z) = 1+ ab$, once we also note that $0 \leq \alpha < \beta \leq \cm + \theta$ by construction. Consequently, the ``Global CLT'' part of the theorem would follow from \cite[Corollary 5.4]{DK22}. In the remainder of this section we verify (\ref{Eq.CLTRed1}).

From (\ref{S41E6}), (\ref{Eq.CaseIIPhis}) and (\ref{Eq.CaseIIR}) we have  
$$ (1-ab)z+ab\cm-\theta = z \cdot e^{-\theta G_{\mu}(z)} + ab(\cm+\theta - z) \cdot e^{\theta G_{\mu}(z)}.$$
The above sets up quadratic equations for $e^{\pm \theta G_{\mu}(z)}$. Namely, we obtain the equations 
$$X  [(1-ab)z+ab\cm-\theta] = z + ab(\cm + \theta - z) X^2, \hspace{2mm} Y [ (1-ab)z+ab\cm-\theta] = Y^2z + ab(\cm + \theta - z), $$
which have roots
$$X_{\pm} = \frac{(1-ab)z+ab\cm-\theta \pm (1 + ab) \sqrt{(z- \alpha)(z-\beta)}}{2ab (\cm + \theta -z)},$$
$$Y_{\pm} = \frac{(1-ab)z+ab\cm-\theta \pm (1 + ab) \sqrt{(z- \alpha)(z-\beta)}}{2z}.$$
Using that $\lim_{z \rightarrow \infty} e^{\pm \theta G_{\mu}(z)} = 1$, we see that 
\begin{equation}\label{Eq.ExponSt}
\begin{split}
&e^{\theta G_{\mu}(z)} = X_- = \frac{(1-ab)z+ab\cm-\theta - (1 + ab) \sqrt{(z- \alpha)(z-\beta)}}{2ab (\cm + \theta -z)}, \\
& e^{-\theta G_{\mu}(z)} = Y_+ =  \frac{(1-ab)z+ab\cm-\theta + (1 + ab) \sqrt{(z- \alpha)(z-\beta)}}{2z}.
\end{split}
\end{equation}
Substituting the latter into (\ref{Eq.Qmu}) gives (\ref{Eq.CLTRed1}).

%
%
\subsection{Global LDP}\label{Section6.2} Let us extend the functions $V_N$ in (\ref{Eq.CaseIIBetaEnsembleV2})  and $V$ in (\ref{Eq.DefVCaseII}) from $I_N = [0, M  N^{-1} + (N-1)  N^{-1} \theta] = [0, s_N]$ and $I = [0, \cm + \theta]$ continuously to $\mathbb{R}$ by setting
$$V_N(x) = \begin{cases} V_N(0) -x &\mbox{ if } x \leq 0, \\ V_N(s_N) + (x - s_N) &\mbox{ if } x \geq s_N, \end{cases} \hspace{2mm}  V(x) = \begin{cases}  -x &\mbox{ if } x \leq 0, \\  x - \cm - \theta &\mbox{ if } x \geq \cm + \theta. \end{cases}$$
We proceed to verify that the measures $\mathcal{J}_{a^N, b_\beta^M}$ from (\ref{Eq.CaseIIBetaEnsembleV2}) satisfy \cite[Assumptions 1 and 2]{DZ23}.

We observe that \cite[Assumption 1]{DZ23} holds for $a_N = a = 0$, $b_N = M(N)$ and $b = \cm$. For \cite[Assumption 2]{DZ23} we note that due to the linear growth of both $V_N$ and $V$ near infinity, they satisfy \cite[(1.9)]{DZ23} and \cite[(1.13)]{DZ23}, respectively, and are both continuous on $\mathbb{R}$ by construction. Lastly, by combining (\ref{Eq.S33R1}) with the triangle inequality, we conclude for some $A_2 > 0$
$$\sup_{x \in \mathbb{R}} |V_N(x) - V(x)| \leq A_2 N^{-1} \log (N + 1) + |s_N - \cm - \theta| = O\left(N^{-1} \log(N+1) \right),$$
which shows that \cite[Assumption 2(a)]{DZ23} holds. Since  \cite[Assumptions 1 and 2]{DZ23} hold, we conclude from \cite[Theorem 1.3]{DZ23} that the laws of $\mu_N$ satisfy an LDP with speed $N^2$ and good rate function
\begin{equation}\label{Eq.GLDPRed1}
I_V^{\theta} (\mu) = \begin{cases} \theta (E_V(\mu) - F_V^{\theta}) &\mbox{ for } \mu \in \mathcal{M}_{\theta}([0, \cm + \theta]), \\ \infty &\mbox{ for } \mu \in \mathcal{M}(\mathbb{R}) \setminus \mathcal{M}_{\theta}([0, \cm + \theta]), \end{cases}
\end{equation}
where $F_V^{\theta} = \inf_{\mu \in \mathcal{M}_{\theta}([0, \cm + \theta])} E_V(\mu)$. From \cite[Theorem 1.1]{DZ23} we know that $F_V^{\theta}$ is finite and $F_V^{\theta} = E_V(\mu_{\mathsf{eq}}^{\theta})$ for a unique $\mu_{\mathsf{eq}}^{\theta} \in \mathcal{M}_{\theta}([0, \cm + \theta])$. The last few statements imply that $\mu_N$ converge weakly in probability to $\mu_{\mathsf{eq}}^{\theta}$; however, from Lemmas \ref{Lem.CaseIIWeakConv} and \ref{Lem.CaseIIFormulaMu} we know that $\mu_N$ converge weakly in probability to $\mu^{II}$. Since weak limits are unique, we conclude $\mu_{\mathsf{eq}}^{\theta} = \mu^{II}$ and hence $I_V^{\theta}$ from (\ref{Eq.GLDPRed1}) agrees with $I^{\mathsf{Global}}$ in (\ref{Eq.RateGLDP}). This completes the proof of the ``Global LDP'' part of the theorem.

%
%
\subsection{Edge LDP}\label{Section6.3} In Section \ref{Section3.3} we showed that $\mathcal{J}_{a^N, b_\beta^M}$ from (\ref{Eq.CaseIIBetaEnsembleV2}) satisfy  Assumption \ref{Ass.Finite}, which in turn imply Assumptions \cite[Assumptions 2.1 and 2.2]{DD22} with $M_N = M(N)$, $a_0 = A_0 = \cm$, $q_N = 1/N$, $V_N,V$ as in (\ref{Eq.CaseIIBetaEnsembleV2}) and (\ref{Eq.DefVCaseII}), and $p_N = N^{-1} \log(N + 1)$. The latter shows that $\mathcal{J}_{a^N, b_\beta^M}$ satisfy the conditions of \cite[Definition 2.4]{DD22}, and since $\lim_N N^{3/4} q_N  = \lim_N N^{3/4} p_N = 0$, we obtain from \cite[Proposition 5.2]{DD22} that:
\begin{enumerate}
\item[(a)] For any $t \in [\theta, \cm + \theta)$, we have 
\begin{equation}\label{Eq.ELDP1}
\limsup_{N \rightarrow \infty} \frac{1}{N} \log \mathcal{J}_{a^N, b_\beta^M} (\ell_1 \geq tN) \leq - J_V^{\theta, \cm + \theta}(t),
\end{equation} 
\item[(b)] For any $t \in [\theta, b_{\cm})$, we have 
\begin{equation}\label{Eq.ELDP2}
\liminf_{N \rightarrow \infty} \frac{1}{N} \log \mathcal{J}_{a^N, b_\beta^M} (\ell_1 \geq tN) \geq - J_V^{\theta, \cm + \theta}(t),
\end{equation}
\item[(c)] In addition, if $J_V^{\theta, \cm + \theta}(t) > 0$ for $t \in (b_{\cm}, \cm + \theta)$, then (\ref{Eq.ELDP2}) holds for any $t \in [\theta, \cm + \theta)$.
\end{enumerate}
In the above equations we have that $b_{\cm}$ is the rightmost point of the support of $\mu^{II}$ from (\ref{Eq.muII}) and 
\begin{equation}\label{Eq.ELDP3}
J_V^{\theta, \cm + \theta} (x) = \begin{cases} 0 &\mbox{ if } x \in [0, b_{\cm}), \\ \inf_{y \in [x, \cm + \theta]} G_V^{\theta, \cm + \theta}(y)  - G_V^{\theta, \cm + \theta}(b_{\cm}) & \mbox{ if } x \in [b_{\cm}, \cm + \theta], \end{cases}
\end{equation}
where 
\begin{equation}\label{Eq.ELDP4}
G_V^{\theta, \cm + \theta}(x) = -2 \theta \int_0^{\cm + \theta} \log |x - t| \mu^{II}(t) dt + V(x).
\end{equation}

We now observe directly from the definition of $\mu^{II}$ in (\ref{Eq.muII}) that if $\cm \in (0, ab \theta]$, then $b_{\cm} = \cm + \theta$. Indeed, if $\cm \in (0, ab \theta)$, we have that 
$$\lim_{x \rightarrow (\cm + \theta)-} [ (1-ab)x + ab\cm-\theta ] = \cm -ab\theta < 0,$$
while the denominator in (\ref{Eq.muII}) vanishes, making the argument of $\arccos$ converge to $-\infty$, and hence $\lim_{x \rightarrow (\cm + \theta)-} \mu^{II}(x) = \theta^{-1}$. If instead $\cm = ab \theta$, we have as $x \rightarrow (\cm + \theta)-$
$$\frac{(1-ab)x + ab\cm-\theta}{2 \sqrt{ab x (\cm +\theta - x) }} \sim \frac{(ab-1)\sqrt{\cm + \theta - x} }{2 \sqrt{ab(ab+1)\theta}},$$
and hence  $\lim_{x \rightarrow (\cm + \theta)-} \mu^{II}(x) = \theta^{-1}/2$. As $b_{\cm} = \cm + \theta$, we conclude (\ref{Eq.ELDPMain}) from (\ref{Eq.ELDP1}), (\ref{Eq.ELDP2}) and (\ref{Eq.ELDP3}) in the case when $\cm \in (0, ab \theta]$.\\

In the remainder of the proof we assume that $\cm > ab\theta$. We first proceed to find $b_{\cm}$ in this case. Let $\alpha, \beta$ be as in (\ref{Eq.AlphaBeta}). Then, we see that for $x \in [\beta, \cm + \theta)$
$$(1-ab)x + ab\cm-\theta \geq (1-ab)\beta + ab\cm-\theta  = \frac{2ab\cm + 2\sqrt{ab\cm \theta} - 2ab\theta - 2ab\sqrt{ab \cm \theta} }{1 + ab} > 0,$$
where in the last inequality we used that $\cm > ab\theta$. On the other hand, we have 
$$[(1-ab)x + ab\cm-\theta ]^2 - 4 abx(\cm + \theta -x ) > 0 \iff (1 + ab)(x-\alpha)(x-\beta) > 0, $$
which together with the last equation implies for $x \in (\beta, \cm + \theta)$
\begin{equation}\label{Eq.ELDPRat}
\frac{(1-ab)x + ab\cm-\theta}{2 \sqrt{abx(\cm + \theta -x )}} > 1.
\end{equation} 
We conclude that the argument of $\arccos$ in (\ref{Eq.muII}) is bigger than $1$ for $x \in (\beta, \cm + \theta)$ and so 
\begin{equation}\label{Eq.Void}
\mu^{II}(x) = 0 \mbox{ for } x \geq \beta.
\end{equation} 
Fix $\varepsilon > 0$, such that $\varepsilon < \beta$ and $\varepsilon < \cm + \theta - \beta$. We note that for $x \in [\varepsilon, \cm + \theta - \varepsilon]$ we have 
$$\frac{(1-ab)x + ab\cm-\theta}{2 \sqrt{ab x (\cm +\theta - x) }} = 1 + \frac{(1+ab)(\beta - \alpha) (x - \beta) }{8 ab \beta ( \cm + \theta - \beta)} + O(|\beta - x|^2),$$
and that $\arccos(z) = \sqrt{2 - 2z} +O(|z-1|^{3/2})$ near $1-$. Consequently, we conclude that for $x \in [\varepsilon, \beta]$
\begin{equation}\label{Eq.MuIISqRoot}
\mu^{II}(x) = \frac{s_B}{\pi} \cdot \sqrt{\beta - x} + O(|\beta - x|^{3/2}), \mbox{ where } s_B = \frac{\sqrt{(1+ab)(\beta - \alpha)} }{2 \theta \sqrt{ ab \beta ( \cm + \theta - \beta)}}.
\end{equation}
The constant in the big $O$ notation depends on $a,b, \cm , \theta$ and $\varepsilon$. Equations (\ref{Eq.Void}) and (\ref{Eq.MuIISqRoot}) show that in the case when $\cm > ab\theta$, we have that $b_{\cm} = \beta$.\\

Our next task is to find $\frac{d}{dx}G_V^{\theta, \cm + \theta}(x)$. From (\ref{Eq.ELDP4}) we see that 
\begin{equation*}
\frac{d}{dx} G_V^{\theta, \cm + \theta}(x) = -2 \theta G_{\mu^{II}}(x) + \log(x) - \log(\cm + \theta - x) - \log(ab),
\end{equation*}
where we used the formula for the derivative of $V$ from (\ref{Eq.CaseIIVPrime}) and the definition of the Stieltjes transform from (\ref{S41E5}). Combining the latter with (\ref{Eq.ExponSt}), we conclude for $x \in (\beta, \cm + \theta)$ 
\begin{equation*}
\frac{d}{dx} G_V^{\theta, \cm + \theta}(x) = 2 \log \left( \frac{(1-ab)x+ab\cm-\theta + (1 + ab) \sqrt{(x- \alpha)(x-\beta)}}{2 \sqrt{abx (m + \theta -x)}  } \right) > 0,
\end{equation*}
where in the last inequality we used (\ref{Eq.ELDPRat}). The last equation and (\ref{Eq.ELDP3}) together imply that for $x \in [\beta, \cm + \theta]$
\begin{equation}\label{Eq.JMain}
J_V^{\theta, \cm + \theta} (x)  = \int_{\beta}^{x} 2 \log \left( \frac{(1-ab)y+ab\cm-\theta + (1 + ab) \sqrt{(y- \alpha)(y-\beta)}}{2 \sqrt{aby (m + \theta -y)}  } \right)  dy,
\end{equation}
and the latter is positive for $x \in (\beta, \cm + \theta)$. In particular, condition (c) below (\ref{Eq.ELDP2}) is satisfied. Since we showed that $b_{\cm} = \beta$, we conclude (\ref{Eq.ELDPMain}) from (\ref{Eq.ELDP1}), (\ref{Eq.ELDP2}), (\ref{Eq.ELDP3}) and (\ref{Eq.JMain}) in the case when $\cm > ab\theta$. This completes the proof of the ``Edge LDP'' part of the theorem.

%
%
\subsection{Edge universality}\label{Section6.4} We proceed to verify that the measures $\mathcal{J}_{a^N, b_\beta^M}$ from (\ref{Eq.CaseIIBetaEnsembleV2}) satisfy Assumptions 1.2, 1.3, 1.5 and 1.7 in \cite{GH17}. In the proof of Lemma \ref{Lem.CaseIIFormulaMu} we showed that $\mathcal{J}_{a^N, b^M_\beta}(\cdot)$ satisfy Assumptions \ref{Ass.Finite}, \ref{Ass.Loop1} and \ref{Ass.Loop2}, which in turn imply \cite[Assumption 1.2]{GH17} with 
$$a(N) = -1, \hspace{2mm} b(N) = M(N) + 1 + (N-1)\theta, \hspace{2mm} \hat{a} = 0, \hspace{2mm} \hat{b} = \cm + \theta, \hspace{2mm} \mbox{$V_N$ and $V$ as in (\ref{Eq.CaseIIBetaEnsembleV2}) and (\ref{Eq.DefVCaseII})};$$
and also imply \cite[Assumption 1.3]{GH17} with $\psi^{\pm}_N = \Phi^{\pm}_N$ and $\phi^{\pm} = \Phi^{\pm}$. In addition, in Section \ref{Section6.1} above we showed that \cite[Assumption 1.5]{GH17} holds with $H(z) = 1+ab$, $A = \alpha$, and $B= \beta$, where we recall that $\alpha, \beta$ are as in (\ref{Eq.AlphaBeta}). We mention that the conditions $\cm > ab\theta$ and $ab\cm \neq \theta$ ensure $\alpha > 0$ and $\beta < \cm + \theta$, which are required in \cite[Assumption 1.5]{GH17}. What remains is to show that \cite[Assumption 1.7]{GH17} holds, for which we seek to show that in a neighborhood of $[\alpha, \beta]$
\begin{equation}\label{Eq.DerApprox}
V_N'(x) = V'(x) + O(N^{-1/3}).
\end{equation}

We recall from \cite[Theorem 1.2.5]{AAR} that 
$$\frac{\Gamma'(x)}{\Gamma(x)} = -\gamma + \sum_{n = 0}^{\infty} \left[\frac{1}{n+1} - \frac{1}{n+x} \right],$$
where $\gamma= 0.577215...$ is the Euler-Mascheroni constant. Combining the last equation with the formula for $V_N$ in (\ref{Eq.CaseIIBetaEnsembleV2}) and $V'$ in (\ref{Eq.CaseIIVPrime}) we conclude
\begin{equation*}
V_N'(x) - V'(x) =   \sum_{n = 0}^{\infty} \left[\frac{1}{n + M + N \theta - \theta -Nx +1} - \frac{1}{n+Nx + 1} \right]- \log(x) + \log(\cm + \theta - x). 
\end{equation*}
We also observe that 
$$\log(x) - \log(\cm + \theta -x) = \sum_{n = 0}^{\infty} \int_{n/N}^{(n+1)/N} \left[ \frac{1}{y + \cm + \theta - x} - \frac{1}{y + x}  \right] dy.$$
Since $[\alpha, \beta] \subset (0, \cm + \theta)$, we can find $\varepsilon > 0$ such that $[\alpha - 2 \varepsilon, \beta + 2 \varepsilon] \subset (0, \cm + \theta)$. If $x \in [\alpha - \varepsilon, \beta + \varepsilon]$ and $N$ is sufficiently large so that $N^{-1}(\mathsf{D} + \theta + 1) < \varepsilon/2$, we have for $y \in [n/N, (n+1)/N]$ and $\Delta_N = M + N\theta - \theta +1$
\begin{equation*}
\begin{split}
&\left| \frac{1}{N^{-1}n + N^{-1} \Delta_N -x } - \frac{1}{y + \cm + \theta - x}\right| \leq \frac{1}{N} \cdot \frac{\mathsf{D} + \theta + 2}{(\cm + \theta - N^{-1}(\mathsf{D} + \theta + 1) -x + n/N)^2},\\
&\left| \frac{1}{N^{-1}(n+1) + x } - \frac{1}{y + x}\right| \leq \frac{1}{N} \cdot \frac{1}{(x +n/N)^2} ,
\end{split}
\end{equation*}
where in deriving the inequalities in each line we used the mean value theorem and that $|N^{-1} \Delta_N - \cm - \theta| \leq N^{-1}(\mathsf{D} + \theta + 1) < \varepsilon/2$. Combining the last three displayed equations and using $N^{-1}(\mathsf{D} + \theta + 1) < \varepsilon/2$ we conclude for $x \in [\alpha - \varepsilon, \beta + \varepsilon]$
\begin{equation*}
\left|V_N'(x) - V'(x) \right| \leq \frac{1}{N^2}\sum_{n = 0}^{\infty} \left[\frac{1}{(\varepsilon +n/N)^2} + \frac{\mathsf{D} + \theta + 2}{(\varepsilon/2 + n/N)^2} \right] = O(N^{-1}),  \end{equation*}
which clearly implies (\ref{Eq.DerApprox}).\\

Our work so far verifies the assumptions required to apply \cite[Theorem 1.11]{GH17}, but we still need to verify a couple of statements within the theorem itself. From (\ref{Eq.Void}) and (\ref{Eq.MuIISqRoot}) we know that $\beta$ is the rightmost endpoint of the support of $\mu^{II}$, the interval $[\beta, \cm + \theta]$ is a void region for the measure, and $\mu^{II}$ has a square root behavior near $\beta$. The last few observations now confirm the conditions within \cite[Theorem 1.11]{GH17} itself, which yields the ``Edge universality'' part of the theorem.

\bibliographystyle{amsplain} 
\bibliography{ref}

\end{document}